%%%%%%%%%%%%%%%%%%%%%%%%%%%%%%%%%%%%%%%%%%%%%%%%%%%%%%%%%%%%
%%%%%%%%%%%%%%%%%%%%%%%%%%%%%%%%%%%%%%%%%%%%%%%%%%%%%%%%%%%%
%																	                                              %	
%																	                                              %
%																	                                              %	
%   A. ALARCON and F. FORSTNERIC															    % 
%																						    %
%																	                                              %
%																	                                              %	
%																	                                              %
%   AMS-LaTeX 1.2 file for journals, based on amsart.cls	         		                                                                       %
%																	                                              %	
%																	                                              %
%%%%%%%%%%%%%%%%%%%%%%%%%%%%%%%%%%%%%%%%%%%%%%%%%%%%%%%%%%%%

\documentclass[11pt]{amsart}  

\usepackage[a4paper,hmargin=3.5cm,vmargin=4cm]{geometry}
\usepackage{amsfonts,amssymb,amscd,amstext}
\usepackage{graphicx}
\usepackage[dvips]{epsfig}
\usepackage[utf8]{inputenc}
\usepackage{hyperref}
\usepackage{verbatim}
\usepackage{subfigure} % To place figures in parallel

%% TO MODIFY THE HEADINGS 

\usepackage{fancyhdr}
\pagestyle{fancy}
\fancyhf{}

\input xy
\xyoption{all}

%% EDITING THE HEADINGS
%\renewcommand{\headrulewidth}{0pt}

%% Type of letter
%\usepackage{palatino}
%\usepackage{times}
%\usepackage[bitstream-charter]{mathdesign}
%\usepackage{mathdesign}
%\usepackage{mathptmx}

\usepackage{enumerate}
\usepackage{mathrsfs}

%% To do not cut the words
\pretolerance=2000
\tolerance=3000

% Margins

\headheight=14pt
\headsep 3mm
\topmargin 0.5cm
\textheight = 48 \baselineskip %49\baselineskip
\textwidth 14cm
\oddsidemargin 1cm
\evensidemargin 1cm
 
\setlength{\parskip}{0.5em}

%%% SECTIONS
%\titleformat{\section}%[display]
%{\filcenter\bfseries\large} {\thesection{.}}{0.2cm}{}%[$\vspace*{-1.0cm}$]
%%%%%%%%%%%%%%%%%%%%%%%%%%%%%%%%%%%%%%%%%%%%%%%%%%%%%%%
%%% SUBSECTIONS
%\titleformat{\subsection}[runin]
%{\bfseries} {\thesubsection{.}}{0.15cm}{}[.]
%%%%%%%%%%%%%%%%%%%%%%%%%%%%%%%%%%%%%%%%%%%%%%%%%%%%%%%
%%% SUBSUBSECTIONS
%\titleformat{\subsubsection}[runin]
%{\em}{\thesubsubsection{.}}{0.15cm}{}[.]
%%%%%%%%%%%%%%%%%%%%%%%%%%%%%%%%%%%%%%%%%%%%%%%%%%%%%%%

%% Caption of figures
\usepackage[up,bf]{caption}
%\setlength{\captionmargin}{20pt}

%%%%%%%%%%
%%%%%%%%%%
%%%%%%%%%%
%%%%%%%%%%
%%%%%%%%%%
%%%%%%%%%%

\newtheorem{theorem}{Theorem}[section]
\newtheorem{proposition}[theorem]{Proposition}

\newtheorem{lemma}[theorem]{Lemma}
\newtheorem{corollary}[theorem]{Corollary}

\theoremstyle{definition}
\newtheorem{definition}[theorem]{Definition}
\newtheorem{remark}[theorem]{Remark}

\newtheorem{example}[theorem]{Example}
\newtheorem{conjecture}[theorem]{Conjecture}

\numberwithin{equation}{section}
\numberwithin{figure}{section}

%%%%%%%%%%
%%%%%%%%%%
%%%%%%%%%%
%%%%%%%%%%
%%%%%%%%%%
%%%%%%%%%%
%%%%%%%%%%
%%%%%%%%%%
%%%%%%%%%%     CALIGRAPHIC CAPITAL
%%%%%%%%%%
%%%%%%%%%%
%%%%%%%%%%

\newcommand\Acal{\mathcal{A}}

\newcommand\Lcal{\mathcal{L}}

\newcommand\Ocal{\mathcal{O}}
\newcommand\Pcal{\mathcal{P}}

\newcommand\Ucal{\mathcal{U}}

%%%%%%%%%%
%%%%%%%%%%
%%%%%%%%%%     SMALL BOLDFACE
%%%%%%%%%%
%%%%%%%%%%
%%%%%%%%%%

\newcommand\bu{\mathbf{u}}
\newcommand\bv{\mathbf{v}}

%%%%%%%%%%
%%%%%%%%%%
%%%%%%%%%%     MATH SCRIPT
%%%%%%%%%%
%%%%%%%%%%
%%%%%%%%%%

\newcommand\Ascr{\mathscr{A}}

\newcommand\Cscr{\mathscr{C}}

\newcommand\Gscr{\mathscr{G}}

%%%%%%%%%%
%%%%%%%%%%
%%%%%%%%%%     MATH BLACKBOARD
%%%%%%%%%%
%%%%%%%%%%
%%%%%%%%%%

\newcommand\B{\mathbb{B}}
\newcommand\C{\mathbb{C}}
\newcommand\D{\overline{\mathbb D}}
\newcommand\CP{\mathbb{CP}}
\renewcommand\D{\mathbb D}

\newcommand\N{\mathbb{N}}

\newcommand\R{\mathbb{R}}

\renewcommand\S{\mathbb{S}}

\newcommand\Z{\mathbb{Z}}

\renewcommand\c{\mathbb{C}}
\newcommand\cd{\overline{\mathbb D}}
\newcommand\cp{\mathbb{CP}}
\renewcommand\d{\mathbb D}

\newcommand\n{\mathbb{N}}
\renewcommand\r{\mathbb{R}}
\newcommand\s{\mathbb{S}}
\renewcommand\t{\mathbb{T}}
\newcommand\z{\mathbb{Z}}

%%%%%%%%%%
%%%%%%%%%%
%%%%%%%%%%     Sf
%%%%%%%%%%
%%%%%%%%%%
%%%%%%%%%%

%%%%%%%%%%
%%%%%%%%%%
%%%%%%%%%%     FRAKTUR
%%%%%%%%%%
%%%%%%%%%%
%%%%%%%%%%

\newcommand\igot{\mathfrak{i}}

\renewcommand\igot{\mathfrak{i}}

\newcommand\pgot{\mathfrak{p}}

\newcommand\Agot{\mathfrak{A}}
\newcommand\Igot{\mathfrak{I}}

\newcommand\Ygot{\mathfrak{Y}}

%
%  typewriter
%

%
%  e, i, zero -  mathmode
%

%\newcommand\I{\imath}
\renewcommand\imath{\igot}

%
%  arrows
%
\newcommand\hra{\hookrightarrow}
\newcommand\lra{\longrightarrow}

\newcommand\longhookrightarrow{\ensuremath{\lhook\joinrel\relbar\joinrel\rightarrow}}

%
%  tilde, widehat, di, dibar
%
\newcommand\wt{\widetilde}
\newcommand\wh{\widehat}
\newcommand\di{\partial}
\newcommand\dibar{\overline\partial}

%
%  abbreviations
%

\newcommand\dist{\mathrm{dist}}
\renewcommand\span{\mathrm{span}}
\newcommand\length{\mathrm{length}}
\newcommand\reg{\mathrm{reg}}

\newcommand\Flux{\mathrm{Flux}}
\newcommand\GCMI{\mathrm{GCMI}}
\newcommand\CMI{\mathrm{CMI}}
\newcommand\NC{\mathrm{NC}}

\newcommand\supp{\mathrm{supp}}

\newcommand\tr{\mathrm{tr}}
\newcommand\Hess{\mathrm{Hess}}
\newcommand\Id{\mathrm{Id}}

\def\dist{\mathrm{dist}}
\def\span{\mathrm{span}}
\def\length{\mathrm{length}}
\def\Flux{\mathrm{Flux}}
\def\reg{\mathrm{reg}}

%%%%%%%%%%
%%%%%%%%%%
%%%%%%%%%%  begin document
%%%%%%%%%%
%%%%%%%%%%
%%%%%%%%%%

\usepackage{color}

\begin{document}

%%
%% Headings
%%

\fancyhead[LO]{New complex analytic methods in the theory of minimal surfaces: a survey}
\fancyhead[RE]{A.\ Alarc\'on and F.\ Forstneri\v c}
\fancyhead[RO,LE]{\thepage}
\thispagestyle{empty}

%%
%% Title
%%

\vspace*{1cm}
\begin{center}
{\bf\LARGE  New complex analytic methods in the theory of minimal surfaces: a survey}

\vspace*{0.5cm}

%
% Authors
%
{\large\bf Antonio Alarc\'on\ and\  Franc Forstneri\v c}

\end{center}

%% Addresses and finantial support
%\footnote[0]{\vspace*{-0.4cm}
%}

%
% Abstract, keywords, and MSC
%
\vspace*{5mm}

\begin{quote}
{\small
\noindent {\bf Abstract}\hspace*{0.1cm}
In this paper we survey recent developments in the classical theory of minimal surfaces in 
Euclidean spaces which have been obtained as applications of both classical and modern 
complex analytic methods; in particular, Oka theory, period dominating holomorphic sprays, gluing methods for holomorphic maps, and the Riemann-Hilbert boundary value problem. 
Emphasis is on results pertaining to the global theory of minimal surfaces,  
in particular, the Calabi-Yau problem, constructions of properly immersed and embedded minimal surfaces in $\R^n$ and in minimally convex domains of $\R^n$, results on the complex Gauss map,
isotopies of conformal minimal immersions, and the analysis of the homotopy type of the 
space of all conformal minimal immersions from a given open Riemann surface.

\vspace*{1mm}

\noindent{\bf Keywords}\hspace*{0.1cm} Minimal surface, Riemann surface, Oka manifold.

\vspace*{1mm}

\noindent{\bf Mathematics Subject Classification (2010)}\hspace*{0.1cm} 
53A10, 32B15, 32E30, 32H02
}
\end{quote}

%\vspace*{5mm}

%\tableofcontents

%%%%%%%%%%
%%%%%%%%%%
%%%%%%%%%% Section: Introduction
%%%%%%%%%%
%%%%%%%%%%
%%%%%%%%%%

\setcounter{tocdepth}{1}
\tableofcontents

\section{\sc Introduction}
\label{sec:intro}

An immersed surface $M\to\r^n$ in the $n$-dimensional Euclidean space $(n\ge 3)$ is said to be 
a {\em minimal surface} if it is locally area minimizing, meaning that sufficiently small pieces of the surface 
have the smallest area among all surfaces with the same boundary. Such surfaces were first studied by
Euler in 1744 who showed that the only area minimizing surfaces of rotation are planes and catenoids.
The subject was taken up by Lagrange in 1760 who studied the area functional
and came up with the differential equation of minimal graphs in $\R^3$. 
It was discovered by Meusnier in 1776 that a surface is minimal if and only if its mean curvature 
vector field ${\mathbf H} \colon M\to\r^n$ vanishes identically. 
Plateau pointed out in 1873 that minimal surfaces appear naturally as soap films, and 
Douglas \cite{Douglas1932TAMS} and Rad{\'o} \cite{Rado1930AM} 
independently proved in 1932  that every Jordan curve in $\R^3$ spans a minimal surface.

The influence of complex analysis in the study of minimal surfaces was apparent already in the third quarter of 
the 19th century when Enneper and Weierstrass provided an analytic formula for representing any minimal surface in $\r^n$.
The so-called {\em Enneper-Weierstrass representation formula} \eqref{eq:EW} relies on the fact that for an isometric immersion 
$X=(X_1,\ldots,X_n)\colon M\to \r^n$ from a Riemannian surface $M$ (a smooth surface 
endowed with a Riemannian metric), the metric Laplacian of the immersion 
equals two times its mean curvature vector:
\[
	\Delta X=(\Delta X_1,\ldots, \Delta X_n)=2 \mathbf H. 
\]
Since the vanishing of the Laplacian depends only on the conformal class of the metric,
it follows that a conformal (angle preserving) immersion $X\colon M\to \r^n$ from 
a Riemann surface $M$ is minimal if and only if it is harmonic, $\Delta X=0$;
equivalently, the $(1,0)$-derivative $\di X=(\di X_1,\ldots,\di X_n)$ is a holomorphic $\C^n$-valued
$1$-form. Furthermore, $X$ is conformal if and only if $\di X$ satisfies the null equation
\begin{equation}\label{eq:null1}
	(\di X_1)^2 + (\di X_2)^2 + \cdots + (\di X_n)^2 = 0.
\end{equation}
(See Sect.\ \ref{ss:null}.)
This reduces the construction of oriented conformal minimal surfaces $M\to \R^n$ to the construction of
holomorphic maps from the Riemann surface $M$ to the subvariety $\Agot^{n-1}$ of $\C^n$ defined by the equation
$z_1^2+z_2^2+\cdots +z_n^2=0$, the {\em null quadric}  (see \eqref{eq:null}). 
This is the basis for the Enneper-Weierstrass formula; see Theorem \ref{th:EW} and \eqref{eq:EWR}. 
The analogous formula applies to {\em null holomorphic immersions} $Z=(Z_1,\ldots, Z_n)\colon M\to\C^n$, i.e., 
holomorphic immersions satisfying  
\[
	(dZ_1)^2 + (dZ_2)^2 + \cdots + (d Z_n)^2 = 0.
\]
Every conformal minimal immersion $M\to\R^n$ is locally (on any simply connected subset
of $M$) the real part of a null holomorphic immersion into $\C^n$; conversely, the real and the imaginary part of any
null holomorphic immersion $M\to\C^n$ are conformal minimal immersions $M\to\R^n$. See Sect.\ \ref{ss:flux} for 
more details.

In the mid-20th century Osserman \cite{Osserman1986book} renewed the interest in the theory of minimal surfaces by 
showing in particular that the Enneper-Weierstrass representation formula is very useful for the construction of complete minimal 
surfaces in $\r^3$ with finite total curvature. This was the true starting point for the study of the global theory of minimal surfaces 
by complex analytic methods. However, as late as in the 1980's the prevailing thought was that hyperbolic Riemann surfaces 
(i.e., those carrying nonconstant negative subharmonic functions) play only a marginal role in the global theory of minimal surfaces. 
This belief was partially refuted by the pioneering works of Jorge and Xavier \cite{JorgeXavier1980AM} from 1980, 
Nadirashvili \cite{Nadirashvili1996IM} from 1996, and Morales \cite{Morales2003GAFA} from 2003 which 
combined the Enneper-Weierstrass formula with the classical Runge approximation theorem for holomorphic functions.

Nevertheless, the true power and versatility of this approach was revealed only in the last few years
by bringing into the picture some of the more powerful complex analytic methods 
originating in Oka theory (which amounts to holomorphic approximation techniques
combined with a nonlinear version of the $\dibar$-problem),
and by adapting the classical Riemann-Hilbert boundary value problem
to the constructions of conformal minimal surfaces in $\R^n$ and holomorphic null curves in $\C^n$.
One of the key advantages of these stronger complex analytic methods over the classical ones 
is that they allow a more precise control of the placement of the whole surface in the space. 
This enabled the authors, often in collaboration with 
B.\ Drinovec Drnov\v sek and  F.\ J.\ L\'opez, to construct minimal surfaces with  interesting global properties 
and with the complete control of the conformal stucture on the surface. In other words, not only it is now possible 
to find minimal surfaces with interesting global properties and with prescribed topological type, 
one can also control their conformal (holomorphic) type, a major advance in the theory.

The goal of this article is to present these  recent developments in a way that is 
accessible not only to researchers, but also to graduate students in both the field of minimal
surfaces and in complex analysis. What transpires from our narrative is that these two 
fields are much more closely intertwined than believed up to now, with major influences going in both directions. 
On the one hand, complex analytic methods are a powerful
tool in the classical minimal surface  theory; on the other hand, 
many questions about minimal surfaces lead to analogous questions about complex curves.
Several lines of thought have been pursued separately by researchers in these two fields 
without having been aware of the analogies and synergies. Other problems have been
considered only in one field and overlooked in the other one, even though they are perfectly natural
and interesting in both fields.

Let us consider an example. It has been known since the early 1960's that every open Riemann surface 
embeds properly holomorphically into $\C^3$ (see \cite[Theorem 2.4.1]{Forstneric2017E}; this is a
special case of the Bishop-Narasimhan-Remmert embedding theorem for Stein manifolds).
However, the question whether every such surface embeds as a smooth closed complex curve in the 
complex Euclidean plane $\C^2$ remains one of the most difficult open problems of complex analysis, known as the 
{\em Forster Conjecture} \cite{Forster1970} or the {\em Bell-Narasimhan Conjecture} 
\cite{BellNarasimhan1990EMS}. On the minimal surfaces side, there is the equally natural problem 
of determining the smallest dimension $d\ge 3$ for which every open Riemann surface embeds as a 
proper conformal minimal surface in $\R^d$. Since every complex curve in $\C^n$ is also a minimal
surface in $\R^{2n}$ by Wirtinger \cite{Wirtinger1936MMP}, we have $d\le 6$. 
The following recent result says in particular that $d\le 5$. 

%
% THEOREM: PROPER CONFORMAL MINIMAL IMMERSIONS AND EMBEDDINGS TO Rn
%
\begin{theorem}\label{th:I-properRn}
Let $M$ be an open Riemann surface.
\begin{itemize}
\item[\rm (a)] 
There is a proper conformal minimal immersion $M\to \r^3$. Moreover, proper immersions are dense 
in the space of all conformal minimal immersions $M\to \r^3$ (in the compact-open topology).
\vspace{1mm}
\item[\rm (b)] 
There is a proper conformal minimal immersion $M\to\R^4$ with simple double points,
and such immersions are dense in the space of all conformal minimal immersions $M\to \r^4$. 
\vspace{1mm}
\item[\rm (c)]  
There is a proper conformal minimal embedding $M\hra\R^5$. Moreover, proper conformal minimal embeddings 
are dense in the space of all conformal minimal immersions $M\to \r^n$ for any $n\ge 5$.
\end{itemize}
\end{theorem}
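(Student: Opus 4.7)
The plan is to use the Enneper--Weierstrass correspondence recalled in the introduction and build the desired proper immersion as a uniform limit on compacta. A conformal minimal immersion $X\colon M\to\r^n$ corresponds to a nowhere vanishing holomorphic $1$-form $\Phi=\di X$ with values in the null quadric $\Agot^{n-1}\subset\c^n$ whose real periods along every closed loop in $M$ vanish; $X$ is then the primitive of $2\Re\,\Phi$. Fix a normal exhaustion $K_0\subset K_1\subset\cdots$ of $M$ by smoothly bounded Runge compacta, adapted to a strongly subharmonic Morse exhaustion function so that critical points are crossed one at a time. Starting from the conformal minimal immersion $X_0$ to be approximated, I would inductively construct $X_j$ on a neighborhood of $K_j$ satisfying: (i) $|X_{j+1}-X_j|<\varepsilon_j$ on $K_j$, with $\sum_j\varepsilon_j<\infty$; (ii) $|X_j|>j$ on $bK_j$, forcing the limit to be proper; and (iii) all real periods of $\di X_j$ vanish on cycles in $K_j$. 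The limit $X=\lim_j X_j$ would then give statement (a).

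The induction step rests on the two complex-analytic tools that are the theme of the survey. First, the punctured null quadric $\Agot^{n-1}_*=\Agot^{n-1}\setminus\{0\}$ is a homogeneous space of the complex orthogonal group $O(n,\c)$ for $n\ge3$, hence an Oka manifold; holomorphic maps from a neighborhood of $K_j$ into $\Agot^{n-1}_*$ therefore admit Runge--Mergelyan approximation by holomorphic maps defined on a neighborhood of $K_{j+1}$. Second, one must restore the periods that are inevitably perturbed by the approximation and by the boundary manipulation below. This is accomplished by embedding $\di X_j$ as the core of a \emph{period-dominating holomorphic spray} $\{\Phi_t\}_{t\in B^N}$ of $\Agot^{n-1}_*$-valued $1$-forms, where $N$ equals the first Betti number of $K_{j+1}$, chosen so that the differential at $t=0$ of the period map $t\mapsto\bigl(\int_{\gamma_i}\Phi_t\bigr)_i\in\c^N$ is surjective. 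Runge--Mergelyan approximating the core over $K_{j+1}$ and then solving for $t$ via the implicit function theorem enforces condition (iii) exactly while preserving (i).

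To achieve the growth condition (ii), I would invoke the Riemann--Hilbert boundary modification for null holomorphic discs: along $bK_{j+1}$ one attaches a family of small null discs whose outer endpoints lie on spheres of arbitrarily large radius, and folds this deformation into the period-dominating spray. The noncritical stages of the Morse exhaustion are handled entirely by approximation and period correction, whereas the critical stages, where attaching a handle creates a new homology generator, require first extending $X_j$ across the new topology and then enlarging the spray so that it remains period-dominating with respect to the augmented homology lattice. Iterating this scheme yields a proper conformal minimal immersion $X\colon M\to\r^n$ approximating $X_0$ on $K_0$.

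For parts (b) and (c), I would add a transversality argument carried out inside the period-dominating spray: a generic choice of the parameter $t$ still gives a proper conformal minimal immersion and has self-intersection set of the expected dimension $4-n$, so double points are transverse and isolated when $n=4$, and absent when $n\ge5$. The hard part is the induction step itself. The Riemann--Hilbert push that forces properness necessarily perturbs the periods, and only the combination with the period-dominating spray can undo this perturbation while keeping the deformation inside $\Agot^{n-1}_*$. The technically most delicate moment is the critical stage of the Morse exhaustion, where both the topology of $K_j$ and the period lattice change simultaneously, and the spray must be reconstructed so that conditions (i)--(iii) can be met together at every step.
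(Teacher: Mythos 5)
Your toolkit --- exhaustion of $M$, the Enneper--Weierstrass correspondence, Oka approximation of maps into $\Agot^{n-1}_*$, period-dominating sprays to restore vanishing real periods, and a generic transversality argument for parts (b) and (c) --- matches the paper's, but the properness mechanism you propose has a genuine gap. Your condition (ii), that $|X_j|>j$ on $bK_j$, does not force properness of the limit: a divergent path crosses every $bK_j$, so $|X|$ takes arbitrarily large values at those crossings, but nothing in (i)--(iii) prevents $|X|$ from returning to a bounded level on the annuli $K_{j+1}\setminus\mathring K_j$, so $X^{-1}(\{|x|\le R\})$ can still be noncompact. Properness requires control on the whole annulus, which is exactly condition (b) in the inductive proof of Theorem~\ref{th:SSY}: $\max\{X_{j,1},X_{j,2}\}>j-1$ on all of $M_j\setminus\mathring M_{j-1}$, not only on $bM_j$. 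Moreover, the tool you invoke to push the boundary outward, the Riemann--Hilbert method of Theorem~\ref{th:RHCMI}, changes the surface only in a thin collar near the chosen boundary arcs (condition (iii) of that theorem keeps $Y$ $\epsilon$-close to $X$ outside $\Omega$), so it cannot by itself control the surface on the rest of the annulus; it also preserves the flux exactly (condition (iv)), so there is no period defect to repair afterwards, and the step ``fold the R--H deformation into the period-dominating spray'' does not reflect how these two ingredients actually interact. The one place in the survey where Riemann--Hilbert does yield properness (Theorem~\ref{th:ADFL}, minimally convex domains) works because the null discs are constructed so that a strongly minimal plurisubharmonic exhaustion $\rho$ strictly increases along them, giving the global monotonicity $\rho\circ Y\ge\rho\circ X-\delta$ everywhere on $M$; your sketch builds in no such monotone quantity.

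The paper actually derives Theorem~\ref{th:I-properRn} from Theorem~\ref{th:SSY}, whose mechanism is different and uses no Riemann--Hilbert. Since $\max\{X_{j-1,1},X_{j-1,2}\}>j-1$ on $bM_{j-1}$, the boundary circle decomposes into finitely many arcs, each of which lies where $X_1$ or where $X_2$ is already large. One covers the annulus $M_j\setminus\mathring M_{j-1}$ by a checkerboard of topological discs adjacent to those arcs and applies Runge approximation with a \emph{prescribed component} (Theorem~\ref{th:ALL-2}): push $X_2$ up on the discs over arcs where $X_1$ was already large while keeping $X_1$ unchanged, and then conversely, holding the other coordinate fixed so that the first push is not undone by the second. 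This alternation achieves $\max\{X_{j,1},X_{j,2}\}>j-1$ on the full annulus, and the limit is proper --- in fact its projection to a coordinate $2$-plane is proper, which is the stronger conclusion of Theorem~\ref{th:SSY}. So while your architecture is correct, the crucial annular growth condition is missing, and the Mergelyan-with-fixed-component mechanism that supplies it is not part of your sketch.
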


Part (a) is due to Alarc\'on and L\'opez  \cite{AlarconLopez2012JDG,AlarconLopez2014TAMS},
while parts (b) and (c) were proved in 2016 by the authors and L\'opez \cite{AlarconForstnericLopez2016MZ}.
A more precise result in this direction is Theorem \ref{th:SSY} which provides conformal minimal immersions
and embeddings with a proper projection to a coordinate $2$-plane, thereby 
giving an optimal negative answer to both Schoen-Yau's and Sullivan's conjectures
(see Sect.\ \ref{ss:SullivanSchoenYau}).

It is known that only a few open Riemann surfaces embed as proper minimal surfaces in $\R^3$ (see e.g.\ 
\cite{LopezRos1991JDG,Collin1997AM,CollinKusnerMeeksRosenberg2004JDG,MeeksRosenberg2005AM,MeeksPerezRos2015AM}), 
so the smallest embedding dimension for open Riemann surfaces as minimal surfaces satisfies $d\ge 4$. 
This leaves us with the question whether $d=4$ or $d=5$. 
An affirmative answer to the Forster-Bell-Narasimhan Conjecture would imply $d=4$.
In the last decade, powerful new methods for constructing proper holomorphic embeddings of open 
Riemann surfaces into $\C^2$ have been developed by Wold and Forstneri\v c, using the technique of 
exposing boundary points and pushing the boundary of the surface in $\C^2$
to infinity by holomorphic automorphisms; see the recent survey in \cite[Chap.\ 9]{Forstneric2017E}.  
For example, every circular domain in $\C$ embeds properly holomorphically into $\C^2$ \cite{ForstnericWold2013APDE}.
These results, along with the absence of any conceptual obstructions,  speak in favor of the Forster-Bell-Narasimhan Conjecture.
Since minimal surfaces are more abundant than complex curves, the following conjecture has an even 
better chance.

%
%   THE ALARCON-FORSTNERIC CONJECTURE 
%
\begin{conjecture} \label{conj:AF}
Every open Riemann surface admits a proper conformal minimal embedding into $\R^4$.
\end{conjecture}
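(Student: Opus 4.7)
The natural strategy combines the Enneper-Weierstrass representation with an exhaustion-approximation scheme built from period-dominating sprays and the Riemann-Hilbert technique, together with a new mechanism for eliminating self-intersections in codimension two. I would first rephrase the problem: a proper conformal minimal embedding $X\colon M\hra \R^4$ corresponds to a holomorphic $1$-form $\Phi=\di X$ with values in the punctured null quadric $\Agot^3\setminus\{\zero\}\subset \C^4$, having vanishing real periods (or prescribed flux), whose primitive $X=2\,\mathrm{Re}\!\int\Phi$ is injective and proper. The task reduces to constructing such a $\Phi$, the natural starting point being the proper conformal minimal immersion $M\to \R^4$ with isolated double points supplied by Theorem~\ref{th:I-properRn}(b), which must be upgraded to an embedding.

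I would proceed by a normal exhaustion $M=\bigcup_{n\in\N}K_n$ by smoothly bounded Runge compacts, inductively modifying a conformal minimal immersion $X_n\colon M\to \R^4$ to produce $X_{n+1}$ which is (i) close to $X_n$ on $K_{n-1}$, (ii) has flux periods controlled by a period-dominating spray into the Oka manifold $\Agot^3\setminus\{\zero\}$, (iii) is embedded on $K_n$, and (iv) has $\|X_{n+1}\|$ large on $M\setminus K_n$ so that the limit is proper. Steps (i), (ii), (iv) are handled by the methods already effective in Theorem~\ref{th:I-properRn}: Riemann-Hilbert disk-attaching modifications push the boundary arcs of $K_n$ outside large balls of $\R^4$, while the Oka property of $\Agot^3\setminus\{\zero\}$ combined with holomorphic approximation on Runge pairs delivers the global conformal minimal immersions needed at each step.

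The hard part is step (iii): eliminating isolated self-intersections in $\R^4$. Generically the self-intersection set of a conformal minimal immersion $M\to\R^4$ is $0$-dimensional and nonempty, and small perturbations only \emph{move} the double points rather than cancelling them. This is structurally identical to the difficulty faced by the Forster-Bell-Narasimhan conjecture for embeddings into $\C^2$, where the resolution uses automorphisms of $\C^2$ (shears, exposing boundary points, and Andersen-Lempert theory) to push boundary arcs of an exhaustion to infinity while keeping self-intersections outside arbitrarily large compacts. The rigid motion group of $\R^4$ is far too small for this strategy, and no minimal-surfaces analogue of Andersen-Lempert is known.

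To bypass this, I would enlarge the deformation toolkit by exploiting the $O(4,\C)$-action on the null quadric to construct period-dominating sprays that move a pair of preimages of a given double point along the surface, and by coupling each such motion with a Riemann-Hilbert extension that carries the associated boundary arc of $K_n$ far out in $\R^4$, so that the double point is swept off to infinity rather than cancelled by collision. Making such a coordinated scheme converge on the exhaustion, while simultaneously preserving the conformal structure on $M$ and the null condition on $\Phi$, is where I expect the decisive technical work to lie, and is presumably why the statement has remained a conjecture.
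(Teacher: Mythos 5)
This statement is \emph{Conjecture}~\ref{conj:AF}, not a theorem: the paper offers no proof, and in fact the surrounding discussion explains exactly why it remains open, pointing to the analogy with the Forster--Bell--Narasimhan conjecture for proper holomorphic embeddings of open Riemann surfaces into $\C^2$. There is therefore no ``paper's proof'' to compare against, and no reviewer should accept your write-up as a proof --- indeed you say as much in your final sentence. What you have produced is a (reasonable) research program, not an argument.

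That said, your diagnosis of the obstruction is accurate and agrees with the paper's framing. You correctly observe that Theorem~\ref{th:I-properRn}(b) already gives proper conformal minimal immersions $M\to\R^4$ with simple double points, that the approximation, flux-control, and properness steps are covered by the period-dominating sprays (Lemma~\ref{lem:deformation}), the Oka property of $\Agot_*$, and the inductive scheme of Theorem~\ref{th:SSY}, and that the genuine difficulty is eliminating the isolated double points, since a generic small perturbation in $\R^4$ merely displaces them. You also correctly identify why the $\C^2$ strategy of Wold--Forstneri\v c does not transfer: the only ambient transformations of $\R^n$ preserving the class of minimal surfaces are rigid motions, so there is no analogue of compositions with shears or Anders\'en--Lempert-type automorphisms that could sweep double points out of an exhaustion. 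However, the final paragraph of your proposal --- coupling the $O(4,\C)$-action on $\Agot_*$ with Riemann--Hilbert boundary modifications so as to carry the double points to infinity while keeping the immersion proper and keeping the real periods of the Weierstrass data exact --- is precisely the step for which no technique currently exists. In particular, you do not explain how to control the \emph{positions} (not just the number) of the self-intersections under the period-correction step, nor how to guarantee that the correction, which must be global on $M$ to keep $\Re(f\theta)$ exact, does not create new double points inside the already-embedded compact. Until such a mechanism is found, the statement must remain a conjecture, as the paper says.
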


On the other hand, every open Riemann surface which is known to properly embed as a
conformal minimal surface in $\r^4$ is also known to properly embed as a complex curve in $\c^2$,
the main reason being that no automorphisms of $\R^n$ other than the rigid motions preserve
map minimal surfaces to minimal surfaces.

%
%  THE CALABI-YAU PROBLEM
%
Another example where the analogies between the fields of complex analysis and minimal surfaces
become even more apparent is the problem of constructing complete bounded minimal surfaces in $\R^n$ 
(the Calabi-Yau problem) and complete bounded complex submanifolds in $\C^n$ (Yang's problem). 
We describe this topic briefly, referring to Sect.\ \ref{ss:CY} for a more complete presentation.

Recall that an immersion $X\colon M\to \R^n$ from an open manifold $M$ is said to be {\em complete} if the 
pullback $g=X^*(ds^2)$ of the Euclidean metric on $\R^n$ by the immersion is a complete Riemannian 
metric on $M$; equivalently, given any divergent path $\gamma\colon [0,1)\to M$
(meaning that the point $\gamma(t)$ leaves every compact subset of $M$ as $t$ approaches $1$),
the path $t\mapsto X(\gamma(t))\in \R^n$ has infinite Euclidean length in $\R^n$.

The  {\em Calabi-Yau problem for minimal hypersurfaces} asks whether 
there exist complete bounded minimal hypersurfaces in $\R^n$ for $n\ge 3$. Calabi conjectured
that such hypersurfaces do not exist (see \cite[p.\ 170]{Calabi1965Conjecture}). 
The first counterexample  was given
in 1996 by Nadirashvili \cite{Nadirashvili1996IM} who constructed a complete bounded immersed
minimal disc in $\R^3$. A plethora of results  followed extending Nadirashvili's construction 
to more general surfaces; see Sect.\ \ref{ss:CY}. However, with the techniques available at that time 
it was impossible to control the conformal structure or the boundary behavior of the examples. 
The following considerably more precise result  was proved 
in 2015 by the authors together with Drinovec Drnov\v sek and  
L\'opez \cite[Theorem 1.1]{AlarconDrinovecForstnericLopez2015PLMS}.

%
%  OUR CALABI-YAU THEOREM, JORDAN BOUNDARIES
%
\begin{theorem} \label{th:I-complete}
For every compact bordered Riemann surface $M$ and integer $n\ge 3$ there is a continuous map 
$X\colon M\to\R^n$ whose restriction to the interior $\mathring M=M\setminus bM$ is a complete 
conformal minimal immersion $X\colon \mathring M\to \R^n$.
If $n\ge 5$ then $X\colon M\to\R^n$ can be chosen a topological embedding. 
\end{theorem}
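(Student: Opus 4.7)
The plan is to obtain $X$ as a uniform limit of a sequence of continuous maps $X_k\colon M\to\R^n$, each restricting to a conformal minimal immersion on $\mathring M$, constructed by a Nadirashvili-type recursion whose main engine is the Riemann-Hilbert boundary value problem for null holomorphic curves. Working through the Enneper-Weierstrass representation, $X_k$ is determined up to a constant by its $(1,0)$-derivative $\di X_k$, a holomorphic $1$-form with values in the punctured null quadric $\Agot^{n-1}\setminus\{0\}$ whose real periods over $H_1(M;\Z)$ vanish; thus the problem is transferred to the construction of null maps into $\Agot^{n-1}$, which according to the survey is an Oka manifold with useful period-dominating sprays.

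First I would fix a normal exhaustion $M_0\Subset M_1\Subset\cdots\Subset\mathring M$ by compact bordered Riemann surfaces each isotopic to $M$, and start from any $X_0\colon M\to\R^n$ that is continuous on $M$ and a conformal minimal immersion on $\mathring M$. Then I would recursively produce $X_{k+1}$ from $X_k$ meeting three requirements: uniform approximation $\sup_{M_k}|X_{k+1}-X_k|<\epsilon_k$ with $\sum\epsilon_k<\infty$; a length-growth condition asserting that the intrinsic distance in the pullback metric $X_{k+1}^*ds^2$ from $bM_k$ to $bM_{k+1}$ is at least $1$; and, when $n\ge 5$, injectivity of $X_{k+1}$ on $M_{k+1}$ with a quantitative self-separation bound. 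The uniform bounds force the limit $X=\lim X_k$ to be continuous on $M$; the derivatives $\di X_k$ converge to a null $1$-form $\di X$ with vanishing real periods, so $X|_{\mathring M}$ is a conformal minimal immersion; the length-growth conditions telescope so that every divergent path in $\mathring M$ acquires infinite image length in the limit metric, giving completeness; and, for $n\ge 5$, the injectivity bounds pass to the limit.

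The heart of the argument is the recursive step. To force boundary length growth I would attach, along each boundary arc of $M_{k+1}\setminus M_k$, a continuous family of short null discs whose boundary traces are long loops in $\R^n$ pointing in rapidly varying null directions. Solving the Riemann-Hilbert problem for null curves with these boundary data produces a new null holomorphic differential on a neighborhood of $M_{k+1}$ that differs from $\di X_k$ only in a thin collar of $bM_k$ and whose integral forces the image of the strip $M_{k+1}\setminus M_k$ to track the attached discs, raising the intrinsic diameter by the required amount. The resulting $1$-form is then corrected so that its real periods vanish by embedding the construction in a period-dominating holomorphic spray and invoking the implicit function theorem, and is extended to all of $M$ by a Mergelyan-type theorem for conformal minimal immersions. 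For $n\ge 5$, embedding is enforced by transversality: the self-intersection locus of a generic immersed real $2$-surface in $\R^n$ has expected dimension $4-n<0$, so a small perturbation within the Oka-flexible class of conformal minimal immersions removes all double points and a quantitative version gives the required separation.

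The main obstacle is the simultaneous enforcement of (i) exact preservation of the conformal structure on all of $M$ together with vanishing real periods, (ii) uniform closeness to $X_k$ on $M_k$, and (iii) large length growth on $M_{k+1}\setminus M_k$. Classical Runge-based constructions in the style of Jorge-Xavier and Nadirashvili can deliver (ii) and (iii) but only after shrinking the conformal structure at each step, which is why earlier results could not control the conformal type or the boundary behaviour simultaneously. It is precisely the Riemann-Hilbert method for null curves, combined with period-dominating holomorphic sprays and the Oka principle for $\Agot^{n-1}\setminus\{0\}$, that resolves this tension and allows all three conditions to be met at every stage of the recursion.
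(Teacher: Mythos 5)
Your high-level architecture matches the paper's: a Nadirashvili-style recursion driven by the Riemann--Hilbert method for null curves and conformal minimal surfaces (Theorem \ref{th:RHCMI}), with period-dominating sprays (Lemma \ref{lem:deformation}) to restore exactness of the real part, Mergelyan-type approximation (Theorem \ref{th:ALL}), and a transversality step for the embedding case $n\ge 5$. The gap is that you have not supplied the mechanism which makes the two numerical constraints in your recursion compatible, and that mechanism is precisely the Nash-type spiralling, Pythagoras estimate of Lemma \ref{lem:Jordan2} that the introduction flags ("a certain spiralling construction, somewhat resembling Nash's method"). A single Riemann--Hilbert deformation attaching boundary discs of extrinsic radius $\eta$ raises the intrinsic boundary distance by $O(\eta)$ but also moves the surface by $O(\eta)$ in $\Cscr^0$; stacking such steps with distance growth $\geq1$ per stage and $\Cscr^0$-displacement $\epsilon_k$ with $\sum\epsilon_k<\infty$ is therefore not obviously possible, and nothing in your description reconciles the two. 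What Lemma \ref{lem:Jordan2} actually does is choose at every substep the attached boundary discs to lie in $2$-planes in $\R^n$ orthogonal to the accumulated drift $X(p_k)-\Ygot(p_k)$ from a \emph{fixed} reference boundary map $\Ygot$; this is what produces $|Y-\Ygot|<\sqrt{\delta^2+\eta^2}$ on $bM$ while $\dist_Y(p_0,bM)>d+\eta$. Iterating with $\eta_j=1/j$ then gives $\sum\eta_j=\infty$ (completeness) while $\sqrt{\delta_0^2+\sum\eta_j^2}$ stays bounded, and the maximum principle applied to the harmonic difference of consecutive approximants upgrades the boundary bound to the $\Cscr^0(M)$-closeness your outer iteration needs. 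Without the orthogonality and the square root you do not get a divergent intrinsic distance together with a uniformly convergent boundary trace, and that tension \emph{is} the theorem.

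Two secondary but genuine issues. First, your stated constraint $\sup_{M_k}|X_{k+1}-X_k|<\epsilon_k$ lives on $M_k\Subset\mathring M$ only and says nothing near $bM$, so it does not by itself "force the limit $X=\lim X_k$ to be continuous on $M$"; this is one reason the paper works on the fixed compact bordered surface $M$ rather than on an interior exhaustion, controlling the deviation of the boundary map at every step. Second, $\Cscr^0$-closeness does not preserve the pullback metric, so the length-growth condition you establish at stage $j$ is not automatically inherited by $X_k$ for $k>j$; you need $\Cscr^1$-closeness on growing compact subsets of $\mathring M$ (the "uniformly on compacts in $\mathring M$" convergence appearing in Lemma \ref{lem:Jordan2}) so that the telescoping of intrinsic distances actually survives in the limit metric.
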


If $X$ is as in the theorem, then $X(M)$ is a complete minimal surface in $\R^n$ bounded by finitely many 
Jordan curves, and we have a control of its conformal structure. 

One of the tools that made this construction possible is the adaptation 
of the Riemann-Hilbert boundary value problem to null holomorphic curves and conformal minimal immersions
(see Sects. \ref{ss:RHcomplex} and \ref{ss:RH}). 
This topic was started by the authors  \cite{AlarconForstneric2015MA} in dimension $n=3$.
A more general result in any dimension $n\ge 3$ was obtained by the authors together with Drinovec Drnov{\v s}ek and L\'opez 
\cite{AlarconDrinovecForstnericLopez2015PLMS}. We showed  
that one can increase the intrinsic boundary distance in the Riemann
surface $M$ by an arbitrarily big amount by changing the conformal minimal 
immersion $M\to \R^n$ as little as desired in the $\Cscr^0$-norm (see Lemma \ref{lem:Jordan}).
This is achieved by applying the Riemann-Hilbert method in a certain spiralling construction, 
somewhat resembling Nash's method \cite{Nash1954}  of constructing $\Cscr^1$ isometric immersions of 
Riemannian manifolds into Euclidean spaces. 
The same technique applies to complex curves in $\C^n$ for any $n\ge 2$, to
holomorphic null curves in $\C^n$ for any $n\ge 3$, and to holomorphic Legendrian curves 
in complex contact manifolds,  thereby yielding an analogue of Theorem \ref{th:I-complete} in that setting 
(see \cite{AlarconForstneric2017IMRN,AlarconForstnericLopez2017CM}).

On the other hand, Calabi's original conjecture holds for embedded minimal surfaces of finite topology in $\R^3$ since these are 
necessarily proper in $\R^3$ according to Colding and Minicozzi \cite[Corollary 0.13]{ColdingMinicozzi2008AM};
their result was extended to surfaces of finite genus and countably many ends by Meeks, P\'erez, and Ros \cite{MeeksPerezRos-CY}. 
Nothing seems known about Calabi's conjecture in dimensions $n>3$.

The analogous problem in complex analysis was raised in 1977 by 
Yang \cite{Yang1977} who asked whether there exist complete bounded complex submanifolds 
in $\C^n$. The first such examples were found in 1979 by Jones \cite{Jones1979PAMS}
who showed that the unit disc $\D=\{z\in\C:|z|<1\}$ admits a complete bounded holomorphic immersion
into $\C^2$, embedding into $\C^3$, and proper embedding into the ball of $\c^4$.
Interest in this subject was revived only recently, due mainly to the influence
of the developments in minimal surface theory.  In 2013 the authors showed in \cite{AlarconForstneric2013MA}
that every bordered Riemann surface admits a complete proper holomorphic immersion
into the ball of $\C^2$ and embedding into the ball of $\C^3$. 
A flurry of recent activity followed and we refer to \cite[Sect.\ 4.18]{Forstneric2017E} for a survey. 
Contrary to the case of minimal hypersurfaces in $\r^n$ for $n>3$, 
complete bounded complex hypersurfaces in $\c^n$ exist in arbitrary dimension $n\ge 2$; 
see Alarc\'on and L\'opez \cite{AlarconLopez2016JEMS} for $n=2$ and Globevnik \cite{Globevnik2015AM} for any $n$.
Indeed, Globevnik showed in \cite{Globevnik2015AM}  that the ball $\B^n$ of $\C^n$ can be foliated by complete closed 
complex hypersurfaces, 
and subsequently Alarc\'on \cite{Alarcon2018} proved that every smooth complete 
complex hypersurface in $\B^n$ can be embedded into a {\em nonsingular} holomorphic foliation
of $\B^n$ all of whose leaves are complete.

Another classical topic of minimal surface theory is to understand which 
Riemann surfaces properly immerse or embed as conformal minimal surfaces in a given
domain $\Omega\subset \R^n$. The case $\Omega=\R^n$ is covered by Theorem \ref{th:I-properRn}.
When considering minimal surfaces in proper domains $\Omega \subsetneq \R^n$, especially bounded ones,
one must restrict attention to surfaces of hyperbolic conformal type.
Classically this problem was studied for {\em convex} domains (see \cite{MartinMorales2005DMJ}).
The authors, jointly with Drinovec Drnov\v sek and L\'opez, proved in 
\cite{AlarconDrinovecForstnericLopez2015PLMS} that every bordered Riemann
surface admits a complete proper conformal minimal immersion into any convex domain
$\Omega\subset\R^n$, $n\ge 3$, which can be chosen an embedding if $n\ge 5$
and an immersion with simple double points if $n=4$. The Riemann-Hilbert boundary
value problem for conformal minimal immersions (see Theorem \ref{th:RHCMI}) plays
a major role in our proof. It provides an inductive construction by which all of the surface is kept inside $\Omega$ 
at every step, pushing its boundary closer and closer to $b\Omega$.

In the subsequent work \cite{AlarconDrinovecForstnericLopez2018TAMS} of the same authors
this result was extended to the substantially bigger class of all {\em minimally convex}
(also called {\em $2$-convex}) domains. A domain $\Omega\subset \R^n$ is minimally convex if it admits a smooth exhaustion
function $\rho\colon \Omega\to\R_+$ such that the smallest two eigenvalues 
$\lambda_1(x),\lambda_2(x)$ of its Hessian $H_\rho(x)=\left(\frac{\di^2 \rho(x)}{\di x_j\di x_k}\right)$ at any 
point $x\in\Omega$ satisfy $\lambda_1(x)+\lambda_2(x)>0$. If $\Omega$ is smoothly bounded and
$\nu_1(x)\le \nu_2(x)\le\cdots\le \nu_{n-1}(x)$ are the principal curvatures
of the boundary $b\Omega$ at the point $x\in b\Omega$ from the inner side,
then $\Omega$ is minimally convex if and only if $\nu_1(x) + \nu_2(x)\ge 0$ holds
for every $x\in b\Omega$. In particular, if $n=3$ and $S$ is a properly embedded 
minimal surface in $\R^3$ (in which case $\nu_1(x) + \nu_2(x)=0$ holds identically on $S$), 
then each connected component of $\R^3\setminus S$ is a 2-convex domain. 
The following main result in this direction is \cite[Theorem 1.1]{AlarconDrinovecForstnericLopez2018TAMS}
(see Sect.\ \ref{ss:proper}).

%
%  THEOREM: PROPER CMIs IN MINIMALLY CONVEX DOMAINS
%
\begin{theorem}\label{th:I-minimally_convex}
Assume that $\Omega$ is a minimally convex domain in $\R^n$ $(n\ge 3)$, $M$ is a compact 
bordered Riemann surface, and $X\colon M\to\Omega$ is a conformal minimal immersion.
Then $X$ can be approximated uniformly on compacts in $\mathring M=M\setminus bM$
by proper (and complete if so desired) conformal minimal immersions $\mathring M\to \Omega$.
\end{theorem}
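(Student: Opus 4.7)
The plan is to use a recursive exhaustion procedure driven by the Riemann--Hilbert theorem for conformal minimal immersions (Theorem \ref{th:RHCMI}), with the composition of $X$ with the exhaustion function $\rho$ of $\Omega$ serving as the control function on $M$. First I would fix a smooth exhaustion $\rho\colon\Omega\to\R_+$ whose Hessian satisfies $\lambda_1(x)+\lambda_2(x)>0$ on $\Omega$, and a normal exhaustion $M_1\subset M_2\subset\cdots\subset \mathring M$ by compact bordered subdomains with smooth boundary. A direct computation shows that for any conformal minimal immersion $Y\colon M\to\Omega$, the composition $\rho\circ Y$ is strictly subharmonic on $M$: in a conformal parameter, $\Delta(\rho\circ Y)$ equals $|Y_u|^2$ times the trace of $H_\rho$ restricted to the tangent plane $dY(T_x M)$, hence is bounded below by $\lambda_1+\lambda_2>0$. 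This is the precise mechanism by which minimal convexity of $\Omega$ transfers into a maximum principle on the Riemann surface.

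The inductive engine is the following step: given a conformal minimal immersion $X_j\colon M\to\Omega$ and a constant $c_j$ with $\rho(X_j(x))>c_j$ for $x\in M\setminus M_j$, and given $c_{j+1}>c_j+1$, produce a new conformal minimal immersion $X_{j+1}\colon M\to\Omega$ which approximates $X_j$ uniformly on $M_j$ and satisfies $\rho(X_{j+1}(x))>c_{j+1}$ for $x\in M\setminus M_{j+1}$. To achieve this I would subdivide the annular region $M_{j+1}\setminus\mathring M_j$ into finitely many small pieces and successively modify $X_j$ along the outer boundary arcs of these pieces by the Riemann--Hilbert construction of Theorem \ref{th:RHCMI}: at each arc one glues on a smooth family of small conformal minimal (null) discs whose centres lie near $X_j(bM_{j+1})$ and whose boundaries are line segments aimed in directions along which $\rho$ increases by a definite amount. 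The strict subharmonicity of $\rho\circ X_{j+1}$, together with the approximation tolerance on $M_j$, then confines $X_{j+1}(M_{j+1})$ inside $\Omega$ by the maximum principle and yields the required lower bound for $\rho\circ X_{j+1}$ on $M\setminus M_{j+1}$.

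Having established the inductive step, I would iterate with a rapidly increasing sequence $c_j\to\infty$ and summable approximation tolerances $\varepsilon_j$, obtaining a limit $X_\infty=\lim X_j$ that converges uniformly on compacta in $\mathring M$. The limit is a conformal minimal immersion (the space of such immersions is closed under local uniform convergence once the period conditions are preserved), takes values in $\Omega$, and is proper because $\rho\circ X_\infty\to\infty$ along any divergent sequence in $\mathring M$. For the completeness clause, at each inductive stage I would simultaneously apply the spiralling Riemann--Hilbert construction of Lemma \ref{lem:Jordan} to enlarge the intrinsic distance from a fixed basepoint to $bM_j$ by at least one unit while keeping the $\Cscr^0$-change on $M_{j-1}$ below $\varepsilon_j$; in the limit every divergent path in $\mathring M$ then has infinite $X_\infty$-length.

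The principal obstacle is carrying out the Riemann--Hilbert perturbations so that the modified surface remains inside $\Omega$. Without the assumption $\lambda_1+\lambda_2>0$ the small attached null discs could cross $b\Omega$ uncontrollably; minimal convexity is exactly the hypothesis that converts local outward-pushing of boundary values into global confinement via the maximum principle for the subharmonic function $\rho\circ X_j$. A secondary technical point is to reconcile three competing demands at every step, namely approximation on $M_{j-1}$, outward-pushing on $bM_{j+1}$, and intrinsic-distance enlargement, each realised by a Riemann--Hilbert perturbation localised near a single boundary arc; this requires careful sequencing and a period-dominating spray argument on the $\C^n$-valued holomorphic $1$-form $\partial X_j$ to ensure that each perturbation integrates to a single-valued conformal minimal immersion on all of $M$.
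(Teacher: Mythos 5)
Your architecture is essentially the paper's: exhaust by level sets of the strongly minimally plurisubharmonic function $\rho$, drive the inductive step with the Riemann--Hilbert theorem (Theorem \ref{th:RHCMI}) so that the boundary of the surface moves to higher $\rho$-levels at each stage, and obtain completeness from the spiralling Lemma \ref{lem:Jordan}. Your computation that $\Delta(\rho\circ Y)=|Y_u|^2\,\tr_L\Hess_\rho$ on the tangent $2$-plane $L$ is also exactly the mechanism the paper exploits (there phrased via strong null plurisubharmonicity of the extension of $\rho$ to the tube $\Omega\times\imath\R^n$).

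There is however a genuine gap in the crucial technical point, namely the choice of the boundary discs in the Riemann--Hilbert step. You propose attaching conformal minimal discs ``whose boundaries are line segments aimed in directions along which $\rho$ increases,'' and you then invoke the maximum principle for the subharmonic function $\rho\circ X_{j+1}$ to obtain both the confinement inside $\Omega$ and the lower bound $\rho\circ X_{j+1}>c_{j+1}$ outside $M_{j+1}$. The maximum principle gives an upper bound, $\max_M \rho\circ X_{j+1}=\max_{bM}\rho\circ X_{j+1}$, which indeed confines the image in $\Omega$; it does not give a lower bound. The lower bound, which is what makes the limiting map proper, must come from the discs themselves not dipping below the level $\rho(x)$ --- and flat (planar) discs, or discs whose boundaries are line segments, do dip below: along a $2$-plane through $x$ in a direction where $\rho$ increases, $\rho$ decreases to one side of $x$. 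If each iteration costs a decrease of size comparable to the push, the errors do not sum, and you lose $\rho\circ X_\infty\to\infty$ at $bM$. The paper's key input here is the explicit construction of non-flat null discs $\sigma_x^j=\alpha_x^j+\imath\beta_x^j$ lying on the quadratic complex hypersurface
\[
\Sigma_x=\Bigl\{w\in\C^n:\ \sum_j \frac{\di\rho}{\di x_j}(x)\,w_j+\text{(holomorphic quadratic part of $\rho$ at $x$)}=0\Bigr\},
\]
which kills the first-order and pluriharmonic second-order terms of $\rho$ so that $\rho(x+\alpha_x^j(\xi))\ge\rho(x)+c|\xi|^2$ for all $\xi\in\overline\D$. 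It is this quadratic lower bound, applied in the form \eqref{eq:varkappa2} of the Riemann--Hilbert datum, that yields condition (ii) of the paper's inductive step --- $\rho(Y)\ge\rho(X)-\delta$ on \emph{all} of $M$ with $\delta$ chosen independently of (and much smaller than) the outward push --- and hence the summability needed for properness. Without this choice of discs your inductive step does not close.

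Secondarily, you do not address the critical points of $\rho$: the disc construction on $\Sigma_x$ breaks down when $\nabla\rho(x)=0$. The paper takes $\rho$ to be a Morse exhaustion and employs the standard device of avoiding the (discrete) critical locus $P$, working with compact sets $L\subset\Omega\setminus P$ and handling the finitely many changes of topology separately. This should be incorporated into your inductive scheme.
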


Examples in  \cite{AlarconDrinovecForstnericLopez2018TAMS}
show that, in dimension $n=3$, minimally convex domains form the biggest class of domains for which 
one can expect general approximation results for conformal minimal immersions by proper ones.
This line of results is intimately related to the construction,
due to Drinovec Drnov\v sek and Forstneri\v c \cite{DrinovecForstneric2007DMJ},
of proper holomorphic maps from bordered Riemann surfaces into any complex manifold $\Omega$
admitting an exhaustion function whose Levi form has at least 
two positive eigenvalues at every point (hence into any Stein manifold of dimension $>1$). 
Analysis of the proof in \cite{AlarconDrinovecForstnericLopez2018TAMS} reveals
a deeper reason behind this connection. 

Another interesting and important object in the theory is the {\em complex Gauss map}
of a conformal minimal surface $X\colon M\to\R^n$. This is the Kodaira-type holomorphic map
$G_X\colon M\to \CP^{n-1}$ defined by 
\[ 
	G_X(p) = [\di X_1(p) \colon \cdots \colon \di X_n(p)]\in \CP^{n-1}, \quad p\in M.
\]
In view of \eqref{eq:null1} the map $G_X$ assumes values in the complex hyperquadric
\[ 
     Q_{n-2} = \bigl\{[z_1:\ldots : z_n]\in\CP^{n-1} : z_1^2+ \cdots + z_n^2 = 0\bigr\}. 
\]   
This map is especially interesting in dimension $n=3$. In this case, the quadric $Q_{1}$ is the image of a 
quadratically embedded rational curve $\CP^1\hra \CP^2$, and hence we may consider 
$G_X$ as a holomorphic map $g_X\colon M\to \CP^1$, i.e.,  a meromorphic function on $M$.
The complex Gauss map $g_X$ of a minimal surface in $\r^3$ provides crucial information about its geometry. 
Several important properties of the surface depend only on its Gauss map, in particular, 
the Gauss curvature and the Jacobi operator 
(see e.g.\ \cite{MeeksPerez2004SDG,MeeksPerez2012AMS,Osserman1980DG,Osserman1986book}). 
Furthermore, it was shown by Barbosa and do Carmo \cite[Theorem 1.2]{BarbosaDoCarmo1976AJM}
that the minimal surface $X(M)\subset\R^3$ is stable if the spherical image 
$g_X(M)\subset \CP^1$ of $X(M)$ has area less than $2\pi$;  
this holds for example if $g_X(M)$ is a proper subset of the unit disc $\d\subset\c$.
Therefore, it is interesting to know the following recent result 
\cite[Corollary 1.2]{AlarconForstnericLopez2017JGEA} of the authors with L{\'o}pez.

%
%   THE MAIN THEOREM ON THE GAUSS MAP
%
\begin{theorem}\label{th:I-Gauss}
Every meromorphic function on an open Riemann
surface $M$ is the complex Gauss map of a conformal minimal immersion $X\colon M\to \R^3$.
Furthermore, $X$ can be chosen as the real part of a holomorphic null curve $Z=X+\imath Y \colon M\to\C^3$.
\end{theorem}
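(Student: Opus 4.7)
The strategy is to build a holomorphic null immersion $Z=(Z_{1},Z_{2},Z_{3})\colon M\to\C^{3}$ with complex Gauss map $g$ and take $X=2\mathrm{Re}\,Z$; this gives both assertions of the theorem simultaneously. By the Enneper--Weierstrass formula for $n=3$, one seeks a holomorphic $1$-form $\eta$ on $M$ so that
\[
 dZ=\Phi_{\eta}:=\tfrac{1}{2}\bigl(1-g^{2},\,\imath(1+g^{2}),\,2g\bigr)\,\eta,
\]
from which the Gauss map is recovered via $\Phi_{3}/(\Phi_{1}-\imath\Phi_{2})=g$. The two requirements on $\eta$ are: \emph{(i)} the divisor of $\eta$ equals exactly $2D_{\infty}(g)=2\sum_{p}k_{p}\,p$, where the sum runs over the poles $p$ of $g$ of order $k_{p}$ (this makes $\Phi_{\eta}$ holomorphic and nowhere vanishing on $M$), and \emph{(ii)} the complex period $\oint_{\gamma_{i}}\Phi_{\eta}$ vanishes along every cycle in a homology basis $\{\gamma_{i}\}_{i\in I}$ of $H_{1}(M;\Z)$, so that $Z(p)=\int_{p_{0}}^{p}\Phi_{\eta}$ is well defined on $M$. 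Condition (i) is handled first: since $M$ is Stein, the Weierstrass divisor theorem provides $h\in\Ocal(M)$ with divisor exactly $2D_{\infty}(g)$, and by Gunning--Narasimhan $M$ carries a nowhere-vanishing holomorphic $1$-form $\theta$; thus $\eta_{0}:=h\theta$ is an admissible starting form, and every other admissible candidate is then of the form $\eta=f\eta_{0}$ with $f\in\Ocal^{*}(M)$.

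The core of the argument is to achieve (ii) via a \emph{period-dominating spray}
\[
 \eta_{t}=\exp\!\Bigl(\sum_{k=1}^{N}t_{k}h_{k}\Bigr)\eta_{0},\qquad t\in\C^{N},
\]
with $h_{1},\dots,h_{N}\in\Ocal(M)$ chosen so that the differential of the period map $t\mapsto\bigl(\oint_{\gamma_{i}}\Phi_{\eta_{t}}\bigr)_{i\in I}\in(\C^{3})^{|I|}$ at $t=0$ is surjective. Assuming first that $I$ is finite, one takes $N=3|I|$ and seeks to make the matrix with entries $\oint_{\gamma_{i}}h_{k}\,\phi_{0}^{\,j}$ invertible, where $\phi_{0}^{\,j}$ denotes the $j$-th component of $\Phi_{\eta_{0}}$. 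I expect this to be the main technical hurdle: one fixes pairwise disjoint short sub-arcs $\alpha_{i}\subset\gamma_{i}$, each disjoint from all the other $\gamma_{i'}$, and uses the Runge approximation theorem on $M$ to construct each $h_{i,j}$ concentrated on $\alpha_{i}$ and prescribed there so that $\int_{\alpha_{i}}h_{i,j}\Phi_{\eta_{0}}$ points approximately in the $j$-th coordinate direction of $\C^{3}$. This prescription is realisable because the three $1$-forms $\frac{1-g^{2}}{2}\eta_{0}$, $\frac{\imath(1+g^{2})}{2}\eta_{0}$, and $g\eta_{0}$ are linearly independent on any sub-arc along which $g$ is non-constant; the trivial case $g\equiv\mathrm{const}$ can be dealt with directly by taking $\eta=df$ with $f\colon M\to\C$ a holomorphic immersion furnished by Gunning--Narasimhan.

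Once a period-dominating spray is available, the finite-dimensional implicit function theorem yields $t_{*}\in\C^{N}$ for which $\Phi_{\eta_{t_{*}}}$ has vanishing periods; as $\exp$ never vanishes, $\eta:=\exp(\sum_{k}t_{*,k}h_{k})\eta_{0}$ retains the divisor $2D_{\infty}(g)$, and $Z(p)=\int_{p_{0}}^{p}\Phi_{\eta}$ is a well-defined holomorphic null immersion whose real part $X=2\mathrm{Re}\,Z\colon M\to\R^{3}$ is the required conformal minimal immersion with $G_{X}=g$. When $H_{1}(M;\Z)$ has infinite rank one replaces this single-step construction by induction on a Runge exhaustion $M_{1}\Subset M_{2}\Subset\cdots\Subset M=\bigcup_{n}M_{n}$ by compact bordered Riemann surfaces: at each stage one corrects the finitely many new periods while using a Mergelyan-type approximation for holomorphic $1$-forms to keep the previously obtained data nearly intact, nowhere vanishing, and of the prescribed divisor on $M_{n}$. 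The delicate point of this limit procedure is to control the telescoping approximation errors uniformly on compacts so that the sequence of $\eta$'s converges to an admissible $\eta$ on $M$; this is the secondary obstacle of the argument.
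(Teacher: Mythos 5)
Your overall strategy coincides with the paper's: lift the Gauss map to a holomorphic $\Agot_*$--valued $1$-form, then tune a nowhere-vanishing scalar multiplier so that the periods vanish, using a period-dominating spray of multipliers and a Runge/Mergelyan exhaustion. Your construction of the admissible starting form $\eta_0=h\theta$ via the Weierstrass divisor theorem and Gunning--Narasimhan is an elementary $n=3$ substitute for the paper's Oka--Grauert lift $\Gscr\mapsto G\colon M\to\Agot_*$ (for line bundles the two are equivalent), and the exponential spray $\exp\bigl(\sum_k t_kh_k\bigr)\eta_0$ is a clean variant of the affine multiplier $1+\sum_i t_i g_i$ used in the paper's Lemma \ref{lem:existence-sprays}. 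Your fullness check of $p\mapsto\bigl(\tfrac{1-g^2}{2},\tfrac{\imath(1+g^2)}{2},g\bigr)$ for nonconstant $g$ is correct, and the flat case $g\equiv\mathrm{const}$ is handled the right way.

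The genuine gap is the sentence ``the finite-dimensional implicit function theorem yields $t_*\in\C^N$ for which $\Phi_{\eta_{t_*}}$ has vanishing periods.'' Period domination gives that $\Pcal\colon t\mapsto\bigl(\oint_{\gamma_i}\Phi_{\eta_t}\bigr)_i$ has invertible differential at $t=0$, so by the inverse function theorem $\Pcal$ is a biholomorphism of a \emph{neighborhood} of $0$ onto a \emph{neighborhood} of $\Pcal(0)=\bigl(\oint_{\gamma_i}\Phi_{\eta_0}\bigr)_i$. Nothing forces $\Pcal(0)$ to be small, so $0$ need not lie in the image. A period-dominating spray only lets you correct a \emph{small} period defect; it does not by itself produce a form with exactly vanishing periods from an arbitrary starting point. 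The missing ingredient is a device to kill periods in the large. Concretely, one must first construct a nonvanishing continuous multiplier $f_0$ on a $1$-skeleton $\bigcup_i\gamma_i$ of $M$ with $\oint_{\gamma_i}f_0\,\Phi_{\eta_0}=0$ for all $i$ --- this is exactly what the convex-integration/path Lemma \ref{lem:periods} (cf.\ \cite[Lemma 7.3]{AlarconForstneric2014IM} and \cite[Lemma 2.3]{AlarconForstnericLopez2017JGEA}) supplies, using fullness of $\Phi_{\eta_0}/\theta$ on sub-arcs --- and only \emph{then} Mergelyan-approximate $f_0$ by a nowhere vanishing holomorphic multiplier and remove the resulting \emph{small} period error with the spray and the implicit function theorem. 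Equivalently, one starts the exhaustion \eqref{eq:exhaustion} from a simply connected $M_1$ (no periods there) and, in the noncritical steps, propagates the already-exact periods forward so the defect is always small, while in the critical steps (a new handle appears) one again invokes Lemma \ref{lem:periods} on the attached arc before approximating. Your inductive sketch for infinite $H_1$ suffers from the same omission: ``corrects the finitely many new periods'' at a critical step cannot be done by the spray alone, because these new periods are not a priori small.

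Once the path/convex-integration lemma is inserted at the right place (both to initialize the construction on each handle and to treat the base of the induction), the rest of your argument goes through and matches the paper's proof of Theorem \ref{th:Gauss}.
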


The analogous result holds in higher dimensions, see \cite[Theorem 1.1]{AlarconForstnericLopez2017JGEA}.
We refer to Sections \ref{ss:Gauss} and  \ref{ss:OnGauss} for more on this topic.

It is a natural problem to understand the homotopy type of the space $\CMI(M,\R^n)$ 
of all conformal minimal immersions of a given open Riemann surface $M$ to $\R^n$,
endowed with the compact-open topology. A conformal minimal immersion $M\to\R^n$ is 
called {\em nonflat} if its image is not contained in any affine plane;  the space of all such
immersions is denoted by $\CMI_{\mathrm{nf}}(M,\R^n)$. Similarly, $\NC_{\mathrm{nf}}(M,\C^n)$
denotes the space of nonflat holomorphic null immersions $M\to\C^n$, and 
\[
	\Re \NC_{\mathrm{nf}}(M,\C^n) =\{\Re Z\colon M\to\R^n : Z\in \NC_{\mathrm{nf}}(M,\C^n)\}
	\subset \CMI_{\mathrm{nf}}(M,\R^n). 
\]
A continuous map $\phi\colon X\to Y$ of topological spaces is a {\em weak homotopy equivalence}
if it induces a bijection of the path components of the two spaces and an isomorphism
$\pi_k(X,x_0) \stackrel{\cong}{\longrightarrow} \pi_k(Y,\phi(x_0))$ of their homotopy groups for each $k\in\N$ and $x_0\in X$. 
The map is a {\em homotopy equivalence} if there is a continuous map $\psi\colon Y\to X$ such that 
$\psi\circ\phi\colon X\to X$ and $\phi\circ\psi\colon Y\to Y$ are homotopic to the identity on the respective spaces.
Forstneri\v c and L\'arusson \cite{ForstnericLarussonCAG} proved the following result
(see Sect.\ \ref{ss:rough}).

%
%  THE THEOREM ON ROUGH SHAPE
%
\begin{theorem}\label{th:I-shape}
Let $M$ be an open Riemann surface, and let  $\theta$ be a holomorphic $1$-form without zeros on $M$. The map 
\[ 
	\CMI_{\mathrm{nf}}(M,\R^n) \longrightarrow \Cscr(M,\Agot^{n-1}_*),\qquad X\longmapsto \di X/\theta,
\]
is a weak homotopy equivalence, and is a homotopy equivalence if $M$ has finite topological type 
(i.e., finite genus and number of ends). Likewise, the inclusion 
\[
	 \Re(\NC_{\mathrm{nf}}(M,\C^n)) \longhookrightarrow  \CMI_{\mathrm{nf}}(M,\R^n) 
\] 
is a weak homotopy equivalence, and is a homotopy equivalence (indeed, the inclusion of
a strong deformation retract) if $M$ has finite topological type.
\end{theorem}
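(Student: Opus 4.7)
The plan is to factor the derivative map $X \mapsto \di X/\theta$ as
\[
\CMI_{\mathrm{nf}}(M,\R^n) \xrightarrow{\alpha} \Ocal^0_{\mathrm{nf}}(M,\Agot^{n-1}_*) \hookrightarrow \Ocal_{\mathrm{nf}}(M,\Agot^{n-1}_*) \hookrightarrow \Cscr(M,\Agot^{n-1}_*),
\]
where $\Ocal_{\mathrm{nf}}$ denotes nonflat holomorphic maps (image not contained in a complex hyperplane) and $\Ocal^0_{\mathrm{nf}}$ is the subset of $f$ satisfying the real period condition $\Re\oint_\gamma f\theta = 0$ on every cycle $\gamma \subset M$. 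I aim to show each arrow is a weak homotopy equivalence. The map $\alpha$ is a trivial principal $\R^n$-bundle: given $f \in \Ocal^0_{\mathrm{nf}}$ and a base point $p_0 \in M$, the formula $X(p) = c + \Re\int_{p_0}^p f\theta$ yields a conformal minimal immersion for every $c \in \R^n$, and one verifies that nonflatness of $X$ corresponds to nonflatness of $f$. The rightmost inclusion rests on two standard ingredients: the parametric Oka principle gives that $\Ocal(M,\Agot^{n-1}_*) \hookrightarrow \Cscr(M,\Agot^{n-1}_*)$ is a weak equivalence because $\Agot^{n-1}_*$ is an Oka manifold and $M$ is Stein; and the flat maps form a $\CP^{n-1}$-parametrized union of mapping spaces into proper linear sections of the quadric, which can be perturbed away to yield the weak equivalence on the nonflat part.

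The main obstacle is the middle inclusion $\Ocal^0_{\mathrm{nf}} \hookrightarrow \Ocal_{\mathrm{nf}}$, a parametric period-killing problem. I would build a \emph{period-dominating spray}: given any $f_0 \in \Ocal_{\mathrm{nf}}$ and a basis $\gamma_1,\ldots,\gamma_\ell$ of $H_1(M,\Z)$, construct a holomorphic family $\phi: M \times \C^N \to \Agot^{n-1}_*$ with $\phi(\cdot,0) = f_0$ whose real period map
\[
\Psi: \C^N \longrightarrow \R^{n\ell}, \qquad t \longmapsto \Bigl(\Re\oint_{\gamma_i} \phi(\cdot,t)\,\theta\Bigr)_{i=1}^\ell
\]
is submersive at $t = 0$. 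The construction composes $f_0$ with time-$t_j$ flows of holomorphic vector fields tangent to $\Agot^{n-1}_*$ supported near suitable points $p_j \in M$; the abundance of such tangential fields on the null quadric, combined with the nonflatness of $f_0$, guarantees that every real period direction can be hit. The implicit function theorem then gives a local section of the zero-period condition, hence a local retraction $\Ocal_{\mathrm{nf}} \to \Ocal^0_{\mathrm{nf}}$ near any $f_0$; a parametric version patched with a partition of unity over compact parameter spaces $(I^k,\partial I^k)$ yields the weak homotopy equivalence.

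The inclusion $\Re(\NC_{\mathrm{nf}}(M,\C^n)) \hookrightarrow \CMI_{\mathrm{nf}}(M,\R^n)$ is handled in the same spirit: an $X \in \CMI_{\mathrm{nf}}$ is the real part of a null holomorphic curve precisely when its flux $\Im\oint_\gamma \di X$ vanishes on every cycle, and a spray dominating the \emph{imaginary} periods kills the flux continuously in parameters. To upgrade weak to strong homotopy equivalence when $M$ has finite topological type, one uses that $M$ then has the homotopy type of a finite graph, so all four function spaces in the diagram inherit the homotopy type of CW complexes (being mapping spaces from a finite CW complex into an ANR), and Whitehead's theorem promotes weak equivalences to honest homotopy equivalences. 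A continuous choice of period-killing spray, depending smoothly on the starting map, gives the explicit strong deformation retract asserted in the second part of the theorem.
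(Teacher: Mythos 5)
Your outline follows the paper's own proof strategy (which comes from Forstneri\v c--L\'arusson) almost exactly: the same factorization through the derivative map, the parametric Oka--Grauert weak equivalence $\Ocal(M,\Agot^{n-1}_*)\hookrightarrow\Cscr(M,\Agot^{n-1}_*)$, the removal of degenerate maps, and period-dominating sprays plus the implicit function theorem for the period-killing inclusion, finishing with the Whitehead/Milnor upgrade and an explicit spray deformation in the finite-type case. The one issue you should fix is a terminological slip that, taken at face value, would change the theorem: in this paper (Definition~\ref{def:nondegenerate}) \emph{nonflat} means that the image of $f$ is not contained in a complex ray $\C\nu\subset\Agot$, whereas ``image not contained in a complex hyperplane'' is the strictly stronger notion \emph{full}. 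For $n=3$ the two coincide, but for $n>3$ they do not, and it is precisely the weaker nonflatness that feeds into Lemma~\ref{lem:nonflat} (tangent spaces $T_{f(p)}\Agot$ span $\C^n$) and hence into the existence of period-dominating sprays; with your parenthetical definition you would only be treating the full maps, a smaller space than the one in the statement. With that corrected, your argument is the paper's; the only cosmetic difference is that the paper packages your spray/IFT/patching argument via the Alarc\'on--L\'arusson result that $\Ocal_{\mathrm{nf}}(M,\Agot_*)\to H^1(M;\C^n)$ is a Serre fibration.
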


Besides complex analysis, the proof of  Theorem \ref{th:I-shape} strongly relies on
Gromov's {\em convex integration method} which originates in his paper \cite{Gromov1973IZV}
and has been fully developed in his monograph \cite{Gromov1986} (see also Spring \cite{Spring2010}). 
In the case at hand, this technique provides families of loops with specified integrals in the null quadric.

The main interest of Theorem \ref{th:I-shape} lies in the fact that the space $\Cscr(M,\Agot^{n-1}_*)$ is 
quite easy to understand. When $M$ has finite topology, the second part of 
Theorem \ref{th:I-shape} may be interpreted as follows.

{\em We can simultaneously continuously deform all nonflat conformal minimal immersions $M\to \R^n$
to those with vanishing flux, keeping the latter ones fixed.}

That a single conformal minimal immersion can be deformed to one with zero flux 
was first shown by Alarc\'on and Forstneri\v c in \cite{AlarconForstneric2017CRELLE}.
It was later proved in \cite[Corollary 1.6]{AlarconForstnericLopez2017JGEA} that 
such a deformation exists through a family of conformal minimal immersions 
with the same complex Gauss map.

In Theorem \ref{th:I-shape} we have excluded flat conformal minimal immersions and 
null curves; these present technical difficulties in the analysis of the 
structure of the respective mapping spaces. When $M$ is a compact 
bordered Riemann surface, the space $\CMI^r_{\mathrm{nf}}(M,\R^n)$ of all nonflat 
conformal minimal immersions $M\to\R^n$ of class $\Cscr^r$ $(r\in \N)$ is 
a real analytic Banach manifold (see Theorem \ref{th:structure}), while flat immersions 
seem to be singular points of $\CMI^r(M,\R^n)$.
Nevertheless, in \cite[Theorem 7.1]{AlarconForstnericLopez2017JGEA} the connected components
of $\CMI(M,\R^n)$ were identified  as follows.

%
%   THEOREM ON PATH COMPONENTS
%
\begin{theorem}\label{th:I-pathcomponents}
Let $M$ be a connected open Riemann surface. The inclusion 
$\CMI_{\mathrm{nf}}(M,\R^n) \hra \CMI(M,\R^n)$ of the space of all nonflat conformal minimal immersions 
$M\to\R^n$ into the space of all conformal minimal immersions induces a bijection of path components 
of the two spaces. In particular, the set of path components of $\CMI(M,\R^3)$ is in bijective correspondence 
with elements of the abelian group $(\Z_2)^l$ where $H_1(M;\Z)=\Z^l$, and $\CMI(M,\R^n)$ is path connected if $n>3$.
\end{theorem}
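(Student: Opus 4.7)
The plan is to combine Theorem \ref{th:I-shape} with explicit deformation constructions relating flat and nonflat conformal minimal immersions, and then compute the set of free homotopy classes into the punctured null quadric.

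First I would compute the path components of $\CMI_{\mathrm{nf}}(M,\R^n)$. By Theorem \ref{th:I-shape} the map $X\mapsto \di X/\theta$ induces a bijection from $\pi_0(\CMI_{\mathrm{nf}}(M,\R^n))$ onto the set $[M,\Agot^{n-1}_*]$ of free homotopy classes. A connected open Riemann surface with $H_1(M;\Z)=\Z^l$ has the homotopy type of a wedge $\bigvee_{i=1}^l S^1$. From the $\C^*$-bundle $\Agot^{n-1}_*\to Q_{n-2}\subset \CP^{n-1}$ and its long exact homotopy sequence one computes $\pi_1(\Agot^{n-1}_*)\cong \Z_2$ for $n=3$ (the conic $Q_1\cong \CP^1$ has degree $2$ in $\CP^2$, so $\Agot^2_*$ is homotopy equivalent to $\RP^3$), and $\pi_1(\Agot^{n-1}_*)=0$ for $n\ge 4$ (then $Q_{n-2}$ is simply connected and the Chern class of the bundle restricted to a projective line is $\pm 1$). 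Since $\pi_1$ is abelian in both cases, free and pointed homotopy classes agree, giving
\[
  [\bigvee_{i=1}^l S^1, \Agot^{n-1}_*] \;=\; \pi_1(\Agot^{n-1}_*)^l,
\]
which equals $(\Z_2)^l$ for $n=3$ and is trivial for $n\ge 4$.

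Next I would show that the inclusion $\CMI_{\mathrm{nf}}\hra \CMI$ induces a bijection on path components. For surjectivity, given a flat $X\in\CMI(M,\R^n)$ with image (after a linear change of coordinates) in $\{x_3=\cdots=x_n=0\}$, I construct a path $t\mapsto X_t$ in $\CMI(M,\R^n)$ with $X_0=X$ and $X_1$ nonflat. When $n\ge 4$, pick a nonconstant holomorphic function $f$ on $M$ (which exists because every open Riemann surface is Stein) and set $\di X_{j,t}=\di X_j$ for $j\le 2$, $\di X_{3,t}=t\,df$, $\di X_{4,t}=i\,t\,df$, and $\di X_{j,t}=0$ for $j\ge 5$; the null equation holds because cross-terms vanish by disjoint coordinate support and the new contribution is $t^2(df)^2+t^2(i\,df)^2=0$, the real periods vanish automatically since $df$ is exact, and for $t>0$ the image is nonflat. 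When $n=3$, use the Enneper--Weierstrass parametrization $\phi_1=\tfrac12(1-g^2)\eta$, $\phi_2=\tfrac{i}{2}(1+g^2)\eta$, $\phi_3=g\eta$, in which flatness corresponds to $g\equiv 0$: replace $g$ by $tg_1$ for a nonconstant meromorphic $g_1$ on $M$ and correct $\eta$ via a period-dominating holomorphic spray (as developed in Sect.\ \ref{ss:flux}) so that $\Re\int_\gamma \phi_{j,t}=0$ for every cycle $\gamma$ and every $t\in[0,1]$.

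For injectivity, given a path $X_t\in \CMI(M,\R^n)$ with $X_0,X_1\in \CMI_{\mathrm{nf}}$, I would deform it into $\CMI_{\mathrm{nf}}$ keeping the endpoints fixed. The surjectivity construction admits a parametric version: add $\epsilon(t)$ times the auxiliary null perturbation, where $\epsilon\colon [0,1]\to [0,\infty)$ is continuous, vanishes at $t=0,1$, and is strictly positive elsewhere. Because nonflatness is an open condition and the auxiliary perturbation is itself nonflat for $\epsilon>0$, the new path lies entirely in $\CMI_{\mathrm{nf}}$. In dimension $n=3$ the same idea must be implemented through the parametric version of the Weierstrass deformation, combined with a parametric period-dominating spray that fixes the endpoints.

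The main obstacle is the three-dimensional case: the null quadric $\Agot^2$ is only two-dimensional, so the simple ``perturb in a perpendicular coordinate'' trick is unavailable, and one must move all three Weierstrass components of $X$ simultaneously while maintaining the period conditions. Arranging the Enneper--Weierstrass spray so that it produces a deformation which is nontrivial on the Gauss map data, neutral on the periods, continuous in $t$, and equal to the identity at $t=0$ and $t=1$ is the technical heart of the argument; once these deformations are available, Theorem \ref{th:I-shape} together with the homotopy computation above finishes the proof.
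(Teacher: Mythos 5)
Your reduction of $\pi_0(\CMI_{\mathrm{nf}}(M,\R^n))$ to $[M,\Agot^{n-1}_*]$ via Theorem~\ref{th:I-shape}, and the homotopy computation $[\bigvee_{i=1}^l S^1,\Agot^{n-1}_*]=\pi_1(\Agot^{n-1}_*)^l$ with $\pi_1(\Agot^2_*)\cong\Z_2$ (from $\Agot^2_*\simeq\RP^3$) and $\pi_1(\Agot^{n-1}_*)=0$ for $n\ge 4$, are correct and constitute the right first step. The surjectivity construction for $n\ge 4$, perturbing in two perpendicular coordinates with the null data $(t\,df,\imath\, t\, df)$, is essentially sound, although you should note that the choice of $f$ must guarantee linear independence of $\Re f,\Im f$ from $X_1,X_2$ modulo constants so that $X_t$ is genuinely nonflat for $t>0$; a generic nonconstant $f\in\Ocal(M)$ will do.

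The injectivity argument has a genuine gap. You propose, for a path $X_t\in\CMI(M,\R^n)$ joining two nonflat immersions, to ``add $\epsilon(t)$ times the auxiliary null perturbation.'' The null quadric $\Agot$ is not closed under addition: for a holomorphic $1$-form $\Psi$ with values in $\Agot$, the form $\di X_t + \Psi$ satisfies the nullity equation $\sum(\di X_{t,j}+\Psi_j)^2=0$ only if the cross term $\sum \di X_{t,j}\,\Psi_j$ vanishes identically on $M$. When $\Psi$ lives in fixed coordinates $3,4$ this forces $\di X_{3,t}+\imath\,\di X_{4,t}\equiv 0$ for all $t$, which fails for a generic path. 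So the perturbed map need not lie in $\CMI(M,\R^n)$ at all, let alone in $\CMI_{\mathrm{nf}}$. The subsequent claim that ``nonflatness is an open condition and the auxiliary perturbation is itself nonflat'' also does not yield nonflatness of the sum unless the perturbation is transverse to the (\emph{$t$-dependent}) plane carrying the flat image, which you have not arranged. In short, the parametric deformation of the path is not substantiated.

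Fortunately, injectivity does not need to be proved by a deformation argument at all: it follows from your own $\pi_0$ computation. The map $X\mapsto [\di X/\theta]\in[M,\Agot^{n-1}_*]$ is well defined and locally constant on all of $\CMI(M,\R^n)$, not just on the nonflat part, so it descends to a map $\Phi\colon\pi_0(\CMI(M,\R^n))\to[M,\Agot^{n-1}_*]$. By Theorem~\ref{th:I-shape}, the composite $\pi_0(\CMI_{\mathrm{nf}})\to\pi_0(\CMI)\xrightarrow{\Phi}[M,\Agot^{n-1}_*]$ is a bijection, hence $\pi_0(\CMI_{\mathrm{nf}})\to\pi_0(\CMI)$ is injective. (For $n\ge 4$ there is nothing to prove since the source has one element.) With this observation, surjectivity alone gives the theorem, and your $n\ge 4$ construction already provides it in those dimensions. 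The only remaining burden is the $n=3$ surjectivity. You are right that this is the technical heart, but the sketched remedy of correcting $\eta$ via a period-dominating spray runs into the difficulty noted in Remark~\ref{rem:PD1}: such sprays cannot be built with a \emph{flat} core, which is precisely what you have at $t=0$. You would need to first make a small deformation that is nonflat for every $t>0$ while respecting the real-period vanishing (for instance by a careful choice of the L\'opez--Ros multiplier on each component of the exhaustion), and only then invoke the spray on the nonflat part. As it stands, the $n=3$ case is not a proof.

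The paper itself does not give a proof but cites a reference (Alarc\'on--Forstneri\v c--L\'opez, \emph{Every meromorphic function is the Gauss map of a conformal minimal surface}, Theorem~7.1), so your proposal should be judged on its own merits rather than against a proof in this survey. The overall plan --- compute $\pi_0(\CMI_{\mathrm{nf}})$ via Theorem~\ref{th:I-shape} and then deform flat immersions to nonflat ones --- is the right one, but the injectivity step should be replaced by the topological-invariant argument above, and the $n=3$ surjectivity requires substantially more work than your sketch suggests.
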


It was shown by the authors and L\'opez \cite{AlarconForstnericLopezMAMS,AlarconLopez2015GT}
that complex analytic methods may also be used in the construction of 
{\em non-orientable minimal surfaces} in $\R^n$ by working on their oriented double-sheeted coverings. 
In \cite[Example 6.1]{AlarconForstnericLopezMAMS} the reader can find the first known example 
of a properly embedded minimal M\"obius strip in $\R^4$ (see also Sect.\ \ref{ss:minimal}). 
Space does not permit us to include these results.
Another recently developed topic that is not treated in this survey but relies on complex analytic tools is the theory of 
uniform approximation by complete minimal surfaces in $\r^3$ with finite total curvature, due to 
L\'opez \cite{Lopez2014JGA,Lopez2014TAMS}.

There are several other important aspects of the classical theory of minimal surfaces 
in Euclidean spaces where substantial progress has been made in recent years but are not covered in this paper; 
see in particular the survey by P{\'e}rez \cite{Perez2017} and the monographs by
Meeks and P\'erez \cite{MeeksPerez2004SDG,MeeksPerez2012AMS}.

The organization of the paper is evident from the table of contents. We include proofs of the main complex analytic 
results used in the constructions; an exception is the Riemann-Hilbert
boundary value problem for conformal minimal surfaces and null curves 
(see Sect.\ \ref{ss:RH}) whose proofs are too complex to be included. 
The inductive procedures leading to the proofs of the main results are for the most part only sketched, referring 
to the original sources for further details.

%%%%%%%%%%
%%%%%%%%%%
%%%%%%%%%% Section: From minimal surfaces to complex analysis and back
%%%%%%%%%%
%%%%%%%%%%
%%%%%%%%%%

\section{\sc From minimal surfaces to complex analysis and back}
\label{sec:null}

In this section we briefly review those classical facts relating minimal surfaces to complex analysis 
which are indispensable for the subsequent discussion. More complete presentations are available
in the books of Osserman \cite{Osserman1986book}, 
Colding and Minicozzi \cite{ColdingMinicozzi1999CLNM,ColdingMinicozzi2011AMS}, 
and several others. For geometry of surfaces we refer to do Carmo \cite{doCarmo1976book}, 
and for the theory of Riemann surfaces we refer to the monographs by Donaldson \cite{Donaldson2011book},
Farkas and Kra \cite{FarkasKra1992}, and Forster \cite{Forster1981book}.

Let $\R^n$ and $\C^n$ denote the real and the complex 
Euclidean space of dimension $n\in\N=\{1,2,3,\ldots\}$, respectively. 
We also write $\R_\pm=\{x\in \R: \pm x\ge 0\}$, $\C^n_*=\C^n\setminus \{0\}$, and $\C_*=\C^1_*$.
We denote the coordinates on $\R^n$ by $(x_1,\ldots,x_n)$ and those on $\C^n$ by $z=(z_1,\ldots,z_n)$,
where $z=x+\imath y$ with $x,y\in\R^n$ and $\imath=\sqrt{-1}$.
Maps to these spaces will be denoted by the corresponding capital letters, e.g.\ $X\colon M\to\R^n$ and
$Z\colon M\to \C^n$.  We denote the Euclidean inner product and norm on $\R^n$ by 
\[
	x\,\cdotp y=\sum_{j=1}^n x_j y_j, \qquad |x|^2=x\,\cdotp x = \sum_{j=1}^n x_j^2.
\]
The space $\R^n$ is endowed with the Riemannian metric $ds^2=dx_1^2+\cdots+dx_n^2$.

%
%    RIEMANN SURFACES
%
\subsection{Riemann surfaces}\label{ss:Riemann}

A {\em Riemann surface} is a one dimensional complex manifold. 
We denote the space of all holomorphic functions on a Riemann surface $M$ by $\Ocal(M)$.
Since every minimal surface in $\R^n$ is parametrized by a nonconstant harmonic map 
from a Riemann surface, such a surface cannot be compact and without boundary.
Hence we shall mainly consider Riemann surfaces that are either {\em open}
(i.e., non-compact and without boundary) or compact with nonempty boundary.
On an open Riemann surface $M$ we have the classical 
Runge-Mergelyan approximation and Weierstrass interpolation theorems. The former says that, 
given a {\em Runge} (also called {\em $\Ocal(M)$-convex}) compact subset $K\subset M$ 
(i.e., such that $M\setminus K$ has no relatively compact components), every continuous function $K\to\c$
that is holomorphic on the interior $\mathring K$ of $K$ may be approximated uniformly on $K$ by functions in $\Ocal(M)$. 
The latter says that every map $\Lambda\to\c$ on a closed discrete subset $\Lambda\subset M$ extends to a function 
in $\Ocal(M)$. (Weierstrass's original theorem for planar domains \cite{Weierstrass1986} was extended 
to open Riemann surfaces by Florack \cite{Florack1948SMIUM} in 1948.)
An open Riemann surface is the same thing as a $1$-dimensional Stein manifold
(see Sect.\ \ref{ss:Stein}), and the aforementioned results extend to any Stein manifold as the  
Oka-Weil approximation theorem and Oka-Cartan extension theorem, respectively.

The following classification of open Riemann surfaces has important implications in the theory of minimal surfaces;
see Farkas and Kra \cite[p.\ 179]{FarkasKra1992}.

\begin{definition}\label{def:hyperbolic}
An open Riemann surface is said to be {\em hyperbolic} if it carries nonconstant negative subharmonic functions; 
otherwise it is said to be {\em parabolic}.
\end{definition}

By Koebe's uniformization theorem, the only simply connected open Riemann surfaces up to a biholomorphism 
are $\c$ which is parabolic by Liouville's theorem, and the unit disc $\d=\{\zeta\in\c\colon |\zeta|<1\}$ which is hyperbolic. 
Every compact Riemann surface from which finitely many points have been removed is parabolic. 
We refer to the survey by Grigor'yan \cite{Grigoryan1999BAMS} for further information on parabolicity and 
hyperbolicity of Riemannian manifolds.

A {\em compact bordered Riemann surface} is a compact Riemann surface $M$ with boundary 
$bM\neq\varnothing$ consisting of finitely many pairwise disjoint Jordan curves. The interior $\mathring M=M\setminus bM$ 
of such $M$ is a {\em bordered Riemann surface} and is a hyperbolic open Riemann surface. 
Such $\mathring M$ is biholomorphic to a smoothly bounded domain in a compact Riemann surface without boundary.

The only topological invariants of a connected oriented surface $M$ are its genus and number of ends. 
We say that $M$ has {\em finite topological type} if both its genus $g$ and the number $m$ of its ends are finite;
such $M$ is biholomorphic to a domain in a compact Riemann surface $R$ 
from which finitely many points and closed discs have been removed (see Stout \cite{Stout1965TAMS}).
Its first homology group equals $H_1(M;\Z)\cong \Z^l$ where $l=2g+m-1$.
There exist smooth Jordan curves $C_1,\ldots, C_l$ in $M$ representing a basis
of $H_1(M;\Z)$. If $M$ is either open or compact with nonempty boundary,
then these curves can be chosen such that their union $C=\bigcup_{j=1}^l C_j$ is contained
in $\mathring M$ and is Runge in $\mathring M$; furthermore, $C$ is a strong deformation retract of $M$.

%
%   IMMERSED SURFACES, RIEMANNIAN METRICS, ISOTHERMAL COORDINATES
%
\subsection{Immersed surfaces, Riemannian metrics, and isothermal coordinates}
\label{ss:isothermal}

Let $S$ be a smooth real surface and $X=(X_1,\ldots,X_n)\colon S\to \R^n$ be a smooth immersion.
The pullback of the Euclidean metric $ds^2$ on $\R^n$ by the immersion $X$ is a Riemannian metric on $S$:
\[
	g=X^*(ds^2) = (dX_1)^2+\cdots + (dX_n)^2.
\]
In any smooth local coordinate $(u,v)$ on $S$ we have  that
\begin{equation}\label{eq:gABC}
	g= Adu^2 + 2B du dv + Cdv^2,
\end{equation}
where $A>0,B,C>0$ are smooth functions and $AC-B^2>0$; this is called the {\em first fundamental form} of the immersed surface. 
If \eqref{eq:gABC} holds in coordinates $(u,v)$ on a domain $\Omega\subset S$, 
then the area of the immersed surface $X \colon \Omega \to\R^n$ equals
\begin{equation}\label{eq:area}
	\mathrm{Area}\,(X(\Omega)) = \int_{\Omega} \sqrt{AC-B^2} \,  du dv. 
\end{equation}
The $2$-form
\begin{equation}\label{eq:areael}
	dA_{X(\Omega)}=\sqrt{AC-B^2} \,  du dv
\end{equation}
is called the {\em area element} of the immersed surface $X|_{\Omega}\colon \Omega \to\r^n$.

By the celebrated {\em isometric immersion theorem} of Nash \cite{Nash1954,Nash1956},
every smooth Riemannian metric on $S$ is induced by a smooth embedding
$X\colon S\to\R^n$ to a Euclidean space. See Gromov \cite{Gromov1986,Gromov2017} for more information.

A Riemannian metric $g$ on an orientable surface $S$ 
determines an almost complex structure operator $J\colon TS\to TS$, with $J^2=-\Id$, 
by the condition that for every unit vector $\xi\in T_p S$ the pair $(\xi,J\xi)$ 
is a positively oriented $g$-orthonormal basis of the tangent space $T_p S$.
Riemannian metrics $g,\tilde g$ on $S$ are said to be {\em conformally equivalent}
if $\tilde g=\lambda g$ for some function $\lambda>0$; such metrics
determine the same almost complex structure $J$. Conversely, a choice of $J$ uniquely
determines a conformal class of Riemannian metrics. Around any point of $S$ there exist smooth 
{\em isothermal coordinates} $(u,v)$ in which the Riemannian metric $g$ has the form 
\begin{equation}\label{eq:metricisothermal}
	g = \lambda (du^2 + dv^2)
\end{equation}
for some positive function $\lambda>0$. A change of coordinates which puts the metric in this form
is found by solving the {\em Beltrami equation} (see Ahlfors \cite{Ahlfors2006}). 
In such coordinates, the associated almost complex structure $J$ 
is the {\em standard almost complex structure} on $\R^2_{(u,v)}\cong \C$
given by $J_{st} \, \frac{\di}{\di u}  = \frac{\di}{\di v}$, $J_{st}\, \frac{\di}{\di v}   = -\frac{\di}{\di u}$.
The transition map between any two isothermal coordinates 
is a conformal isomorphism, hence holomorphic or antiholomorphic with respect
to the complex coordinate $\zeta=u+\imath v$.
If $S$ is orientable, we obtain an  atlas $\Ucal=\{(U_j,\phi_j)\}$ consisting 
of an open covering $\{U_j\}$ of $S$ and positively oriented isothermal coordinates 
$\phi_j\colon U_j \to \phi_j(U_j) \subset \R^2\cong \C$ whose transition maps $\phi_{i,j}=\phi_i\circ\phi_j^{-1}$
are biholomorphisms; that is, $\Ucal$ is a complex atlas determining on $S$ the structure of a Riemann surface. 
If $S$ is connected and non-orientable, it admits a double-sheeted covering map $\pi\colon \wt S\to S$
with $\wt S$ orientable, and the same argument applied to the metric $\pi^*g$ on $\wt S$ 
shows that $\wt S$ carries the structure of a Riemann surface such that the projection map $\pi$ is conformal.

%
%    MINIMAL SURFACES
%
\subsection{Minimal surfaces}\label{ss:minimal}
Assume that $S$ is a smooth orientable surface and $X\colon S\to \R^n$ is a smooth immersion. 
Let $N\colon S\to\r^n$ be a smooth vector field along $X$ such that  for every $p\in S$ the vector 
$N(p)$ has unit length and is orthogonal to the tangent plane $dX_p(T_p S)\subset\R^n$ of $X$. 
Given a smooth function $\psi\colon S\to\r$ with compact support, there is a number $\epsilon>0$ such that the maps
\[
	X^t=X+t\psi N : S\to\r^n,\qquad t\in (-\epsilon,\epsilon),
\]
are again smooth immersions. Such a family of immersions is called a {\em normal variation with compact support} of $X=X^0$. 
The associated area functional is
\begin{equation}\label{eq:areafunctional}
	\Acal(t)={\rm Area}\, (X^t(S)),\qquad t\in(-\epsilon,\epsilon),
\end{equation}
and the {\em first variation of area formula} says that
\[
	\Acal'(0)=\frac{d\Acal(t)}{dt}\Big|_{t=0}=-2\int_S {\bf H}\cdot N\, \psi\, dA_X,
\]
where ${\bf H}$ and $dA_X$ are the mean curvature vector field and the area element  \eqref{eq:areael} of $X$,
respectively. This leads to the following observation due to Meusnier.

\begin{proposition}\label{pro:1vf}
Let $S$ be a smooth orientable surface and $X\colon S\to \R^n$ be a smooth immersion. 
The following two conditions are equivalent:
\begin{itemize}
\item $X$ is a critical point of the area functional for all normal variations with compact support.
\item The mean curvature vector field ${\bf H}\colon S\to \r^n$ of $X$ vanishes identically.
\end{itemize}
\end{proposition}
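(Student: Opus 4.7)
The plan is to establish the cited first variation of area formula via a direct local computation and then apply the fundamental lemma of the calculus of variations. First I would pick positively oriented isothermal coordinates $\zeta = u + \imath v$ on a chart $U \subset S$ covering $\supp(\psi)$, as provided by Section~\ref{ss:isothermal}. In such coordinates the first fundamental form of $X$ reads $g = \lambda(du^2 + dv^2)$ with $\lambda = |X_u|^2 = |X_v|^2$ and $X_u\cdot X_v = 0$; the area element \eqref{eq:areael} becomes $dA_X = \lambda\, du\, dv$; and the identity $\Delta X = 2\mathbf{H}$ recalled in the introduction specializes to $\mathbf{H} = \frac{1}{2\lambda}(X_{uu} + X_{vv})$.

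Next I would Taylor expand the coefficients of the first fundamental form of the perturbed immersion $X^t = X + t\psi N$. Differentiating the orthogonalities $N \cdot X_u = N\cdot X_v = 0$ yields $N_u\cdot X_u = -N\cdot X_{uu}$, $N_v\cdot X_v = -N\cdot X_{vv}$, and $N_u\cdot X_v + N_v\cdot X_u = -2\,N\cdot X_{uv}$, whence a direct calculation produces
\[
A(t)C(t) - B(t)^2 = \lambda^2 - 2t\psi\lambda\, N\cdot(X_{uu} + X_{vv}) + O(t^2).
\]
Expanding the square root and integrating gives the local first-variation identity
\begin{align*}
\frac{d}{dt}\Big|_{t=0}\int_U \sqrt{A(t)C(t) - B(t)^2}\,du\,dv
&= -\int_U \psi\,N\cdot(X_{uu} + X_{vv})\,du\,dv \\
&= -2\int_U \psi\,(\mathbf{H}\cdot N)\,dA_X.
\end{align*}
Patching these local contributions by a partition of unity subordinate to a finite isothermal cover of $\supp(\psi)$ yields the global first variation formula $\Acal'(0) = -2\int_S (\mathbf{H}\cdot N)\psi\,dA_X$ cited in the excerpt.

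With this formula in hand, the equivalence is immediate. If $\mathbf{H} \equiv 0$, then $\Acal'(0) = 0$ for every admissible $\psi$ and $N$, so $X$ is critical. Conversely, assume $X$ is a critical point for all normal variations with compact support. Fix $p \in S$ and any unit vector $\nu$ orthogonal to $dX_p(T_pS)$; extend $\nu$ to a smooth unit normal vector field $N$ on a neighborhood $V$ of $p$, and let $\psi \in \Cscr^\infty_c(V)$ range over nonnegative bumps peaked at $p$. The fundamental lemma of the calculus of variations applied to $\int_S (\mathbf{H}\cdot N)\psi\,dA_X = 0$ forces $\mathbf{H}\cdot N \equiv 0$ near $p$, so $\mathbf{H}(p)\cdot\nu = 0$. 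Since $\mathbf{H}(p)$ lies in the normal space (being the trace of the second fundamental form) and $\nu$ was an arbitrary unit normal, this yields $\mathbf{H}(p) = 0$, and hence $\mathbf{H}$ vanishes identically on $S$.

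The only real obstacle is bookkeeping in the first-variation calculation — correctly expanding $A(t)C(t) - B(t)^2$ for a nontangential variation, converting derivatives of $N$ into second derivatives of $X$ via the orthogonality of $N$ to the tangent plane, and recognizing the resulting expression $N\cdot(X_{uu}+X_{vv})$ as $2\lambda(\mathbf{H}\cdot N)$ through the conformal-coordinates form of $\mathbf{H}$. No analytic machinery beyond smoothness of the data and the standard variational lemma is needed.
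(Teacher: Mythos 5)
Your proof is correct and runs along the same lines as the paper, which simply states the first variation of area formula and then observes that the proposition follows. You additionally supply the derivation of that formula, which the paper cites as classical: your isothermal-coordinate computation — expanding $A(t)C(t)-B(t)^2$, converting $X_u\cdot N_u + X_v\cdot N_v$ into $-N\cdot(X_{uu}+X_{vv})$ via the orthogonality relations, and identifying $N\cdot(X_{uu}+X_{vv}) = 2\lambda\,(\mathbf{H}\cdot N)$ through \eqref{eq:DeltaH} — is accurate, and the converse direction (ranging over local unit normals $\nu$ and bump functions $\psi$, then using that $\mathbf{H}(p)$ lies in the normal space) is the standard completion.
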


An immersed surface $X\colon S\to\r^n$ is said to be {\em minimal} if it satisfies the equivalent conditions in 
Proposition \ref{pro:1vf}. It follows from the {\em second variation of area formula} 
(see \cite[p.\ 95]{Nitsche1989} or \cite[p.\ 83--84]{DierkesHildebrandtKusterWohlrab1992-I}) 
that every minimal surface in $\r^n$ minimizes area locally.
Surfaces which minimize area globally are said to be {\em area minimizing}. 
Furthermore, for a minimal surface $X\colon S\to\r^n$ we have 
that $\Acal''(0)> 0$ for all normal variations with compact support if and only if every such variation of $X$ strictly increases the area; 
if this holds then $X$ is said to be {\em strongly stable}. If, on the contrary, for some variation the second derivative is negative, 
$\Acal''(0)<0$, then there are nearby surfaces with smaller area and  $X$ is then called {\em unstable}. Finally, those 
minimal surfaces for which $\Acal''(0)\ge 0$ holds for all normal variations with compact support are said to be {\em stable}. 
The stability property has important implications in the theory of minimal surfaces.

The simplest example of a minimal surface in $\r^n$ is an affine plane, which is in fact area minimizing. 
A classical result by Wirtinger \cite{Wirtinger1936MMP} says that every holomorphic curve in $\c^n=\r^{2n}$ $(n\ge 2)$ is area minimizing as well, hence a minimal surface.  The following are some of the most classical examples of minimal surfaces in $\r^3$.

%
%   CATENOIDS
%
\noindent$\bullet$ The {\em catenoids} (Euler, 1744) were the first minimal surfaces in $\r^3$ to be discovered, apart from (pieces of) affine planes. Planes and catenoids are the only minimal surfaces of revolution in $\r^3$. Here is a parametrization of a catenoid:
\[
	X(\rho,\theta)=\Big(c\cosh\big(\frac{\rho}{c}\big)\cos \theta, c\cosh\big(\frac{\rho}{c}\big)\sin \theta, \rho\Big),
	\quad \rho\in\r,\, \theta\in [-\pi,\pi),
\]
where $c\in\r\setminus\{0\}$ is a constant. See Figure \ref{fig:catenoid}.

%
%   HELICOIDS
%
\noindent$\bullet$ The {\em helicoids}, discovered by Meusnier in 1776, are the only ruled minimal surfaces in $\r^3$ besides planes. Here is a parametrization of a helicoid:
\[
	X(u,v)=\bigl(u\cos(cv),u\sin (cv), v\bigr),\quad (u,v)\in\r^2,
\]
where $c\in\r\setminus\{0\}$ is a constant. See Figure \ref{fig:catenoid}.
\begin{figure}[ht]
    \begin{center}
    \subfigure{
    \includegraphics[width=0.32\textwidth]{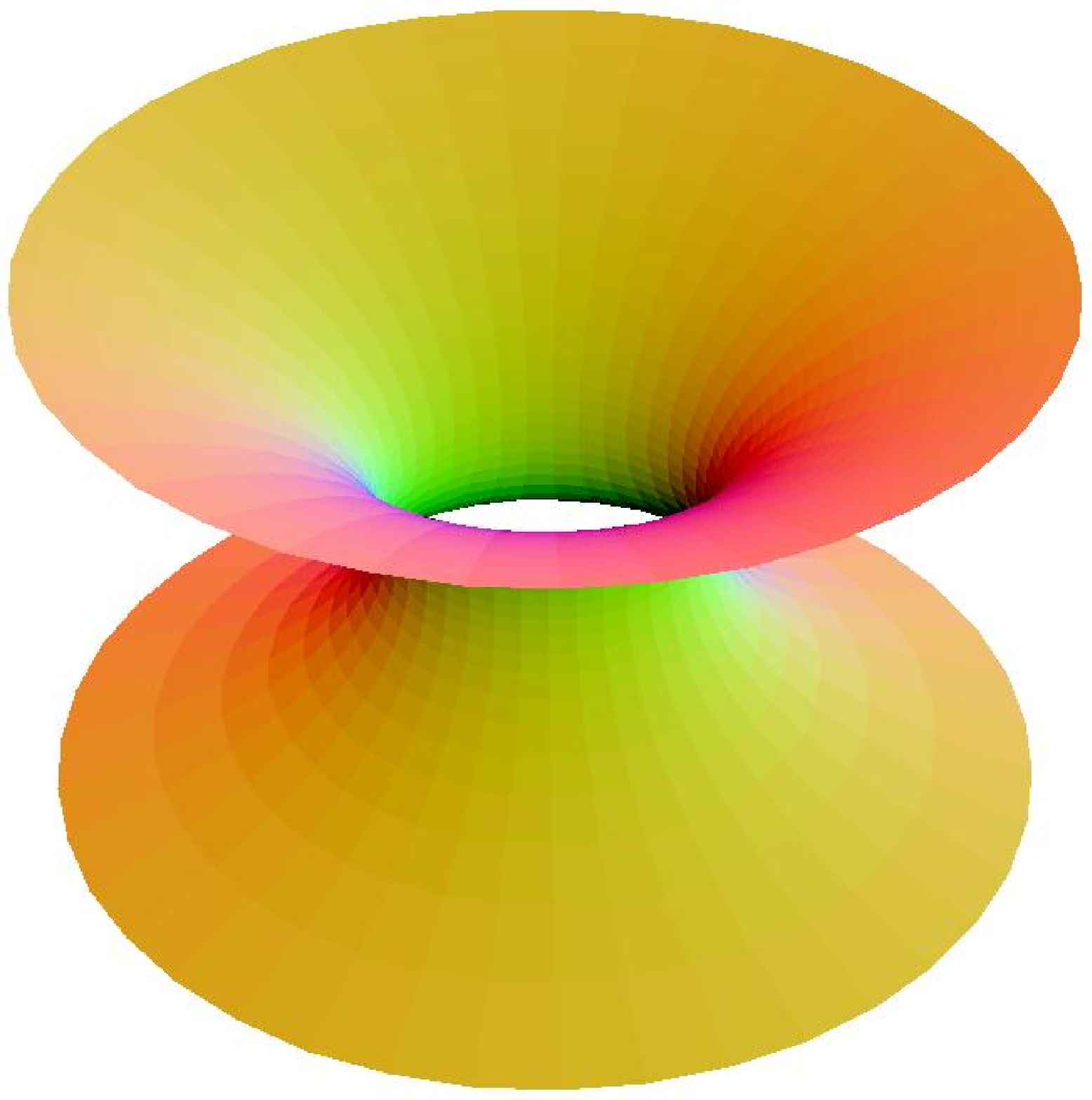}}
    \hspace{10mm}
    \subfigure{
   \includegraphics[width=0.27\textwidth]{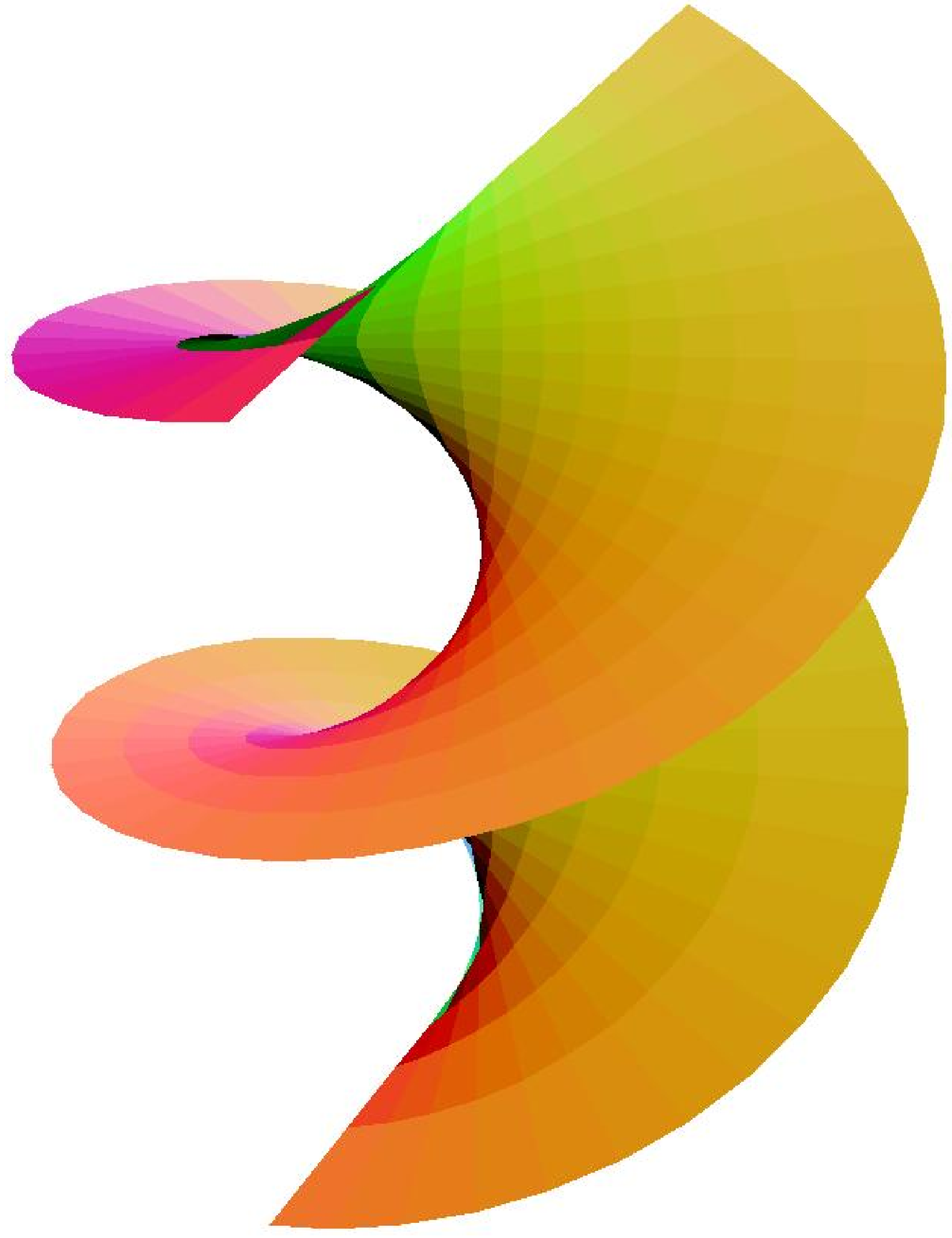}}
     \end{center}
     \vspace{-0.1cm}
     \caption{A catenoid (left) and a helicoid (right)}\label{fig:catenoid}
\end{figure}

%
%   ENNEPER'S SURFACE
%
\noindent$\bullet$ The {\em Enneper surface}, which has self-intersections, was discovered by Enneper in 1864. 
Here is a parametrization; see Figure \ref{fig:enneper}:
\[
	X(u,v)=\Big( \big(1-\frac{u^2}3+v^2\big)u, -\big(1-\frac{v^2}3+u^2\big)v, u^2-v^2 \Big),
	\quad (u,v)\in\r^2.
\]

%
%   RIEMMAN MINIMAL EXAMPLES
%
\noindent$\bullet$ The {\em Riemann minimal examples} form a $1$-parameter family of singly periodic minimal surfaces 
with infinitely many ends asymptotic to parallel planes (see Figure \ref{fig:enneper}).
These surfaces, described by Riemann in a posthumous paper from 1867, are the only minimal surfaces in $\r^3$, 
besides planes, catenoids, and helicoids, that are foliated by circles and affine lines in parallel planes. \begin{figure}[ht]
    \centering
    \subfigure{
    \includegraphics[width=0.38\textwidth]{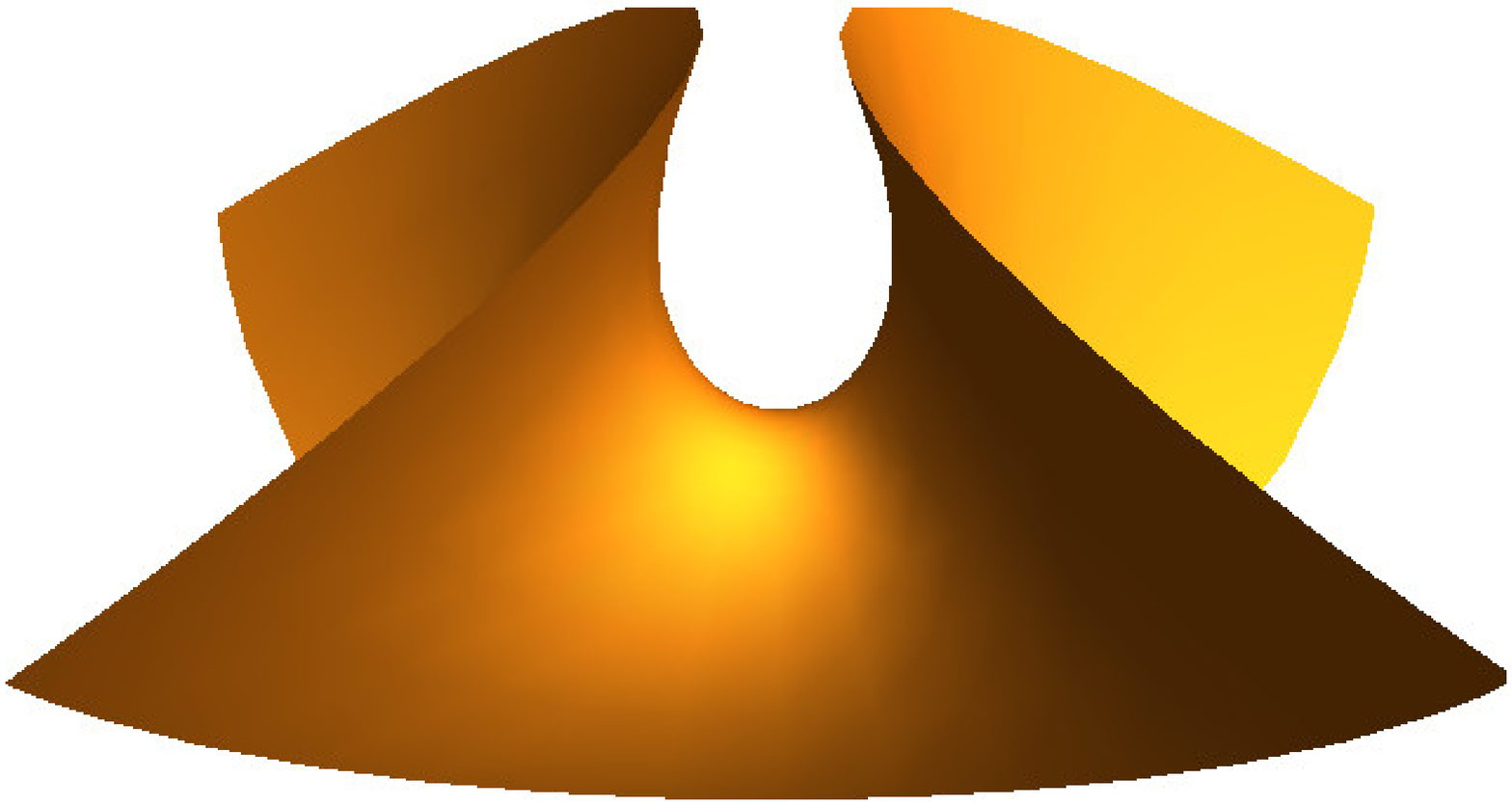}}
    \hspace{5mm}
    \subfigure{
   \includegraphics[width=0.38\textwidth]{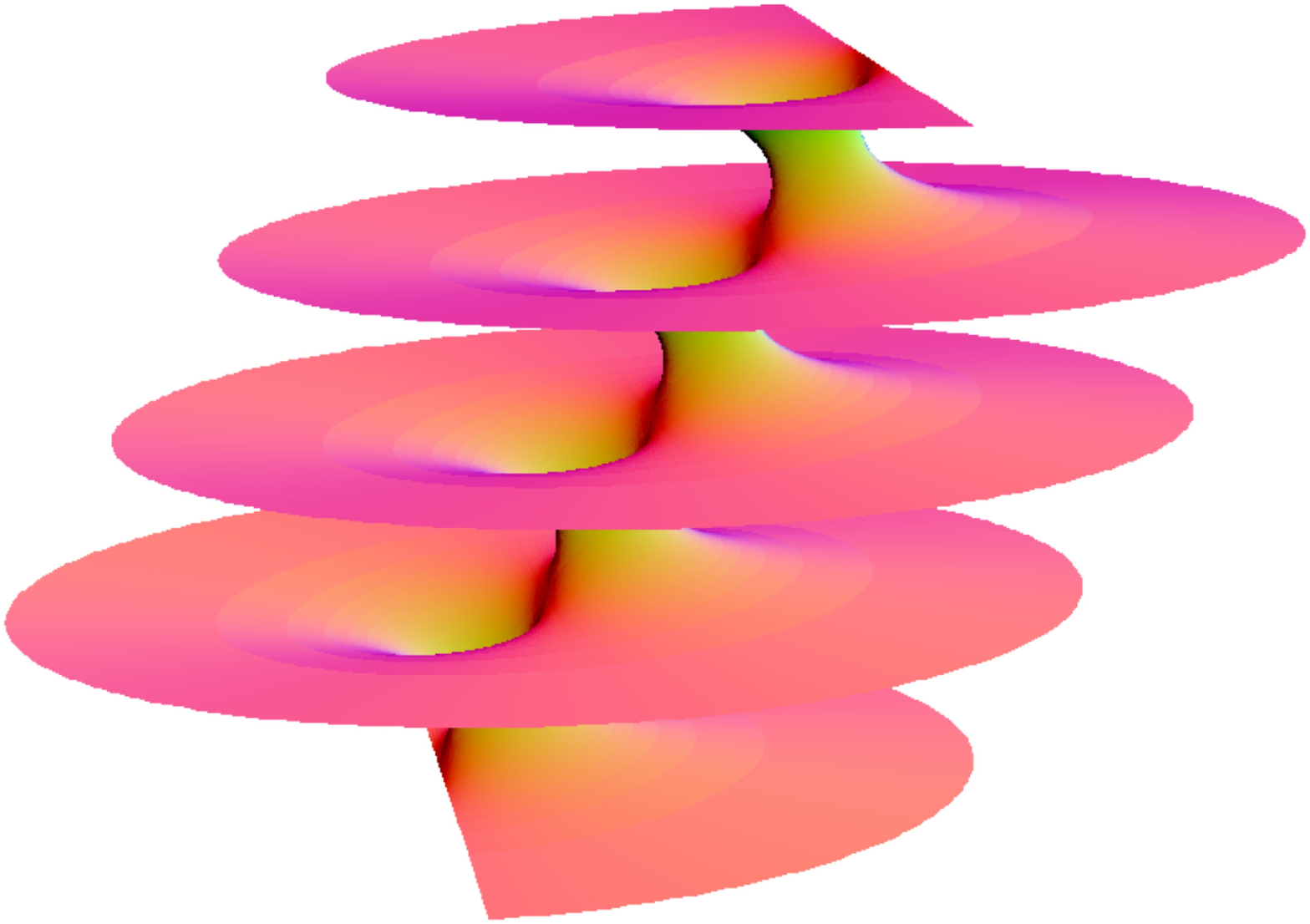}}
     \centering
     \caption{An Enneper surface (left) and a Riemann example (right)}\label{fig:enneper}
\end{figure}

%
%   A PROPERLY EMBEDDED MINIMAL MOBIUS STRIP IN R4
%
\noindent$\bullet$ 
{\em A properly embedded minimal M\"obius strip in $\R^4$} was found by
the authors and L\'opez \cite[Example 6.1]{AlarconForstnericLopezMAMS}.
The harmonic map $X\colon \c_*\to\r^4$ given by
\[
	X(\zeta)=\Re\left( \imath\big(\zeta+\frac1{\zeta}\big), \zeta-\frac1{\zeta},
	\frac{\imath}2 \big(\zeta^2-\frac1{\zeta^2}\big), \frac12 \big(\zeta^2+\frac1{\zeta^2}\big) \right) 
\]
is a proper conformal minimal immersion
such that $X(\zeta_1)=X(\zeta_2)$ if and only if $\zeta_1=\zeta_2$ or $\zeta_1=\Igot(\zeta_1)$,
where $\Igot$ is the fixed-point-free antiholomorphic involution on $\CP^1$ (and on $\C_*$) given by 
$\Igot(\zeta)=-1/\bar\zeta$. Since $\C_*/\Igot$ is a M\"obius strip,
the image surface $X(\c_*)\subset \R^4$ is a properly embedded minimal 
M\"obius strip in $\r^4$.

Another famous example is Meeks's immersed M\"obius strip in $\r^3$ with finite total curvature $-6\pi$
(see \cite[Theorem 2]{Meeks1981DMJ} and \cite[Example 2.6 and Figure 2.3]{AlarconForstnericLopezMAMS}).

%
%   SS: CONFORMAL IMMERSIONS AND THE NULL QUADRIC
%
\subsection{Conformal immersions and the null quadric}\label{ss:null}
Note that vectors $x,y\in\R^n$ are of the same size and orthogonal to each other if and only if the complex vector
$z=x+\imath y\in \C^n$ belongs to the {\em null quadric}
\begin{equation}
\label{eq:null}
	\Agot=\bigl\{z=(z_1,z_2,\ldots,z_n)\in\c^n : z_1^2+z_2^2+\cdots + z_n^2=0\bigr\}.
\end{equation}
Indeed, we have
$\sum_{j=1}^n (x_j+\imath y_j)^2 = |x|^2-|y|^2 + \imath\, x\,\cdotp y$ from which the claim follows. 
Elements $z\in \Agot$ are called {\em null vectors}. Note that $\Agot$ is a complex cone which is 
nonsingular except at the vertex $0\in\Agot$. The {\em punctured null quadric}
\begin{equation}
\label{eq:null*}
	\Agot_*= \Agot\setminus\{0\} = \Agot_{\reg}
\end{equation}
is a homogeneous space of the complex Lie group $\C_*\oplus {\mathrm O}_n(\C)$, where 
${\mathrm O}_n(\C)=\{A\in GL_n(\C): AA^t=I\}$ is the orthogonal group over $\C$.
It follows that maps $M\to \Agot_*$ from any Stein manifold (in particular, from any open Riemann surface) 
satisfy  the {\em Oka principle} (see Theorem \ref{th:OP}). 
This is the most important fact in applications of complex analysis
to the theory of minimal surfaces in $\R^n$. 

Let $M$ be a Riemann surface. Note that an immersion
\begin{equation}\label{eq:immersion}
	X=\left(X_{1},X_{2},\ldots,X_{n}\right) : M\to \R^n
\end{equation}
is conformal if and only if, in any local holomorphic coordinate $\zeta=u+\imath v$ on $M$, 
the partial derivatives $X_u=\di X/\di u = (X_{1,u},\ldots,X_{n,u})\in \R^n$ and 
$X_v=\di X/\di v = (X_{1,v},\ldots,X_{n,v})\in \R^n$ at any given point
have the same length and are orthogonal: 
\begin{equation}\label{eq:conformal} 
	|X_u|=|X_v|>0, \qquad X_u\, \cdotp X_v=0.
\end{equation}
Following the customary notation in complex analysis, we set
\[
	X_\zeta = \frac{\di X}{\di \zeta}= \frac{1}{2}\left(X_u-\imath X_v \right),\qquad 
	X_{\bar \zeta} = \frac{\di X}{\di \bar\zeta} = \frac{1}{2}\left(X_u+\imath X_v \right).
\]
Thus, the equation $X_{\bar \zeta} =0$ characterizes holomorphic functions.
In view of what has been said above, condition \eqref{eq:conformal} is equivalent to 
\begin{equation}\label{eq:conformalC} 
	2X_\zeta = X_u - \imath X_v\in \Agot_*
\end{equation}
where $\Agot_*$ is given by \eqref{eq:null*}. The exterior derivative on $M$ splits into the sum 
\[
	d=\di+\dibar
\]
of the $(1,0)$-part $\di$ and the $(0,1)$-part $\dibar$, where 
\[
	\di X= X_\zeta d\zeta, \qquad \dibar X= X_{\bar \zeta} d\bar\zeta.
\]
Hence, an immersion \eqref{eq:immersion} is conformal  if and only  if  
the $(1,0)$-differential $\di X=(\di X_1,\ldots,\di X_n)$ satisfies the nullity condition
\begin{equation}\label{eq:sum-zero}
	 (\di X_1)^2 + (\di X_2)^2 + \cdots + (\di X_n)^2 =0.
\end{equation}

%
%  THE ENNEPER-WEIERSTRASS REPRESENTATION FORMULA
%
\subsection{The Enneper-Weierstrass representation formula}\label{ss:EW}
Assume now that $M$ is an open Riemann surface and $X\colon M\to \r^n$ is a conformal immersion. 
In any local holomorphic coordinate $\zeta=u+\imath v$ on $M$ the Laplacian equals
\begin{equation}\label{eq:Delta}
	\Delta=\left(\frac{\di}{\di u}\right)^2+\left(\frac{\di}{\di v}\right)^2 = 
	 4\frac{\di^2}{\di \bar \zeta \,\di  \zeta}. 
\end{equation}
In particular, $X$ is harmonic if and only if the $1$-form $\di X$ is holomorphic.
It is classical (see Osserman \cite{Osserman1986book}) that 
\begin{equation}\label{eq:DeltaH}
	\Delta X =2 \mu \mathbf H
\end{equation}
where $\mathbf H$ is the mean curvature vector field of $X$
and $\mu = |X_u|^2=|X_v|^2$. Taking into account also \eqref{eq:DeltaH} gives the following classical result.

\begin{theorem}\label{th:equivalent}
Let $M$ be an open Riemann surface and $X:M\to\R^n$ $(n\ge 3)$ be a smooth conformal immersion.
Then the following conditions are pairwise equivalent.
\begin{itemize} 
\item $X$ is minimal (a stationary point of the area functional \eqref{eq:areafunctional}).
\item $X$ has vanishing mean curvature vector field: $\mathbf H=0$.
\item $X$ is harmonic: $\Delta X=0$.
\item The $\C^n$-valued $1$-form $\di X=(\di X_1,\ldots,\di X_n)$ is nowhere vanishing on $M$, holomorphic,
and satisfies the nullity condition \eqref{eq:sum-zero}.
\item Let $\theta$ be a nowhere vanishing holomorphic $1$-form on $M$. The map 
$f=2\di X/\theta\colon M\to \C^n$ is holomorphic and assumes values in $\Agot_*$ \eqref{eq:null*}. 
\end{itemize}
If these conditions hold then the induced Riemannian metric on $M$ equals
\begin{equation}\label{eq:metric}
	X^*(ds^2)=2 \big(|\di X_1|^2+\cdots+|\di X_n|^2\big).
\end{equation}
\end{theorem}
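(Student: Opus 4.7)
The plan is to verify the five conditions are equivalent by going through them in the natural order, since each adjacent pair is connected by a formula already displayed in the text, and then to derive the metric identity by a direct coordinate computation.

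The equivalence (1) $\Leftrightarrow$ (2) is precisely Proposition \ref{pro:1vf}, itself a consequence of the first variation of area formula. The equivalence (2) $\Leftrightarrow$ (3) is immediate from \eqref{eq:DeltaH}, namely $\Delta X = 2\mu \mathbf H$: since $X$ is an immersion we have $\mu = |X_u|^2 > 0$ at every point in any isothermal chart, so $\Delta X$ vanishes identically if and only if $\mathbf H$ does.

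For (3) $\Leftrightarrow$ (4) I would argue locally, using a holomorphic coordinate $\zeta = u + \imath v$. By \eqref{eq:Delta}, $\Delta X = 4\, \partial X_\zeta/\partial \bar\zeta$, so $X$ is harmonic precisely when the components of $X_\zeta$ are holomorphic, equivalently when $\partial X = X_\zeta\, d\zeta$ is a holomorphic $\C^n$-valued $(1,0)$-form. Moreover, conformality of the immersion $X$ already forces $2X_\zeta \in \Agot_*$ via \eqref{eq:conformalC}, so the nullity relation \eqref{eq:sum-zero} and the nowhere vanishing property of $\partial X$ are automatic from the standing hypotheses and carry no extra content beyond harmonicity. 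The equivalence (4) $\Leftrightarrow$ (5) is then formal: given a nowhere vanishing holomorphic $1$-form $\theta$ on $M$ (which exists on any open Riemann surface), $f = 2\partial X/\theta$ is a holomorphic map $M \to \C^n$, and the conditions $\partial X \neq 0$ and $\sum_j (\partial X_j)^2 = 0$ translate exactly into $f \in \Agot_* = \Agot \setminus \{0\}$.

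Finally, for the metric formula, I would expand in the same isothermal coordinate: $X_\zeta = \tfrac{1}{2}(X_u - \imath X_v)$ gives $|X_\zeta|^2 = \tfrac{1}{4}(|X_u|^2 + |X_v|^2) = \mu/2$ using \eqref{eq:conformal}, hence
\[
  2\sum_{j=1}^n |\partial X_j|^2 = 2|X_\zeta|^2\, |d\zeta|^2 = \mu\, (du^2 + dv^2) = X^*(ds^2),
\]
as claimed. There is no real obstacle in this argument; it is a chain of translations between the differential-geometric, real-analytic, and complex-analytic languages, the only nontrivial input being the classical identity \eqref{eq:DeltaH}, which is what makes Riemann-surface methods applicable to minimal surface theory in the first place.
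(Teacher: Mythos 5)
Your proof is correct and follows exactly the chain of implications the paper intends: Theorem~\ref{th:equivalent} is stated as a classical consequence of \eqref{eq:Delta}, \eqref{eq:DeltaH}, and Proposition~\ref{pro:1vf}, and you invoke precisely those ingredients in the natural order, with the conformality condition \eqref{eq:conformalC} supplying the nullity relation and nowhere-vanishing property for free. The closing coordinate computation of $X^*(ds^2)$ in an isothermal chart is also the standard one and checks out.
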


Every open Riemann surface $M$ admits a nowhere vanishing holomorphic
$1$-form $\theta$ by the Oka-Grauert principle (see \cite[Theorem 5.3.1(iii)]{Forstneric2017E}).
If $X\colon M\to\R^n$ is a conformal minimal immersion then the $1$-form $2\di X=f\theta$
with values in $\Agot_*$ has exact real part since $dX= \di X +\dibar X = 2\Re(\di X)$;
equivalently, $\oint_C \Re (f\theta)=0$ for every closed curve $C\subset M$.
Conversely, every holomorphic $1$-form $f\theta$ with values in $\Agot_*$ and exact real part 
$\Re(f\theta)$ determines a conformal minimal immersion by integration.
We record this observation in the following corollary to Theorem \ref{th:equivalent}.

\begin{theorem}[The Enneper-Weierstrass representation formula] \label{th:EW}
Let $M$ be a connected open Riemann surface, $\theta$ be a nowhere vanishing holomorphic $1$-form on $M$,
and $p_0\in M$ be an arbitrary point. 
Every conformal minimal immersion $X\colon M\to \R^n$ $(n\ge 3)$ is of the form
\begin{equation}\label{eq:EW}
	X(p)=X(p_0) +  \int_{p_0}^p \Re(f\theta), \quad p\in M,
\end{equation}
where $f\colon M\to \Agot_*$ is a holomorphic map into the punctured null quadric
such that the $\R^n$-valued $1$-form $\Re(f\theta)$ is exact. We have that $2\di X=f\theta$.
\end{theorem}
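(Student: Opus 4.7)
The plan is to derive Theorem \ref{th:EW} as a direct corollary of Theorem \ref{th:equivalent}, which already collects the main analytic content. Given a conformal minimal immersion $X\colon M\to\R^n$, the last equivalence in Theorem \ref{th:equivalent} says that the map
\[
   f := \frac{2\,\di X}{\theta}\colon M\longrightarrow \Agot_*
\]
is holomorphic and takes values in the punctured null quadric, with $2\di X = f\theta$ as a holomorphic $\C^n$-valued $1$-form on $M$.

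Next I would use the fact that $X$ is real-valued to split its exterior derivative as
\[
   dX = \di X + \dibar X = \di X + \overline{\di X} = 2\,\Re(\di X) = \Re(f\theta).
\]
Because $X$ is a well-defined map on $M$, the $\R^n$-valued $1$-form $dX = \Re(f\theta)$ is exact; in particular $\oint_C \Re(f\theta) = 0$ for every closed loop $C\subset M$, which is precisely the exactness condition on $\Re(f\theta)$ asserted in the theorem. Integrating along any path from the chosen basepoint $p_0$ to $p\in M$, which is legitimate since $M$ is connected and the integrand is closed (indeed exact), yields
\[
   X(p) - X(p_0) = \int_{p_0}^{p} dX = \int_{p_0}^{p} \Re(f\theta),
\]
which is formula \eqref{eq:EW}. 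This completes the representation.

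For the converse direction implicit in the statement, I would argue that if $f\colon M\to\Agot_*$ is any holomorphic map with $\Re(f\theta)$ exact, then the formula \eqref{eq:EW} defines a smooth $\R^n$-valued map $X$ on $M$, independent of path, with $dX = \Re(f\theta)$ and hence $2\di X = f\theta$. Since $f$ takes values in $\Agot_*$, the holomorphic $1$-form $\di X = \frac{1}{2}f\theta$ is nowhere vanishing and satisfies the nullity condition \eqref{eq:sum-zero}, so the last equivalence in Theorem \ref{th:equivalent} (read backwards) shows that $X$ is a conformal minimal immersion. The only real subtlety in the whole argument — and the one I would flag as the main conceptual obstacle for a reader who has not yet internalized the preceding material — is the period/exactness condition: on a non-simply-connected $M$ one cannot freely prescribe $f\in\Ocal(M,\Agot_*)$, because the real periods $\oint_{C}\Re(f\theta)$ over a basis of $H_1(M;\Z)$ must vanish in order that the primitive $X$ be single-valued. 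All of the subsequent constructions of minimal surfaces in this survey are devoted precisely to controlling these real periods, so it is worth stating the condition explicitly rather than burying it in the integration step.
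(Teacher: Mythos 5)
Your proof is correct and follows essentially the same route as the paper's: the paper also derives Theorem \ref{th:EW} as a direct corollary of Theorem \ref{th:equivalent}, using the same identity $dX = \di X + \dibar X = 2\Re(\di X) = \Re(f\theta)$ to establish exactness and then integrating from the basepoint. Your closing remark about the converse direction and the role of the real period conditions matches the paper's own "conversely" sentence and its subsequent emphasis on controlling periods.
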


%
%    THE COMPLEX GAUSS MAP
%
\subsection{The complex Gauss map}\label{ss:Gauss}
Let $X=(X_1,\ldots,X_n) \colon M\to\R^n$ be a conformal minimal immersion. Its differential 
$\di X= (\di X_1, \ldots, \di X_n)$ determines the Kodaira type holomorphic map
\begin{equation}\label{eq:GX}
	G_X\colon M\to \CP^{n-1},\quad G_X(p) = [\di X_1(p) \colon \cdots \colon \di X_n(p)],
	\quad p\in M,
\end{equation}
called the {\em generalized Gauss map} of $X$.
In view of the equation \eqref{eq:sum-zero}, $G_X$ assumes values in the complex hyperquadric
\begin{equation}\label{eq:nullquadric-projected}
     Q_{n-2} = \bigl\{[z_1:\ldots : z_n]\in\CP^{n-1} : z_1^2+ \cdots + z_n^2 = 0\bigr\}. 
\end{equation}     
In the case $n=3$ the quadric $Q_1\subset \CP^2$ is the image of a quadratically
embedded Riemann sphere $\CP^1\hra \CP^2$, and the {\em complex Gauss map} of a conformal minimal
immersion $X=(X_1,X_2,X_3)\colon M\to\R^3$ is defined to be the holomorphic map 
\begin{equation}\label{eq:C-Gauss}
	g_X = \frac{\di X_3}{\di X_1-\imath \, \di X_2}  =  \frac{\di X_2-\imath\, \di X_1}{\imath\, \di X_3} : M \lra \CP^1.
\end{equation}
The function $g_X$ equals the stereographic projection of the real Gauss map
$N=(N_1,N_2,N_3) \colon M\to \S^2 \subset \r^3$ to the Riemann sphere $\CP^1$; explicitly,
\[
	g_X = \frac{N_1+\imath N_2}{1-N_3} : M \lra \C\cup\{\infty\}=\CP^1.
\]
We can recover the differential $\di X=(\di X_1,\di X_2,\di X_3)$ from the pair $(g_X,\phi_3)$
with $\phi_3=\di X_3$ by the classical {\em Weierstrass formula}:
\begin{equation}\label{eq:EWR}
	\di X= \Phi=(\phi_1,\phi_2,\phi_3) =
	\left( \frac{1}{2} \left(\frac{1}{g_X}-g_X\right),  \frac{\imath}{2} \left(\frac{1}{g_X}+g_X \right),1\right) \phi_3.
\end{equation}
(See \cite[Lemma 8.1, p.\ 63]{Osserman1986book}.) 
Conversely, given a pair $(g,\phi_3)$ consisting of a holomorphic map $g\colon M\to \CP^1$
and a holomorphic $1$-form $\phi_3$ on $M$, the meromorphic $1$-form 
$\Phi=(\phi_1,\phi_2,\phi_3)$ defined by \eqref{eq:EWR} satisfies $\sum_{j=1}^3 \phi_j^2=0$;
it is the differential $\di X$ of a conformal minimal immersion $X\colon M\to\R^3$
if and only if it is holomorphic, nowhere vanishing, and its real periods vanish.
If this holds then the map $X$ is obtained from $\Phi$ \eqref{eq:EWR} by integration:
\[
	X(p)=X(p_0) +  2 \int_{p_0}^p \Re(\Phi), \qquad p\in M.
\]

The generalized Gauss map $G_X$ is of great importance
in the theory of minimal surfaces; see Osserman \cite{Osserman1986book} and the papers \cite{AlarconForstnericLopez2017JGEA,Fujimoto1983JMSJ,Fujimoto1990JDG,HoffmanOsserman1980MAMS,
LopezPerez2003IUMJ,OssermanRu1997JDG,Ros2002DG,Ru1991JDG}, 
among many others. In particular, the complex Gauss map $g_X \colon M\to\CP^1$ 
\eqref{eq:C-Gauss} of a minimal surface in $\r^3$ provides crucial information about its geometry
since the key quantities such as the Gauss curvature and the Jacobi operator
depend only on $g_X$  
(see e.g.\ \cite{MeeksPerez2004SDG,MeeksPerez2012AMS,Osserman1980DG,Osserman1986book}). 
The authors together with F.\ J.\ L{\'o}pez have recently shown in 
\cite[Corollary 1.2]{AlarconForstnericLopez2017JGEA} that every meromorphic function on an open 
Riemann surface $M$ is the complex Gauss map of a conformal minimal immersion $X\colon M\to \R^3$; 
furthermore, $X$ can be chosen as the real part of a holomorphic null curve $Z=X+\imath Y \colon M\to\C^3$.

%
%   FLUX AND PERIOD MAP
%
\subsection{Flux, period map, conjugate surfaces, and null curves} \label{ss:flux}
The {\em conjugate differential} of a smooth map $X:M\to\r^n$ is defined by  
\[
	d^c X= \imath(\dibar X - \di X) = 2 \Im (\di X).  
\]
Recall that $d=\di+\dibar$. Hence we get 
\[
	2\di X = dX + \imath d^c X, \qquad 
	dd^c X= 2\imath\,  \di\dibar X = \Delta_\zeta X \cdotp  du\wedge dv,
\] 
where the last equation holds in any local holomorphic coordinate $\zeta=u+\imath v$.
Thus, the map $X$ is harmonic if and only if $d^c X$ is a closed vector valued $1$-form on $M$, 
and in this case $d^c X=dY$ holds for any local harmonic conjugate $Y$ of $X$. 

The {\em flux map}  of a harmonic map $X\colon M \to\r^n$  
is the group homomorphism $\Flux_X\colon H_1(M;\z)\to\r^n$ given by
\begin{equation} \label{eq:flux}
	\Flux_X([C])=\oint_C d^c X = \oint_C 2 \Im (\di X), \qquad 	[C]\in H_1(M;\z).
\end{equation}
The integral is independent of the choice of the path in a given homology class, 
and we shall write $\Flux_X(C)$ for $\Flux_X([C])$ in the sequel.

Fix a nowhere vanishing holomorphic $1$-form $\theta$ on $M$.  
% (Such exists by the Oka-Grauert principle, cf.\ \cite[Theorem 5.3.1]{Forstneric2017E}.)
Associated to any holomorphic map $f\colon M\to\c^n$ is the {\em period homomorphism} 
$\Pcal(f)\colon H_1(M;\z) \to \c^n$ defined on any closed oriented curve $C\subset M$ by 
\begin{equation}\label{eq:period-map}
	\Pcal(f)(C) = \oint_{C} f\theta. 
\end{equation}
The map $f$ corresponds to a conformal minimal immersion $X \colon M\to\r^n$ as in (\ref{eq:EW})
if and only if $f(M)\subset \Agot_*$ and $\Re(\Pcal(f))=0$; in this case, $X$ is given by \eqref{eq:EW} and
\begin{equation}\label{eq:FP}
	\Flux_X = \Im (\Pcal(f)) : H_1(M;\z) \to \r^n. 
\end{equation}
We have $\Flux_X=0$ if and only if $X$ admits a globally defined harmonic conjugate 
$Y\colon M\to\R^n$ (the {\em conjugate conformal minimal surface}), and in this case
the holomorphic immersion $Z=(Z_1,\ldots,Z_n) = X+\imath Y : M\to \C^n$ satisfies 
\[
	(dZ_1)^2 + (dZ_2)^2 + \cdots + (dZ_n)^2 = 0.
\]
Such $Z$ is called a {\em null holomorphic immersion} of $M$ into $\C^n$.
We see as in Theorem \ref{th:EW} that every null holomorphic immersion is of the form 
\begin{equation}\label{eq:NULL}
	Z(p)=Z(p_0) +  \int_{p_0}^p f\theta, \qquad p\in M,
\end{equation}
where $f\colon M\to \Agot_*$ is a holomorphic map into the punctured null quadric
such that the $\C^n$-valued holomorphic $1$-form $f\theta$ is exact. The minimal surfaces
\[
	X_t = \Re(e^{\imath t} Z) \colon M\to\R^n,\qquad t\in \R
\]
are called the {\em associated minimal surfaces} of the null curve $Z$.

\begin{example}
The {\em catenoid} and the {\em helicoid}  (see Figure \ref{fig:catenoid})
are conjugate minimal surfaces --- the real and the imaginary part of the null curve $Z\colon \C\to \C^3$ given by
\[ 
	Z(\zeta) = (\cos\zeta,\sin\zeta,-\imath \zeta),\qquad \zeta=x+\imath y\in\C. 
\]
Consider the family of minimal surfaces ($t\in \R$):
\begin{eqnarray*}
  X_t(\zeta) &=& \Re\left( e^{\imath t} Z(\zeta) \right) \\
     &=& \cos t \left( 
     \begin{matrix} \cos x \cdotp \cosh y \cr \sin x \cdotp \cosh y \cr y\end{matrix}\right)
      + \sin t \left( 
      \begin{matrix} \sin x \cdotp\sinh y \cr -\cos x \cdotp\sinh y \cr x\end{matrix}\right)
\end{eqnarray*}
At $t=0$ we have a parametrization of a catenoid, and at $t=\pm \pi/2$ we have a 
(left or right handed) helicoid.
\end{example}

%
%    MAPPING SPACES
%
\subsection{Spaces of mappings} \label{ss:spaces}
If $M$ is an open Riemann surface, then $\Ocal(M)$ is the algebra of holomorphic functions  $M\to\c$, 
$\Ocal(M,X)$ is the space of  holomorphic mappings $M\to X$ to a complex manifold $X$, 
\[
	\CMI(M,\r^n)
\] 
is the set of all conformal minimal immersions $M\to \r^n$, and 
\[
	\NC(M,\C^n)
\] 
is the space of all null holomorphic immersions $M\to\C^n$. These spaces are endowed with the compact-open topology.

Assume now that $M$ is a compact bordered Riemann surface (see Sect.\ \ref{ss:Riemann}).
Given $r\in \z_+=\{0,1,2,\ldots\}$,  we denote by $\Ascr^r(M)$ the space of all functions
$M\to \c$ of class $\Cscr^r(M)$ that are holomorphic in $\mathring M=M\setminus bM$. 
More generally, for any complex manifold $X$ we let $\Ascr^r(M,X)$ denote the space 
of maps $M\to X$ of class $\Cscr^r$ which are holomorphic in $\mathring M$.
We write $\Ascr^0(M)=\Ascr(M)$ and $\Ascr^0(M,X)=\Ascr(M,X)$. 
Note that $\Ascr^r(M,\c^n)$ is a complex Banach space, and for any complex manifold $X$ the space $\Ascr^r(M,X)$ 
is a complex Banach manifold modeled on the Banach space $\Ascr^r(M,\c^n)$ with $n=\dim X$ 
(see \cite[Theorem 8.13.1]{Forstneric2017E} or \cite[Theorem 1.1]{Forstneric2007AJM}). 
A compact bordered Riemann surface $M$ can be considered as a smoothly bounded compact domain 
in an open Riemann surface $R$. It is classical that each function in $\Ascr^r(M)$ can be approximated 
in the $\Cscr^r(M)$ norm by functions in $\Ocal(M)$, i.e., functions holomorphic in a
neighborhood of $M$ in $R$. The same holds for maps to an arbitrary complex manifold 
(see \cite[Theorem 5.1]{DrinovecForstneric2007DMJ}). 
For any $r\in \n$ we denote by 
\[
	\CMI^r(M,\r^n)
\] 
the set of all  conformal minimal immersions $M\to\r^n$ 
of class $\Cscr^r(M)$. More precisely, an immersion $F\colon M\to \r^n$ of class $\Cscr^r$ belongs to 
$\CMI^r(M,\r^n)$ if and only if $\di F$ is a $(1,0)$-form of class $\Cscr^{r-1}(M)$ that
satisfies the nullity condition (\ref{eq:sum-zero}) and is holomorphic on the interior 
$\mathring M$.  Similarly, 
\[
	\NC^r(M,\C^n) 
\]
denotes the space of all null holomorphic immersions $M\to \C^n$ of class $\Ascr^r(M)$.

The following notions will play an important role in our analysis.

%
%
%    NONFLAT CMIS
%
%
\begin{definition}
\label{def:nondegenerate}
Let $M$ be a connected open or bordered Riemann surface, 
let $\theta$ be a nowhere vanishing holomorphic $1$-form on $M$, 
and let $\Agot$ be the null quadric (\ref{eq:null}).
\begin{enumerate}
\item A holomorphic map $f:M\to \Agot_*$ is {\em flat} if the image 
$f(M)$ is contained in a complex ray $\c\nu\subset \Agot$ $(\nu\in \Agot_*)$ of the null quadric,
and is {\em nonflat} otherwise.

\vspace{1mm}

\item A conformal minimal immersion $X\colon M\to \r^n$ is {\em nonflat} if 
the map $f=\di X/\theta: M\to \Agot_*$ is nonflat; equivalently,  if the image $X(M)\subset\R^n$ 
is not contained in an affine plane. A null holomorphic immersion 
$Z\colon M\to \C^n$ is {\em nonflat} if the map $f=dZ/\theta: M\to \Agot_*$ is nonflat.

\vspace{1mm}

\item  A holomorphic map $f:M\to \Agot_*$ is {\em full} if the image 
$f(M)$ is not contained in any complex hyperplane of $\C^n$.
A conformal minimal immersion $X\colon M\to \r^n$ is {\em full} if the image $X(M)$ is not contained 
in any affine hyperplane of $\R^n$. 
\end{enumerate}
\end{definition}

For a conformal minimal immersion $M\to \r^3$, nonflat and full are equivalent conditions. 
However, in dimensions $n>3$  we clearly have that $\text{full $\Longrightarrow$nonflat}$,
but the converse is obviously not true. 
If $M$ is an open Riemann surface, we denote by $\CMI_{\rm nf}(M,\r^n)$ the open subset of $\CMI(M,\r^n)$ 
consisting of all immersions which are nonflat on every connected component of $M$. 
The analogous notation 
\[
	\CMI_{\rm nf}^r(M,\r^n) \subset \CMI^r(M,\r^n)
\]
is used for a compact bordered Riemann surface $M$. 
Likewise,  $\NC_{\rm nf}(M,\C^n)$ is the space of all nonflat holomorphic null curves.

Since the tangent space $T_z\Agot$ is the kernel at $z$ of the $(1,0)$-form $\sum_{j=1}^n z_j\, dz_j$, 
we have $T_z\Agot = T_w\Agot$ for $z,w\in\c^n\setminus\{0\}$ if and only if $z$ and $w$ are colinear.
This implies

\begin{lemma}\label{lem:nonflat}
A holomorphic map $f\colon M\to \Agot_*$ is nonflat if and only if the linear span of the tangent spaces 
$T_{f(p)} \Agot\subset T_{f(p)}\C^n\cong \C^n$ 
over all points $p\in M$ equals $\c^n$.
\end{lemma}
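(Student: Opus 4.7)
The plan is to combine the explicit description of $T_z\Agot$ with the remark preceding the lemma, which identifies when two such tangent spaces coincide. Since $\Agot$ is cut out by $\sum_j z_j^2=0$, differentiating gives
\[
	T_z\Agot = \Bigl\{w\in\C^n : \sum_{j=1}^n z_j w_j = 0\Bigr\} = z^{\perp},
\]
where $\perp$ refers to the complex symmetric bilinear form $\langle u,v\rangle=\sum_j u_j v_j$. For any nonzero $z\in\C^n$ this is a hyperplane of complex dimension $n-1$, and the relation $z^{\perp}=w^{\perp}$ forces $z$ and $w$ to be colinear, as noted just before the statement.

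For the easy direction, I would argue by contrapositive: if $f$ is flat, so $f(M)\subset\C\nu$ for some $\nu\in\Agot_*$, then $f(p)$ is a nonzero scalar multiple of $\nu$ for every $p\in M$, so $T_{f(p)}\Agot=f(p)^{\perp}=\nu^{\perp}$ is the same hyperplane for all $p$. Consequently the span in question equals the single hyperplane $\nu^{\perp}\subsetneq\C^n$.

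For the converse, again by contrapositive, suppose the linear span
\[
	V = \mathrm{span}\bigl\{\,T_{f(p)}\Agot : p\in M\,\bigr\}
\]
is a proper subspace of $\C^n$. Choose a nonzero $w\in\C^n$ with $V\subset w^{\perp}$; then for every $p\in M$ one has $f(p)^{\perp}\subset w^{\perp}$. Since $f(p)\neq 0$ and $w\neq 0$, both $f(p)^{\perp}$ and $w^{\perp}$ are hyperplanes of the same complex dimension $n-1$, so the inclusion is in fact an equality, forcing $f(p)\in\C w$ for all $p\in M$. Thus $f(M)\subset\C w$, and since $f(M)\subset\Agot_*$ we get $w\in\Agot_*$, showing $f$ is flat.

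The argument is essentially pure linear algebra; the only subtle point worth flagging is that the relevant orthogonality is with respect to the nondegenerate complex bilinear form $\langle\cdot,\cdot\rangle$ (not the Hermitian inner product), so that dimensions behave as expected and containment of equidimensional hyperplanes forces equality. There is no serious analytic or geometric obstacle, since the holomorphy of $f$ plays no role beyond ensuring that $f$ actually takes values in $\Agot_*$.
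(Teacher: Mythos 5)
Your proof is correct and follows essentially the same route as the paper: identify $T_z\Agot$ with the hyperplane $z^\perp$ for the complex symmetric bilinear form, note that two such hyperplanes coincide if and only if the base points are colinear, and conclude. The paper states this step-and-conclusion very tersely, and you have simply filled in the elementary linear algebra (nondegeneracy of the form, containment of equidimensional hyperplanes forcing equality) that the paper leaves implicit.
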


%
%   ADMISSIBLE SETS AND GCMIs
%

We now introduce sets in Riemann surfaces that are used in Mergelyan approximation theorems for
conformal minimal immersions, and the notion of a generalized conformal minimal
immersion on them. Such sets appear naturally in the constructions of conformal minimal immersions. 

\begin{definition}\label{def:admissible}
Let $M$ be an open Riemann surface. A compact set $S\subset M$ is {\em admissible} if it is Runge in $M$ 
and of the form $S=K\cup \Gamma$, where $K$ is a finite union of pairwise disjoint smoothly bounded compact domains 
in $M$ and $\Gamma= S \setminus\mathring  K$ is a finite union of pairwise disjoint smooth Jordan arcs and closed 
Jordan curves meeting $K$ only in their endpoints (or not at all) and such that their intersections with the boundary 
$bK$ of $K$ are transverse.
\end{definition} 

\begin{definition}\label{def:GCMI}  
Let $S=K\cup\Gamma$ be an admissible set in an open Riemann surface $M$ and let $\theta$ be a 
nowhere vanishing holomorphic $1$-form on $M$. A {\em generalized conformal minimal immersion}  
$S\to\r^n$ is a pair $(X,f\theta)$, where $X\colon S\to \r^n$ is a smooth map that is a conformal 
minimal immersion on an open neighborhood of $K$ and $f\colon S\to\Agot_*$ is a smooth map that is 
holomorphic on a neighborhood of $K$, such that
\begin{itemize}
\item $f\theta =2\di X$ holds on an open neighborhood of $K$, and
\vspace{1mm}
\item for any smooth path $\alpha$ in $M$ parametrizing a connected component of $\Gamma$ we have 
$\Re(\alpha^*(f\theta))=\alpha^*(dX)=d(X\circ \alpha)$.
\end{itemize}
\end{definition}

We denote the space of all generalized conformal minimal immersions $S\to\r^n$ by 
\[
	\GCMI(S,\r^n).
\] 
 
%%%%%%%%%%
%%%%%%%%%%
%%%%%%%%%% SECTION: OKA THEORY, PERIOD DOMINATING SPRAYS,...
%%%%%%%%%%
%%%%%%%%%% 
%%%%%%%%%%

\section{\sc Oka theory, period dominating sprays, and loops with given periods in the null quadric} \label{sec:Oka}

Oka theory concerns the existence, approximation, and extension theorems 
for holomorphic maps $f\colon S\to O$ from Stein manifolds $S$ to Oka manifolds $O$. 
In this section we recall the main results of Oka theory which 
are used in the study of minimal surfaces. For Stein manifolds, see any of the monographs 
\cite{GrauertRemmert1979,GunningRossi2009,Hormander1990book}
or \cite[Chap.\ 2]{Forstneric2017E}. For Oka theory, see \cite[Chaps.\ 5--7]{Forstneric2017E}
and the surveys \cite{Forstneric2013KAWA,ForstnericLarusson2011}.
A recent survey of holomorphic approximation theory is available in \cite{FornaessForstnericWold2018}.

%
%   OKA MANIFOLDS
%
\subsection{Stein manifolds}\label{ss:Stein}
A complex manifold $S$ is said to be a {\em Stein manifold}
(named after Karl Stein who introduced this important class of complex manifolds in 1951)
% under the name  {\em holomorphically complete manifolds}) 
if it satisfies the following two conditions:
\begin{enumerate}
\item holomorphic functions on $S$ separate any pair of distinct points, and
\vspace{1mm}
\item if $K$ is a compact subset of $S$, then so is its $\Ocal(S)$-convex hull
\[
	\wh K =\bigl\{x \in S : |f(x)| \le \sup_K |f|\ \ \forall f\in \Ocal(S)\bigr\}.
\]
\end{enumerate}
A compact set $K\subset S$ is called {\em $\Ocal(S)$-convex} if $K=\wh K$. 
If $S=\C^n$ then $\wh K$ is the {\em polynomial hull} of $K$. 
Clearly, no manifold containing a compact complex submanifold of positive dimension is Stein.

The main example for the purposes of this paper is when $\dim S=1$, i.e., $S$ is a Riemann surface.
Every open Riemann surface is a Stein manifold according to Behnke and Stein (1949), and 
in this case the hull $\wh K$ of any compact set $K\subset S$ is the union
of $K$ with all relatively compact connected components of its complement $S\setminus K$.
Furthermore, the Cartesian product $S_1\times S_2$ of a pair of Stein manifolds is Stein,
and the total space $E$ of any holomorphic vector bundle $E\to S$ over a Stein base $S$ is Stein.
A domain $\Omega\subset \C^n$ is Stein if and only if it is a domain of holomorphy
(which holds if and only if it is pseudoconvex). In particular, every domain in $\C$ is Stein,
and every convex domain in $\C^n$ for any $n\ge 1$ is Stein.

There are several other characterizations of the class of Stein manifolds. One is that a 
Stein $n$-manifold $S$ embeds properly holomorphically into the Euclidean space $\C^{2n+1}$
(Remmert 1956, Bishop 1960, Narasimhan 1961; see \cite[Theorem 2.4.1]{Forstneric2017E}); 
the converse is easily seen by restricting holomorphic polynomials to the embedded submanifold. 
Another characterization of Stein manifolds is by the existence of strongly plurisubharmonic exhaustion
functions (see \cite[Sect.\ 2.5]{Forstneric2017E}). 

The axioms (1) and (2) say that a Stein manifold admits many holomorphic functions. 
More explicit manifestations of this phenomenon are the {\em Oka-Weil approximation 
theorem} and the {\em Oka-Cartan extension theorem}.  The first one says
that, given an $\Ocal(S)$-convex compact set $K$ in a Stein manifold $S$ and
a holomorphic function $f$ on a neighborhood of $K$, we can 
approximate $f$ as closely as desired uniformly on $K$ by global holomorphic functions on $S$.
This generalizes the classical Runge theorem for functions on $\C$. 
(See the survey \cite{FornaessForstnericWold2018} for more information.)
The second one says that for any closed complex subvariety $S'$ of
a Stein manifold $S$ and holomorphic function $f\colon S'\to \C$ there exists
a holomorphic function $F\colon S\to \C$ extending $f$, i.e., $F|_{S'}=f$.
If $S$ is an open Riemann surface and $S'$ is a discrete subset of $S$, this  
is the classical Weierstrass interpolation theorem \cite{Weierstrass1886} (see also \cite{Florack1948SMIUM}).
One may combine the approximation and the interpolation statement, including  also 
jet interpolation on a subvariety and continuous dependence on parameters; 
see \cite[Theorem 2.8.4]{Forstneric2017E}. The same results hold for sections
of any holomorphic vector bundle over a Stein manifold. These classical results, 
along with Cartan's Theorems A and B  (see \cite[Sect.\ 2.6]{Forstneric2017E}), 
form the basis for analysis on Stein manifolds.

%
%    OKA THEORY
%
\subsection{Oka theory}\label{ss:Oka}
We may consider holomorphic functions on Stein manifolds as holomorphic maps $S\to \C$. Applying the above 
mentioned approximation and interpolation results componentwise, we can extend them to
maps $S\to\C^N$ for any $N\in\N$. A completely different picture emerges
for maps $S\to X$ to more general complex manifolds.
For example, {\em Picard's theorem} says that there are no nonconstant
holomorphic maps $\C\to \C\setminus\{0,1\}$. On the other hand, Grauert proved in 1957--58
\cite{Grauert1957MA,Grauert1958MA} that the approximation and interpolation 
results still hold in the absence of topological obstructions for maps to 
complex homogeneous manifolds; the case when $X=\C_*$ is Oka's theorem from 1939.

%
%    THE OKA-GRAUERT THEOREM
%
\begin{theorem}[The Oka-Grauert theorem]
\label{th:OP}
Assume that $S$ is a Stein manifold, $K$ is an $\Ocal(S)$-convex compact subset of $S$,
$S'$ is a closed complex subvariety of $S$, $X$ is a complex homogeneous manifold,
and $f\colon S\to X$ is a continuous map that is holomorphic on an open neighborhood 
of $K$ and whose restriction $f|_{S'}\colon S'\to X$ is holomorphic. Then, $f$ can
be approximated uniformly on $K$ by holomorphic maps $F\colon S\to X$ 
satisfying $F|_{S'}=f$. If in addition $f$ is holomorphic on a neighborhood of $S'$,
then $F$ can be chosen to agree with $f$ to any given finite order along $S'$.
The analogous result holds for sections $S\to E$ of any principal fibre bundle $\pi:E\to S$
over a Stein manifold $S$.
\end{theorem}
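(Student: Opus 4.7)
My plan is to reduce the theorem to the classical Cartan--Oka theorems for holomorphic sections of vector bundles over Stein manifolds, by manufacturing on the target $X$ a dominating holomorphic spray coming from the transitive Lie group action. Let $G$ be a complex Lie group acting holomorphically and transitively on $X$ with Lie algebra $\ggot$ and exponential map $\exp\colon\ggot\to G$. The map
$$ s\colon X\times \ggot\lra X,\qquad s(x,v)=\exp(v)\cdot x, $$
is holomorphic, satisfies $s(\cdot,0)=\mathrm{id}_X$, and by transitivity its partial differential $d_vs(x,0)\colon \ggot\to T_xX$ is surjective at every $x\in X$. Composition with $f$ thus provides a dominating spray $(p,v)\mapsto \exp(v)\cdot f(p)$ along the ``section'' $f$. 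The point is that any $\Cscr^0$-small perturbation $g$ of $f$ can locally be written as $g(p)=\exp(\gamma(p))\cdot f(p)$ for a $\ggot$-valued function $\gamma$, so deformations of $f$ are locally reduced to the linear problem of producing $\gamma$.

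With this nonlinear linearization in hand, I would run the standard Grauert--Cartan induction on a strongly plurisubharmonic exhaustion of $S$. Choose an exhaustion by smoothly bounded strongly pseudoconvex domains $K\Subset D_0\Subset D_1\Subset\cdots$, adapted to $S'$ so that each $\overline{D_j}\cap S'$ is $\Ocal(\overline{D_j})$-convex. Assume inductively that $F_j\in\Ocal(\overline{D_j},X)$ has been produced, homotopic to $f|_{\overline{D_j}}$, agreeing with $f$ on $S'\cap\overline{D_j}$, and uniformly close to $f$ on $K$. To advance to $D_{j+1}$, cover $\overline{D_{j+1}}\setminus D_j$ by finitely many small convex bumps; on each bump $B$, the continuous map $f$ admits a holomorphic approximation $g\colon B\to X$ by applying Oka--Weil in a local holomorphic chart on $X$. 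On the overlap $\overline{D_j}\cap B$ the maps $F_j$ and $g$ are $\Cscr^0$-close, so write $g=\exp(\gamma)\cdot F_j$ for a small holomorphic $\ggot$-valued $\gamma$, solve a Cartan-type additive splitting $\gamma=\gamma_0-\gamma_1$ for holomorphic vector-valued functions on the Cartan pair $(\overline{D_j},B)$, and glue $F_{j+1}:=\exp(\gamma_0)\cdot F_j=\exp(\gamma_1)\cdot g$ on $\overline{D_j}\cup B$. Passing through critical levels of the exhaustion requires the usual Andreotti--Narasimhan handle attachments, which I would use off the shelf.

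To preserve the interpolation condition $F_j|_{S'}=f|_{S'}$ throughout the induction, I would choose the additive splittings with $\gamma_0,\gamma_1$ vanishing on $S'$; this is possible by applying Cartan's Theorems~A and B to the ideal sheaf of $S'$ on the Stein compact $\overline{D_j}\cap B$. Higher-order vanishing of the splittings delivers the jet interpolation claim when $f$ is already holomorphic near $S'$. For the extension to a principal $G$-bundle $\pi\colon E\to S$, one can either regard sections as $G$-equivariant maps $E\to G$ and apply the homogeneous case to $X=G$ with its left translation action, or trivialize $\pi$ over a Stein open cover of $S$ and perform the same Cartan--Oka gluing on the cocycle, invoking Cartan's Theorem~B to solve the relevant $\dibar$-equations.

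The main obstacle is the combined approximation/interpolation Cartan splitting with appropriate size control. The exponential map linearizes the gluing locally, but the splittings $\gamma_0^{(j)},\gamma_1^{(j)}$ produced at each stage must be chosen simultaneously small enough for the infinite iteration $\exp(\gamma_0^{(j)})\cdot\exp(\gamma_0^{(j-1)})\cdots$ to converge in $\Cscr^0$ on compacts of $S$, compatible with the prescribed values on $S'$, and compatible with the topological changes across critical levels of the exhaustion. Balancing these three requirements simultaneously is the delicate technical heart of the proof.
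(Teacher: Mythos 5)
The paper does not give a proof of Theorem \ref{th:OP}; it cites Grauert's papers \cite{Grauert1957MA,Grauert1958MA} and \cite{Forstneric2017E}, so your proposal must stand on its own against the classical arguments. Your overall architecture (transitive $G$-action providing a spray $s(x,v)=\exp(v)\cdot x$, exhaustion by strongly pseudoconvex domains adapted to $S'$, local Mergelyan-type approximation on bumps, gluing across Cartan pairs, ideal-sheaf vanishing for interpolation, handle attachment at critical levels, and then either equivariant maps $E\to G$ or cocycle-level gluing for the principal bundle case) is the correct Grauert framework.

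There is, however, a concrete gap in the gluing step, and it is precisely where Grauert's theorem is nontrivial. You solve an \emph{additive} Cousin-type splitting $\gamma=\gamma_0-\gamma_1$ in the Lie algebra $\ggot$ and then set $F_{j+1}:=\exp(\gamma_0)\cdot F_j=\exp(\gamma_1)\cdot g$. On the overlap this requires $\exp(\gamma_0)\cdot F_j=\exp(\gamma_1)\exp(\gamma)\cdot F_j$, i.e.\ $\exp(\gamma_0)=\exp(\gamma_1)\exp(\gamma)$ in $G$ modulo the stabilizer. But from $\gamma_0=\gamma_1+\gamma$ one only gets $\exp(\gamma_0)=\exp(\gamma_1+\gamma)$, which for a nonabelian $G$ differs from $\exp(\gamma_1)\exp(\gamma)$ by Baker--Campbell--Hausdorff commutator terms; the two sides of your gluing do not agree. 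The classical fix is to prove the \emph{multiplicative} Cartan splitting lemma for $G$-valued maps on a Cartan pair (factor a $G$-valued transition map close to the identity as $\Gamma=\Gamma_0\Gamma_1^{-1}$), which is itself established by an iteration that starts from the additive case and absorbs the BCH error terms; alternatively, the modern route (used in \cite{Forstneric2017E}) replaces this by the gluing of dominating holomorphic sprays together with an implicit function theorem on Banach spaces. Either way, this missing nonabelian splitting is not a cosmetic technicality but the essential content separating Grauert's theorem from the scalar Cousin problems, and it needs to be stated and proved (or cited precisely) rather than subsumed under ``the delicate technical heart.''

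A smaller point: writing $g=\exp(\gamma)\cdot F_j$ for a \emph{global} $\ggot$-valued $\gamma$ on the overlap tacitly assumes a holomorphic local section of the orbit map $G\to X$ near the identity coset together with a lift of the $\exp$-inverse; this exists since $G\to X$ is a holomorphic submersion, but when the stabilizer is positive-dimensional and nonconnected one must verify that the lift can be chosen continuously in the inductive parameter. Stating this explicitly would close the remaining soft spot in the linearization step.
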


\begin{comment}
In fact, the initial map $f$ in Theorem \ref{th:OP} can be deformed
to a holomorphic map $F$ through a homotopy of continuous maps $f_t\colon S\to X$
$(t\in [0,1])$ which is fixed on $S'$ (to any finite order if the initial map
$f=f_0$ is holomorhic on a neighborhood of $S'$)  and such that for every $t$,
$f_t$ is holomorphic near $K$ and uniformly close to $f$ on $K$. 

The analogous result holds for families of maps depending on a parameter in a compact Hausdorff space.
\end{comment}

In the theory of minimal surfaces, Theorem  \ref{th:OP} is mainly used with 
$X$ either the punctured null quadric $\Agot_*\subset \C^n$, 
the intersection of $\Agot_*$ with an affine complex hyperplane in $\C^n$,
the punctured Euclidean space $\C^n_*$, or a projective space $\CP^n$. 
All these manifolds are complex homogeneous.

A complex manifold $X$ satisfying the conclusion of Theorem \ref{th:OP} is called an {\em Oka manifold}. 
The class of Oka manifolds also contains many nonhomogeneous manifolds; 
see \cite[Sect.\ 5.6 and Chap.\ 7]{Forstneric2017E}. 
The most general Oka principle for maps from Stein manifolds to Oka manifolds
is given by \cite[Theorem 5.4.4]{Forstneric2017E} which also includes the parametric case,
i.e., families of maps depending continuously on a parameter in a compact Hausdorff space.
It follows in particular that for every Stein manifold $S$ and Oka manifold $O$
the natural inclusion $\Ocal(S,O)\hra \Cscr(S,O)$ is a weak homotopy equivalence
(see \cite[Corollary 5.5.6]{Forstneric2017E}), and is the inclusion of a strong deformation retract
(hence a homotopy equivalence) if $S$ is of finite analytic type in the sense that 
it admits a strongly plurisubharmonic exhaustion function
with only finitely many critical points (see \cite[Theorem 5.5.9]{Forstneric2017E} due to
L\'arusson). Note that an open Riemann surface $S$ is of finite analytic type if
and only if it is of finite topological type, i.e., the homology group $H_1(S;\Z)$ is finitely generated.

Theorem \ref{th:OP} and its extension to Oka manifolds also hold with Mergelyan type approximation; 
see \cite[Corollaries 5.4.6 and 5.4.7]{Forstneric2017E} and \cite{FornaessForstnericWold2018}.

A useful sufficient condition for a manifold $X$ to be Oka is the existence of  finitely many $\C$-complete 
holomorphic vector fields $V_1,\ldots, V_N$ on $X$ which span the tangent space of $X$ at any point. 
(If $X=G/H$ is a homogeneous manifold of a complex Lie group $G$,  
this holds for $G$-invariant holomorphic vector fields on $X$ which are always complete.)  
The composition of their flows $\phi^j_t$ for complex values of $t$ 
gives the map $\sigma \colon X\times \C^N\to X$, defined by
\begin{equation}\label{eq:flowspray}
	\sigma(x,t_1,\ldots,t_N) = \phi^1_{t_1}\circ \cdots \circ \phi^N_{t_N}(x)\in X
\end{equation}
for $x\in X$ and $t=(t_1,\ldots,t_N)\in\C^N$, satisfying the domination condition
\begin{equation}\label{eq:domination}
	\frac{\di \sigma(x,t)}{\di t}\bigg|_{t=0} \colon \C^N \to T_x X\quad \text{is surjective for every}\ x\in X. 
\end{equation}
A holomorphic map $\sigma\colon X\times \C^N\to X$ satisfying 
$\sigma(x,0)=x$ for all $x\in X$ and the domination condition \eqref{eq:domination} is called a 
{\em dominating spray} on $X$. More generally, we may take as the domain of the spray the total space 
$E$ of any holomorphic vector bundle $\pi\colon E\to X$. Gromov proved in \cite{Gromov1989JAMS} 
that every complex manifold admitting a dominating spray is an Oka manifold. 
For more on this subject see \cite[Chap.\ 6]{Forstneric2017E}.

A {\em (holomorphic) dominating spray of maps} $S\to X$ is a holomorphic map 
$F\colon S\times V\to X$, where $V\subset \C^N$ is an open
neighborhood of the origin in a complex Euclidean space, such that
\begin{equation}\label{eq:domination-maps}
	\frac{\di F(s,t)}{\di t}\bigg|_{t=0} : \C^N \longrightarrow T_{F(s,0)} X\quad \text{is surjective for every}\ s\in S.
\end{equation}
The map $F_0=F(\cdotp,0): S\to X$ is called the {\em core map}, or simply the {\em core}, 
of $F$. If $X$ admits a dominating spray $\sigma\colon X\times\C^N\to X$, then for any
holomorphic map $f\colon S\to X$, the map $F\colon S\times \C^N\to X$ given by
\[
	F(s,t) = \sigma(f(s),t) \in X,\qquad s\in S,\ t\in \C^N
\]
is a dominating spray of maps with the core $F_0=f$. For instance, if $\sigma$ 
is of the type \eqref{eq:flowspray} defined by flows $\phi^j_t$ of complete holomorphic vector fields, then
\begin{equation}\label{eq:flowspray-maps}
	F(s,t_1,\ldots,t_N) = \phi^1_{t_1}\circ \cdots \circ \phi^N_{t_N}(f(s))\in X,
	\qquad s\in S,\ t\in\C^N.
\end{equation}
In general, globally defined dominating sprays with a given core do not exist
unless $S$ is an Oka manifold. However,  for every holomorphic  
map $f\colon S\to X$ from a Stein manifold $S$ to an arbitrary complex manifold $X$
and for any compact subset $K\subset S$ there exist a Stein neighborhood $U\Subset S$ of $K$ and a 
dominating spray $F\colon U\times V\to X$ of the form \eqref{eq:flowspray-maps} with $F(\cdotp, 0)=f|_U$,
where $V$ is a neighborhood of the origin in some $\C^N$. Such $F$ is obtained by composing flows of 
(not necessarily complete) holomorphic vector fields on $X$ defined on a 
neighborhood  $\Omega \subset S\times X$ of the graph 
$	
	G_f(U) = \{(s,f(s)): s\in U\} \subset S\times X
$
of $f|_U$. Note that $G_f(U)$ admits an open Stein neighborhood in $S\times X$ 
by Siu's theorem (see \cite[Theorem 3.1.1]{Forstneric2017E}), and the rest
follows from Cartan's Theorem A on Stein manifolds.

%
%   PERIOD DOMINATING SPRAYS AND MAPS INTO THE NULL QUADRIC
%
\subsection{Period dominating sprays of maps into the null quadric}\label{ss:period-dominating}

Let $M$ be a compact connected bordered Riemann surface with boundary $bM$. 
Denote by $g\ge 0$ the genus of $M$ and by $m\ge 1$ the number of its boundary components;
hence $H_1(M;\Z)\cong \Z^l$ with $l=2g+m-1$.
We may assume that $M$ is a smoothly bounded domain in an open Riemann surface $R$.
For a fixed  choice of a nowhere vanishing holomorphic $1$-form $\theta$ on $R$ and of a
basis $\{C_j\}_{j=1}^l$ of $H_1(M;\Z)$ we let 
\begin{equation}\label{eq:P}
	\Pcal=(\Pcal_1,\ldots, \Pcal_l) : \Ascr(M,\C^n)\to (\C^n)^l = \C^{ln}
\end{equation}
be the {\em period map} whose $j$-th component equals 
\begin{equation}\label{eq:Pj}
	\Pcal_j(f) = \oint_{C_j} f\theta \in \C^n,\qquad  f\in \Ascr(M,\C^n). 
\end{equation}
Note that the holomorphic 1-form $f\theta$ on $M$ is exact if and only if $\Pcal(f) =0$;
this condition is clearly independent of the choice of a period basis.

Recall that $\Agot_*$ denotes the punctured null quadric \eqref{eq:null*}. The following lemma 
(see \cite[Lemma 5.1]{AlarconForstneric2014IM} and \cite[Lemma 3.2]{AlarconForstnericLopez2016MZ}) 
provides one of our main technical tools. 

%
%   EXISTENCE OF PERIOD DOMINATING SPRAYS
%
\begin{lemma}
\label{lem:deformation} 
Given a nonflat map $f\in \Ascr(M,\Agot_*)$ (see Definition \ref{def:nondegenerate}),  
there exist an open neighborhood $V$ of the origin in $\C^{ln}$ and a 
map $\Phi_f\colon M\times V \to \Agot_*$ of class $\Acal(M\times V,\Agot_*)$ such that $\Phi_f(\cdotp,0)=f$ and 
\begin{equation}\label{eq:period-domination}
	\frac{\di}{\di t}\bigg|_{t=0} \Pcal(\Phi_f(\cdotp,t)) : (\C^n)^l \to (\C^n)^l 
	\quad \text{is an isomorphism}.
\end{equation}
Furthermore, given a finite set $P\subset M$ and $r\in \N$, we may choose $\Phi_f$ such that 
\begin{equation}\label{eq:agree}
\text{$\Phi_f(\cdotp,t)\colon M\to\Agot_*$ agrees with $f$ to order $r$ at each 
$p\in P$ for all $t\in V$.}
\end{equation}
There is a neighborhood $\Omega_f$ of $f$ in $\Ascr(M,\Agot_*)$ such that the map $\Omega_f \ni g \mapsto \Phi_{g}$ 
depends holomorphically on $g$.
\end{lemma}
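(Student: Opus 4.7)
The plan is to construct $\Phi_f$ as a composite of flows of finitely many $\C$-complete holomorphic vector fields on $\Agot_*$, with complex times of the form $t_k h_k(p)$ for judiciously chosen $h_k \in \Ocal(M)$. Since $\Agot_*$ is a homogeneous space of $\C_* \oplus {\mathrm O}_n(\C)$, its tangent bundle is spanned at every point by $\C$-complete holomorphic vector fields $V_1,\ldots, V_N$ arising from the Lie algebra action. Writing $\phi^k_s$ for the time-$s$ flow of $V_k$, I pick indices $\nu(1), \ldots, \nu(ln) \in \{1, \ldots, N\}$ and functions $h_1, \ldots, h_{ln} \in \Ocal(M)$ to be determined, and set
$$\Phi_f(p, t) = \phi^{\nu(1)}_{t_1 h_1(p)} \circ \phi^{\nu(2)}_{t_2 h_2(p)} \circ \cdots \circ \phi^{\nu(ln)}_{t_{ln} h_{ln}(p)}(f(p))$$
for $p \in M$ and $t = (t_1, \ldots, t_{ln})$ in a small neighborhood $V$ of the origin in $\C^{ln}$. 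Then $\Phi_f$ is holomorphic with values in $\Agot_*$ and satisfies $\Phi_f(\cdot, 0) = f$.

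Differentiating at $t = 0$ in the $k$-th coordinate direction gives the tangent vector field $\xi_k(p) = h_k(p)\, V_{\nu(k)}(f(p))$ along $f$ with values in $T_{f(p)}\Agot$, so the derivative of the period map in \eqref{eq:period-domination} has columns
$$\oint_{C_j} h_k(p)\, V_{\nu(k)}(f(p))\, \theta(p) \in \C^n, \qquad j = 1, \ldots, l,\ k = 1,\ldots, ln.$$
Arranging for this $ln \times ln$ block matrix to be invertible is the crux. Here nonflatness enters via Lemma \ref{lem:nonflat}: the spaces $T_{f(p)}\Agot$ jointly span $\C^n$ as $p$ varies over $M$, so the $\C$-linear span of $\{V_k(f(p)) : p \in M,\ k = 1,\ldots, N\}$ is all of $\C^n$. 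Combining this with Runge--Mergelyan approximation on $M$, which allows one to pick $h \in \Ocal(M)$ whose contribution to $\oint_{C_j} h\, V_k(f)\, \theta$ localizes near any prescribed arc of $C_j$ while being arbitrarily small along the other homology cycles, one shows that the linear map
$$\Ocal(M)^N \longrightarrow (\C^n)^l, \quad (h^{(1)}, \ldots, h^{(N)}) \longmapsto \Bigl(\sum_k \oint_{C_j} h^{(k)}\, V_k(f)\, \theta\Bigr)_{j=1}^l$$
is surjective. Choosing $ln$ source vectors whose images form a basis of $(\C^n)^l$ and re-indexing produces the desired $(h_k, \nu(k))$.

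To enforce the jet condition \eqref{eq:agree}, replace each $h_k$ by $\psi\, h_k$ where $\psi \in \Ocal(M)$ vanishes to order $r+1$ at every point of $P$, such $\psi$ being supplied by the Oka--Cartan interpolation theorem; this makes $\Phi_f(\cdot, t)$ agree with $f$ to order $r$ at each point of $P$ for every $t \in V$. The surjectivity argument may be re-run inside the subspace $\psi \cdot \Ocal(M) \subset \Ocal(M)$, which is Runge-dense in the relevant sense away from $P$ and therefore still yields an invertible period-derivative matrix. With the vector fields $V_k$, indices $\nu(k)$, and functions $h_k, \psi$ now fixed, the same formula defines a spray $\Phi_g$ for every $g$ in an $\Ascr(M, \Agot_*)$-neighborhood of $f$; the period-derivative matrix depends holomorphically on $g$ and invertibility is an open condition, so this neighborhood is the required $\Omega_f$. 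The main obstacle is the Runge surjectivity step --- everything else is formal or standard, but combining nonflatness with arc-by-arc localization of holomorphic functions on each $C_j$ to realize prescribed $\C^n$-valued periods is the substantive content.
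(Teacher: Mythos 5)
Your proposal follows essentially the same route as the paper: compose local flows of holomorphic vector fields tangent to $\Agot$ with complex times $t_k h_k(p)$, use nonflatness to obtain a span condition, localize the $h_k$ on short arcs of the cycles $C_j$ to make the period-derivative matrix invertible (block-diagonal in the paper's presentation), approximate by Mergelyan, and multiply by a function vanishing to high order at $P$ for the jet condition, with holomorphic dependence on the core then automatic. The only detail you elide is that the span condition $\span_{p\in M} T_{f(p)}\Agot=\C^n$ coming from nonflatness must first be upgraded, via the identity principle, to a span condition on each individual cycle $C_j$ so that the arc-by-arc localization actually produces the full target $(\C^n)^l$; this is precisely the step at which the paper invokes the identity principle.
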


A map $\Phi_f$ satisfying Lemma \ref{lem:deformation}  is called a 
{\em period dominating spray} of maps $M\to \Agot_*$ with the core $\Phi_f(\cdotp,0)=f$.

\begin{proof}
We first consider the case without paying attention to \eqref{eq:agree}; the 
modification to ensure this matching condition will be explained at the end.

Let $C_1,\ldots,C_l\subset \mathring M$ be smooth oriented Jordan curves 
providing a homology basis for $H_1(M;\Z)$ and
such that $C=\bigcup_{j=1}^l C_j$ is Runge in $M$ (see Sect.\ \ref{ss:Riemann}).  
We may assume that the curves $C_i$ have a single common point $p_0\in M$,
i.e., $C_i\cap C_j=\{p_0\}$ for any $i\ne j$.
Let $\Pcal=(\Pcal_1,\ldots,\Pcal_l)$ be the associated period map \eqref{eq:P}, \eqref{eq:Pj}.
Since $f$ is nonflat,  Lemma \ref{lem:nonflat} and the identity principle show that for every $j=1,\ldots, l$ 
there are points $p_{j,k}\in C_j\setminus \{p_0\}$ and holomorphic vector fields 
$V_{j,k}$  ($k=1,\ldots, n$) on $\C^n$, tangent to the null quadric $\Agot$, such that 
\begin{equation}\label{eq:span}
	\span \bigl\{V_{j,k}(x_{j,k}): k=1,\ldots,n\bigr\}=\C^n\quad \text{where}\ x_{j,k}=f(p_{j,k}).
\end{equation}
Let $\phi^{j,k}_t$ denote the local holomorphic flow of $V_{j,k}$ for a complex time variable $t$. 
Write $t=(t_1,\ldots,t_l)\in (\C^{n})^l$ where $t_j=(t_{j,1},\ldots,t_{j,n})\in\C^n$. 
For every $j=1,\ldots,l$ and $k=1,\ldots,n$ we pick a smooth function $h_{j,k} \colon C\to \C$, supported
on a short arc in $C_j$ around the point $p_{j,k}\in C_j$, and consider the map 
\begin{equation}\label{eq:Phi}
	\Phi(p,t)=\phi_{h_{1,1}(p)t_{1,1}}^{1,1} \circ \cdots \circ \phi_{h_{l,n}(p)t_{l,n}}^{l,n}(f(p)) \in\Agot_*,\qquad p\in C.
\end{equation}
(We take the composition of flows $\phi_{h_{j,k}(p)t_{j,k}}^{j,k}$ for all $j=1,\ldots,l$ and $k=1,\ldots,n$.)
Note that $\Phi(\cdotp,0)=f$, $\Phi$ is well defined for all $t\in \C^{ln}$ sufficiently close to the origin, and 
it has range in $\Agot_*$. Clearly we have that
\[
	\frac{\di \Phi(p,t) }{\di t_{j,k}}\bigg|_{t=0} = h_{j,k}(p) V_{j,k}(f(p)), \qquad p\in C,
\]
and hence
\[
	\frac{\di \Pcal_i(\Phi(\cdotp,t))}{\di t_{j,k}}\bigg|_{t=0}  = \oint_{C_i} h_{j,k} (V_{j,k}\circ f) \theta.
\]
A suitable choice of the functions $h_{j,k}$ ensures that the above expression is as close as desired
to $V_{j,k}(x_{j,k})$ if $i=j$, and it equals zero otherwise. 
In view of \eqref{eq:span} it follows that the differential 
$\frac{\di}{\di t}|_{t=0} \Pcal(\Phi(\cdotp,t))\colon \C^{ln}\to  \C^{ln}$
has a block structure with vanishing off-diagonal $n\times n$ blocks
and with invertible diagonal blocks; hence it is invertible. By Mergelyan's theorem 
we can approximate each function $h_{j,k}$ uniformly on $C$ by a holomorphic
function $\tilde h_{j,k}\in \Ocal(M)$. Inserting these new functions into 
the definition of $\Phi$ \eqref{eq:Phi} we obtain a spray $\Phi_f$
of maps $M\to \Agot_*$ satisfying the conclusion of the lemma. 
Indeed, \eqref{eq:period-domination} holds provided that the approximation 
of the functions $h_{j,k}$ by $\tilde h_{j,k}$ is close enough, and the other properties are obvious.

In order to ensure the condition \eqref{eq:agree}, we choose the curves $C_i$  in the homology basis
so that they do not intersect the finite set $P$. Choose a  funtion 
$g\in\Ocal(M)$ that vanishes to order $r+1$ at each of the points in $P$ and has no other zeros.
We replace each of the functions $h_{j,k}$ in the spray \eqref{eq:Phi} by the product
$g h_{j,k}$. Proceeding as before, we obtain a new spray of the same type with 
$h_{j,k}\in\Ocal (M)$. It is elementary to see that the map 
$(p,t) \to \phi^{j,k}_{g(p)h_{j,k}(p) t}(f(p))$ is tangent to $f$ to order $r$ 
at every point $p\in P$ (see \cite[Lemma 2.2]{AlarconCastro-Infantes2017}); 
hence the same holds for their composition $\Phi_f$. 
\end{proof}

\begin{remark}\label{rem:PD1}
By using additional flows in the definition of 
$\Phi_f$ \eqref{eq:Phi} we can ensure that the spray $\Phi_{f_q}$ is 
period dominating for a given continuous family $\{f_q\colon q\in Q\}$ of holomorphic maps 
$f_q\colon M\to\Agot_*$ with the parameter in a compact Hausdorff space $Q$. 
In this case, condition \eqref{eq:period-domination} is replaced by asking
that the $t$-differential of the period map  is surjective at $t=0$.
On the other hand, we are unable to find a period dominating spray
whose core is a flat map since the tangent spaces to $\Agot_*$ are constant along a complex ray of $\Agot_*$,
and hence they do not span $\C^n$.
\end{remark}

\begin{remark}\label{rem:PD2}
Proofs of Lemmas \ref{lem:deformation} and \ref{lem:existence-sprays}
extend in an obvious way to the case when $M=K\cup \Gamma$ is an admissible set in an open Riemann surface $R$; 
see Definition \ref{def:admissible}. A map $f\colon M\to\Agot_*$ of class $\Ascr(M)$ 
is said to be nonflat or full if the restriction of $f$ to $K$ and to each connected component of $\Gamma$
is nonflat or full, respectively. Such $f$ typically arises as the derivative map 
$f=2\di\Phi/\theta\colon M\to\Agot_*$ of a generalized conformal minimal immersion on an admissible set;
see Definition \ref{def:GCMI}. 
\end{remark}

The following result (see \cite[Theorem 3.1]{AlarconForstnericLopez2016MZ}) is a straightforward 
application of Lemma \ref{lem:deformation}. The notation has been established in Section \ref{ss:spaces}.

%
%
%   BANACH MANIFOLD STRUCTURE THEOREMS
%
%
\begin{theorem} \label{th:structure}
Let $M$ be a compact bordered Riemann surface with nonempty boundary $bM$,
and let $n\ge 3$ and $r\ge 1$ be integers.  Then the following hold.
\begin{itemize}
\item[\rm (a)] 
	The space $\CMI_{\rm nf}^r(M,\r^n)$ is a real analytic Banach manifold. 
\vspace{1mm}
\item[\rm (b)]
 	The space $\NC_{\rm nf}^r(M,\C^n)$ is a complex Banach manifold. 
\end{itemize}
\end{theorem}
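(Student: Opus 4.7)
The plan is to realize $\CMI^r_{\rm nf}(M,\R^n)$ and $\NC^r_{\rm nf}(M,\C^n)$ as (products of Euclidean spaces with) the zero sets of the period map on the open subset of nonflat maps $M\to\Agot_*$, and then to invoke the implicit function theorem for Banach manifolds, using the period-dominating sprays from Lemma \ref{lem:deformation} to supply the submersion hypothesis.

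First I would fix a nowhere vanishing holomorphic $1$-form $\theta$ near $M$, a base point $p_0\in\mathring M$, and a homology basis $\{C_j\}_{j=1}^l$ of $H_1(M;\Z)$. By Theorems \ref{th:equivalent}, \ref{th:EW} together with the representation \eqref{eq:NULL} for null curves, the derivative maps $X\mapsto (X(p_0),2\di X/\theta)$ and $Z\mapsto (Z(p_0),dZ/\theta)$ induce bijections
\begin{align*}
\CMI^r_{\rm nf}(M,\R^n) &\longleftrightarrow \R^n \times \bigl\{f\in\Ascr^{r-1}_{\rm nf}(M,\Agot_*) : \Re\Pcal(f)=0\bigr\}, \\
\NC^r_{\rm nf}(M,\C^n) &\longleftrightarrow \C^n \times \bigl\{f\in\Ascr^{r-1}_{\rm nf}(M,\Agot_*) : \Pcal(f)=0\bigr\},
\end{align*}
where $\Pcal$ is the period map \eqref{eq:P}--\eqref{eq:Pj} acting on maps of class $\Ascr^{r-1}$. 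In each case the inverse is given by integration from $p_0$ and is well-defined precisely because the relevant periods vanish; nonflatness of $X$ (resp.\ $Z$) corresponds tautologically to nonflatness of the associated $f$.

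Next I would observe that $\Ascr^{r-1}(M,\Agot_*)$ carries the structure of a complex Banach manifold modeled on $\Ascr^{r-1}(M,\C^{n-1})$, by the same argument as in \cite[Theorem 8.13.1]{Forstneric2017E}: local charts are produced from holomorphic retractions of tubular neighborhoods in $\C^n$ onto the smooth complex submanifold $\Agot_*$. Nonflatness is an open condition in view of Lemma \ref{lem:nonflat} and the identity principle, so $\Ascr^{r-1}_{\rm nf}(M,\Agot_*)$ is an open complex Banach submanifold. The period map $\Pcal:\Ascr^{r-1}_{\rm nf}(M,\Agot_*)\to(\C^n)^l$ is holomorphic, since it is the restriction of a continuous $\C$-linear functional on the ambient Banach space $\Ascr^{r-1}(M,\C^n)$.

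The crucial step is to show that $\Pcal$ is a submersion on the nonflat locus. Given $f_0\in\Ascr^{r-1}_{\rm nf}(M,\Agot_*)$, Lemma \ref{lem:deformation} furnishes a period-dominating spray $\Phi_{f_0}:M\times V\to\Agot_*$ with core $f_0$. The construction used in the proof of that lemma — composing flows of holomorphic vector fields on $\C^n$ tangent to $\Agot$ with the map $f_0$, weighted by $\Ocal(M)$-functions obtained from Mergelyan approximation — carries over verbatim to $\Ascr^{r-1}$, because smoothness up to $bM$ is preserved under composition with smooth flows and multiplication by holomorphic functions. Hence the induced map $V\ni t\mapsto\Phi_{f_0}(\cdot,t)\in\Ascr^{r-1}_{\rm nf}(M,\Agot_*)$ is holomorphic, and \eqref{eq:period-domination} asserts that its composition with $\Pcal$ has invertible differential at $t=0$. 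Thus $d\Pcal_{f_0}$ is surjective, so $\Pcal$ is a holomorphic submersion. The implicit function theorem in Banach spaces then presents $\{\Pcal=0\}$ as a complex Banach submanifold of codimension $ln$, proving (b); applied to $\Re\Pcal$, viewed as a real-analytic submersion onto $\R^{ln}$, it yields (a). The only delicate bookkeeping point is the preservation of the boundary regularity class $\Ascr^{r-1}$ under the spray construction, but as noted above this is straightforward rather than a genuine obstacle.
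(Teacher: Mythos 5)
Your proposal is correct and follows essentially the same route as the paper: both realize the space via the Enneper--Weierstrass correspondence (including the $\R^n$ or $\C^n$ base-point factor), both invoke \cite[Theorem 1.1]{Forstneric2007AJM} to make $\Ascr^{r-1}(M,\Agot_*)$ a complex Banach manifold, both use the period-dominating spray of Lemma \ref{lem:deformation} to show the (real or complex) period map is a submersion at nonflat points, and both conclude by the implicit function theorem. Your explicit remark that the spray construction preserves the $\Ascr^{r-1}$ regularity class is a worthwhile point that the paper leaves implicit, but it does not change the substance of the argument.
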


We do not know whether the spaces $\CMI^r(M,\r^n)$ and $\NC^r(M,\C^n)$ are also 
Banach manifolds. In fact, it seems that  flat conformal minimal immersions and 
holomorphic null curves are singular points of these spaces.

\begin{proof} 
By \cite[Theorem 1.1]{Forstneric2007AJM} % (see also \cite[Theorem 8.13.1]{Forstneric2017E})
the space $\Ascr^{r-1}(M,\Agot_*)$ is a complex Banach manifold modeled on the complex Banach space 
$\Ascr^{r-1}(M,\c^{n-1})$, where $\dim \Agot_*=n-1$. 
Let $\Pcal \colon \Ascr^{r-1}(M,\c^n)\to (\c^n)^l$ denote the holomorphic period map (\ref{eq:P}). Set 
\[
	\Ascr_0^{r-1}(M,\Agot_*)=\bigl\{f\in \Ascr^{r-1}(M,\Agot_*): \Re(\Pcal(f))=0\bigr\},
\]
and let $\Ascr_{0,{\mathrm{nf}}}^{r-1}(M,\Agot_*)$ denote the open subset of $\Ascr_0^{r-1}(M,\Agot_*)$ 
consisting of all nonflat maps (see Definition \ref{def:nondegenerate}). 
Lemma \ref{lem:deformation} implies that the differential $d\Pcal_{f_0}$ at any point 
$f_0\in \Ascr_{0,{\mathrm{nf}}}^{r-1}(M,\Agot_*)$ has maximal rank equal to $ln$. 
By the implicit function theorem, $f_0$ admits an open neighborhood 
$\Omega \subset \Ascr^{r-1}(M,\Agot_*)$ such that 
$\Omega\cap \Ascr_0^{r-1}(M,\Agot_*)=\Omega \cap \Ascr_{0,{\mathrm{nf}}}^{r-1}(M,\Agot_*)$ 
is a real analytic Banach submanifold of $\Omega$ parametrized by the kernel of the 
real part $\Re(d\Pcal_{f_0})$ of the differential of $\Pcal$ at $f_0$;
this is a real codimension $ln$ subspace of the complex Banach space $\Ascr^{r-1}(M,\c^{n-1})$
(the tangent space of the complex Banach manifold $\Ascr^{r-1}(M,\Agot_*)$).
This shows that $\Ascr_{0,{\mathrm{nf}}}^{r-1}(M,\Agot^*)$ is a real analytic Banach manifold. 
The integration $p\mapsto v+\int_{p_0}^p \Re(f\theta)$ $(p\in M)$, with an arbitrary choice 
of the value $v\in \r^n$ at a base point $p_0\in M$, provides an isomorphism between the 
Banach manifold  $\Ascr_{0,{\mathrm{nf}}}^{r-1}(M,\Agot^*) \times \r^n$  and 
$\CMI^r_{\mathrm{nf}}(M,\r^n)$, so the latter is also a real analytic Banach manifold. This proves (a). 
Essentially the same argument applies in case (b).
\end{proof}

We now give another version of Lemma \ref{lem:deformation} in which a period dominating
spray is obtained by multiplying the given core map $f\colon M\to\Agot_*$ by a 
nonvanishing holomorphic function (a {\em multiplier}). 

A path $f\colon I=[0,1]\to\c^n$ is said to be {\em full} if the $\c$-linear span of its image equals $\c^n$.
Let $\Pcal\colon \Cscr(I,\c^n)\to\c^n$ denote the map
\[
        \Pcal(f)=\int_0^1 f(s)\, ds\in\c^n,\qquad f\in \Cscr(I,\c^n).
\]
The following result is \cite[Lemma 2.1]{AlarconForstnericLopez2017JGEA}.

%
%   EXISTENCE OF PERIOD DOMINATING MULTIPLIERS
%
\begin{lemma}\label{lem:spray-loops}
Assume that $I'$ is a nontrivial closed subinterval of $I=[0,1]$ and $Q$ is a compact Hausdorff space. 
Given a continuous map $f\colon  I\times Q \to\c^n$ such that $f(\cdot,q)$ is full on $I'$ for every $q\in Q$, 
there exist finitely many continuous functions $g_1,\ldots,g_N\colon I\to\c$,  
supported on $I'$, such that the function $h\colon I\times \c^N  \to\c$ given by
\begin{equation}\label{eq:h-spray}
             h(s,t)=1 + \sum_{i=1}^N t_i g_i(s),\qquad s\in I,\ t=(t_1,\ldots,t_N)\in\c^N,
\end{equation}
is a {\em period dominating multiplier of $f$}, in the sense that
\begin{equation}\label{eq:period-domination1}
      \frac{\di}{\di t} \Pcal (h(\cdot,t)f(\cdot,q)) \big|_{t=0} : \c^N\to\c^n
      \ \ \text{is surjective for every $q\in Q$.}
\end{equation}
\end{lemma}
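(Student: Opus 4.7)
The plan is to recognize the surjectivity condition (\ref{eq:period-domination1}) as a spanning condition on a finite collection of $\C^n$-valued integrals, and then to produce enough such integrals by exploiting fullness and compactness of $Q$. A direct calculation gives
\[
   \frac{\partial}{\partial t_i}\bigg|_{t=0} \Pcal\bigl(h(\cdotp,t)f(\cdotp,q)\bigr) = \int_0^1 g_i(s)\,f(s,q)\,ds =: v_i(q)\in\C^n,
\]
so the differential in (\ref{eq:period-domination1}) is the linear map $(t_1,\ldots,t_N)\mapsto \sum_i t_i v_i(q)$, and its surjectivity amounts to saying that $v_1(q),\ldots,v_N(q)$ span $\C^n$. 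Thus the task reduces to choosing finitely many continuous functions $g_1,\ldots,g_N$ supported in $I'$ whose integrals against $f(\cdotp,q)$ jointly span $\C^n$ for every $q\in Q$.

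First I would handle a single parameter $q_0\in Q$. By the assumed fullness of $f(\cdotp,q_0)$ on $I'$, I can choose points $s_1,\ldots,s_n\in I'$ at which the vectors $f(s_j,q_0)\in\C^n$ are $\C$-linearly independent. Around each $s_j$ I would pick a small closed subinterval $J_j\subset I'$ (with the $J_j$ pairwise disjoint) and a continuous nonnegative function $g_j^{q_0}$ supported in $J_j$ with $\int_0^1 g_j^{q_0}(s)\,ds=1$. For $J_j$ short enough, continuity of $s\mapsto f(s,q_0)$ forces the quadrature vectors $v_j^{q_0}(q_0):=\int_0^1 g_j^{q_0}(s)f(s,q_0)\,ds$ to be arbitrarily close to $f(s_j,q_0)$, hence themselves a basis of $\C^n$. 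Joint continuity of $f$ on $I\times Q$ then makes $q\mapsto v_j^{q_0}(q)$ continuous, so this linear independence persists on some open neighborhood $U_{q_0}\subset Q$ of $q_0$.

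Second, by compactness of $Q$ I would extract a finite subcover $U_{q_1},\ldots,U_{q_m}$ and take $g_1,\ldots,g_N$ (with $N=mn$) to be the concatenation of all the families $\{g_j^{q_k}\}_{j=1}^n$, $k=1,\ldots,m$. For any $q\in Q$, pick $k$ with $q\in U_{q_k}$; the sub-collection coming from $q_k$ already spans $\C^n$ at $q$, so the full collection does as well, and (\ref{eq:period-domination1}) holds. The only mildly delicate point is the single-parameter approximation: one must shrink the supports $J_j$ enough that the quadrature integrals inherit the linear independence of the point values $f(s_j,q_0)$. This is immediate from uniform continuity of $f$ on the compact set $I\times\{q_0\}$, so no substantive obstacle is expected.
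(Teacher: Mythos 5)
Your argument is correct and follows essentially the same route as the paper's: both compute the $t_i$-derivative as $\int_0^1 g_i(s) f(s,q)\,ds$, both use bump functions supported near points of $I'$ at which the path values span $\C^n$, and both invoke compactness of $Q$ to get uniformity. The only cosmetic difference is that you make the compactness step explicit via a finite open cover of $Q$ (checking persistence of the span of the actual quadrature integrals), whereas the paper compresses this into a single assertion that one large set of sample points $s_1,\ldots,s_N$ spans simultaneously for all $q$; these are two ways of writing the same argument.
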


%\begin{remark}\label{rem:stability}
%Note that \eqref{eq:period-domination} is an open condition which remains valid with the same function $h$ 
%if we replace $f$ by any $f' \in \Cscr(Q\times I, \c^n)$ sufficiently close to $f$.
%\end{remark}

\begin{proof}
Let $N\ge n$ be an integer and, for each $i\in\{1,\ldots,N\}$, let $g_i\colon I\to\c$ be a continuous function; 
both the number $N$ and the functions $g_i$ will be specified later. 
Let $h$ be defined by \eqref{eq:h-spray}. Note that
$\frac{\di h(s,t)}{\di t_i} \big|_{t=0} =  g_i(s)$ and hence
\begin{equation}\label{eq:diQ}
     \frac{\di}{\di t_i} \Pcal(h(\cdot,t)f(\cdot.q)) \bigg|_{t=0} 
     =   \int_0^1 \left. \frac{\di h(s,t)}{\di t_i}  \right|_{t=0} f(s,q)\, ds  
     =   \int_0^1  g_i(s) f(s,q)\, ds.
\end{equation}
Since $f(\cdot,q)$ is full on $I'$ for every $q\in Q$, compactness of $Q$ and continuity of $f$ ensure that 
there are points $s_1,\ldots,s_N\in \mathring I'$ for a big $N\in\N$ such that
\begin{equation}\label{eq:span-N}
       \span\bigl\{ f(s_1,q),\ldots, f(s_N,q) \bigr\}=\c^n\quad \text{for all $q\in Q$}.
\end{equation}
Pick a small $\epsilon>0$ and for every $i=1,\ldots, N$ a continuous function $g_i\colon I\to\c$ supported on 
$(s_i-\epsilon,s_i+\epsilon)\subset I$ such that 
\begin{equation}\label{eq:intg1}
  	 \int_0^1 g_i(s)\, ds=\int_{s_i-\epsilon}^{s_i+\epsilon} g_i(s)\, ds=1.
\end{equation}
For small $\epsilon>0$ we have in view of \eqref{eq:diQ} and \eqref{eq:intg1} that
\[
    \frac{\di \Pcal(h(\cdot,t)f(\cdot.q))}{\di t_i} \bigg|_{t=0} =  \int_0^1 g_i(s) f(s,q)\, ds \approx f(s_i,q)
\]
for all $q\in Q$ and $i\in\{1,\ldots,N\}$.
Assuming as we may that the approximations are close enough, it follows from \eqref{eq:span-N} that
\eqref{eq:period-domination1} holds. 
\end{proof}

By using Lemma \ref{lem:spray-loops} we easily obtain the following result
which is essentially \cite[Lemma 3.2]{AlarconForstnericLopez2017JGEA}.
In this lemma, $\Pcal\colon \Ascr(M,\C^n)\to (\C^n)^l$ 
again denotes the period map \eqref{eq:P}, \eqref{eq:Pj}.
The same result holds if $M$ is a compact admissible subset of an open Riemann surface;
see Definition \ref{def:admissible} and Remark \ref{rem:PD2}.

%
%   PERIOD DOMINATING SPRAYS OF MULTIPLIERS
%
\begin{lemma}\label{lem:existence-sprays}
Let $M$ be a compact bordered Riemann surface with $H_1(M;\z)=\Z^l$,
let $\theta$ be a nowhere vanishing holomorphic $1$-form on $M$, and let $Q$ be a compact Hausdorff space. 
Assume that $f\colon M\times Q\to \c^n$ is a continuous map such that $f(\cdot,q)\colon M\to\C^n$ 
is a full map of class $\Ascr(M)$ for every $q\in Q$. 
Then there exist finitely many holomorphic functions $g_1,\ldots,g_N\in \Ocal(M)$ 
such that the function $h\colon M\times \c^N \to\c$ given by
\[
      h(p,t)= 1 + \sum_{i=1}^N t_i g_i(p),\qquad t=(t_1,\ldots,t_N)\in\c^N,\ p\in M,
\]
is a {\em period dominating multiplier of $f$}, meaning that 
\begin{equation}\label{eq:period-domination2}
      \frac{\di}{\di t} \Pcal (h(\cdot,t)f(\cdot,q)) \big|_{t=0} : \c^N \to (\c^n)^l
      \ \ \text{is surjective for every $q\in Q$.}
\end{equation}
\end{lemma}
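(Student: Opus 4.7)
My plan is to reduce Lemma \ref{lem:existence-sprays} to the path version Lemma \ref{lem:spray-loops}, applied separately along each curve of a homology basis of $M$, and then patch the resulting continuous functions into global holomorphic multipliers via Mergelyan's theorem. First, exactly as in the proof of Lemma \ref{lem:deformation}, I would fix pairwise disjoint smooth Jordan curves $C_1,\ldots,C_l$ in $\mathring M$ representing a basis of $H_1(M;\z)$, chosen so that they share a single common point $p_0$ and so that $C=\bigcup_{j=1}^l C_j$ is Runge in $M$. For each $j$, fix a smooth parametrization $\alpha_j:[0,1]\to C_j$ with $\alpha_j(0)=\alpha_j(1)=p_0$ and a closed subinterval $I_j'\subset(0,1)$ avoiding the endpoints; then $\oint_{C_j} g\, f(\cdot,q)\theta=\int_0^1 g(\alpha_j(s))\, \tilde f_j(s,q)\, ds$ for any continuous $g$ on $M$, where $\tilde f_j(s,q):=\theta(\alpha_j'(s))\, f(\alpha_j(s),q)\in\c^n$.

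The key preliminary observation is that fullness of $f(\cdot,q)$ on $M$ forces fullness of the restriction $\tilde f_j(\cdot,q)$ to $I_j'$: if $f(\alpha_j(I_j'),q)$ lay in a complex hyperplane $H\subset\c^n$, then $f(\cdot,q)^{-1}(H)$ would be an analytic subvariety of the Riemann surface $M$ containing a nontrivial arc, hence all of $M$ by the identity principle, contradicting fullness of $f(\cdot,q)$ on $M$. Since $\theta$ is nowhere vanishing, the same conclusion holds for $\tilde f_j(\cdot,q)$. Applying Lemma \ref{lem:spray-loops} to each parametric family $\{\tilde f_j(\cdot,q)\}_{q\in Q}$ yields continuous functions $g_{j,1},\ldots,g_{j,N_j}:[0,1]\to\c$ supported on $I_j'$ such that $1+\sum_k t_{j,k}\,g_{j,k}$ is a period dominating multiplier for $\tilde f_j$ uniformly in $q\in Q$. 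Transplanting these through $\alpha_j^{-1}$ and extending by zero across $C\setminus C_j$ produces continuous functions $u_{j,k}:C\to\c$; the extensions are continuous at $p_0$ because each $g_{j,k}$ vanishes near $0$ and $1$.

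Mergelyan's theorem applied on the Runge set $C\subset M$ now provides simultaneous holomorphic approximations $g_i\in\Ocal(M)$ of the $u_{j,k}$, enumerated as $i=1,\ldots,N$ with $N=N_1+\cdots+N_l$. With these choices, the differential $\frac{\di}{\di t}\Pcal(h(\cdot,t)f(\cdot,q))|_{t=0}$ is an $ln\times N$ matrix of period integrals whose columns split into $l$ groups according to the index $j$ of the originating $u_{j,k}$. Before approximation this matrix would be exactly block-diagonal with each diagonal block surjective (this is precisely the content of Lemma \ref{lem:spray-loops} applied to $\tilde f_j$); after approximation the off-diagonal blocks are small because the $u_{j,k}$ are exactly zero off $C_j$, and the diagonal blocks are small perturbations of surjective maps. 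Surjectivity of a linear map is an open condition in the space of linear maps, and the perturbation is controlled uniformly in $q\in Q$ by the Mergelyan error times the uniform bound of $|f(\cdot,q)\theta|$ on $C\times Q$, which is finite by compactness. Hence a sufficiently accurate approximation secures \eqref{eq:period-domination2} for all $q\in Q$ simultaneously. The only delicate points I foresee are the uniform bookkeeping over the parameter space $Q$ and the identity-principle argument used to transfer fullness from $M$ to the arcs $I_j'$; both are handled above and use only standard facts about analytic subsets of Riemann surfaces.
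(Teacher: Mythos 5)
Your proof is correct and follows essentially the route the paper intends: parametrize each homology basis curve $C_j$, check that fullness of $f(\cdot,q)$ on $M$ passes to the arcs $I_j'$ via the identity principle, invoke Lemma~\ref{lem:spray-loops} curve by curve, transplant the resulting multipliers to $C=\bigcup_j C_j$, and approximate by $\Ocal(M)$ functions via Mergelyan, using the approximately block-diagonal structure plus openness of surjectivity and compactness of $Q$ to close the argument. This mirrors the paper's own proof of the flow-based analogue Lemma~\ref{lem:deformation}, which is what the text means by ``we easily obtain'' the present statement from Lemma~\ref{lem:spray-loops}.
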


%
%   CURVES WITH PRESCRIBED PERIODS IN THE NULL QUADRIC
%
\subsection{Paths with prescribed periods in the null quadric}
\label{ss:loops}

In this section we present a construction of paths with prescribed integrals in 
the punctured null quadric. An elementary result concerning a single path is
\cite[Lemma 7.3]{AlarconForstneric2014IM}. The parametric version  
(see \cite[Lemma 3.1]{ForstnericLarussonCAG}) is needed in the investigation of the homotopy structure 
of the spaces $\CMI(M,\R^n)$ and $\NC(M,\C^n)$; see Sect.\ \ref{ss:rough}.
Here we present a $1$-parametric version, \cite[Lemma 2.3]{AlarconForstnericLopez2017JGEA},
which has the advantage of preserving the Gauss map, so it can
be used to construct conformal minimal immersions with prescribed
complex Gauss map (see Sect.\ \ref{ss:Gauss}).

\begin{lemma}\label{lem:periods}
Set $I=[0,1]$. Let $\alpha\colon I\to \c^n$ and $f\colon I\times I \to \c^n$ be continuous maps
such that the path $f_t:= f(\cdotp,t) \colon I\to\c^n$ is full for every $t\in I$. 
Then there exists a continuous function $h\colon I\times I\to \c_*$ such that 
$h(s,t)=1$ for $t\in I$ and $s\in  \{0,1\}$ and
\begin{equation}\label{eq:exact} 
	\int_0^1 h(s,t) f(s,t)\, ds = \alpha(t), \qquad t\in[0,1].
\end{equation}
If in addition we have that $\int_0^1 f(s,0)\, ds= \alpha(0)$,  then $h$ can be chosen such that
$h(s,0)=1$ for all $s\in [0,1]$.
\end{lemma}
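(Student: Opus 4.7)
The plan is to construct $h$ as a product of many small non-vanishing factors obtained from period-dominating multipliers supplied by Lemma \ref{lem:spray-loops}. Writing $F(t) = \int_0^1 f(s,t)\, ds$ and $\eta(t) = \alpha(t) - F(t)$, the requirement becomes: the cumulative change of the period of $f(\cdot, t)$ after multiplying by $h$ equals $\eta(t)$, continuously in $t$, while $h$ stays away from zero and equals $1$ at $s \in \{0, 1\}$. Under the extra hypothesis $F(0) = \alpha(0)$ we have $\eta(0) = 0$, and the additional conclusion $h(\cdot, 0) \equiv 1$ will follow automatically from the construction.

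First I would apply Lemma \ref{lem:spray-loops} with $Q = I$ and some closed subinterval $I' \subset \mathring I$ to the family $\{f(\cdot, t)\}_{t \in I}$, obtaining continuous $g_1, \ldots, g_N \colon I \to \C$ supported in $I'$ such that $H(s, u) = 1 + \sum_i u_i g_i(s)$ is a period-dominating multiplier. Concretely, the linear map $A(t) \colon \C^N \to \C^n$ defined by $A(t) u = \sum_i u_i \int_0^1 g_i(s) f(s, t)\, ds$ is surjective for every $t \in I$, so its Moore--Penrose pseudo-inverse $A(t)^+$ is continuous in $t$ with uniformly bounded operator norm on the compact interval $I$. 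Setting $u(t) = A(t)^+ \eta(t)$ and $h(s, t) = 1 + \sum_i u_i(t) g_i(s)$ solves the period equation $\int_0^1 h(s,t) f(s,t)\, ds = \alpha(t)$; moreover $h(0, t) = h(1, t) = 1$ because each $g_i$ is supported in $I' \subset \mathring I$, and if $\eta(0) = 0$ then $u(0) = 0$ and so $h(\cdot, 0) \equiv 1$.

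The crux of the argument is that this single-step construction may force $h$ to vanish somewhere on $I \times I$ when $\|\eta\|_\infty$ is large, since then $\|u(t)\|$ is correspondingly large. To circumvent this I would iterate: fix a large integer $K$, set $\eta_k(t) = (k/K)\eta(t)$, and build $h = h_1 h_2 \cdots h_K$ where each $h_k$ stays within $O(1/K)$ of $1$ (hence is non-vanishing) and performs the incremental period correction $\eta(t)/K$. Inductively, if $h^{(k-1)} := h_1 \cdots h_{k-1}$ is non-vanishing and satisfies the period equation at level $\eta_{k-1}$, then $h^{(k-1)}(\cdot, t) f(\cdot, t)$ is still a full family (multiplying by nonvanishing scalars preserves the span), so a fresh application of Lemma \ref{lem:spray-loops} produces functions $g_i^{(k)}$ and a linear map $A^{(k)}(t)$ whose pseudo-inverse is uniformly bounded; then $u^{(k)}(t) = A^{(k)}(t)^+ (\eta(t)/K)$ has norm $O(1/K)$ and $h_k = 1 + \sum_i u_i^{(k)}(t) g_i^{(k)}(s)$ is uniformly within $O(1/K)$ of $1$. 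Taking logarithms, the accumulated deviation $\log |h^{(K)}|$ is a telescoping sum bounded by a constant depending only on $\|\eta\|_\infty$, $f$, and $I'$ but not on $K$, so $h^{(K)}$ remains in a fixed compact annulus in $\C_*$ throughout the iteration.

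The main technical obstacle is precisely this bootstrap control: one must verify that the uniform bounds on $\|A^{(k)}(t)^+\|$ that underpin each step are preserved as $k$ increases, which is what prevents a single-step construction from working directly. The boundary conditions $h(s, t) = 1$ at $s \in \{0, 1\}$ and (under the extra hypothesis) $h(\cdot, 0) \equiv 1$ are inherited from each factor $h_k$ exactly as in the one-step discussion, since each $g_i^{(k)}$ is supported in $I'$ and each $u^{(k)}(0)$ vanishes when $\eta(0) = 0$.
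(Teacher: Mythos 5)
Your one-step construction via Lemma \ref{lem:spray-loops} and the Moore--Penrose pseudo-inverse is fine, but the iterative bootstrap does not close, and the difficulty is structural rather than a matter of an unverified bound. Test the scheme in the simplest case $n=1$, $f\equiv 1$ (which is full) and $\alpha\equiv -1$, so $\eta\equiv -2$. Take a single tent-shaped bump $g$ of unit mass concentrated near a point $s_0\in\mathring I'$, so $g(s_0)\ge 1$. The period-domination operator is then $A^{(k)}(t)u\approx u\,h^{(k-1)}(s_0)$, so $u^{(k)}=-2/\bigl(K\,h^{(k-1)}(s_0)\bigr)$, and hence
\[
 h^{(k)}(s_0)\;=\;h^{(k-1)}(s_0)\bigl(1+u^{(k)}g(s_0)\bigr)\;=\;h^{(k-1)}(s_0)-\frac{2g(s_0)}{K}\;=\;1-\frac{2k\,g(s_0)}{K},
\]
which marches along a straight line through the origin before $k$ reaches $K$. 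At the step where $h^{(k)}(s_0)$ is near $0$, the next correction $|h_{k+1}(s_0)-1|=2g(s_0)/\bigl(K\,|h^{(k)}(s_0)|\bigr)$ blows up rather than staying $O(1/K)$, so the factors do not remain in a fixed annulus and the telescoping bound on $\log|h^{(K)}|$ that you invoke is false. In other words, the uniform control on $\|A^{(k)}(t)^+\|$, which you flag as ``the main technical obstacle'' and then treat as settled, actually fails: the operator degenerates exactly because the additive update $1+\sum_iu_ig_i$ displaces $h(s_0,\cdot)$ along affine segments of $\C$, and nothing in the minimum-norm choice $u^{(k)}=A^{(k)}(t)^+(\eta/K)$ makes these segments spiral around the origin rather than pass through it.

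The paper circumvents this obstruction by not trying to stay multiplicatively close to $1$. It first reduces \eqref{eq:exact} to an approximate version (splitting $I$ in the $t$-variable and reserving an $s$-subinterval $J_2$ on which a single application of Lemma \ref{lem:spray-loops} absorbs the residual error). It then builds the approximate $h$ directly as a mollification of a $\C_*$-valued step function whose levels $g_j(t)\in\C_*$ solve the finite linear system $\sum_j g_j(t)V_j(t)=\alpha(t)$; crucially, the transitions at the jump points are taken along the arcs $g_j([0,t])\subset\C_*$, so $h$ never approaches $0$ by construction. If you wish to rescue an iterative approach, you would need a multiplicative or exponential update law (say $h_k=e^{\sum_iu_i^{(k)}g_i}$) so that nonvanishing is automatic, but then the period constraint becomes genuinely nonlinear and the linear pseudo-inverse argument no longer applies as written.
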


\begin{remark}
If the map $f$ in Lemma \ref{lem:periods} has range in the punctured null quadric $\Agot_*$, then the same
holds for the map $hf$ for any nowhere vanishing function $h$. 
This is how the lemma is used in the present paper.
The analogous conclusion holds when $f$ has range in any conical complex subvariety of $\C^n$.
\end{remark}

\begin{proof}
We begin by explaining a reduction to the case when the exact condition \eqref{eq:exact} is replaced by 
an approximate condition
\begin{equation}\label{eq:approximate}
	\left| \int_0^1 h(s,t) f(s,t)\, ds - \alpha(t)\right| <\epsilon, \qquad t\in[0,1],
\end{equation}
where $\epsilon>0$ is any given number. Indeed, since the path $f_t$ is full for every $t\in I$, 
we can divide the $t$-interval $I$  into finitely many subintervals $I_1,\ldots, I_m$ such that for every $i=1,\ldots,m$ 
there is a closed subinterval $J_i\subset I$ such that the restricted path $f_t\colon J_i\to\C^n$ is full 
for every $t\in I_i$. Clearly it suffices to consider the problem separately on each $I_i$.
Hence, replacing $I$ by $I_i$, we may assume that there is a closed subinterval $J \subset I$
such that $f_t\colon J \to\C^n$ is full for every $t\in I$. Choose nontrivial disjoint subintervals 
$J_1,J_2\subset J$. Replacing the $s$-interval $I$ by $J_1$, it suffices  to prove that for 
any given $\epsilon>0$ there is a function $h\colon I\times I\to \c_*$ satisfying \eqref{eq:approximate}.
Choosing $\epsilon$ small enough, we can correct the small error and obtain \eqref{eq:exact} 
by applying the period dominating argument, furnished by Lemma \ref{lem:spray-loops}, 
on the subinterval $J_2$. 

It remains to explain the construction of a function $h$ satisfying \eqref{eq:approximate}. 
Since $f_t$ is full for each $t\in I$, there is a division $0=s_0<s_1<\cdots <s_N=1$ of $I$ such that 
\[
	\span\{f_t(s_1),\ldots, f_t(s_N)\}=\c^n\quad \text{for all}\ t\in I.
\]
Set 
\[
	V_j(t) = \int_{s_{j-1}}^{s_j} f_t(s)\, ds,\qquad  j=1,\ldots,N. 
\]
Note that $V_j(t)$ is close to $f_t(s_j)(s_j-s_{j-1})$ if the intervals $[s_{j-1},s_j]$ are short.
Passing to a finer division if necessary we may therefore assume that
\[
	\span\bigl\{V_1(t),\ldots, V_N(t)\bigr\} =\c^n,\qquad t\in I. 
\]
For each $t\in I$ we let $\Sigma_t\subset \c^N$ denote the affine complex hyperplane defined by 
\[
	\Sigma_t= \biggl\{(g_1,\ldots,g_N) \in \c^N : \sum_{j=1}^N g_j V_j(t) = \alpha(t) \biggr\}.
\]
Clearly, there exists a continuous map $g=(g_1,\ldots, g_N)\colon I\to \c^N$ such that 
$g(t)\in \Sigma_t$ for every $t\in I$. (We may view $g$ as a section of the affine bundle
over $I$ whose fiber over the point $t$ equals $\Sigma_t$.) This can be written as follows:
\begin{equation}\label{eq:exact2}
	\sum_{j=1}^N \int_{s_{j-1}}^{s_j} g_j(t) f_t(s)\, ds = \alpha(t),\qquad t\in I.
\end{equation}
Note that $\sum_{j=1}^N V_j(t) = \int_0^1 f_t(s)\, ds$. Hence, if $\int_0^1 f(0,s)\, ds= \alpha(0)$ 
then $g$ can be chosen such that $g(0)=(1,\ldots,1)\in \c^N$.
We assume in the sequel that this holds since the proof is even simpler otherwise.

By a small perturbation we may assume that $g_j(t)\in \c_*$ for every $t\in I$ and $j=1,\ldots, N$.
% (At this point we need that the parameter space $I$ is one-dimensional.) 
This changes the exact condition \eqref{eq:exact2} to the approximate condition 
\begin{equation}\label{eq:epsilon2}
	\left| \, \sum_{j=1}^N \int_{s_{j-1}}^{s_j} g_j(t) f_t(s)\, ds - \alpha(t)\right| < \frac{\epsilon}{2}, \qquad t\in I.
\end{equation}
For a fixed $t\in I$ we consider the vector $g(t)=(g_j(t))_j\in \C^N$ as a step function of 
$s\in I$ which equals the constant $g_j(t)$ on $s\in [s_{j-1},s_{j})$ for every $j=1,\ldots,N$. 
We now approximate this step function by a continuous function $h_t=h(\cdotp,t)\colon I\to\c_*$ which agrees 
with the step function, except near the points $s_0,s_1,\ldots,s_{N}$, ensuring also that 
$h_t(0)=h_t(1)=1$. Here are the details. Let $C>1$ be chosen such that 
\[
	\max_{(s,t)\in I\times I} |f(s,t)|\le C, \qquad \max_{t\in I,\, j=1,\ldots,N} |g_j(t)| \le C.
\]
Pick a number $\eta>0$ such that 
\begin{equation}\label{eq:eta}
	4C(C+1) N \eta < \epsilon.
\end{equation}
For each $t\in I$ and $j=1,\ldots N$ we define the function $h(\cdotp,t)\colon [s_{j-1},s_j]  \to \c_*$ by
\[
	h(s,t) = \begin{cases} 
			g_{j}((s-s_{j-1})t/\eta),  & s\in [s_{j-1},s_{j-1}+\eta]; \\
			g_j(t),                           & s\in [s_{j-1}+\eta,s_j-\eta]; \\
			g_{j}((s_j-s)t/\eta),       & s\in [s_{j}-\eta,s_j].
                    \end{cases} 
\]
Thus, $h(s,t)$ spends most of its time (for $s\in [s_{j-1}+\eta,s_j-\eta]$) at the point $g_j(t)$, 
and it travels between the point $1\in \c_*$ (where it is at the endpoints $s=s_{j-1}$ and $s=s_j$) 
and the point $g_j(t)$ along the trace of the path $\tau\mapsto g_{j}(\tau t)\in\c_*$.
This defines a continuous function $h\colon I\times I \to \c_*$ satisfying 
\begin{equation}\label{eq:hleC}
	|h(s,t)|\le C\quad \text{for all $(s,t)\in I\times I$.}
\end{equation}
It follows easily from \eqref{eq:epsilon2}, \eqref{eq:eta}, and \eqref{eq:hleC} that 
the replacement of the step function % $g(t)=(g_j(t))_j$ 
by $h(s,t)$ causes an error of size $<\epsilon/2$. This yields the estimate \eqref{eq:approximate}.
\end{proof}

%
%   TRANSVERSALITY
%
\subsection{Transversality methods for conformal minimal surfaces}
\label{ss:transversality}

In this section we indicate how the techniques of Section \ref{ss:period-dominating}, especially
Lemma \ref{lem:deformation}, can be used to prove the following general position theorem
for conformal minimal immersions of bordered Riemann surfaces. 
The original reference is \cite[Theorem 4.1]{AlarconForstnericLopez2016MZ}.

%
%
%  THEOREM OF GENERAL POSITION FOR BORDERED RIEMANN SURFACES
%
%
\begin{theorem}
\label{th:desingBRS}
Let $M$ be a compact bordered Riemann surface and $r\in \N$.
Every conformal minimal immersion $X \in  \CMI^r(M,\r^n)$ for $n\ge 5$ can be approximated
arbitrarily closely in the $\Cscr^r(M)$ norm by a conformal minimal embedding 
$\wt X \in  \CMI^r(M,\r^n)$ satisfying $\Flux_{\wt X}=\Flux_X$.
If $n=4$ then $X$ can be approximated by conformal minimal immersions
with simple (transverse) double points.
\end{theorem}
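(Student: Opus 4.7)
The plan is to apply an Abraham-type parametric transversality argument to the difference map $\delta X\colon M\times M\setminus\Delta_M\to\R^n$, $\delta X(p,q)=X(p)-X(q)$, using a sufficiently rich family of flux-preserving conformal minimal deformations of $X$. Since $M\times M\setminus\Delta_M$ has real dimension $4$, once such a family dominates in the target, standard transversality will force $\delta X^{-1}(0)=\varnothing$ for a generic parameter when $n\ge 5$, and will give only finitely many transverse intersection points when $n=4$. To obtain \emph{simple} double points in dimension $n=4$, I would run the analogous argument on $M^3$ using the triple-point map $(p,q,r)\mapsto (X_w(p)-X_w(q),\ X_w(p)-X_w(r))$ with target $\R^8$ and domain of real dimension $6$: the virtual preimage dimension is negative, so triple points are excluded for a generic parameter.

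After first replacing $X$ by a $\Cscr^r$-small perturbation which is nonflat (harmless by Lemma \ref{lem:deformation}), I would set $f=2\di X/\theta\colon M\to\Agot_*$ and build an enlarged period-dominating spray
\begin{equation*}
\widetilde\Phi\colon M\times V\times W\to\Agot_*,\qquad \widetilde\Phi(\cdot,0,0)=f,
\end{equation*}
in which the $V$-directions in $\C^{ln}$ dominate the period map $\Pcal$ exactly as in Lemma \ref{lem:deformation}, while the $W$-directions are produced by composing additional flows of $\C$-complete holomorphic vector fields on $\Agot$ with cut-off functions supported in small pairwise-disjoint discs $D_1,\dots,D_N\Subset \mathring M$, chosen disjoint from the homology-basis curves used to build the $V$-directions. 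By the implicit function theorem applied to $\Pcal$, I can then solve $v=v(w)$ on a neighborhood of $0\in W$ so that $\Pcal(\widetilde\Phi(\cdot,v(w),w))\equiv \Pcal(f)$. Integrating as in \eqref{eq:EW} yields a $\Cscr^r$-smooth family $X_w\colon M\to\R^n$ of conformal minimal immersions with $X_0=X$ and $\Flux_{X_w}\equiv \Flux_X$. The crucial design requirement is pointwise target-dominance: $\partial_w X_w(p)\big|_{w=0}\colon W\to\R^n$ should be surjective for every $p\in M$, which is arranged by choosing enough vector fields and enough bump-supports in $\bigcup_j D_j$ so that, after integration, the real parts of their local contributions span $\R^n$ at each $p$; the spatial separation from the $V$-supports prevents the period correction $v(w)$ from cancelling the local contribution.

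With such a dominating family in hand, the parametric difference map
\begin{equation*}
F\colon (M\times M\setminus\Delta_M)\times W\to\R^n,\qquad F(p,q,w)=X_w(p)-X_w(q),
\end{equation*}
is a submersion along $F^{-1}(0)$, since for $p\neq q$ one can activate bump functions supported disjointly near $p$ and near $q$ to vary $X_w(p)$ and $X_w(q)$ independently. The parametric transversality theorem together with Sard then produces arbitrarily small regular values $w$ for which $\delta X_w$ is transverse to $\{0\}$: for $n\ge 5$ the preimage is empty, giving an embedding (immersivity being stable under $\Cscr^r$-small perturbations); for $n=4$ the preimage is finite and each intersection is transverse, i.e., a simple double point, with triple points excluded by the $M^3$-variant above. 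The main obstacle is the careful bookkeeping of spray parameters: period-dominance already absorbs $2ln$ real directions of $V$, so one must introduce enough additional $W$-directions with spatially disjoint supports to preserve target-dominance at every $p\in M$ after substituting $v=v(w)$, all while keeping $\widetilde\Phi$ valued in the null cone $\Agot_*$ throughout.
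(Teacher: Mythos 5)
Your overall strategy matches the paper's: attach to $X$ a flux-preserving finite-dimensional family of conformal minimal immersions built from a period-dominating spray plus the implicit function theorem, use the difference map, and apply Abraham's parametric transversality to conclude. There is, however, a gap in your construction of the dominating family, and a related missing reduction that the paper uses to make the compactness bookkeeping work.

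First, the paper does not argue on the non-compact manifold $M\times M\setminus\Delta_M$; it first exploits that $X$ is an immersion (hence locally injective) on the compact surface $M$ to produce a neighborhood $U$ of the diagonal $D_M$ on which $\delta X$ is bounded away from $0$ off $D_M$, and then arranges transversality only on the \emph{compact} set $M\times M\setminus U$. Double points with $(p,q)\in U\setminus D_M$ are ruled out simply by $\Cscr^1$-closeness of $\wt X$ to $X$; the transversality argument is not asked to do any work near the diagonal. This compact exhaustion is also what makes a \emph{finite}-dimensional parameter family $W$ suffice. Second, the design requirement you call ``pointwise target-dominance'' — surjectivity of $\partial_w X_w(p)\big|_{w=0}\colon W\to\R^n$ for every $p\in M$ — is not achievable: $X_w$ is determined by integration from a base point $p_0$, at which $\partial_w X_w(p_0)\equiv0$, and pairwise-disjoint discs $D_j$ cannot cover $M$, so the ``local contributions'' you invoke are not available at every $p$. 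It is also not what is needed. What is needed, and what the paper achieves, is surjectivity of $\partial_w\bigl[X_w(q)-X_w(p)\bigr]\big|_{w=0}$ for $(p,q)\in M\times M\setminus U$. Because this quantity is the integral $\int_E\Re(\cdot\,\theta)$ over an arc $E$ from $p$ to $q$, the paper's construction (in \cite{AlarconForstnericLopez2016MZ}) fixes such an arc for a compactness-finite collection of pairs and builds, via Lemma~\ref{lem:deformation}, a spray whose $t'$-directions dominate $\int_E$ while disjointly supported $t''$-directions dominate the period map; the implicit-function substitution $t''=\rho(t')$ then produces the flux-preserving family $H$ of Theorem~\ref{th:desingBRS}. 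Your closing remark that ``spatial separation prevents the period correction $v(w)$ from cancelling the local contribution'' gestures at this, but the argument must be carried out for differences along arcs $E$, not for point values $X_w(p)$, since arcs from $p$ to $q$ may unavoidably cross some homology curves on which the period-dominating $t''$-supports sit, and the non-cancellation has to be checked for the composed spray $t'\mapsto f_{(t',\rho(t'))}$ rather than assumed from disjointness of supports. Your $M^3$ variant excluding triple points when $n=4$ is correct and is indeed what makes the double points ``simple.''
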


Since the set of embeddings $M\to\r^n$ is clearly open in the set of immersions of class 
$\Cscr^r(M)$ for any $r\ge 1$ and $\CMI^r(M,\r^n)$ is a closed subset of the Banach 
space $\Cscr^r(M,\r^n)$ (hence a Baire space), the following corollary is immediate.

\begin{corollary}
Let $M$ be a compact bordered Riemann surface. For every pair of integers
$n\ge 5$ and $r\ge 1$  the set of conformal minimal embeddings $M\hookrightarrow \r^n$ of class $\Cscr^r(M)$ is
residual (of the second category) in the Baire space $\CMI^r(M,\r^n)$. The same holds for
the set of conformal minimal immersions $M\to\R^4$ with simple double points. 
\end{corollary}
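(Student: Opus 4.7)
The plan is to deduce the corollary from Theorem \ref{th:desingBRS} by a standard Baire category argument. Three ingredients must be verified: (i) $\CMI^r(M,\r^n)$ is a Baire space, (ii) the set of conformal minimal embeddings (respectively, conformal minimal immersions with only simple double points in dimension four) is \emph{open} in $\CMI^r(M,\r^n)$, and (iii) this set is \emph{dense}. The density is exactly the content of Theorem \ref{th:desingBRS}; the rest is essentially formal.

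First I would check (i). Since $M$ is compact, $\Cscr^r(M,\r^n)$ is a Banach space. Conformality, encoded by the nullity equation \eqref{eq:sum-zero}, and harmonicity $\Delta X=0$ are both closed conditions in the $\Cscr^r$ topology, while the immersion condition $\mathrm{rank}\, dX = 2$ is $\Cscr^1$-open and, by compactness of $M$, carves out an open subset of $\Cscr^r(M,\r^n)$ for $r\geq 1$. Hence $\CMI^r(M,\r^n)$ is the intersection of an open and a closed subset of a Banach space, i.e.\ locally closed in a Polish space; such subsets are completely metrizable, so $\CMI^r(M,\r^n)$ is a Baire space. (Alternatively, one may invoke the Banach manifold structure on $\CMI^r_{\mathrm{nf}}(M,\r^n)$ from Theorem \ref{th:structure} on the nonflat locus and observe that flat immersions form a closed nowhere dense subset.)

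Next I would verify (ii). This is a general fact about maps of compact manifolds with boundary. If $X\in\CMI^r(M,\r^n)$ is an embedding, it is an injective immersion of a compact set, so there is $\delta>0$ such that any $\tilde X\in\Cscr^r(M,\r^n)$ with $\|\tilde X-X\|_{\Cscr^1(M)}<\delta$ is again an embedding: the immersion condition is $\Cscr^1$-open, and a failure of injectivity in a sequence $\tilde X_k\to X$ would produce pairs of distinct points $p_k\ne q_k\in M$ with $\tilde X_k(p_k)=\tilde X_k(q_k)$; passing to a subsequence, $p_k\to p$, $q_k\to q$, and in the limit either $p\ne q$ with $X(p)=X(q)$ (contradicting injectivity) or $p=q$, in which case the full-rank hypothesis on $dX_p$ together with the inverse function theorem would contradict the existence of two distinct preimages for large $k$. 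The same style of argument, using the implicit function theorem applied to the map $(p,q)\mapsto X(p)-X(q)$ on $(M\times M)\setminus\Delta_M$, shows that in dimension $n=4$ the condition of having only finitely many transverse double points is likewise $\Cscr^r$-open.

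Combining (i), (ii), (iii), the set of conformal minimal embeddings is a dense open subset of the Baire space $\CMI^r(M,\r^n)$, hence in particular a residual set of the second category; the same argument handles the four-dimensional case. The only non-formal point is the openness (ii), and that reduces to a compactness-plus-inverse function theorem argument of the sort familiar from differential topology; I do not anticipate a serious obstacle.
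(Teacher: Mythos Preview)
Your proposal is correct and follows essentially the same route as the paper: openness of embeddings among $\Cscr^r$ immersions, density from Theorem \ref{th:desingBRS}, and the Baire property of $\CMI^r(M,\r^n)$. The paper dispatches this in one sentence, asserting that $\CMI^r(M,\r^n)$ is a \emph{closed} subset of $\Cscr^r(M,\r^n)$ (hence Baire); your treatment is in fact more careful, since the immersion condition is open rather than closed, so $\CMI^r(M,\r^n)$ is only locally closed in $\Cscr^r(M,\r^n)$, and you correctly invoke that locally closed subsets of Polish spaces are completely metrizable.
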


\begin{proof}[Sketch of proof of Theorem \ref{th:desingBRS}]
We may assume that $M$ is a smoothly bounded domain in an open Riemann surface $R$ and $X$ is a nonflat 
conformal minimal immersion in an open neighborhood of $M$ in $R$. 
We associate to $X$ the {\em difference map} $\delta X\colon M\times M\to \r^n$ defined by
\[
	\delta X(p,q)=X(q)-X(p), \qquad p,q\in M.
\]
Clearly, $X$ is injective if and only if $(\delta X)^{-1}(0)= D_M:=\{(p,p): p\in M\}$. 
Since $X$ is an immersion, it is locally injective, and hence there is an open neighborhood 
$U\subset M\times M$ of the diagonal  $D_M$ such that $\delta X$ does not assume the value 
$0\in \r^n$ on $\overline U\setminus D_M$. To prove the theorem, it suffices to find arbitrarily close 
to $X$ a conformal minimal immersion $\wt X  \colon M\to\r^n$ whose difference map 
$\delta \wt X$  is transverse to the origin $0\in \r^n$ on $M\times M\setminus U$.
Since $\dim_\r M\times M=4<n$, this will imply that $\delta\wt X$ does not assume the 
value zero on $M\times M\setminus U$, so $\wt X(p)\ne \wt X(q)$ if $(p,q)\in M\times M\setminus U$. 
If $(p,q)\in U \setminus D_M$ then $\wt X(p)\ne \wt X (q)$ provided that 
$\wt X$ is close enough to $X$, so $\wt X$ is an embedding. 
To obtain such $\wt X$, we find a neighborhood $\Omega \subset \r^N$ 
of the origin in a Euclidean space and a real analytic map $H\colon \Omega \times M \to \r^n$ 
satisfying the following conditions:
\begin{itemize}
\item[\rm (a)] $H(0,\cdotp)=X$,
\vspace{1mm}
\item[\rm (b)] $H(\xi,\cdotp)\in \CMI^r(M,\R^n)$ for every $\xi \in \Omega$, and 
\vspace{1mm}
\item[\rm (c)]  the difference map $\delta H\colon \Omega \times M\times M \to \r^n$, defined by 
\[
	\delta H(\xi,p,q) = H(\xi,q)-H(\xi,p), \qquad \xi\in \Omega, \ \ p,q\in M,
\] 
is a submersive family  on $M\times M\setminus U$, in the sense that the partial differential 
\begin{equation} \label{eq:pd}
	d_\xi|_{\xi=0} \, \delta H(\xi,p,q) \colon \r^N \to \r^n
\end{equation}
is surjective for every $(p,q)\in M\times M\setminus U$. 
\end{itemize}
For the details of the construction of $H$ see \cite[Theorem 4.1]{AlarconForstnericLopez2016MZ};
one uses Lemma \ref{lem:deformation} and the implicit function theorem.
Assume now that such $H$ exists. By compactness of $M\times M \setminus U$, 
the partial differential $d_\xi (\delta H)$  (\ref{eq:pd}) is surjective for all $\xi$ in a neighborhood 
$\Omega'\subset \Omega$ of the origin in $\r^N$. 
Hence, the map $\delta H \colon M\times M\setminus U\to\r^n$ 
is transverse to any submanifold of $\r^n$, in particular, to the origin $0\in \r^n$. 
The transversality argument due to Abraham \cite{Abraham1963TAMS}
(see also \cite[Sect.\ 8.8]{Forstneric2017E}) 
shows that for a generic choice of $\xi\in\Omega'$,  the difference map 
$\delta H(\xi,\cdotp,\cdotp)$ is transverse to $0\in\r^n$ on $M\times M\setminus U$,
and hence it omits the value $0$ by dimension reasons. 
By choosing $\xi$ sufficiently close to $0\in\r^N$ we thus obtain a 
conformal minimal embedding $\wt X=H(\xi,\cdotp)\colon M \to \r^n$ close to $u$,
thereby proving the theorem. 
% 
%
\begin{comment}
For the details of the construction of $H$, see \cite[proof of Theorem 4.1]{AlarconForstnericLopez2016MZ}.
The main point is to use Lemma \ref{lem:deformation} and the implicit function theorem;
here is the main idea. Let $\theta$ be a noweher vanishing holomorphic $1$-form on $M$.
Fix a pair $(p,q)\in M\times M\setminus U$ and an embedded arc $E\subset M$ with the endpoints $p,q$.
By using Lemma \ref{lem:deformation} we find a holomorphic spray of maps $f_t \colon M\to\Agot_*$
with the core $f_0=2\di X/\theta$, depending on $t=(t',t'')\in \C^{n'}\times \C^{n''}$,  which is period dominating with
respect to $t''$ at $t=0$ and such that that the differential of the map 
$t'\mapsto \int_E \Re(f_{(t',0'')} \theta) \in \R^n$ is nondegenerate at $t=0$. 
(If  $\Re(f_{(t',0'')} \theta)$ were exact, the integral $2\int_E \Re(f_{(t',0'')} \theta)$ would equal
the difference of the values of the corresponding coformal minimal immersion at the endpoints $p,q$ of $E$.)
The implicit function theorem provides a holomorphic function $t''=\rho(t')$ near the origin of $\C^{n'}$,
with $\rho(0')=0''$, such that the form $\Re(f_{(t',\rho(t'))} \theta)$ is exact for every $t'\in\C^{n'}$
near the origin, and the spray of conformal minimal immersions 
\[
	H_{t'} = X(p_0)+2\int_{p_0}^\cdotp \Re(f_{(t',\rho(t'))} \theta)
\]
satisfies condition \eqref{eq:pd} for the given pair $(p,q)$. The composition of finitely many such sprays
satisfies the same condition for all pairs $(p,q)\in M\times M\setminus U$.
\end{comment}
\end{proof}

%%%%%%%%%%
%%%%%%%%%%
%%%%%%%%%%
%%%%%%%%%% SECTION: CONFORMAL MINIMAL IMMERSIONS
%%%%%%%%%%
%%%%%%%%%%
%%%%%%%%%%

\section{\sc Conformal minimal immersions: approximation, interpolation, embeddings, and isotopies}
\label{sec:OkaP}
At the dawn of the 21st century, not much was known about how to deform a given minimal surface 
in $\r^n$ into another one with more desirable properties. 
At that time we only counted on a few techniques which had been created ad hoc in order 
to settle specific problems. This is for instance the case of the {\em L\'opez-Ros deformation} for minimal surfaces 
$X\colon M\to \r^3$ (see \cite{LopezRos1991JDG}) which amounts to multiplying the complex Gauss map 
$g_X$ by a nowhere vanishing holomorphic function, subject to suitable period vanishing conditions
on the Weierstrass data. Its main shortcoming is that one needs the initial conformal minimal
immersion $X$ already defined everywhere on $M$; let us point out that, at that time, few open Riemann surfaces 
were known to be the underlying complex structure of a minimal surface in $\r^3$. 

The implementation of the complex analytic tools from Sections \ref{ss:Oka}--\ref{ss:transversality},
and also those to be explained in Section \ref{sec:RH}, gave rise to the birth and development of the theories of 
approximation, interpolation, and isotopies for conformal minimal immersions $M\to\r^n$ 
from any given open Riemann surface $M$,
%  This furnished powerful and versatile tools for constructing minimal surfaces in $\R^n$ with interesting global properties, thereby 
leading to an array of new results. 
In this section we discuss both the foundations of the aforementioned theories and some of their applications.
Results depending on the Riemann-Hilbert boundary value problem (see e.g.\ Theorem \ref{th:I-complete})
are treated in the following section.

%
%    RUNGE APPROXIMATION
%
\subsection{Runge approximation with jet interpolation for conformal minimal immersions}
\label{ss:Mergelyan}

The following is one of the main new tools for the construction of minimal surfaces in $\r^n$ for any $n\ge 3$
with interesting global properties and arbitrary conformal structure. It is analogous in spirit to the 
combination of the Runge approximation theorem and the Weierstrass interpolation theorem
for holomorphic maps from open Riemann surfaces to $\C^n$.

%
%    THE FIRST MAIN THEOREM ON CMI'S
%
\begin{theorem}[Runge approximation with jet interpolation for conformal minimal surfaces]\label{th:ALL}
Let $M$ be an open Riemann surface, $\Lambda\subset M$ be a closed discrete subset, 
and $K\subset M$ be a smoothly bounded compact Runge domain.
For each $p\in\Lambda$ let $\Omega_p\subset M$ be a 
neighborhood of $p$ in $M$ such that $\Omega_p\cap \Omega_q=\varnothing$ for all $p\neq q\in \Lambda$, 
and set $\Omega:=\bigcup_{p\in\Lambda}\Omega_p$. 
Given a function $r\colon \Lambda \to \N$,  every conformal minimal immersion $X\colon K\cup\Omega\to\r^n$ $(n\ge 3)$ 
can be approximated uniformly on $K$ by conformal minimal immersions $\wt X\colon M\to\r^n$ 
having a contact of order $r(p)$ with $X$ at every point $p\in\Lambda$.
\end{theorem}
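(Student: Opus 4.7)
The plan is to reduce the problem via the Enneper-Weierstrass representation (Theorem \ref{th:EW}) to the construction of a holomorphic map from $M$ into the punctured null quadric $\Agot_*$, and then to solve that problem by an inductive exhaustion in which the period dominating spray of Lemma \ref{lem:deformation} corrects the real periods introduced by each approximation step. Fix a nowhere vanishing holomorphic $1$-form $\theta$ on $M$ and set $f=2\di X/\theta\colon K\cup\Omega\to\Agot_*$. It suffices to produce a holomorphic map $\tilde f\colon M\to\Agot_*$ that approximates $f$ uniformly on $K$, has contact of order $r(p)$ with $f$ at every $p\in\Lambda$, and whose associated $1$-form $\Re(\tilde f\theta)$ is exact on $M$; then $\tilde X(p)=X(p_0)+\int_{p_0}^{p}\Re(\tilde f\theta)$ is the required conformal minimal immersion. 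Exhaust $M$ by smoothly bounded compact Runge domains $M_0=K\subset M_1\subset M_2\subset\cdots$ with $\bigcup_k M_k=M$, arranged so that each inclusion $M_{k-1}\hookrightarrow M_k$ is either \emph{noncritical} (meaning $M_{k-1}$ is a deformation retract of $M_k$) or \emph{critical} (a single $1$-handle is attached, so the rank of $H_1(M_k;\z)$ exceeds that of $H_1(M_{k-1};\z)$ by one), and so that $\Omega_p\subset\mathring M_k$ whenever $p\in\Lambda\cap M_k$.

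Suppose inductively that a nonflat holomorphic map $f_{k-1}\in\Ocal(M_{k-1},\Agot_*)$ has been produced with $\Re(f_{k-1}\theta)$ exact on $M_{k-1}$ and having the prescribed contact with $f$ at each point of $\Lambda\cap M_{k-1}$. In the noncritical step, the Mergelyan approximation theorem with jet interpolation for maps into the Oka manifold $\Agot_*$ (Theorem \ref{th:OP}) furnishes a nonflat $g\in\Ocal(M_k,\Agot_*)$ close to $f_{k-1}$ on $M_{k-1}$ and with prescribed contact with $f$ at every new point of $\Lambda\cap(M_k\setminus M_{k-1})$. Since $M_{k-1}$ is a deformation retract of $M_k$, the real periods of $g\theta$ over a basis of $H_1(M_k;\z)$ lie close to $0$. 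Lemma \ref{lem:deformation}, with the jet-preservation clause \eqref{eq:agree} applied at $\Lambda\cap M_k$, provides a period dominating spray $\Phi\colon M_k\times V\to\Agot_*$ with core $\Phi(\cdotp,0)=g$; the implicit function theorem then yields a parameter $t_*\in V$ near the origin for which $\Re(\Phi(\cdotp,t_*)\theta)$ has vanishing real periods, and we set $f_k=\Phi(\cdotp,t_*)$. In the critical step, choose a smooth arc $\gamma\subset M\setminus\mathring M_{k-1}$ such that $S=M_{k-1}\cup\gamma$ is an admissible set (Definition \ref{def:admissible}) and is a deformation retract of $M_k$. Extend $f_{k-1}$ smoothly to a map $S\to\Agot_*$ along $\gamma$, arranging that the extension is nonflat on $\gamma$ and that the real integral of the extended $1$-form along the new homology cycle equals $0$; such an extension exists by connectedness and positive dimension of $\Agot_*$. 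The Mergelyan theorem on admissible sets for the Oka manifold $\Agot_*$ then delivers $g\in\Ocal(M_k,\Agot_*)$ approximating the extended map on $S$ and interpolating the jet data at $\Lambda\cap M_k$, after which one concludes as in the noncritical case by applying Lemma \ref{lem:deformation} (in its admissible-set version, Remark \ref{rem:PD2}) together with the implicit function theorem, now on the enlarged period basis of $M_k$.

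If the differences $\|f_k-f_{k-1}\|_{\Cscr^1(M_{k-1})}$ are made summable to a preassigned tolerance, the sequence $\{f_k\}$ converges uniformly on compact subsets of $M$ to a holomorphic map $\tilde f\colon M\to\c^n$ which, by Hurwitz's theorem applied to the defining equations of $\Agot_*$, still takes values in $\Agot_*$; by construction $\tilde f$ has the prescribed contact with $f$ on $\Lambda$, approximates $f$ on $K$ as closely as desired, and has $\Re(\tilde f\theta)$ exact on $M$. Integration then yields the required conformal minimal immersion $\tilde X\colon M\to\r^n$. I expect the main obstacle to be the critical step: the extension of the Weierstrass datum across the newly attached arc $\gamma$ must simultaneously be nonflat there (so that Lemma \ref{lem:deformation} applies on the enlarged domain), realize a preassigned real integral on the new homology class (here $0$) to within a small error, and be compatible with any jet data on $\Lambda$ lying near or on $\gamma$. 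Once this arc-extension issue has been resolved, the Mergelyan theorem for admissible sets together with Lemma \ref{lem:deformation} and the implicit function theorem carry out the inductive step in a routine manner.
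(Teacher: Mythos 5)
Your overall strategy — reduce to the Weierstrass datum, exhaust by compact Runge domains, use Mergelyan/Oka approximation into $\Agot_*$ with jet interpolation, correct periods with the spray of Lemma~\ref{lem:deformation}, and close the argument with the implicit function theorem — is the right skeleton and matches the paper. However, there is a genuine gap in the reduction at the very start, and it propagates through the noncritical step.

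You claim it suffices to build $\tilde f\colon M\to\Agot_*$ that (i) approximates $f=2\di X/\theta$ on $K$, (ii) has contact of order $r(p)$ with $f$ at each $p\in\Lambda$, and (iii) has $\Re(\tilde f\theta)$ exact, and then integrate. This does not yield the theorem: matching the jet of $\tilde f$ with the jet of $f$ at $p$ controls the derivatives of $\tilde X$ at $p$, but not the \emph{value} $\tilde X(p)=X(p_0)+\int_{p_0}^p\Re(\tilde f\theta)$, which depends on $\tilde f$ along the entire path from $p_0$ to $p$. Thus your $\tilde X$ would have contact of the right order with $X+c_p$ for some uncontrolled constant $c_p$, not with $X$. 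In your noncritical step this is the whole story: you approximate $f_{k-1}$ on $M_{k-1}$, impose the jet of $f$ at the new point of $\Lambda$, and correct the closed-loop periods — nothing in that procedure forces $\tilde X$ to take the prescribed value at the new point. The period-dominating spray with the agreement clause \eqref{eq:agree} only preserves the jet of the Weierstrass datum at $\Lambda$; it does not touch the arc-integral from $p_0$ to the interpolation point.

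The paper's proof closes exactly this hole by enlarging the period map. In each inductive step one attaches smooth Jordan arcs $\alpha_0,\dots,\alpha_k$ in $\mathring M_{i+1}$ from a base point $p_0$ to the points of $\Lambda_{i+1}$, forming an admissible set $S=M_i\cup\Gamma$ on which a generalized conformal minimal immersion is built using Lemma~\ref{lem:periods} so that it equals $X$ on $\Lambda_{i+1}$. The period map $\Pcal$ in Lemma~\ref{lem:deformation} (admissible-set version, Remark~\ref{rem:PD2}) is then taken over \emph{both} the closed homology curves and the arcs $\alpha_a$; period domination and the implicit function theorem then fix simultaneously the real closed-loop periods (so $\Re(\tilde f\theta)$ is exact) and the arc-integrals $\int_{\alpha_a}\tilde f\theta$ (so $\tilde X(q_a)=X(q_a)$). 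You correctly anticipated the arc-extension/Lemma-\ref{lem:periods} device for the critical handle, but it is also indispensable in the noncritical step whenever a new point of $\Lambda$ enters, and more fundamentally your stated reduction discards the value constraint that this device is there to enforce.
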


\begin{remark}
In fact, more is true: the conformal minimal immersions $\wt X:M\to \R^n$ in  Theorem \ref{th:ALL}
can be chosen complete (see Theorem \ref{th:ALL-complete}); furthermore, if the map $X:\Lambda \to\R^n$ is proper 
(this holds in particular if $\Lambda$ is finite) then $\wt X$ can also be chosen proper 
(see Theorems \ref{th:SSY} and \ref{th:ALL-proper}). 
As we shall see in the proof, one can also obtain Mergelyan approximation on admissible sets
(see Definition \ref{def:admissible}). By using  the general position argument in 
Theorem \ref{th:desingBRS}, one easily sees that the  immersions 
$\wt X\in \CMI(M,\r^n)$ can be chosen embeddings if $n\ge 5$,
immersions with simple double points if $n=4$, and to have prescribed flux 
compatible with the flux of the initial immersion $X$  for any loop in $K$.
\end{remark}

\begin{remark}
The analogous Runge approximation theorems with jet interpolation holds for holomorphic null curves, and more generally
for holomorphic immersions $M\to \C^n$ directed by any conical complex subvariety 
$A\subset \C^n$ such that $A\setminus\{0\}$ is an Oka manifold (see \cite[Theorems 7.2 and 7.7]{AlarconForstneric2014IM} 
and \cite[Theorem 1.3]{AlarconCastro-Infantes2017}).
Here we say that a holomorphic immersion $Z\colon M\to\C^n$ is {\em directed by} $A$, or an {\em $A$-immersion},
if $(dZ/\theta)(M)\subset A\setminus\{0\}$, where $\theta$ is any nowhere vanishing holomorphic $1$-form on $M$.
Thus, null holomorphic immersions correspond to the case when $A=\Agot$ \eqref{eq:null}.
\end{remark}

Theorem \ref{th:ALL} is a compilation of results from the paper \cite{AlarconLopez2012JDG} by Alarc\'on and L\'opez
where the existence and approximation was proved for conformal minimal immersions into $\R^3$,
the paper \cite{AlarconForstnericLopez2016MZ} by the authors and L\'opez where the same was done
in any dimension $n\ge 3$, and the paper \cite{AlarconCastro-Infantes2017} by Alarc\'on and Castro-Infantes 
where interpolation was added. 

\begin{proof}[Sketch of the proof of Theorem \ref{th:ALL}]
We assume that the function $r\colon \Lambda\to \N$ is constant; the general case is obtained by 
an obvious modification. 

Pick a smooth strongly subharmonic Morse exhaustion function $\rho\colon M\to\r$ and  
exhaust $M$ by an increasing sequence 
\begin{equation}\label{eq:exhaustion}
	K=M_1\Subset M_2\Subset\cdots\Subset \bigcup_{i=1}^\infty M_i=M
\end{equation}
of compact smoothly bounded domains of the form $M_i=\{p\in M\colon \rho(p)\le c_i\}$,
where $c_1<c_2<\cdots$ is an increasing sequence of regular values of $\rho$
with $\lim_{i\to\infty} c_i =+\infty$. Thus, each domain $M_i$ is a possibly disconnected compact bordered Riemann surface. 
For convenience of exposition we also assume that $\rho$ has at most one critical
point $p_i$ in each difference $\mathring M_{i+1}\setminus M_i$, and that
no point of $\Lambda$ is a critical point of $\rho$. It follows that $M_{i}$ is
Runge in $M$ for every $i\in \n$. Set $\Lambda_i=\Lambda\cap M_i$ for each $i\in\n$;
this is a finite set since $\Lambda\subset M$ is closed and discrete. 
Up to enlarging $K$ and $\Lambda$ if necessary, we may assume that $\rho$ is chosen 
such that $\Lambda\cap bM_i=\varnothing$ and $\Lambda_{i+1}\setminus\Lambda_i$ consists of a single point for all $i\in\n$.

Set $X_1=X|_{M_1}$ and assume as we may that $X_1$ is nonflat. To prove the theorem, we inductively construct a sequence of nonflat
conformal minimal immersions $\{X_i\in\CMI(M_i)\}_{i\ge 2}$ satisfying the following conditions.
\begin{itemize}
\item[\rm (a)] $X_i$ is as close to $X_{i-1}$ as desired in the $\Cscr^1(M_{i-1})$ topology  for all $i\geq 2$.
\smallskip
\item[\rm (b)] $X_i$ and $X$ have a contact of order $r$ at every point in $\Lambda_i$.
\end{itemize}
It is clear that if the approximations in {\rm (a)} are close enough then the limit 
$\wt X=\lim_{i\to\infty} X_i\colon M\to\r^n$ satisfies the conclusion of the theorem. 

The basis of the induction is given by the already fixed $X_1$.
Assume that we already have the immersion $X_i$ 
for some $i\in\n$. We consider two different cases depending on the topology of $M_{i+1}\setminus M_i$.

\noindent{\em The noncritical case}: $\rho$ has no critical value in $[c_i,c_{i+1}]$. In this case
$M_i$ is a strong deformation retract of $M_{i+1}$. 
We may assume that $M_i$ is connected; otherwise we apply the same argument in each connected 
component. Set $f_i=2 \di X_i/\theta\colon M_i\to\Agot_*$, write $\Lambda_i=\{q_1,\ldots,q_k\}$, and denote by $q_0$ 
the only point in $\Lambda_{i+1}\setminus\Lambda_i$. 
Pick a point $p_0\in \mathring M_i\setminus\Lambda$ and choose a family of smooth Jordan arcs 
$\alpha_0,\alpha_1,\ldots,\alpha_k$ in $\mathring M_{i+1}$ and smooth Jordan curves 
$\alpha_{k+1},\ldots,\alpha_{k+l}$  in $\mathring M_i$ $(l=\dim H_1(M_i;\z))$ satisfying the following conditions.
\begin{itemize}
\item $\alpha_a\cap\alpha_b=\{p_0\}$ for all $a\neq b\in \{0,\ldots,k+l\}$.
\item The endpoints of $\alpha_a$ are $p_0$ and $q_a$ for all $a\in\{0,\ldots,k\}$. 
We orient each $\alpha_a$ so that $p_0$ is its initial point and $q_a$ is its final point.
\item The curves $\alpha_{k+1},\ldots,\alpha_{k+l}$ determine a homology basis of $M_i$.
\item $\Upsilon=\bigcup_{a=0}^{k+l} \alpha_a$ is a Runge set in $M$.
\item The set $S=M_i\cup \Upsilon=M_i\cup \Gamma$, where $\Gamma=\bigcup_{a=0}^k \alpha_a$, is admissible in $M$ 
(see Definition \ref{def:admissible}).
\end{itemize}

By Lemma \ref{lem:periods}  we can extend $X_i\colon M_i\to\r^n$ to a generalized conformal minimal immersion 
$(\wt X_i,f_i\theta)\colon S\to\r^n$ (see Definition \ref{def:GCMI}) 
such that $\wt X_i=X$ on $\Lambda_{i+1}$ and on a neighborhood of $q_0$; this is possible by condition {\rm (b)} 
for the index $i$. (Here, $\theta$ is a nowhere vanishing holomorphic $1$-form on $M$.) Consider the period map
\[
	\Pcal(f)=\left(\int_{\alpha_a}f\theta\right)_{a=0}^{k+l},\qquad f\in \Acal(S,\C^n).
\]
Lemma \ref{lem:deformation} and Remark \ref{rem:PD2} furnish a period dominating spray of maps 
$f_{i;w}\colon S\to\Agot_*$ of class $\Ascr(S)$ with core $f_{i;0}=f_i$,  
depending holomorphically on a parameter $w$ in a ball $B\subset \C^N$ for some $N\in\n$,
such that $f_{i;w}$ and $f_i$ have a contact of order $r$ at every point in $\Lambda_{i+1}$. 
Since $\Agot_*$ is a complex homogeneous manifold and $S$ is Runge in $M$ and a deformation retract of 
$M_{i+1}$, we may apply Theorem \ref{th:OP} to approximate $f_{i;w}$ uniformly on $M_i$ and uniformly 
with respect to $w\in B$ (shrinking $B$ slightly if necessary) by a holomorphic spray of holomorphic maps 
$g_w\colon M_{i+1}\to\Agot_*$ having a contact of order $r$ with $f_i$ at every point in $\Lambda_{i+1}$. 
Assuming that the approximation is close enough, the period domination condition of 
$f_{i;w}$ and the implicit function theorem give a point $w_0\in B$ close to $0\in\c^N$ such that
$\Pcal(g_{w_0})=\Pcal(f_i)$. The conformal minimal immersion 
\[
	X_{i+1}(p)=X_i(p_0)+  \int_{p_0}^p \Re(g_{w_0}\theta),\qquad p\in M_{i+1},
\]
then satisfies conditions {\rm (a)} and {\rm (b)} for the index $i+1$.

\noindent{\em The critical case}: $\rho$ has a unique (Morse) critical point $p_{i+1}\in M_{i+1}\setminus M_i$. 
Since $\rho$ is strongly subharmonic, $p_{i+1}$ has Morse index either $0$ or $1$. 

If the Morse index is $0$, a new simply connected component of the sublevel set $\{\rho\leq c\}$ 
appears at $p_{i+1}$ when $c$ passes the value $\rho(p_{i+1})$. We define $X_{i+1}$ on this new component 
as any conformal minimal immersion, thereby reducing the proof to the noncritical case.

If the Morse index of $p_{i+1}$ is $1$, the change of topology at $p_{i+1}$ is described by attaching to $M_i$ a smooth
arc $E\subset \mathring M_{i+1}\setminus (M_i\cup\Lambda)$ such that $M_i\cup E$ is a compact admissible Runge set 
(see Definition \ref{def:admissible}) which is a strong deformation retract of $M_{i+1}$. 
Let $\theta$ be a nowhere vanishing 
holomorphic $1$-form on $M$. Consider the smooth map $f_i=\di X_i/\theta:M_i\to \Agot_*$ which is 
holomorpic in $\mathring M_i$. We can extend $f_i$ to a smooth map $\tilde f_i \colon M_i\cup E\to\Agot_*$.
We orient $E$ and let $p,q\in bM_i$ denote the beginning and the endpoint of $E$, respectively. 
Lemma \ref{lem:periods} applied on $E$ 
furnishes a smooth function $h\colon E\to \C_*$ which equals $1$ near both endpoints such that
\[
	\int_E  h\tilde f_i\theta = X_i(q)-X_i(p).
\]
We extend $h$ to a smooth function on $M_i\cup E$ by setting $h|_M=1$. Let $\hat f_i=h\tilde f_i$.
By integrating $\hat f_i\theta$ from any initial point $p_0\in M_i$ we obtain a generalized conformal minimal immersion 
$(\widehat X_i,\hat f_i\theta)\in\GCMI(M_i\cup E,\r^n)$ (see Definition \ref{def:GCMI}) 
such that $\widehat X_i=X_i$ on $M_i$. We finish as in the noncritical case considered above,
applying the method of period dominating sprays on the admissible set $M_i\cup E$.
\end{proof}

The L\'opez-Ros deformation  \cite{LopezRos1991JDG}  for minimal surfaces in $\r^3$ enables one to perturb a given 
conformal minimal immersion by preserving one of its component functions; this is crucial in all applications of 
this technique  in the literature.  Theorem \ref{th:ALL} also admits a version in which 
all but two  components of the initial immersion are preserved. The next theorem is a compilation of results from 
\cite{AlarconCastro-Infantes2017,AlarconFernandezLopez2013CVPDE,AlarconForstnericLopez2016MZ,AlarconLopez2012JDG}. 

%
% MERGELYAN'S THEOREM WITH A FIXED COMPONENT
%
\begin{theorem}\label{th:ALL-2}
(Assumptions as in Theorem \ref{th:ALL}.) 
Assume in addition that $X=(X_1,\ldots,X_n)$ is nonflat and that the functions $X_3,\ldots,X_n$ extends harmonically to $M$.
Then the approximating conformal minimal immersions $\wt X=(\wt X_1,\ldots,\wt X_n)\in\CMI(M,\r^n)$ in Theorem \ref{th:ALL} can 
be found with $\wt X_k=X_k$ for $k=3,\ldots,n$.
\end{theorem}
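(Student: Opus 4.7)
The plan is to mirror the inductive Runge-Mergelyan scheme used to prove Theorem \ref{th:ALL}, but to restrict all intermediate deformations to a higher-dimensional analog of the classical L\'opez-Ros deformation that fixes the coordinates $X_3,\ldots,X_n$. Fix a nowhere vanishing holomorphic $1$-form $\theta$ on $M$ and set $f_k = 2\di X_k/\theta$. The hypothesis that $X_3,\ldots,X_n$ extend harmonically to $M$ means that $f_3,\ldots,f_n\in\Ocal(M)$ and therefore $H:=\sum_{k=3}^n f_k^2\in\Ocal(M)$ globally. The nullity condition $\sum f_j^2=0$ becomes $(f_1+\imath f_2)(f_1-\imath f_2)=-H$, so setting $e_0=f_{1,0}+\imath f_{2,0}$ and $e'_0=f_{1,0}-\imath f_{2,0}$ on $K\cup\Omega$ we have $e_0e'_0=-H$. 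For any nowhere vanishing holomorphic function $g$ defined where $e_0$ is, the pair
\[
\tilde f_1=\tfrac{1}{2}\bigl(ge_0+e'_0/g\bigr),\qquad \tilde f_2=-\tfrac{\imath}{2}\bigl(ge_0-e'_0/g\bigr)
\]
together with $f_3,\ldots,f_n$ continues to satisfy the null equation and hence defines a conformal minimal immersion (up to period vanishing). A small preliminary perturbation, supported where needed and preserving the fixed components (allowed by Theorem \ref{th:ALL} applied only to the first two coordinates over a neighborhood of $K$), makes $e_0$ nowhere zero on $K$; alternatively one allows $g$ to be meromorphic with poles at zeros of $e_0$, as in the classical L\'opez-Ros construction.

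I would then carry out the noncritical and critical Mergelyan steps of the proof of Theorem \ref{th:ALL} within this restricted deformation class. In each step, extend $(f_1,f_2)$ to an admissible set $S=M_i\cup\Gamma$ by a generalized conformal minimal immersion via Lemma \ref{lem:periods}, making sure the extension still satisfies $\tilde f_1^2+\tilde f_2^2=-H$ (this is possible because $H$ is already defined on $\Gamma$, and the target variety for the attached path is the $1$-dimensional conic $\{z_1^2+z_2^2=-H(p)\}$, which is nonempty and irreducible at every point where $H(p)\ne 0$). Next, produce a period-dominating multiplier as in Lemma \ref{lem:existence-sprays}: seek $g$ of the form $g(p,t)=1+\sum_{i=1}^N t_ig_i(p)$ with $g_i\in\Ocal(M_{i+1})$ each vanishing to order $r(p)$ at every $p\in\Lambda_{i+1}$, so that the jet interpolation is automatic. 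Linearizing $\tilde f_1$ and $\tilde f_2$ in $t$ at $t=0$ yields variations $u(e_0-e'_0)/2=\imath uf_{2,0}$ and $-\imath u(e_0+e'_0)/2=-\imath uf_{1,0}$, so period domination on the homology basis reduces to fullness of the pair $(f_{1,0},f_{2,0})\colon M\to\C^2$ along each loop, which is an open condition preserved under generic perturbation and is implied by nonflatness of $X$ combined with the global presence of $f_3,\ldots,f_n$. Apply the Oka-Weil/Mergelyan theorem on $\Agot_*$-valued maps (using that $\C_*$ is Oka) to globalize $g$ to $M_{i+1}$, then correct periods exactly by the implicit function theorem, as in Theorem \ref{th:ALL}. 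The critical case attaching a handle is handled as in Theorem \ref{th:ALL} by first extending across an arc $E\subset \mathring M_{i+1}\setminus(M_i\cup\Lambda)$ using Lemma \ref{lem:periods} applied only to the two varying coordinates and treating $H$ as fixed along $E$.

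The main obstacle, as in all such constructions, is the period domination of Step 2, which here is substantially more delicate than in Lemma \ref{lem:deformation}: the deformation space is parametrized by a single scalar multiplier $g$ rather than by arbitrary flows inside $\Agot_*$, so the first-order period variations lie only in the $\C$-span of $\{f_{1,0}\theta,f_{2,0}\theta\}$ rather than in all of $\C^n$. One must therefore show that nonflatness of the full map $f$ together with the fixed-components hypothesis forces fullness of $(f_{1,0},f_{2,0})$ along each loop, and one must arrange the initial perturbation so that $e_0$ does not vanish on $K$. Once these two points are settled the rest of the argument runs parallel to Theorem \ref{th:ALL}, producing in the limit the desired $\wt X\in\CMI(M,\R^n)$ with $\wt X_k=X_k$ for $k=3,\ldots,n$, approximating $X$ on $K$ and matching its $r(p)$-jet at every $p\in\Lambda$.
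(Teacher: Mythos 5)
The paper does not give a proof of Theorem \ref{th:ALL-2}; it cites \cite{AlarconLopez2012JDG,AlarconFernandezLopez2013CVPDE,AlarconForstnericLopez2016MZ,AlarconCastro-Infantes2017}, which indeed use the higher-dimensional L\'opez-Ros deformation you describe. Your set-up is correctly laid out: writing $e_0e'_0=-H$ with $H=\sum_{k\ge 3}f_k^2\in\Ocal(M)$, deforming $e_0\mapsto ge_0$, $e'_0\mapsto e'_0/g$, and identifying the linearized period variations as $\imath u f_{2,0}$ and $-\imath u f_{1,0}$. This is the right strategy, and you have also correctly located where the real difficulty lies, namely in period domination with only a scalar multiplier.

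However, the key assertion you make to close this gap — that nonflatness of $X$ together with the fixed components $f_3,\ldots,f_n$ ``forces fullness of $(f_{1,0},f_{2,0})$ along each loop'' — is false as soon as $n>3$. Take $n=4$, nonconstant $\lambda,h\in\Ocal(M)$ with $h$ nonvanishing and $h/\lambda$ nonconstant, and set $f_1=f_2=\lambda$, $f_3+\imath f_4=h$, $f_3-\imath f_4=-2\lambda^2/h$. Then $\sum_{j=1}^4 f_j^2=0$, the pair $(f_3,f_4)$ is not proportional to a constant vector, so $f$ does not lie in a complex ray and $X$ is nonflat; yet $(f_1,f_2)=\lambda(1,1)$ lies in a single complex line in $\C^2$ and is never full. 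The linearized period variations then all lie in $\C\cdot(\imath,-\imath)$, so the multiplier spray is not period dominating. Your proposed preliminary perturbation to repair the initial data is justified by invoking ``Theorem \ref{th:ALL} applied only to the first two coordinates,'' which is circular: Theorem \ref{th:ALL} perturbs all coordinates, and the coordinate-preserving version is exactly what is to be proved. What is actually needed (and what the cited papers supply) is a direct argument that a nonconstant multiplier $g$ already destroys the proportionality (the ratio $\tilde f_1/\tilde f_2$ is a nonconstant M\"obius function of $g$ whenever $e_0e'_0\not\equiv 0$), together with period control for this preliminary step, and a separate treatment of the degenerate case $H\equiv 0$, where $(f_1,f_2)$ is \emph{never} full but the two real period constraints collapse to the single complex condition $\oint\tilde f_2\theta=0$, which the multiplier does dominate because $f_2$ is nowhere vanishing. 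Without these supplementary arguments the induction does not close.
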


%
%   COMPLETENESS
%
The following extension of Theorem \ref{th:ALL} requires some additional work.

\begin{theorem}\label{th:ALL-complete}
The conformal minimal immersions $\wt X\colon M\to\r^n$ in Theorem \ref{th:ALL} can be chosen complete.
\end{theorem}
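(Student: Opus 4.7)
The plan is to upgrade the inductive construction in the proof of Theorem \ref{th:ALL} by inserting, at each stage, a labyrinth-type length-increasing step in the spirit of Jorge--Xavier \cite{JorgeXavier1980AM} and Nadirashvili \cite{Nadirashvili1996IM}, implemented via the period-dominating spray machinery of Section \ref{ss:period-dominating}. Fix the same exhaustion $K=M_1\Subset M_2\Subset\cdots\Subset M$ by smoothly bounded compact Runge domains, a nowhere vanishing holomorphic $1$-form $\theta$ on $M$, and a base point $p_0\in\mathring M_1$. I would construct inductively a sequence of nonflat conformal minimal immersions $X_i\in\CMI(M_i,\R^n)$ satisfying conditions (a) and (b) from the proof of Theorem \ref{th:ALL}, together with the additional \emph{length condition}
\[
	\dist_{X_i}(p_0,bM_i)>i \qquad (i\ge 2),
\]
where the distance is measured in the induced metric $X_i^*(ds^2)$. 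If the $\Cscr^1(M_{i-1})$-approximations in (a) are close enough at each step, this lower bound survives in the limit $\wt X=\lim_{i\to\infty}X_i\colon M\to\R^n$, so every divergent path in $M$ has infinite $\wt X$-length and $\wt X$ is complete.

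The new ingredient is the transition from $X_i$ to $X_{i+1}$. In the annulus $A_i=M_{i+1}\setminus\mathring M_i$ (reducing to the noncritical situation as in the proof of Theorem \ref{th:ALL}, and handling the Morse critical case by first attaching a smooth arc and applying Lemma \ref{lem:periods} as before) I would place a \emph{labyrinth} $L_i\subset A_i\setminus \Lambda$: a finite union of pairwise disjoint, thin, smoothly bounded compact discs arranged in a dense pattern so that any arc in $A_i$ joining $bM_i$ to $bM_{i+1}$ must pass very close to, or through, a prescribed large number of the connected components of $L_i$. The key geometric property, arranged by choice of the geometry of $L_i$, is that a Euclidean metric in $M$ whose conformal factor is very large on $L_i$ and comparable to $|\theta|$ elsewhere produces arbitrarily long paths across $A_i$. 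Writing $f_i=2\di X_i/\theta\colon M_i\to\Agot_*$ and recalling from \eqref{eq:metric} that $X_i^*(ds^2)=\tfrac12|f_i|^2|\theta|^2$, the plan is to multiply $f_i$ (extended smoothly across $L_i$) by a holomorphic multiplier $h\in\Ocal(M_{i+1})$ with $|h|\gg 1$ on $L_i$, $|h|\approx 1$ on $M_{i-1}$, and $h$ having prescribed high-order jets at the finitely many points of $\Lambda_{i+1}$. Such an $h$ is produced by Runge--Oka approximation of a Mergelyan model on the admissible set $M_i\cup L_i\cup(\text{arcs to }\Lambda_{i+1})$.

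The main obstacle, and the reason one cannot simply multiply by a scalar, is that the new map $hf_i$ must still have vanishing real periods so that $\Re(hf_i\theta)$ integrates to a well-defined conformal minimal immersion on $M_{i+1}$, while simultaneously $hf_i$ must remain in $\Agot_*$ and match the jets of $f_i$ at points of $\Lambda_{i+1}$. The first issue is resolved by embedding $h$ as the core of the period-dominating multiplier spray $h(p,t)=1+\sum_{j=1}^N t_jg_j(p)$ supplied by Lemma \ref{lem:existence-sprays}, applied to $hf_i$, and then invoking the implicit function theorem to find a parameter value $t^*$ close to $0$ for which the period condition $\Re\,\Pcal(h(\cdot,t^*)f_i)=\Re\,\Pcal(f_i)$ holds on a basis of $H_1(M_{i+1};\Z)$; the smallness of $t^*$ ensures the metric on $L_i$ remains large. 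Because $h(\cdot,t)$ takes values in $\C_*$, the product $h(\cdot,t)f_i$ stays in the cone $\Agot_*$. The interpolation is secured by arranging $g_j$ to vanish to order $r(p)$ at each $p\in\Lambda_{i+1}$, as in the proof of Lemma \ref{lem:deformation}.

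Putting these together, integration of $\Re(h(\cdot,t^*)f_i\,\theta)$ from $p_0$ defines $X_{i+1}\in\CMI(M_{i+1},\R^n)$ satisfying (a), (b), and, by the labyrinth geometry, also the length condition $\dist_{X_{i+1}}(p_0,bM_{i+1})>i+1$. The hardest part is to coordinate the three competing constraints --- a large multiplier on $L_i$, small perturbation on $M_{i-1}$ and along $\Lambda_{i+1}$, and exact period closing on a full homology basis of the changing surface $M_{i+1}$ --- but each is convex/open relative to the others, so the period-dominating spray together with Oka-type Runge approximation delivers them simultaneously, and the inductive limit $\wt X$ meets the conclusion of Theorem \ref{th:ALL-complete}.
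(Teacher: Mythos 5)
Your approach is a genuine departure from the paper's: where you multiply the full derivative map $f_i$ by a conformal factor $h_0$ (in the spirit of the Gauss-map-preserving multiplier technique of Lemma~\ref{lem:existence-sprays} and Theorem~\ref{th:Gauss}), the paper instead invokes Theorem~\ref{th:ALL-2} to keep the first coordinate $X_{i;1}$ \emph{identically} unchanged on all of $M_{i+1}$ and then perturbs $X_{i;2}$ to be far from its old values on the labyrinth $\Upsilon$. This difference matters, and it is where your argument has a gap. You state, correctly, that the mechanism requires the conformal factor to be ``very large on $L_i$ and comparable to $|\theta|$ elsewhere.'' The first half you achieve via the Mergelyan model for $h_0$, but the second half is not delivered by your construction: Runge--Oka approximation only controls $h_0$ on the admissible set $M_i\cup L_i\cup(\text{arcs})$, and on the gaps $A_i\setminus(M_i\cup L_i)$ the modulus $|h_0|$ is a priori uncontrolled (indeed, by the minimum principle for $\log|h_0|$, one cannot force $|h_0|\ge 1$ throughout $A_i$ while also keeping $|h_0|\approx 1$ on $M_i$ without controlling $h_0$ up to $bM_{i+1}$, which the admissible set does not reach). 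So a Jordan arc from $bM_i$ to $bM_{i+1}$ threading the labyrinth gaps could lie in a region where $|h_0f_i|$ is tiny, and its length in $X_{i+1}^*(ds^2)=\tfrac12|h(\cdot,t^*)h_0f_i|^2|\theta|^2$ need not be large, destroying the lower bound $\dist_{X_{i+1}}(p_0,bM_{i+1})>i+1$. The paper's choice to freeze $X_{i;1}$ via Theorem~\ref{th:ALL-2} is precisely what sidesteps this: $|\di X_{i+1;1}|=|\di X_{i;1}|$ is bounded below on all of $A_1$, so the labyrinth inequality~\eqref{eq:diX1} directly bounds the length of any path that avoids $\Upsilon$, independently of the uncontrolled part of the approximation.

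Two secondary points. First, Lemma~\ref{lem:existence-sprays}, which you invoke to close the periods, requires the core map to be \emph{full}, whereas Theorem~\ref{th:ALL} and its proof only guarantee \emph{nonflatness}; you would either have to restrict the implicit-function argument to the span of $f_i(M)$, or use the flow-based period-dominating spray of Lemma~\ref{lem:deformation} (which needs only nonflatness and does not preserve the Gauss map, but that is not required here). Second, the labyrinth must be of Jorge--Xavier type (long concentric arcs with alternating gaps forcing paths to wind), not a ``dense pattern of small discs'': a family of small pairwise disjoint discs leaves approximately radial escape routes whose $|\theta|$-length is not forced to be large, so the stated ``key geometric property'' would fail for such $L_i$.
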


If one ignores the interpolation, then Theorem \ref{th:ALL-complete} follows from Theorem \ref{th:SSY} to the effect 
that a conformal minimal
immersion $X\colon K\to\R^n$ for $n\ge 3$ from a Runge set $K$ in an open Riemann surface $M$ can be approximated 
by {\em proper} (hence complete) conformal minimal immersions $\wt X\colon M\to\R^n$.
Assuming in addition that $X: \Lambda \to\R^n$ is a proper map, it is also possible to match the interpolation
condition in Theorem \ref{th:ALL} by a proper conformal minimal immersions $\wt X\colon M\to\R^n$ 
(see Theorem \ref{th:ALL-proper}).

\begin{proof}[Sketch of proof]
Following the noncritical case in the proof of Theorem \ref{th:ALL}, we assume without loss of generality that $M_i$ is connected and, 
for simplicity of exposition, that $bM_i$ is connected as well; hence $A=M_{i+1}\setminus \mathring M_i$ is a smoothly 
bounded compact annulus with $bA=bM_{i+1}\cup bM_i$.
Write $X_i=(X_{i;1},\ldots,X_{i;n})$.  We split $A$ into two annuli $A_0$ and $A_1$  such that $A_0\cap A_1$ is a boundary 
component of both $A_0$ and $A_1$, $bM_i\subset bA_0$, $bM_{i+1}\subset bA_1$, and the only point $q_0$ in 
$\Lambda_{i+1}\setminus \Lambda_i$ lies in $\mathring A_0$. By the proof of Theorem \ref{th:ALL} we may assume that 
$X_i$ extends to $M_{i+1}$ having a contact of order $k$ with $X$ at every point in $\Lambda_{i+1}$, and that $\di X_{i;1}$ 
vanishes nowhere on $A_1$. We then consider a labyrinth of compact sets $\Upsilon$ in $\mathring A_1$ as in 
Jorge-Xavier \cite{JorgeXavier1980AM}, i.e., $\Upsilon$ is a finite union of pairwise disjoint compact sets in $\mathring A_1$ 
such that if $\gamma\colon[0,1]\to A_1\setminus \Upsilon$ is a path connecting the two boundary components of $A_1$ then
\begin{equation}\label{eq:diX1}
	\int_\gamma |\di X_{i;1}|>2\tau
\end{equation}
for a given number $\tau>0$. By Theorem \ref{th:ALL-2} we obtain 
$X_{i+1}=(X_{i+1;1},\ldots, X_{i+1;n})\in\CMI(M_{i+1},\r^n)$ which is close to $X_i$ in the $\Cscr^1(M_i\cup A_0)$ norm, 
has a contact of order $k$ with $X$ everywhere on $\Lambda_{i+1}\subset M_i\cup A_0$, $X_{i+1;1}=X_{i;1}$ everywhere 
on $M_{i+1}$, and $|X_{i+1;2}(p)-X_{i;2}(q)|>\tau$ for all points $p\in \Upsilon$ and $q\in A_0$. Together with \eqref{eq:metric}, 
this and \eqref{eq:diX1} guarantee that, if the approximation of $X_i$ by $X_{i+1}$ is close enough, the intrinsic distance between 
the boundaries of $A_1$ with respect to the metric induced on $M_{i+1}$ by the Euclidean metric in $\r^n$ via $X_{i+1}$ is 
greater than $\tau$. Since $\tau>0$ is arbitrary, this shows that we may arbitrarily enlarge the intrinsic diameter of the 
surface in every step of the inductive construction in the proof of Theorem \ref{th:ALL}, thereby ensuring  
completeness of the limit map. 
\end{proof}

%
% Subsection: ON SULLIVAN'S & SCHOEN-YAU'S CONJECTURES AND THE EMBEDDING PROBLEM
%
\subsection{On Sullivan's and Schoen-Yau's conjectures and the embedding problem}
\label{ss:SullivanSchoenYau}
As we have mentioned in the introduction, as late as in the 1990s hyperbolic Riemann surfaces were thought 
to play only a marginal role in the global theory of minimal surfaces as seen from the following well known conjectures.

\begin{conjecture}[Sullivan]\label{co:Sullivan}
Every properly immersed minimal surface in $\r^3$ with finite topology is parabolic.
\end{conjecture}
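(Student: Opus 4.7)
The conjecture is stated as motivation for its refutation, so my plan is to construct a counterexample: a proper conformal minimal immersion $X\colon \D\to\R^3$ from the unit disc, which is hyperbolic by Liouville's theorem and of finite topological type (genus zero, one end). Following the remark preceding the conjecture, which credits Theorem \ref{th:SSY} with producing proper projections onto a coordinate $2$-plane, I would build $X=(X_1,X_2,X_3)$ so that the projection $(X_1,X_2)\colon \D\to\R^2$ is already proper, whence properness of $X$ itself is automatic.

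I would exhaust $\D$ by smoothly bounded Runge discs $M_i=\{|z|\le r_i\}$ with $r_i\nearrow 1$ and inductively construct nonflat conformal minimal immersions $X_i=(X_{i;1},X_{i;2},X_{i;3})\in\CMI(M_i,\R^3)$ satisfying two conditions: (a) $X_{i+1}$ approximates $X_i$ in $\Cscr^1(M_i)$ to within $2^{-i}$, and (b) $\max(|X_{i;1}(p)|,|X_{i;2}(p)|)\ge i$ for every $p\in bM_i$. Condition (a) forces $\Cscr^1$-convergence on compact subsets of $\D$ to a limit $X\in\CMI(\D,\R^3)$, and combined with (b) it yields $\max(|X_1(p)|,|X_2(p)|)\to\infty$ as $p$ approaches $b\D$, so the projection $(X_1,X_2)\colon \D\to\R^2$ of the limit is proper. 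The inductive step rests on Theorem \ref{th:ALL-2}: after fixing a harmonic function $X_3\colon \D\to\R$ at the outset, at each stage I would invoke that theorem with the third coordinate preserved to extend $X_i$ to $X_{i+1}\in\CMI(M_{i+1},\R^3)$ with $X_{i+1;3}=X_3|_{M_{i+1}}$ and with $(X_{i+1;1},X_{i+1;2})$ approximating $(X_{i;1},X_{i;2})$ on $M_i$. The residual freedom to deform $(X_{i+1;1},X_{i+1;2})$ on the annulus $M_{i+1}\setminus M_i$, subject only to the nullity relation $(\di X_{i+1;1})^2+(\di X_{i+1;2})^2 = -(\di X_3)^2$ and to the vanishing of real periods, is then exploited to enforce (b); existence of the required deformation rests on the period-dominating spray of Lemma \ref{lem:deformation} and the Oka principle for $\Agot_*$ (Theorem \ref{th:OP}).

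The main hurdle is reconciling the small-perturbation requirement (a) on $M_i$ with the large-growth requirement (b) on the boundary $bM_{i+1}$ of the next domain. The L\'opez--Ros-type device embedded in Theorem \ref{th:ALL-2} is precisely what decouples these demands: one has essentially unrestricted freedom to modify $(X_{i+1;1},X_{i+1;2})$ on the annulus $M_{i+1}\setminus M_i$ without disturbing the approximation on $M_i$ beyond the prescribed tolerance, since the perturbing multipliers furnished by Lemma \ref{lem:existence-sprays} can be chosen arbitrarily small on the inner set. Once the induction succeeds, the limit $X\colon \D\to\R^3$ is a proper conformal minimal immersion of a hyperbolic Riemann surface of finite topology, refuting Sullivan's conjecture.
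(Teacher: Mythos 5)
Your overall plan---show Sullivan's conjecture is false by producing a proper conformal minimal immersion $X\colon\D\to\R^3$ whose projection $(X_1,X_2)\colon\D\to\R^2$ is already proper, built by exhausting $\D$ and growing $\max(|X_1|,|X_2|)$ on the boundary circles while approximating inside---is exactly the strategy behind the paper's account of the refutation (Morales, Alarc\'on--L\'opez, and Theorem \ref{th:SSY}). However, the inductive step as you have written it has a genuine gap. You propose to fix $X_3$ once and for all and then rely on ``essentially unrestricted freedom'' to modify $(X_1,X_2)$ on the annulus $M_{i+1}\setminus M_i$. That freedom is in fact tightly constrained: once $\phi_3=2\,\di X_3/\theta$ is fixed, the pair $(\phi_1,\phi_2)$ is confined to the affine conic $\phi_1^2+\phi_2^2=-\phi_3^2$, and the only available deformation keeping $X_3$ fixed is the L\'opez--Ros substitution $g\mapsto hg$ of the Gauss map by a nonvanishing holomorphic multiplier $h$. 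This necessarily moves $\phi_1$ and $\phi_2$ \emph{simultaneously}: you cannot enlarge $X_2$ on a portion of the boundary without altering $X_1$ there. Hence there is no a priori reason that the arcs of $bM_{i+1}$ near $bM_i$ on which $X_1$ was already large (coming from the inductive hypothesis) survive the deformation that enlarges $X_2$ on adjacent pieces, and your sketch gives no argument that condition (b) can be enforced everywhere on $bM_{i+1}$. Lemma \ref{lem:deformation} and Lemma \ref{lem:existence-sprays} supply period-dominating sprays, not the required control of the boundary values; on the disc the period constraints are vacuous anyway, so those lemmas are not where the difficulty lies.

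The proof the paper sketches for Theorem \ref{th:SSY} is built precisely to dodge this coupling. It decomposes $bM_{i-1}$ into arcs $\alpha_k$ according to which of $X_1,X_2$ is already $>i-1$, extends the immersion along a web of escape arcs via a generalized conformal minimal immersion and Theorem \ref{th:ALL}, and then applies Theorem \ref{th:ALL-2} \emph{twice with different fixed coordinates}: first with $X_1$ held fixed (so that enlarging $X_2$ on the islands $D_k$, $k\in I_1$, cannot disturb the arcs where $X_1$ is large), and then symmetrically with $X_2$ held fixed to enlarge $X_1$ on the remaining islands. Fixing $X_3$ for the entire construction forecloses this alternation, and the single-pass version you describe does not, as written, preserve what has been achieved. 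The L\'opez--Ros (fix-$X_3$) route taken by Morales and by Alarc\'on--L\'opez does succeed, but only through a considerably more delicate choice of the multiplier $h$ on each boundary piece; to make your proposal work you would either need to supply that argument, or switch to the paper's two-phase alternation of the fixed coordinate, which sidesteps the coupling entirely.
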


\begin{conjecture}[Schoen-Yau \text{\cite[p.\ 18]{SchoenYau1997CIP}}]
No hyperbolic open Riemann surface $M$ carries proper harmonic maps $M\to\r^2$. In particular,
every minimal surface in $\r^3$ with proper projection to $\R^2$ is parabolic.
\end{conjecture}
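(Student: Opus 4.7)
The Schoen-Yau conjecture is in fact false (as foreshadowed in the introduction), so the plan is not to prove it but to sketch an explicit counterexample. Taking $M=\D$ (the unit disc, a hyperbolic Riemann surface), I will construct a conformal minimal immersion $X=(X_1,X_2,X_3)\colon\D\to\R^3$ whose coordinate projection $(X_1,X_2)\colon\D\to\R^2$ is proper. Then $X_1+\imath X_2\colon\D\to\C\cong\R^2$ is a proper harmonic map from a hyperbolic surface, refuting the first clause; and $X(\D)\subset\R^3$ is a minimal surface with proper projection to $\R^2$ whose underlying conformal structure is that of $\D$, refuting the second clause.

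The strategy is an inductive Runge-type construction modeled on Theorem \ref{th:ALL}. Exhaust $\D$ by smoothly bounded compact Runge domains $M_1\Subset M_2\Subset\cdots\Subset\D$ with $\bigcup_i M_i=\D$, start from any nonflat $X^{(1)}\in\CMI(M_1,\R^3)$, and inductively produce $X^{(i+1)}\in\CMI(M_{i+1},\R^3)$ that (i) is as close as desired to $X^{(i)}$ in the $\Cscr^0$-norm on $M_i$, and (ii) satisfies $|X^{(i+1)}_1(p)+\imath X^{(i+1)}_2(p)|>i$ for every $p\in bM_{i+1}$. Condition (i) ensures convergence on compact subsets of $\D$ to a limit $\wt X\in\CMI(\D,\R^3)$; condition (ii), cascaded through sufficiently small approximations at later stages, forces $\wt X_1+\imath \wt X_2\colon\D\to\C$ to push every loop $bM_{i+1}$ outside $\overline B(0,i-1)\subset\R^2$, hence to be proper.

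The engine of the inductive step is the L\'opez-Ros deformation as packaged in Theorem \ref{th:ALL-2}: one may approximate $X^{(i)}$ by conformal minimal immersions on $M_{i+1}$ whose third component coincides (after harmonic extension) with $X^{(i)}_3$. Concretely, represent $\di X^{(i)}$ via Weierstrass data $(g,\phi_3)$ as in \eqref{eq:EWR} with $g=g_{X^{(i)}}$ and $\phi_3=\di X^{(i)}_3$, and replace $g$ by $hg$ for a nowhere vanishing $h\in\Ocal(M_{i+1})$. This fixes $\phi_3$ but transforms
\[
	(\phi_1,\phi_2,\phi_3)\longmapsto \Bigl(\tfrac{1}{2}\bigl(\tfrac{1}{hg}-hg\bigr)\phi_3,\ \tfrac{\imath}{2}\bigl(\tfrac{1}{hg}+hg\bigr)\phi_3,\ \phi_3\Bigr).
\]
Choose $h$ close to $1$ on $M_i$ and of very large modulus along $bM_{i+1}$; such an $h$ is produced by Mergelyan approximation on $M_{i+1}$ of a smooth nonvanishing model tailored to the annulus $M_{i+1}\setminus M_i$. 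The integration of the modified data then yields $(X_1,X_2)$ with $|X_1|^2+|X_2|^2$ as large as desired along $bM_{i+1}$ and arbitrarily close to the previous values on $M_i$. Finally, the real periods perturbed by the multiplier are restored by applying the period-dominating spray of Lemma \ref{lem:deformation} to the full modified triple, which introduces only a small further correction.

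The hard part is the simultaneous bookkeeping of three competing requirements on $h_i$ and the ensuing spray correction: uniform closeness to $1$ on $M_i$ (for (i)), a prescribed lower bound on $|h_i|$ along $bM_{i+1}$ (for (ii)), and the admissibility of a small period-correcting perturbation that does not destroy (ii). These are essentially decoupled because they live on disjoint regions of $M_{i+1}$ and act at different scales, but arranging that the globally defined period-correcting spray remains uniformly small on $bM_{i+1}$ relative to $i$ — so that the pushed-out projection does not fall back below the threshold — is the technical core of the construction. This is precisely what underlies the Theorem \ref{th:SSY} cited in Section \ref{ss:SullivanSchoenYau}.
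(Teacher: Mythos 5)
You correctly identify that the statement is a (refuted) conjecture and that the appropriate response is a counterexample --- a conformal minimal immersion of $\D$ into $\R^3$ with proper projection to $\R^2$ --- which is indeed how the paper handles it, via Alarc\'on--L\'opez \cite{AlarconLopez2012JDG} and the stronger Theorem~\ref{th:SSY}, and your Runge-exhaustion scheme is the right starting framework. However, your proposed mechanism has two genuine gaps. First, taking a L\'opez--Ros multiplier $h$ with $|h|$ large along $bM_{i+1}$ makes the Weierstrass \emph{derivatives} $\phi_1,\phi_2$ large pointwise, but this does not force the \emph{values} $X_1,X_2$ to be large on $bM_{i+1}$: the integrals $\int\Re(\phi_j)$ from $M_i$ out to $bM_{i+1}$ may see a large oscillating integrand and cancel. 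Controlling the phase of $h$ along the boundary, simultaneously with closeness to $1$ on $M_i$ and smallness of the period correction, is not obviously possible and is not the device the paper uses. The paper's sketch of Theorem~\ref{th:SSY} works directly with pointwise bounds on the \emph{function values}: it splits $bM_{i-1}$ into arcs $\alpha_k$ on which either $X_1$ or $X_2$ already exceeds $i-1$, extends $X$ along Jordan arcs $\gamma_k$ out to $bM_i$ while keeping the relevant coordinate large (by prescribing values on the arcs via a generalized conformal minimal immersion as in Definition~\ref{def:GCMI} and applying Theorem~\ref{th:ALL}, not a multiplier), subdivides the annulus $M_i\setminus\mathring M_{i-1}$ into petals $\Omega_k$ with small discs $D_k$, and then applies Theorem~\ref{th:ALL-2} \emph{twice}: first fixing $X_1$ to push $X_2>i$ on $\bigcup_{k\in I_1}D_k$, and then, symmetrically, fixing $X_2$ to push $X_1$ on the remaining discs. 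The disjointness of the two families of petals is what makes this alternation admissible; a single multiplier scaled large on all of $bM_{i+1}$ cannot replace it.

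Second, even granting the boundary bound $|X^{(i+1)}_1+\imath X^{(i+1)}_2|>i$ on $bM_{i+1}$, this does not by itself give properness: a divergent sequence in $\D$ eventually lies outside each $M_i$, but $|(X_1,X_2)|$ can still be small inside the annulus $M_{i+1}\setminus M_i$ even when it is large on both bounding curves, since $X_1+\imath X_2$ is harmonic but not holomorphic and no minimum principle applies to its modulus. The inductive condition the paper imposes is $\max\{X_{i,1},X_{i,2}\}>i-1$ on the \emph{entire} compact annulus $M_i\setminus\mathring M_{i-1}$ (condition (b) in the sketch of Theorem~\ref{th:SSY}); that annulus-wide lower bound is what cascades to properness of the limit and must be built into the induction rather than deduced from a boundary estimate.
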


The first and more ambitious part of Schoen-Yau's conjecture was refuted in 1999 by Bo{\v z}in \cite{Bozin1999IMRN}
who constructed in a very explicit way a proper harmonic map $\d\to\r^2$. 
Another counterexample was given in 2001 by Forstneri{\v c} and Globevnik \cite{ForstnericGlobevnik2001MRL} who constructed 
a proper holomorphic map $f=(f_1,f_2)\colon\d\to\C^2$ with $f(\d)\subset (\C_*)^2$; hence, 
$(\log|f_1|,\log|f_2|)\colon \d\to\R^2$ is a proper harmonic map.  
However, the second part of the conjecture concerning minimal surfaces
remained open at that time. Sullivan's conjecture was refuted in 2003 by Morales \cite{Morales2003GAFA} 
who constructed a proper conformal minimal immersion $\d\to\r^3$ by using the L\'opez-Ros deformation 
and the Runge theorem in a highly intricate way. Morales' result was later extended to the existence of proper hyperbolic 
minimal surfaces in $\r^3$ with arbitrary topology; see Ferrer, Mart\'in and Meeks \cite{FerrerMartinMeeks2012AM}.

Finally, Alarc\'on and L\'opez \cite{AlarconLopez2012JDG} proved in 2012 that every open Riemann surface admits a 
conformal minimal immersion into $\r^3$ properly projecting to a plane; this gave a counterexample to the second part of 
Schoen-Yau's conjecture and provided an optimal solution to the two problems. The following more precise result 
in this direction is due to the authors and L\'opez (see \cite[Theorem 7.1]{AlarconForstnericLopez2016MZ}).

%
% THEOREM:  CMI's WITH PROPER PROJECTIONS TO R^2  
%
\begin{theorem}
[Conformal minimal immersions with proper projections to $\R^2$] \label{th:SSY}
Let $M$ be an open Riemann surface and $K\subset M$ be a Runge compact set. Every conformal minimal immersion 
$U\to\r^n$  $(n\ge 3)$ from an open neighborhood $U\subset M$ of $K$ 
can be approximated uniformly on $K$ by proper conformal minimal immersions 
$M\to\r^n=\r^2\times\r^{n-2}$ properly projecting into $\r^2\times\{0\}^{n-2}\cong\r^2$. The approximating immersions 
can be chosen with prescribed flux compatible with the flux of the initial immersion, with simple double points if 
$n=4$, and embeddings if $n\ge 5$.
\end{theorem}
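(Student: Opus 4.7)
The plan is to combine the Runge-Mergelyan approximation of Theorem \ref{th:ALL} with the L\'opez-Ros type flexibility of Theorem \ref{th:ALL-2} in an inductive exhaustion procedure. I would exhaust $M$ by an increasing sequence
\[
K\subset M_1\Subset M_2\Subset\cdots\Subset\bigcup_{i\in\N}M_i=M
\]
of smoothly bounded compact Runge domains obtained from the sublevel sets of a strongly subharmonic Morse exhaustion function, as in the proof of Theorem \ref{th:ALL}. The goal is to construct inductively a sequence of nonflat conformal minimal immersions $X_i\in\CMI(\Omega_i,\R^n)$ on open neighborhoods $\Omega_i$ of $M_i$, with $X_1$ close to $X|_K$, such that
\begin{itemize}
\item[(a)] $\|X_{i+1}-X_i\|_{\Cscr^0(M_i)}<\epsilon_i$ for a summable sequence $\epsilon_i\searrow 0$ fixed in advance, and
\item[(b)] $|X_{i+1,1}(p)|^2+|X_{i+1,2}(p)|^2\ge (i+1)^2$ for every $p\in M_{i+1}\setminus\mathring M_i$.
\end{itemize}
If this can be arranged, the limit $\wt X=\lim X_i\in\CMI(M,\R^n)$ has proper projection $(\wt X_1,\wt X_2)\colon M\to\R^2$: by (a) and (b), the preimage $(\wt X_1,\wt X_2)^{-1}(\overline{B_R})$ is contained in $M_i$ for any $i$ with $i> R+\sum_j\epsilon_j$, hence compact in $M$. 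Properness of $\wt X\colon M\to\R^n$ is immediate from properness of its projection, since $|\wt X|^2\ge |\wt X_1|^2+|\wt X_2|^2$.

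The inductive step proceeds in two substeps. Given $X_i$ on $\Omega_i$, first apply Theorem \ref{th:ALL} to approximate $X_i$ uniformly on $M_i$ by a nonflat conformal minimal immersion $Y\in\CMI(\Omega_{i+1},\R^n)$, handling the noncritical and critical cases of the exhaustion as sketched in Sect.~\ref{ss:Mergelyan}. Second, apply the L\'opez-Ros type statement of Theorem \ref{th:ALL-2}, with the two freely modifiable coordinates taken as the first two, to produce $X_{i+1}\in\CMI(\Omega_{i+1},\R^n)$ with $X_{i+1,k}=Y_k$ for $k=3,\ldots,n$, which is $\epsilon_i$-close to $X_i$ on $M_i$ and has $|X_{i+1,1}+\imath X_{i+1,2}|$ as large as desired on the outer part of $M_{i+1}$. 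The deformation is realized by replacing the Weierstrass data $\di Y_1\pm\imath\,\di Y_2$ by $h^{\pm 1}(\di Y_1\pm\imath\,\di Y_2)$ for a suitably chosen nowhere vanishing $h\in\Ocal(M_{i+1})$; the period-dominating spray of Lemma \ref{lem:deformation}, together with the implicit function theorem, corrects the $H_1(M_{i+1};\Z)$-periods of the modified data so that the integrated immersion $X_{i+1}$ exists and moreover has the prescribed flux compatible with that of $X$.

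The main obstacle is the construction of the multiplier $h$: no holomorphic $h$ can be simultaneously close to $1$ on $M_i$ and of uniformly large modulus on all of $M_{i+1}\setminus\mathring M_i$, since $M_i$ and $M_{i+1}\setminus\mathring M_i$ share the boundary $bM_i$. The remedy, in the spirit of Jorge-Xavier, is to subdivide the annular region $M_{i+1}\setminus\mathring M_i$ into a thin collar $N_i$ adjacent to $bM_i$ together with an outer region, and to place inside $N_i$ a labyrinth $\Upsilon_i$ of finitely many pairwise disjoint small compact sets so that $M_i\cup\Upsilon_i\cup(M_{i+1}\setminus \mathring N_i)$ is a Runge admissible set in $M_{i+1}$ (cf.\ Definition \ref{def:admissible}). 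Runge-Mergelyan approximation then yields $h$ with $|h|\approx 1$ on $M_i$ and $|h|$ arbitrarily large on $\Upsilon_i$ and on $M_{i+1}\setminus\mathring N_i$. The labyrinth $\Upsilon_i$ is designed, as in the sketches of Theorem \ref{th:ALL-complete} and of the original constructions in \cite{AlarconForstnericLopez2016MZ,AlarconLopez2012JDG,Morales2003GAFA}, so that the L\'opez-Ros deformation forces the modulus of $X_{i+1,1}+\imath X_{i+1,2}$ to stay large throughout $N_i\setminus\Upsilon_i$ as well, by a path-integral argument exploiting that the forms $h^{\pm 1}(\di Y_1\pm\imath\,\di Y_2)$ have controlled size off $\Upsilon_i$. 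Executing this quantitatively while simultaneously maintaining the $\Cscr^0$-approximation on $M_i$ and correcting the periods over a basis of $H_1(M_{i+1};\Z)$ is the most technical part; it is carried out by interlacing Runge approximation with period-dominating deformations from Lemma \ref{lem:deformation} over progressively finer subdivisions of the annulus.

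Once a proper conformal minimal immersion $\wt X\colon M\to\R^n$ with proper projection onto $\R^2$ and the prescribed flux has been constructed, the general position argument of Theorem \ref{th:desingBRS} supplies an arbitrarily small perturbation yielding an embedding for $n\ge 5$ and an immersion with only transverse double points for $n=4$; a sufficiently small $\Cscr^0$-perturbation preserves both the properness of $\wt X$ and of its projection onto $\R^2$, completing the plan.
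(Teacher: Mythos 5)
Your overall scaffolding (exhaust $M$ by Runge domains, ensure at each stage that the projection $(X_1,X_2)$ escapes to infinity on the new annulus, finish with the general-position Theorem~\ref{th:desingBRS}) matches the paper, but the core of your inductive step diverges from the paper's and, as written, does not work.

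First, your conditions (a) and (b) cannot close the induction. Condition (b) at stage $i+1$ demands $|(X_{i+1,1},X_{i+1,2})|\ge i+1$ on all of $M_{i+1}\setminus\mathring M_i$, which includes the inner boundary $bM_i$. But (a) forces $X_{i+1}$ to be $\epsilon_i$-close to $X_i$ on $M_i\supset bM_i$, and the inductive hypothesis (b) at stage $i$ only guarantees $|(X_{i,1},X_{i,2})|\ge i$ there, one unit short. No small perturbation bridges this gap, so the scheme collapses at step one. The paper's conditions are deliberately staggered: (a) asks for $\max\{X_{i,1},X_{i,2}\}>i$ on $bM_i$ (the strong bound on the outer boundary), while (b) asks only for $\max\{X_{i,1},X_{i,2}\}>i-1$ on the full annulus $M_i\setminus\mathring M_{i-1}$. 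Then (b) at stage $i+1$ requires $>i$ on $bM_i$, which is exactly what (a) at stage $i$ already supplies, so the induction is consistent. You need to record a separate, stronger bound on the outer boundary of each $M_i$.

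Second, and more fundamentally, your mechanism for making $(X_1,X_2)$ escape — a single L\'opez-Ros multiplier $h$ concentrated on a Jorge--Xavier labyrinth — is the completeness tool, not the properness tool, and your ``path-integral argument'' is where the real difficulty lives, unaddressed. Making $|h|$ large on $\Upsilon_i$ makes the derivatives $h^{\pm1}(\di Y_1\pm\imath\,\di Y_2)$ large there, which gives lower bounds on \emph{arc-lengths} (hence completeness), not on the \emph{values} $X_1(p),X_2(p)$. A path from a basepoint in $M_i$ to a point $p$ near $bM_{i+1}$ can weave through $N_i\setminus\Upsilon_i$ where the multiplier is not extreme, and even where $|\phi_1|$ is large the real part $\Re\phi_1$ can cancel along the path unless its phase is controlled — which a modulus estimate alone does not give. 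This is precisely the gap that the paper's combinatorial argument is designed to avoid: it splits $bM_{i-1}$ into arcs $\alpha_k$ with $k\in I_1\sqcup I_2$ on which $X_{i-1,1}>i-1$ or $X_{i-1,2}>i-1$ respectively, builds a grid of discs $\Omega_k\supset D_k$ in the annulus, and applies Theorem~\ref{th:ALL-2} \emph{twice}, once fixing $X_1$ and pushing $X_2$ up by a large constant $C>0$ on $\bigcup_{k\in I_1}D_k$ (conditions (P3)--(P4)), and once in the symmetric way (conditions (P5)--(P6)). Because one coordinate is held fixed in each pass while the other is translated by a concrete constant, the resulting lower bounds on $\max\{X_1,X_2\}$ are immediate, with no delicate cancellation estimates needed. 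If you want to follow the labyrinth/L\'opez-Ros route (closer to Morales' original argument), you would need to supply the phase control; the paper's two-pass translation argument is a cleaner alternative.
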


Concerning the analogue of the Schoen-Yau conjecture in higher dimension, it was recently shown by 
Forstneri{\v c} \cite[Corollary 3.5]{ForstnericJAM} that every Stein manifold $X$ of complex dimension $n\ge 1$ admits 
a proper pluriharmonic map into $\R^{2n}$. 

Although Theorem \ref{th:SSY} contributes to the aforementioned conjectures, its main relevance concerns the 
problem of determining the minimal dimension $d$ for which every open Riemann surface properly embeds into $\r^d$ 
as a conformal minimal surface; compare with Theorem \ref{th:I-properRn} and Conjecture \ref{conj:AF} in the Introduction.

\begin{proof}[Sketch of proof of Theorem \ref{th:SSY}]
We may assume that $K$ is a smoothly bounded compact  Runge domain in $M$. Let $X_0\in\CMI(K,\r^n)$. 
Choose an exhaustion $K=M_0\Subset M_1\Subset\cdots$ of $M$ as in \eqref{eq:exhaustion} and inductively construct 
a sequence $X_i=(X_{i,1},X_{i,2},\ldots,X_{i,n})\in\CMI(M_i,\r^n)$ $(i\in\N)$ satisfying the following conditions.
\begin{itemize}
\item[\rm (a)] $\max\{X_{i,1},X_{i,2}\}> i$ everywhere on $bM_i$.
\item[\rm (b)] $\max\{X_{i,1},X_{i,2}\}> i-1$ everywhere on $M_i\setminus\mathring M_{i-1}$.
\item[\rm (c)] $X_i$ is as close to $X_{i-1}$ as desired in the $\Cscr^1(M_{i-1})$ norm.
\item[\rm (d)] $X_i$ only has simple double points if $n=4$ and is an embedding if $n\ge 5$.
\end{itemize}
(The way to prescribe the flux map is the standard one; we shall omit it.)
Clearly, if the approximation in {\rm (c)} is close enough then the limit conformal minimal immersion 
$\lim_{i\to\infty} X_i\colon M\to\r^n$ satisfies the conclusion of the theorem. 

We begin the induction with $X_0\in \CMI(K,\r^n)$ which, up to composing with a translation and 
using Theorem \ref{th:desingBRS}, satisfies (a) and (d), while (b), (c) are vacuous.

We now explain the {\em noncritical case} in the inductive step. Assume that $M_{i-1}$ is a strong deformation retract of $M_i$ 
for some $i\ge 1$ and that we already have $X_{i-1}\in\CMI(M_{i-1},\r^n)$ with the desired properties.
Note that $M_i\setminus \mathring M_{i-1}$ is union of finitely many pairwise disjoint compact annuli.
For simplicity of exposition we assume that there is only one annulus, so $A=M_i\setminus \mathring M_{i-1}$, 
since the same argument can be  applied separately to each one of them. Note that $bA=bM_i\cup bM_{i-1}$.
By Theorem \ref{th:desingBRS} it suffices to find $X_i\in\CMI(M_i,\r^n)$ satisfying {\rm (a)}, {\rm (b)}, 
and {\rm (c)}. In view of condition {\rm (a)} for the index $i-1$, $bM_{i-1}$ splits into $l\ge 3$
%
% AA: We choose $l\ge 3$ just to make that no different arcs shared their two endpoints. Perhaps $l\ge 2$ is enough, but I see it clearer this way.
%
compact subarcs $\alpha_k$, $k\in \z_l=\z/l\z$, lying end to end, for which there are complementary subsets 
$I_1$ and $I_2=\z_l\setminus I_1$ of $\z_l$ 
satisfying that $X_{i-1,\sigma}>i-1$ everywhere on $\alpha_k$ for all $k\in I_\sigma$, $\sigma=1,2$. Denote by $p_k\in bM_{i-1}$ 
the only point in $\alpha_k\cap\alpha_{k+1}$, $k\in\z_l$, and choose a family $\gamma_k$ $(k\in\z_l)$ of pairwise disjoint 
smooth Jordan arcs in $A$ such that $\gamma_k$ connects $p_k$ with a point $q_k\in bM_i$ and is otherwise disjoint 
from $bA$. We choose these arcs such that the set $S=M_{i-1}\cup \bigcup_{k\in\z_l} \gamma_k\subset M$
is admissible (see Definition \ref{def:admissible}). Denote by $\beta_k$ the Jordan arc in $bM_i$ connecting $q_{k-1}$ and 
$q_k$, and by $\Omega_k\subset A$ the closed disc bounded by $\gamma_{k-1}\cup\alpha_k\cup\gamma_k\cup\beta_k$
for $k\in\z_l$. Theorem \ref{th:ALL}, applied to a suitable generalized conformal minimal immersion on $S$ 
extending $X_{i-1}$, furnishes $Y=(Y_1,Y_2,\ldots,Y_n)\in \CMI(M_i,\r^n)$ as close as desired to $X_{i-1}$ in the $\Cscr^1(M_{i-1})$ norm 
and smoothly bounded compact discs $D_k\subset \Omega_k\setminus(\gamma_{k-1}\cup\alpha_k\cup\gamma_k)$, $k\in\z_l$,   
such that $D_k\cap\beta_k\neq\varnothing$ is a Jordan arc in $\beta_k\setminus\{q_{k-1},q_k\}$,
\begin{itemize}
\item[\rm (P1)] $Y_\sigma>i$ everywhere on $\overline{\beta_k\setminus D_k}$ for all $k\in I_\sigma$, $\sigma=1,2$, and
\item[\rm (P2)] $Y_\sigma>i-1$ everywhere on $\overline{\Omega_k\setminus D_k}$ for all $k\in I_\sigma$, $\sigma=1,2$.
\end{itemize} 
(See Figure \ref{fig:proper}.)
\vspace{-1mm}
\begin{figure}[ht]
    \begin{center}
    \scalebox{0.13}{\includegraphics{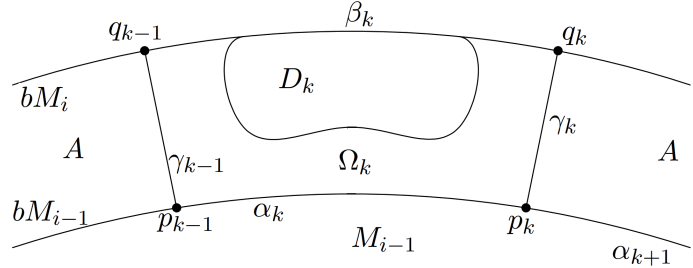}}
        \end{center}
\caption{Sets in the proof of Theorem \ref{th:SSY}}\label{fig:proper}
\end{figure}
Note that $Y$ already satisfies conditions {\rm (a)} and {\rm (b)} but only on $\bigcup_{k\in\z_l} \overline{\Omega_k\setminus D_k}$.
Now, since $Y$ is defined everywhere on $M_i$ and it may be assumed nonflat in view of Theorem \ref{th:ALL}, 
by Theorem \ref{th:ALL-2} (the Mergelyan approximation with fixed components) we may 
approximate $Y$ on $M_{i-1}\cup \bigcup_{k\in I_2}\Omega_k$ by a conformal minimal immersion 
$Y'=(Y_1',Y_2',\ldots,Y_n')\in\CMI(M_i,\r^n)$ such that
\begin{itemize}
\item[\rm (P3)] $Y_1'=Y_1$ everywhere on $M_i$, and 
\item[\rm (P4)] $Y_2'>i$ everywhere on $\bigcup_{k\in I_1} D_k$.
\end{itemize}
%
% AA: I added the following explanation
%
Indeed, it suffices to apply Theorem \ref{th:ALL-2}, keeping the first component fixed, 
with a conformal minimal immersion $\wt Y\in \CMI(M_{i-1}\cup \bigcup_{k\in I_2}\Omega_k\cup \bigcup_{k\in I_1} D_k)$ 
of the form
\[
	\wt Y=\left\{\begin{array}{ll}
	Y & \text{on $M_{i-1}\cup \bigcup_{k\in I_2}\Omega_k$,}
	\\
	(0,C,0,\ldots,0)+Y & \text{on $\bigcup_{k\in I_1} D_k$},
	\end{array}\right.
\]
where $C>0$ is a large enough constant.

Note that, by {\rm (P3)}, if the approximation of $Y$ by $Y'$ is close enough then {\rm (P1)} and {\rm (P2)} hold with $Y'$ 
in place of $Y$. Observe that $Y'$ already satisfies conditions {\rm (a)} and {\rm (b)} but only on 
$\bigcup_{k\in\z_l} \overline{\Omega_k\setminus D_k}\cup \bigcup_{k\in I_1} D_k$; we will now deform it to meet these 
requirements also on $\bigcup_{k\in I_2} D_k$ and this will finish the proof. Indeed, proceeding in a symmetric way we may 
approximate $Y'$ on $M_{i-1}\cup \bigcup_{k\in I_1}\Omega_k$ by a conformal minimal 
$Y''=(Y_1'',Y_2'',\ldots,Y_n'')\in\CMI(M_i,\r^n)$ such that
\begin{itemize}
\item[\rm (P5)] $Y_2''=Y_1'$ everywhere on $M_i$, and 
\item[\rm (P6)] $Y_2''>i$ everywhere on $\bigcup_{k\in I_2} D_k$.
\end{itemize}
As above, by {\rm (P5)}, if the approximation of $Y'$ by $Y''$ is close enough then $Y''$ formally satisfies 
{\rm (P1)}, {\rm (P2)}, and {\rm (P4)}. This and {\rm (P6)} shows that $X_i:=Y''$ meets conditions 
{\rm (a)} and {\rm (b)}. Finally, {\rm (c)} also holds 
provided that the approximations of $X_{i-1}$ by $Y$, of $Y$ by $Y'$, and of $Y'$ by $Y''$ are sufficiently close. 
This concludes the proof.
\end{proof}

By joining the ideas in the above proof with those in Theorem \ref{th:ALL} we obtain the following extension 
of Theorem \ref{th:ALL}, due to Alarc\'on and Castro-Infantes \cite{AlarconCastro-Infantes2017}.

\begin{theorem}[Theorem 1.2 in \cite{AlarconCastro-Infantes2017}] \label{th:ALL-proper}
In the assumptions of Theorem \ref{th:ALL}, if in addition $X|_\Lambda\colon \Lambda\to\r^n$ is a proper map, then 
the conformal minimal immersions $\wt X\colon M\to\r^n$ in Theorem \ref{th:ALL} can be chosen proper.
\end{theorem}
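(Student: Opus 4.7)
The plan is to merge the inductive scheme of Theorem \ref{th:ALL} with the double-alternation pushing procedure underlying Theorem \ref{th:SSY}, using a carefully chosen exhaustion that prepares the interpolation points for the properness condition. Using that $X|_\Lambda\colon\Lambda\to\R^n$ is proper, I would first choose a smooth strongly subharmonic Morse exhaustion $\rho\colon M\to\R$ and regular values $c_1<c_2<\cdots$ with $\lim c_i=+\infty$, together with sublevel sets $M_i=\{\rho\le c_i\}$, arranged so that $K\subset\mathring M_1$, $\Lambda\cap bM_i=\varnothing$ for every $i$, at most one point of $\Lambda$ lies in each difference $M_i\setminus M_{i-1}$, and $\max\{X_1(p),X_2(p)\}>i$ for every $p\in\Lambda_i\setminus\Lambda_{i-1}$ where $\Lambda_i=\Lambda\cap M_i$. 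Such a labeling exists since $X|_\Lambda$ is proper, so only finitely many points of $\Lambda$ have $|X(p)|$ bounded by any given constant, and we may place them deep enough in the exhaustion.

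Next I would construct inductively a sequence $X_i=(X_{i,1},\ldots,X_{i,n})\in\CMI(M_i,\R^n)$ satisfying simultaneously:
\begin{itemize}
\item[(i)] $X_i$ approximates $X_{i-1}$ as closely as desired in the $\Cscr^1(M_{i-1})$ topology;
\item[(ii)] $X_i$ has contact of order $r(p)$ with $X$ at each $p\in\Lambda_i$;
\item[(iii)] $\max\{X_{i,1},X_{i,2}\}>i$ everywhere on $bM_i$ and $\max\{X_{i,1},X_{i,2}\}>i-1$ everywhere on $M_i\setminus\mathring M_{i-1}$.
\end{itemize}
If the approximation in (i) is sufficiently fast, the limit $\wt X=\lim_i X_i\colon M\to\R^n$ is a conformal minimal immersion satisfying the interpolation condition along $\Lambda$ and is proper by (iii), since $\max\{\wt X_1,\wt X_2\}\to\infty$ along any divergent sequence. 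For the noncritical inductive step I would repeat the construction sketched for Theorem \ref{th:SSY}: subdivide the annular region $M_i\setminus\mathring M_{i-1}$ into sectors $\Omega_k$ with discs $D_k$ pushed out by complementary coordinates, first produce an intermediate immersion $Y$ by Theorem \ref{th:ALL} that interpolates $X$ to order $r(p)$ at each $p\in\Lambda_i$ and already satisfies (P1)--(P2) off the $D_k$, and then perform the two alternating fixed-component deformations (Theorem \ref{th:ALL-2}) to produce $Y'$ fixing $Y_1$ and forcing $Y'_2>i$ on $\bigcup_{k\in I_1}D_k$, followed by $Y''$ fixing $Y'_2$ and forcing $Y''_1>i$ on $\bigcup_{k\in I_2}D_k$. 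The critical case is handled exactly as in Theorem \ref{th:ALL}, attaching arcs disjoint from $\Lambda$ so that neither (ii) nor (iii) is disturbed.

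The main obstacle is to perform the two alternating fixed-component Mergelyan approximations of Theorem \ref{th:ALL-2} while simultaneously preserving the prescribed jets at the finitely many points of $\Lambda_i$. I would handle this by strengthening the period dominating spray of Lemma \ref{lem:deformation} in its form \eqref{eq:agree}, and the multiplier spray of Lemma \ref{lem:existence-sprays}, requiring that each deformation function vanish to order $r(p)+1$ at every $p\in\Lambda_i$; this is achieved by multiplying the ingredient functions $h_{j,k}$ and $g_i$ in those proofs by a fixed holomorphic function on $M$ that vanishes to the appropriate order along $\Lambda_i$ and has no other zeros (cf.\ \cite[Lemma 2.2]{AlarconCastro-Infantes2017}). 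With this modification the entire spray is tangent to the core map to order $r(p)$ at each $p\in\Lambda_i$, and the correction supplied by the implicit function theorem preserves tangency. The compatibility of interpolation and pushing is then automatic from the preparation of the exhaustion: since every $p\in\Lambda_i\setminus\Lambda_{i-1}$ already satisfies $\max\{X_1(p),X_2(p)\}>i$, the pushing step (iii) imposes no additional constraint at interpolation points and reduces to a constraint that is already built into the jet of $X$ to be matched.
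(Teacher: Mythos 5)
Your overall strategy — merging the inductive interpolation scheme of Theorem \ref{th:ALL} with the alternating coordinate-pushing of Theorem \ref{th:SSY}, preserving jets at $\Lambda$ via strengthened sprays — is the approach the survey indicates as leading to \cite{AlarconCastro-Infantes2017}, and the jet-preservation mechanism you sketch for Lemmas \ref{lem:deformation} and \ref{lem:existence-sprays} (multiplying the ingredient functions $h_{j,k}$ and $g_i$ by a fixed holomorphic function vanishing to order $r(p)+1$ along $\Lambda_i$) is sound.

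There is nonetheless a genuine gap in your preparation of the exhaustion. You assert that the $M_i$ can be arranged so that every $p\in\Lambda_i\setminus\Lambda_{i-1}$ satisfies $\max\{X_1(p),X_2(p)\}>i$, justifying this only from properness of $X|_\Lambda$, i.e.\ from $|X(p)|\to\infty$. These conditions are not equivalent. If, say, $X(p_k)=k\,v_k$ with $\{v_k\}$ dense in $\S^{n-1}\subset\R^n$, then $X|_\Lambda$ is proper, yet for every $i$ there are infinitely many $p\in\Lambda$ with $\max\{X_1(p),X_2(p)\}<-i$, and this persists after any rigid rotation of $\R^n$; no reordering of $\Lambda$ into the $M_i$ produces the labeling you require. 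The gap is fatal as written: your inductive condition (iii) asks for $\max\{X_{i,1},X_{i,2}\}>i-1$ on all of $M_i\setminus\mathring M_{i-1}$, while the interpolation condition (ii) pins $X_i(p)=X(p)$ at the new interpolation point $p$ lying in that very annulus, so for data as above the two requirements are directly contradictory and the induction cannot close. The functional $\max\{X_1,X_2\}$ is tied to the stronger conclusion of Theorem \ref{th:SSY} about proper projections to $\R^2$, which is neither asserted by nor achievable in Theorem \ref{th:ALL-proper}. To repair the argument one must replace it by a properness control compatible with the prescribed values on $\Lambda$ — for instance imposing $|X_i|>\tau_i$ on $bM_i$ for a divergent sequence $\tau_i$ chosen so that $|X(p)|>\tau_i$ holds for the new interpolation points, and carrying out the pushing away from small neighborhoods of $\Lambda$ where $|X_i|$ is already large by the approximation conditions. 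That is a substantive rearrangement of the inductive step, not a notational one.
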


%
% Subsection: ON THE GAUSS MAP
%

\subsection{On the Gauss map}
\label{ss:OnGauss}

The Gauss map of a minimal surface in $\r^3$ and, more generally, the generalized Gauss map $G_X\colon M\to\cp^{n-1}$ 
(see \eqref{eq:GX}) of a conformal minimal immersion $X\colon M\to\r^n$ $(n\ge 3)$, is a fundamental object in the theory. 
It is classical that $G_X$ is a holomorphic map assuming values in the null quadric (see Sec.\ \ref{ss:Gauss}).
Somewhat surprisingly, the following converse was proved only very recently
by the authors and L\'opez (see \cite[Theorem 1.1 and Corollary 1.2]{AlarconForstnericLopez2017JGEA}).

\begin{theorem}\label{th:Gauss}
Let $M$ be an open Riemann surface.
For any holomorphic map $\Gscr\colon M\to Q_{n-2}\subset \cp^{n-1}$ $(n\ge 3)$ into the quadric 
\eqref{eq:nullquadric-projected} there is a conformal minimal immersion $X\colon M\to\r^n$ with
the generalized Gauss map $G_X=\Gscr$ and with vanishing flux. If in addition the map $\Gscr$ is full 
(i.e., its image is not contained in any proper projective subspace), 
then $X$ can be chosen to have arbitrary flux and 
to be an immersion with simple double points if $n=4$ and an embedding if $n\ge 5$. 

In particular, every holomorphic map $g\colon M\to\CP^1$ is the complex
Gauss map \eqref{eq:C-Gauss} of a conformal minimal immersion $X\colon M\to \R^3$ with vanishing flux.
If $g$ is nonconstant, then we can find $X$ with arbitrary given flux.
\end{theorem}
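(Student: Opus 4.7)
The plan is to seek $X$ in the Enneper--Weierstrass form \eqref{eq:EW} with Weierstrass datum $f=h\,\tilde\Gscr$, where $\tilde\Gscr\colon M\to\Agot_*$ is a holomorphic lift of $\Gscr$ through the natural $\C_*$-bundle projection $\pi\colon\Agot_*\to Q_{n-2}$, and $h\in\Ocal(M,\C_*)$ is a nowhere-vanishing holomorphic multiplier to be determined. Since multiplication by a $\C_*$-valued function does not affect the projective class, the generalized Gauss map of the resulting $X$ is $[f]=[\tilde\Gscr]=\Gscr$ identically on $M$. The problem therefore reduces to finding $h$ so that $h\,\tilde\Gscr\,\theta$ has prescribed periods: denoting the desired flux homomorphism by $\mathfrak{p}\colon H_1(M;\Z)\to\R^n$ (with $\mathfrak{p}=0$ for the first assertion), we want
\[
\oint_C h\,\tilde\Gscr\,\theta \;=\; \imath\,\mathfrak{p}(C)\in\C^n\qquad\text{for every }[C]\in H_1(M;\Z),
\]
which by \eqref{eq:FP} simultaneously forces $\Re(h\,\tilde\Gscr\,\theta)$ to be exact (so that $X$ is globally defined on $M$) and $\Flux_X=\mathfrak{p}$.

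The lift $\tilde\Gscr$ exists because $M$ is a $1$-dimensional Stein manifold with $H^2(M;\Z)=0$, so the pullback $\Gscr^*\pi$ is a holomorphically trivial principal $\C_*$-bundle over $M$ and admits a global holomorphic section. Given $\tilde\Gscr$, the multiplier $h$ is constructed by an inductive scheme along a Morse exhaustion $M=\bigcup_i M_i$ by smoothly bounded Runge compact domains: one builds $h_i\in\Ocal(M_i,\C_*)$ approximating $h_{i-1}$ closely on $M_{i-1}$ and satisfying the period condition on a fixed Runge homology basis of $M_i$. In the noncritical step ($M_i$ a deformation retract of $M_{i+1}$) one first extends $h_i$ to a nowhere-vanishing holomorphic function $\tilde h$ on $M_{i+1}$ by invoking the Oka principle (Theorem \ref{th:OP}) for the Oka manifold $\C_*$; the resulting periods are almost but not exactly correct. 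One then corrects them by applying Lemma \ref{lem:existence-sprays} to produce a period-dominating multiplier spray
\[
\sigma(p,t) \;=\; 1+\sum_{k=1}^N t_k\,g_k(p),\qquad p\in M_{i+1},\ t\in\C^N,
\]
and using the implicit function theorem to find a small $t\in\C^N$ making $\sigma(\cdotp,t)\,\tilde h$ have exactly the prescribed periods; since $\sigma(\cdotp,t)$ stays $\C_*$-valued for $t$ near $0$, the product is nowhere vanishing and is taken to be $h_{i+1}$. The critical step (attachment of a smooth arc $E$ realizing a $1$-handle) is reduced to the noncritical one by first extending $h_i$ across the admissible set $M_i\cup E$ via Mergelyan approximation, with the integral along $E$ calibrated by the one-variable Lemma \ref{lem:periods} so that the newly appearing homology generator receives the prescribed period.

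The hinge of the argument, and the main technical obstacle, is the applicability of Lemma \ref{lem:existence-sprays}, which requires $\tilde\Gscr$ to be full on each cycle of the chosen homology basis. When $\Gscr$ is full, $\tilde\Gscr\colon M\to\C^n$ is a full holomorphic map, and by placing the supports of the auxiliary functions $g_k$ on appropriate short arcs of each cycle one achieves period domination onto the full target $(\C^n)^l$, enabling arbitrary prescribed flux and yielding the second assertion. When $\Gscr$ is not full, $\tilde\Gscr(M)$ is contained in a proper linear subspace $H\subset\C^n$ and the period map takes values only in $H^l$, so only fluxes with $\imath\,\mathfrak{p}(C)\in H$ are reachable; since $0\in H$, the vanishing-flux case is always reachable, which is the first assertion. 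For the embedding (resp.\ simple double point) claim when $\Gscr$ is full and $n\geq 5$ (resp.\ $n=4$), I would interleave the period-correction induction with the general-position Theorem \ref{th:desingBRS}, applying a small transverse perturbation of $h_i$ at every stage that preserves both periods and approximation. Finally, the concluding statement for $n=3$ is obtained by applying the main assertion to $\Gscr=v\circ g\colon M\to Q_1$, where $v\colon\CP^1\to Q_1\subset\CP^2$ is the Veronese isomorphism, under which the complex Gauss map \eqref{eq:C-Gauss} of $X$ is identified with $g$.
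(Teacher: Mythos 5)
Your proposal follows essentially the same route as the paper's proof: lift $\Gscr$ to a map $\tilde\Gscr\colon M\to\Agot_*$ via the Oka--Grauert principle, then seek a nowhere-vanishing holomorphic multiplier (the paper's $\varphi$) by a Morse-exhaustion induction driven by the period-dominating multiplier sprays of Lemma~\ref{lem:existence-sprays} and the implicit function theorem, invoking Theorem~\ref{th:desingBRS} for the general-position statements and the Veronese embedding $\CP^1\to Q_1$ for the $n=3$ case. The one place worth tightening is your non-full case: the phrase ``since $0\in H$'' does not by itself establish that the vanishing-flux target is reachable --- what makes it work is that $\tilde\Gscr$ is automatically full within the linear span $H$ of its image, so the period-domination argument of Lemma~\ref{lem:existence-sprays} applies verbatim with $\C^n$ replaced by $H$, and the restricted period map is surjective onto $H^l$, hence hits $0$.
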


\begin{proof}[Sketch of proof]
We first apply the Oka-Grauert principle (see Theorem \ref{th:OP}) to 
lift the map $\Gscr\colon M\to \cp^{n-1}$ to a holomorphic map $G\colon M\to\c_*^n$ such that $\pi\circ G=\Gscr$, 
where $\pi\colon\c_*^n\to\cp^{n-1}$ is the canonical projection. Obviously, $G$ assumes values in the punctured null quadric 
$\Agot_*\subset\c^n$ \eqref{eq:null}, \eqref{eq:null*}. To complete the proof of the first part of the theorem, it then suffices to 
fix a nowhere vanishing holomorphic $1$-form $\theta$ on $M$ and find a holomorphic function $\varphi\colon M\to\c_*$ 
such that the real part of the $1$-form $\Phi=\varphi G\theta$ is exact on $M$. If such $\varphi$ exists then given 
$p_0\in M$ the Enneper-Weierstrass formula (Theorem \ref{th:EW}) shows that the map $X\colon M\to\r^n$ given by 
$X(p)=\int_{p_0}^p \Re(\Phi)$, $p\in M$, is a conformal minimal immersion with the generalized Gauss map 
$G_X=[\di X]=[\Phi]=\pi\circ (\varphi G)=\pi\circ G=\Gscr$.

The construction of the function $\varphi$ follows the scheme of proof of Theorem \ref{th:ALL} but using 
Lemma \ref{lem:existence-sprays} instead of Lemma \ref{lem:deformation}. Let us focus on the case of vanishing flux,
i.e., we look for $\varphi$ such that $\varphi G\theta$ is exact. Choose an exhaustion 
$M_1\Subset M_2\Subset\cdots$ of $M$ as in \eqref{eq:exhaustion} such that $M_1$ is simply connected. 
We inductively construct a sequence of holomorphic functions $\varphi_i\colon M_i\to\c_*$ $(i\in\n)$ such that
\begin{itemize}
\item[\rm (a)] $\varphi_i G\theta$ is exact on $M_i$, and 
\item[\rm (b)] $\varphi_i$ is as close to $\varphi_{i-1}$ as desired in the $\Cscr(M_{i-1})$ norm for all $i\ge 2$.
\end{itemize}
The limit function $\varphi=\lim_{i\to\infty}\varphi_i\colon M\to\c_*$ clearly meets the requirements if all approximations 
in {\rm (b)} are close enough. 
Since $M_1$ is simply connected, the basis of the induction is given by any holomorphic function $\varphi_1\colon M_1\to\c_*$. 
For the inductive step we assume that we already have $\varphi_{i-1}$ for some $i\ge 2$. For simplicity of exposition we 
assume that $M_{i-1}$ is connected and a strong deformation retract of $M_i$ (i.e., we only discuss the noncritical case). 
Lemma \ref{lem:existence-sprays} provides a period dominating multiplier $h\colon M_{i-1}\times\c^N\to\c$ of $\varphi_{i-1}$. 
Next, we approximate $\varphi_{i-1}$ and $h$ on $M_{i-1}$ by a holomorphic function $f\colon M\to\c_*$ and a spray 
of holomorphic functions $h'\colon  M_i\times\c^N\to\c$, respectively, such that $h'$ is a period dominating multiplier of $f$. If 
the approximations of $\varphi_{i-1}$ by $f$ and of $h$ by $h'$ are close enough, then there is a point 
$\zeta_0\in\c^N$ close to the origin such that the function $\varphi_i(p)=f(p) h(p,\zeta_0)$, $p\in M_i$, 
does not vanish anywhere and meets conditions {\rm (a)} and {\rm (b)}.

For the second assertion concerning the cases $n=4$ and $n\ge 5$, we adapt the transversality method described in 
Section \ref{ss:transversality} to the current framework. By using period dominating multipliers given by 
Lemma \ref{lem:existence-sprays}, we may improve Theorem \ref{th:desingBRS} by ensuring that the approximating immersion 
$\wt X$ has the same generalized Gauss map as $X$. (See \cite[Proof of Theorem 1.1]{AlarconForstnericLopez2017JGEA} 
for the details.) This enables us to find the function $\varphi_i$ in the inductive construction such that the conformal 
minimal immersion $X_i\colon M_i\to\r^n$ given by $X_i(p)=\int_{p_0}^p \Re(\varphi_i G\theta)$, $p\in M_i$, 
has simple double points if $n=4$, and is an embedding if $n\ge 5$. The same holds for $X=\lim_{i\to\infty}X_i\colon M\to\r^n$ 
provided the approximations in {\rm (b)} are sufficiently close. 
\end{proof}

The size of the spherical image of the Gauss map of a minimal surface in $\r^3$ has important implications. For instance, 
Barbosa and do Carmo \cite{BarbosaDoCarmo1976AJM} proved that if the area of the spherical image is smaller than 
$2\pi$ (half of the area of the sphere) then the surface is stable. Thus, Theorem \ref{th:Gauss} leads to the following corollary.

%
% COROLLARY ON STABILITY
%
\begin{corollary}\label{co:stable}
If $M$ is an open Riemann surface and $g\colon M\to\cp^1$ is a holomorphic map whose image $g(M)$ has 
spherical area less than $2\pi$, 
then there is a stable conformal minimal immersion $M\to\r^3$ with the complex Gauss map $g$.
\end{corollary}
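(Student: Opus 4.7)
The plan is very short: Corollary \ref{co:stable} is a direct marriage of Theorem \ref{th:Gauss} with the Barbosa--do Carmo stability criterion \cite[Theorem 1.2]{BarbosaDoCarmo1976AJM}, so my task is mainly to supply the conformal minimal immersion realizing $g$ as its Gauss map and then to check that the stability hypothesis translates correctly.

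First I would invoke the case $n=3$ of Theorem \ref{th:Gauss}: applied to the given holomorphic map $g\colon M\to\CP^1$, it produces a conformal minimal immersion $X\colon M\to\R^3$ whose complex Gauss map $g_X$ (as defined in \eqref{eq:C-Gauss}) equals $g$. Recall from Section \ref{ss:Gauss} that $g_X$ is, by construction, the composition of the real (spherical) Gauss map $N\colon M\to\S^2$ with the stereographic projection $\S^2\to\CP^1$, and this projection is a conformal diffeomorphism that identifies the round metric on $\S^2$ with the Fubini--Study (spherical) metric on $\CP^1$ up to a constant chosen so that both surfaces have total area $4\pi$. In particular,
\[
    \mathrm{Area}_{\S^2}(N(M)) = \mathrm{Area}_{\CP^1}(g(M)) < 2\pi,
\]
where areas are counted with multiplicity as dictated by the pullback metric.

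Next I would invoke the Barbosa--do Carmo theorem in the form suitable for open surfaces: it asserts that if $Y\colon \Sigma\to\R^3$ is a conformal minimal immersion of a bordered (or more generally relatively compact) surface whose spherical image has area strictly less than $2\pi$, then $Y$ is strongly stable on $\Sigma$. To deduce that $X\colon M\to\R^3$ itself is stable, I would exhaust $M$ by an increasing sequence of smoothly bounded, relatively compact domains $D_1\Subset D_2\Subset\cdots\Subset M$ with $\bigcup_k D_k=M$. For each $k$, the spherical image $N(D_k)\subseteq N(M)$ has area $\le \mathrm{Area}(N(M))<2\pi$, so Barbosa--do Carmo applies on $D_k$ and yields that the second variation of area is nonnegative for every normal variation of $X$ with compact support in $D_k$. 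Since any compactly supported normal variation of $X$ is contained in some $D_k$ for $k$ large enough, $X$ is stable in the sense of Section \ref{ss:minimal}.

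There is no genuine obstacle: the existence half is the nontrivial content and is delivered wholesale by Theorem \ref{th:Gauss}; the stability half is a textbook application of Barbosa--do Carmo, the only point requiring a word of care being the passage from the compact version of their criterion to the noncompact surface $M$, which is handled by the exhaustion argument above. If one wanted a more refined conclusion (e.g.\ that $X$ can be chosen complete, proper, or embedded when the hypotheses of Theorem \ref{th:Gauss} on fullness of $g$ hold), one would simply feed the corresponding reinforced statement of Theorem \ref{th:Gauss} into the same scheme; the stability estimate is unaffected.
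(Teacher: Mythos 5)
Your proof is correct and follows essentially the same route as the paper: combine Theorem \ref{th:Gauss} (to produce $X\colon M\to\R^3$ with $g_X=g$) with the Barbosa--do Carmo stability criterion. The extra care you take -- identifying the spherical area on $\S^2$ with that of $g(M)\subset\CP^1$, and passing from relatively compact subdomains to all of $M$ via an exhaustion, using that compactly supported normal variations live in some exhausting domain -- is exactly the bookkeeping the paper leaves implicit.
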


Another important direction in the study of the Gauss map of conformal minimal surfaces in $\R^3$ is to 
understand how many points it can omit. A seminal result of Fujimoto \cite{Fujimoto1988JMSJ} says that the Gauss map 
of a complete nonflat minimal surface in $\r^3$ can omit at most four points of $\CP^1$;
there 
are examples with four omitted points, for instance, the classical Sherk's doubly periodic surface. 
In higher dimensions, Ru \cite{Ru1991JDG} 
proved that if $X\colon M\to\r^n$ is a complete nonflat conformal minimal immersion then its generalized Gauss map $G_X$ 
can omit at most $\frac12 n(n+1)$ hyperplanes in general position in $\cp^{n-1}$. 
(As pointed out in \cite[p.\ 280]{Fujimoto1983JMSJ}, this is equivalent to Fujimoto's theorem for $n=3$.) 
However, the number of exceptional hyperplanes depends on the complex structure of the surface.
Ahlfors  \cite{Ahlfors1941ASSF} proved that any holomorphic map $\c\to\cp^{n-1}$ avoiding $n+1$ hyperplanes 
of $\cp^{n-1}$ in general position is degenerate (for $n=2$ this is Picard theorem). 
This shows that the following result of Alarc\'on, Fern\'andez, and L\'opez \cite{AlarconFernandezLopez2012CMH,AlarconFernandezLopez2013CVPDE}
is the best possible for all minimal surfaces with nondegenerate Gauss map.

%
%   CMIS WITH GAUSS MAP OMITTING HYPERPLANES
%
\begin{theorem}\label{th:Isa}
Let $M$ be an open Riemann surface. For any group homomorphism $\pgot\colon H_1(M;\z)\to\r^n$ $(n\ge 3)$ 
there is a complete conformal minimal immersion $M\to\r^n$ with the flux map $\pgot$ whose generalized Gauss map is 
nondegenerate and omits $n$ hyperplanes of $\cp^{n-1}$ in general position.
In particular, every open Riemann surface admits a complete nonflat conformal minimal immersion 
into $\r^3$ whose complex Gauss map  omits two points of $\CP^1$.  
\end{theorem}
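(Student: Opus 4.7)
The plan is to combine the Enneper--Weierstrass representation (Theorem \ref{th:EW}) with the labyrinth completeness construction from Theorem \ref{th:ALL-complete}, carrying along open nonvanishing conditions on the Weierstrass data that encode the avoided hyperplanes. The central observation is that the condition ``$G_X$ avoids a hyperplane $H=\{\sum_j a_j z_j=0\}\subset\CP^{n-1}$'' is equivalent to ``the holomorphic function $\sum_j a_j\, \di X_j/\theta$ is nowhere vanishing on $M$''; this is an open condition in $X$ and hence survives small $\Cscr^1$ perturbations.

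First I would choose $n$ hyperplanes adapted to a tractable Weierstrass ansatz. For $n=3$, take $p_1=[1:i:0]$ and $p_2=[-1:i:0]$ in $Q_1\subset\CP^2$ (the images of $g=0$ and $g=\infty$ under the classical parametrization implicit in \eqref{eq:EWR}), and set $H_1=\{z_1+iz_2=0\}$, $H_2=\{z_1-iz_2=0\}$ (the tangent hyperplanes to $Q_1$ at $p_1, p_2$) and $H_3=\{z_3=0\}$ (the secant through them). A direct check shows $H_1,H_2,H_3$ are in general position in $\CP^2$ and that a holomorphic $\Gscr\colon M\to Q_1$ avoids $H_1\cup H_2\cup H_3$ iff the complex Gauss map $g_X$ takes values in $\c_*$ and $\di X_3$ is nowhere vanishing. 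For general $n\ge 3$, an analogous combinatorial choice (using tangent hyperplanes to $Q_{n-2}$ at suitable points together with a few secants through them) reduces the hyperplane-avoidance problem to nonvanishing of $n$ prescribed linear combinations of $\di X_j/\theta$.

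With the hyperplanes chosen, I would run an induction on an exhaustion $M_1\Subset M_2\Subset\cdots$ of $M$ as in \eqref{eq:exhaustion}, producing $X_k\in\CMI(M_k,\R^n)$ satisfying (i) $X_k$ approximates $X_{k-1}$ in $\Cscr^1(M_{k-1})$, (ii) $\Flux_{X_k}=\pgot$ on $H_1(M_k;\Z)$, (iii) each of the $n$ linear forms defining the $H_j$ pulls back to a nowhere-vanishing function on $M_k$, (iv) $G_{X_k}$ is nondegenerate, and (v) a Jorge--Xavier labyrinth in $M_k\setminus\mathring M_{k-1}$ forces the intrinsic distance across that annulus to exceed $1$. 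The starting $X_1$ is obtained from Weierstrass data $(g,\phi_3)=(e^f,\theta)$ for a nonconstant $f\in\Ocal(M)$: then $g\in\Ocal(M,\c_*)$, $\phi_3$ is nowhere zero, and $1, e^f, e^{-f}$ are linearly independent, so $G_{X_1}$ is nondegenerate; the flux is then matched via the period-dominating multiplier of Lemma \ref{lem:existence-sprays}. The inductive step combines Theorem \ref{th:ALL-2} (Mergelyan approximation with $n-2$ coordinates frozen) with the period-dominating spray of Lemma \ref{lem:deformation}.

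The chief obstacle is the compatibility between the labyrinth step (v) and the hyperplane-avoidance condition (iii). The classical labyrinth argument underlying Theorem \ref{th:ALL-complete} is implemented by L\'opez--Ros type deformations of the complex Gauss map $g\mapsto g h$ with $h$ a nowhere-vanishing holomorphic function; such multiplicative changes preserve $g\in\c_*$ and leave $\phi_3$ untouched, so both nonvanishing conditions in (iii) persist automatically. For $n\ge 4$ a parallel multiplicative deformation in the appropriate coordinate frame respects the corresponding system of nonvanishing constraints. Because (iii) and (iv) are open in the $\Cscr^1$-topology, sufficiently close approximations in (i)---small with respect to the pointwise modulus of the controlling functions on $M_{k-1}$---ensure both survive in the limit; (ii) is arranged, cycle by cycle, by the implicit function theorem applied to the period map on period-dominating sprays, as in the proof of Theorem \ref{th:ALL}. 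The limit $X=\lim_k X_k$ is then the desired complete conformal minimal immersion with flux $\pgot$ whose nondegenerate Gauss map avoids the $n$ chosen hyperplanes in general position.
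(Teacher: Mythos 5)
Your proposal is essentially the paper's argument: freeze the coordinate $X_3$ (more generally $X_3,\ldots,X_n$) so that $\di X_3$ is nowhere zero, observe that the nullity relation $(\di X_1+\imath\,\di X_2)(\di X_1-\imath\,\di X_2)=-(\di X_3)^2$ then makes avoidance of $\{z_1\pm\imath z_2=0\}$ and $\{z_3=0\}$ automatic, and build $(X_1,X_2)$ inductively via Theorem~\ref{th:ALL-2} (Runge approximation with fixed components) together with Jorge--Xavier labyrinths as in Theorem~\ref{th:ALL-complete}. Two small corrections: the labyrinth step in Theorem~\ref{th:ALL-complete} is carried out via Theorem~\ref{th:ALL-2} and period-dominating sprays rather than by L\'opez--Ros deformations (though your conclusion that the nonvanishing constraints persist is right), and the appeal to $\Cscr^1$-openness of those constraints is unnecessary and slightly misleading on a noncompact $M$---it is the frozen $\phi_3$ together with the nullity relation that guarantees the nonvanishing globally, not a stability argument.
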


On the other hand, Osserman proved in 1964 \cite{Osserman1964AM} that the Gauss map of a complete nonflat minimal surface with 
finite total curvature in $\r^3$ can omit at most three points of $\CP^1$ (see also \cite[p.\ 89]{Osserman1986book}).
His question, whether there is an example of this kind whose Gauss map omits three points, is still open.

\begin{proof}[Sketch of proof]
We explain the case $n=3$ without taking care of the flux. It suffices to find a complete nonflat 
conformal minimal immersion $X=(X_1,X_2,X_3)\colon M\to\r^3$ such that $\di X_3$ does not vanish anywhere on $M$. 
Indeed,  by \eqref{eq:C-Gauss}, this implies that the complex Gauss map $g_X\colon M\to\c$ of $X$ is holomorphic and 
nowhere vanishing, and hence the Gauss map of $X$ assumes neither the north nor the south poles of $\s^2$. 
Note that Theorem \ref{th:ALL-2} (the Runge theorem with fixed components for minimal surfaces) ensures that every 
nonconstant harmonic function $X_3\colon M\to\r$ is a component function of a nonflat conformal minimal immersion 
$X=(X_1,X_2,X_3)\colon M\to\r^3$; choosing $X_3$ with no critical points we have that $\di X_3$ vanishes nowhere on $M$. 
To complete the proof, it remains to show that such an immersion $X$ may be chosen to be complete. 
As in the proof of Theorem \ref{th:ALL}, the map $(X_1,X_2)\colon M\to\r^2$ is constructed inductively: 
$(X_1,X_2)=\lim_{i\to\infty} (\wt X_{i,1},\wt X_{i,2})$ for suitable harmonic maps 
$(\wt X_{i,1},\wt X_{i,2})\colon M_i\to\r^2$. (Here $M_1\Subset M_2\Subset\cdots$ is an exhaustion of $M$ as in 
\eqref{eq:exhaustion}.) To ensure completeness of $X=\lim_{i\to\infty} \wt X_i=(\wt X_{i,1},\wt X_{i,2},X_3)\colon M\to\r^3$ 
we suitably enlarge the intrinsic diameter of each immersion $\wt X_i\colon M_i\to\r^3$ by using a Jorge-Xavier type
labyrinth  of compact sets in $\mathring M_i\setminus M_{i-1}$ as in the proof of Theorem \ref{th:ALL-complete}.
\end{proof}

%
% Subsection: ROUGH SHAPE
%

\subsection{Rough shape of the space of conformal minimal immersions}
\label{ss:rough}

Several of the results already stated in the paper may be extended to continuous families of conformal
minimal immersions by exploiting the parametric Oka property of the punctured null quadric $\Agot_*\subset\c^n$. 
For instance, for an open Riemann surface $M$, 
{\em every conformal minimal immersion $X_0\colon M\to\r^n$ is isotopic through conformal minimal immersions 
$X_t\colon M\to\r^n$ $(t\in[0,1])$ to 
\begin{enumerate}
\item[\rm (a)] a complete conformal minimal immersion \cite{AlarconForstneric2017CRELLE},
\item[\rm (b)] a complete conformal minimal immersion with arbitrary flux if the generalized Gauss map of $X_0$ is 
nondegenerate \cite{AlarconForstneric2017CRELLE}, and
\item[\rm (c)] a complete conformal minimal immersion with vanishing flux such that all maps $X_t$ 
have the same generalized Gauss map $M\to\CP^{n-1}$ 
{\em (see \cite[Corollary 1.4]{AlarconForstnericLopez2017JGEA})}.
\end{enumerate}}

Fix a nowhere vanishing holomorphic 1-form $\theta$ on $M$ and consider the following commuting diagram
of spaces and maps:
\[ 
\xymatrix{
\NC_{\mathrm{nf}}(M,\C^n)  \ar^\phi[r] \ar_{\Re}[d]  &   
\Ocal_{\mathrm{nf}}(M,\Agot_*)  \ar@{^{(}->}^i[r] &  \Ocal(M,\Agot_*) \ar@{^{(}->}^j[r]  &   \mathscr C (M,\Agot_*)  \\  
\operatorname{\Re}\NC_{\mathrm{nf}}(M,\C^n)   \ar@{^{(}->}^\alpha[r]     
&  \CMI_{\mathrm{nf}}(M,\R^n)  \ar_{\psi}[u]  \ar@{^{(}->}^\beta[r] & \CMI(M,\R^n) 
} 
\]
The left hand side map $\Re$ (the real part projection) is a homotopy equivalence 
by continuity of the conjugate map transform. Note that $\operatorname{\Re}\NC_{\mathrm{nf}}(M,\C^n)$
is the space of conformal minimal immersions $M\to\R^n$ with zero flux.
The maps $\phi$ and $\psi$ are defined by $\phi(Z)=dZ/\theta$ and  $\psi(X)=2\di X/\theta$, respectively. 
The space $\Ocal_{\mathrm{nf}}(M,\Agot_*)$ consists of all nonflat holomorphic maps $M\to\Agot_*$ 
(see Def.\ \ref{def:nondegenerate}).

Forstneri\v c and L\'arusson proved in \cite{ForstnericLarussonCAG}  that all maps in the above diagram,
with the only possible exception of the inclusion $\CMI_{\mathrm{nf}}(M,\R^n) \hra  \CMI(M,\R^n)$, are
weak homotopy equivalences, and are homotopy equivalences if $M$ has finite topological type.
(See \cite[Theorem 1.1]{ForstnericLarussonCAG} for the inclusion 
$\alpha\colon \Re \NC_{\mathrm{nf}}(M,\c^n)\hookrightarrow \CMI_{\mathrm{nf}}(M,\r^n)$, 
\cite[Theorem 1.2]{ForstnericLarussonCAG} for maps $\phi$ and $\psi$, and
\cite[Theorem 5.4]{ForstnericLarussonCAG} for the inclusion 
$i\colon \Ocal_{\mathrm{nf}}(M,\Agot_*)  \hra \Ocal(M,\Agot_*)$. 
The inclusion $\Ocal(M,\Agot_*)  \hra  \mathscr C(M,\Agot_*)$ is a weak homotopy equivalence 
by the Oka-Grauert Theorem \ref{th:OP}. For the proof of strong homotopy equivalences,  
see \cite[Sect.\ 6]{ForstnericLarussonCAG}.)  Subsequently, Alarc\'on and L\'arusson  \cite{AlarconLarusson2017IJM}  
used the methods from \cite{ForstnericLarussonCAG} to show that 
the map $\pi\colon \Ocal_{\mathrm{nf}}(M,\Agot_*)\to H^1(M;\c^n)$ 
sending a nonflat holomorphic map $g\colon M\to\Agot_*$ to the cohomology class of $g\theta$
is a Serre fibration; this also implies the aforementioned results from \cite{ForstnericLarussonCAG}.  

The only map in the above diagram which is not completely understood is the inclusion 
$\CMI_{\mathrm{nf}}(M,\R^n) \hra  \CMI(M,\R^n)$ of the space of nonflat conformal minimal immersions
into the space of all conformal minimal immersions. The authors and L\'opez 
showed in \cite[Theorem 7.1]{AlarconForstnericLopez2017JGEA}  that this inclusion 
induces a bijection of path components of the two spaces. In particular,  we have the following result.

\begin{corollary}
For any open Riemann surface $M$ the space $\CMI(M,\R^n)$ is path connected if $n>3$, 
whereas the set of path components of $\CMI(M,\R^3)$
is in bijective correspondence with the elements of the abelian group $(\Z_2)^l$ where $H_1(M;\Z)=\Z^l$. 
\end{corollary}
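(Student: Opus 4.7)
The plan is to deduce the corollary from the first assertion of Theorem \ref{th:I-pathcomponents} (that the inclusion $\CMI_{\mathrm{nf}}(M,\R^n)\hookrightarrow\CMI(M,\R^n)$ induces a bijection on path components) together with the weak homotopy equivalence supplied by Theorem \ref{th:I-shape}. Chaining these, I obtain
\[
\pi_0(\CMI(M,\R^n)) \;\cong\; \pi_0(\CMI_{\mathrm{nf}}(M,\R^n)) \;\cong\; \pi_0(\Cscr(M,\Agot_*)) \;=\; [M,\Agot_*],
\]
with the second bijection realized by $X\mapsto \di X/\theta$. The problem is thereby reduced to the purely topological one of computing the set $[M,\Agot_*]$ of free homotopy classes.

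Since $M$ is a connected open Riemann surface with $H_1(M;\Z)=\Z^l$, it has the homotopy type of a wedge $\bigvee_{i=1}^l S^1$ of $l$ circles, and the classical description of $[\bigvee_{i=1}^l S^1,Y]$ identifies this set with the $l$-tuples in $\pi_1(Y)$ modulo simultaneous conjugation, which collapses to $\pi_1(Y)^l$ whenever $\pi_1(Y)$ is abelian. Thus everything comes down to computing $\pi_1(\Agot_*)$.

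For this, the plan is to show that $\Agot_*$ is homotopy equivalent to a real Stiefel manifold. Writing $z=x+\imath y\in\C^n$ with $x,y\in\R^n$, the null condition $\sum_j z_j^2=0$ together with $z\ne 0$ is equivalent to $|x|=|y|>0$ and $x\cdot y=0$; the map $z\mapsto(x/|x|,y/|x|)$ then provides a deformation retraction of $\Agot_*$ onto the Stiefel manifold $V_2(\R^n)$ of orthonormal $2$-frames in $\R^n$. Using the classical fact that $V_2(\R^n)$ is $(n-3)$-connected, with the exceptional case $V_2(\R^3)\cong \mathrm{SO}(3)\cong\RP^3$ having $\pi_1=\Z_2$, I conclude that $\pi_1(\Agot_*)\cong\Z_2$ for $n=3$ and $\pi_1(\Agot_*)=0$ for $n\ge 4$. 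Both groups being abelian, the previous paragraph yields $[M,\Agot_*]\cong(\Z_2)^l$ when $n=3$ and $[M,\Agot_*]=\{*\}$ when $n>3$, which is the claim.

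The main conceptual obstacle is the identification of $\pi_1(\Agot_*)$: it is precisely the failure of $\mathrm{SO}(3)$ to be simply connected that produces the $\Z_2$ factors in dimension three, while for $n\ge 4$ the relevant Stiefel manifolds become simply connected and all path components of $\CMI(M,\R^n)$ merge into one.
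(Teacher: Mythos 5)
Your proof is correct and follows the same route the survey implicitly outlines: combine Theorem \ref{th:I-pathcomponents} with Theorem \ref{th:I-shape} to reduce $\pi_0(\CMI(M,\R^n))$ to $[M,\Agot_*]$, then compute the latter. The survey itself does not supply the topological computation (it simply cites the original source and states the conclusion), so the only part you add beyond what is written is the identification of $\Agot_*$ with $V_2(\R^n)$ via $z=x+\imath y\mapsto (x/|x|,y/|x|)$ and the standard connectivity of Stiefel manifolds; this is exactly the expected argument, and your handling of $V_2(\R^3)\cong\mathrm{SO}(3)$ and of free homotopy classes of maps from a wedge of circles into a space with abelian $\pi_1$ is accurate.
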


%%%%%%%%%%
%%%%%%%%%%
%%%%%%%%%%
%%%%%%%%%% Section: THE RIEMANN-HILBERT METHOD
%%%%%%%%%%
%%%%%%%%%%
%%%%%%%%%%

\section{\sc The Riemann-Hilbert method for minimal surfaces} 
\label{sec:RH}

The Riemann-Hilbert problem is a classical boundary value problem for holomorphic functions and maps.
The basic form of the problem was mentioned by Riemann in his dissertation in 1851. 
A brief history, references and a list of applications can be found in \cite[Sect.\ 3]{AlarconForstneric2015Abel}. 
In Sect.\ \ref{ss:RHcomplex} we describe the original complex analytic setting. 
In Sect.\ \ref{ss:RH} we state without proof a version of the Riemann-Hilbert problem for conformal minimal
immersions from bordered Riemann surfaces to $\R^n$. 
This is the basis for the construction of complete conformal  minimal surfaces in $\R^n$ bounded by 
Jordan curves and normalized by any given bordered Riemann surface 
(see Theorems \ref{th:I-complete} and \ref{th:Jordan}), the construction of proper complete 
conformal minimal immersions of such surfaces to minimally convex domains in $\R^n$ (see Sect.\ \ref{ss:proper}),
and the description of the minimal hull of a compact set in $\R^n$ by sequences of minimal discs
(see \cite{DrinovecForstneric2016TAMS} for $n=3$ and \cite{AlarconDrinovecForstnericLopez2018TAMS}
for $n>3$). Due to space limitations we shall not discuss minimal hulls in this survey.

%
%   SUBSECTION: THE RH PROBLEM IN COMPLEX ANALYSIS
%
\subsection{The Riemann-Hilbert problem in complex analysis}\label{ss:RHcomplex}

Let $X$ be a complex manifold. We are given a holomorphic map $f\colon \cd\to X$
(an {\em analytic disc} in $X$) and for each point $z\in \t=b\D$ a holomorphic map $g_z\colon \overline\d\to X$
with $g_z(0)=f(z)$ and depending continuously on $z\in \t$. Set $T_z=g_z(\t) \subset X$ and 
$S_z=g_z(\overline \d)\subset X$ for $z\in\t$. Fix a distance function $\dist$ on $X$. 
Given numbers $0<r<1$ and $\epsilon>0$, the {\em approximate Riemann-Hilbert problem} 
asks for a holomorphic map $F\colon \overline\d\to X$ satisfying the following conditions for some $r'\in [r,1)$:
\begin{itemize}
\item[\rm (a)] $\dist(F(z),T_z)<\epsilon$ for $z\in\t$,
\vspace{1mm}
\item[\rm (b)] $\dist(F(\rho z),S_z)<\epsilon$ for $z\in \t$ and $r'\le \rho\le 1$, and
\vspace{1mm}
\item[\rm (c)] $\dist(F(z),f(z))<\epsilon$ for $|z|\le r'$.
\end{itemize}
(The domain of $f$ may in fact be any bordered Riemann surface $M$,
but the domain of the maps $g_z$ is always the closed disc $\overline\d$.) This implies that 
\begin{itemize}
\item $F(\cd)$ lies in the $\epsilon$-neighborhood of the set
$\Sigma= f(\cd)\cup \bigcup_{z\in \t} g_z(\cd)$, and 
\vspace{1mm}
\item its boundary $F(\t)$ lies in the $\epsilon$-neighborhood of the torus 
$T=\bigcup_{z\in \t} g_z(\t)$.
\end{itemize}
This shows that the placement of the image curve $F(\cd)$ in $X$ is well controlled, a
very important point in most applications. (The exact problem, asking for $F$ satisfying 
$F(z)\in T_z$ for every $z\in b\D$, is only rarely solvable.)

For $X=\c^n$ the problem is solved as follows 
(see \cite{DrinovecForstneric2012IUMJ} for the details). The map 
\[
	\t\times \overline\d \ni (z,w) \mapsto g_z(w) - f(z) \in\c^n
\]
is continuous in $z$, holomorphic in $w$, and vanishes at $w=0$ for any $z\in \t$ since $g_z(0)=f(z)$. 
We can approximate it arbitrarily closely by a rational map 
\[
	G(z,w)= z^{-m} \sum_{j=1}^N A_j(z) w^j \in\c^n, 
\]
where the $A_j$'s are $\c^n$-valued holomorphic polynomials. Pick $k\in \n$
and set 
\begin{equation}\label{eq:F}
	F(z)= f(z)+ G(z,z^k)= f(z) + z^{k-m} \sum_{j=1}^N A_j(z) z^{k(j-1)}, \quad z\in \overline \d. 
\end{equation}
The pole at $z=0$ cancels if $k>m$, and one easily verifies that $F$ satisfies conditions (a)--(c) if the integer 
$k$ is chosen big enough. 

Consider now  the case when the domain of $f$ is a bordered Riemann surface $M$
and the target manifold $X$ is arbitrary. In most applications it suffices to solve the following restricted problem. 
Pick a pair of arcs $I_0,I_1\subset bM$ with $I_0\subset \mathring I_1$ and a smooth function $\chi\colon bM\to [0,1]$ 
such that $\chi=1$ on $I_0$ and $\chi=0$ on $bM\setminus I_1$. Set $\tilde g_z(w)=g_z(\chi(z)w)$ for $z\in bM$ and 
$w\in \overline\d$. Note that $\tilde g_z$ agrees with $g_z$ for $z\in I_0$ and is the constant disc $w\to f(z)$ for any point 
$z\in bM\setminus I_1$. Let $D\subset M$ be a smoothly bounded simply connected domain (a disc)
such that $I_1$ is a relatively open subset of $bD\cap bM$.
We define $\tilde g_z$ as the constant dics $f(z)$ for points $z\in bD \setminus I_1$ (this is consistent
with the previous choices). 
Let $\wt F:\overline D\to X$ be an approximate solution of the Riemann-Hilbert problem with the data $f|_{\overline D}$ 
and $\tilde g_z$, $z\in bD$. (Such $\wt F$ is found by reducing this local problem to the Euclidean case
via suitable Stein neighborhoods of the graphs of our maps.)
By choosing the integer $k$ in (\ref{eq:F}) big enough, $\wt F$ satisfies condition (a) for $z\in I_0$, 
it satisfies (b) for $z\in bD$, and is uniformly close to $f$ on $\overline D \setminus U$ where $U\subset \overline M$ 
is any given neighborhood of the arc $I_1$. Write $\overline M=A\cup B$ where $A,B\subset \overline M$ 
are smoothly bounded compact domains such that $A$ is the complement of a small neighborhood of the arc $I_1$, 
$B\subset \overline D$ contains a small neighborhood of $I_1$, we have that 
$\overline{A\setminus B}\cap \overline{B\setminus A}=\varnothing$, and $\wt F$ is uniformly close to $f$ on $C=A\cap B$. 
Next, we glue $f$ and $\wt F$ into a solution $F\colon M\to X$ by the method of {\em gluing holomorphic sprays}. 
An outline can be found in \cite[Sect.\ 3]{AlarconForstneric2015Abel}, and the method is fully 
explained in \cite[Chapter 5]{Forstneric2017E}. This method lies at the heart of proof of 
the Oka principle for maps from Stein manifolds to Oka manifolds (see \cite[Theorem 5.4.4]{Forstneric2017E}).

%
%   THE RH PROBLEM FOR NULL CURVES AND MINIMAL SURFACES
%
\subsection{The Riemann-Hilbert method for null curves and minimal surfaces}
\label{ss:RH}
The Riemann-Hilbert boundary value problem has been adapted to null holomorphic curves in $\C^n$ and 
conformal minimal surfaces in $\R^n$ for any $n\ge 3$ in the papers 
\cite{AlarconDrinovecForstnericLopez2015PLMS,AlarconForstneric2015MA}.
A special case for null curves in $\C^3$ was first obtained by the authors in \cite{AlarconForstneric2015MA} by
using the double sheeted spinor parametrization $\pi\colon\C^2_*\to\Agot^2_*$ of the null quadric, 
lifting to derivative maps from $\Agot_*^2$ to $\C^2_*$, applying the Riemann-Hilbert
method in $\C^2_*$ and then pushing the resulting maps down to $\Agot^2_*$. 
When replacing the disc by a bordered Riemann surface $M$, one must glue local solutions on small discs 
abutting $bM$ by using the method of gluing sprays as described in the previous section. 
Since the Riemann-Hilbert problem is not used directly for null curves but for their derivatives 
with values in the null quadric $\Agot_*$, one must also pay attention 
to the period vanishing conditions to ensure that the approximating maps 
integrate to null curves. The results from \cite{AlarconForstneric2015MA} were extended 
to any dimension $n\ge 3$ and were adapted to conformal minimal immersions by the authors  
with Drinovec Drnov\v sek and L\'opez \cite{AlarconDrinovecForstnericLopez2015PLMS}.
% (see \cite[Lemmas 3.1, 3.3 and Theorem 3.5]{AlarconDrinovecForstnericLopez2015PLMS}).
The following result is \cite[Theorem 3.6]{AlarconDrinovecForstnericLopez2015PLMS}.

%
%
%  RH PROBLEM FOR CONFORMAL MINIMAL IMMERSION IN C^n
%
%
\begin{theorem}[Riemann-Hilbert problem for conformal minimal surfaces in $\r^n$] 
\label{th:RHCMI}
Let $M$ be a compact bordered Riemann surface with boundary $bM\ne\varnothing$, and let $I_1,\ldots,I_k$ be  
pairwise disjoint compact arcs in $bM$ which are not connected components of $bM$. 
Let $r \colon bM \to \r_+$ be a continuous  nonnegative function supported on  $I:=\bigcup_{i=1}^k I_i$.
Also, let $\sigma \colon I \times\overline{\d}\to\c$ be a function of class $\Cscr^1$ such that
for every $\zeta\in I$ the function $\cd\ni \xi \mapsto \sigma(\zeta,\xi)$ is holomorphic on $\d$,
$\sigma(\zeta,0)=0$, and the partial derivative $\di \sigma/\di \xi$ is nowhere vanishing on $I \times \cd$.
Choose a thin annular neighborhood  $A\subset M$ of $bM$ and a smooth retraction $\rho\colon A\to bM$. 
For each $i=1,\ldots, k$ let $\bu_i,\bv_i\in \r^n$ be a pair of orthogonal vectors 
satisfying $|\bu_i|=|\bv_i|>0$. Given $X\in\CMI^1(M,\r^n)$ $(n\ge 3)$, consider the continuous map  
$\varkappa\colon bM \times\overline{\d}\to\r^n$ given by 
\begin{equation}\label{eq:varkappa}
	\varkappa(\zeta ,\xi)=\left\{\begin{array}{ll}
	X(\zeta ),  & \zeta \in bM\setminus I; \\ 
	X(\zeta ) + r(\zeta )\bigl( \Re \sigma(\zeta,\xi) \bu_i+ \Im \sigma(\zeta,\xi)  \bv_i \bigr), & 
	\zeta \in I_i,\; i\in\{1,\ldots,k\}.
	\end{array}\right.
\end{equation}
Given $\epsilon>0$ there exist an arbitrarily small open neighborhood $\Omega\subset M$ of   
$I$ and a  conformal minimal immersion $Y \in\CMI^1(M,\r^n)$ satisfying  the following conditions.
\begin{enumerate}[\it i)]
\item $\dist(Y(\zeta ),\varkappa(\zeta ,\t))<\epsilon$ for all $\zeta \in bM$.
\item $\dist(Y(\zeta ),\varkappa(\rho(\zeta ),\overline{\d}))<\epsilon$ for all $\zeta \in \Omega$.
\item $Y$ is $\epsilon$-close to $X$ in the $\Cscr^1$ norm on $M\setminus \Omega$.
\item $\Flux (Y)=\Flux (X)$.
\end{enumerate}
\end{theorem}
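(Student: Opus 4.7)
\emph{Plan of proof.} The plan is to work not directly with $X$ but with its $(1,0)$-derivative $f = 2\di X/\theta\colon M\to\Agot_*$, use the classical Riemann-Hilbert method on local discs near each arc $I_i$, then glue the local modifications into a single $\Agot_*$-valued map on $M$ via Oka-theoretic gluing of sprays, and finally correct the periods by the period-dominating spray of Lemma \ref{lem:deformation}. The crucial geometric observation is that each boundary disc in $\varkappa$ is itself the real part of a \emph{flat} null holomorphic disc: the vector $\nu_i := \bu_i - \imath\bv_i\in\C^n$ satisfies $\nu_i\cdot\nu_i = |\bu_i|^2 - |\bv_i|^2 - 2\imath\,\bu_i\cdot\bv_i = 0$, so $\nu_i\in\Agot_*$, and
\[
	r(\zeta)\bigl(\Re\sigma(\zeta,\xi)\,\bu_i + \Im\sigma(\zeta,\xi)\,\bv_i\bigr)
	= \Re\bigl[r(\zeta)\sigma(\zeta,\xi)\,\nu_i\bigr].
\]
Thus the boundary data along $I_i$ is the real part of a scalar-valued holomorphic function times the fixed null vector $\nu_i$.

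First, I would treat each arc $I_i$ separately. Pick a small simply connected disc $D_i\subset M$ such that a neighborhood of $I_i$ in $bM$ is a relatively open subarc of $bD_i$. On $\overline{D_i}$, solve the scalar Riemann-Hilbert problem as in Sect.\ \ref{ss:RHcomplex}: produce a holomorphic function $h_i\in\Ascr(\overline{D_i})$ which on $I_i$ approximately takes the values $r(\zeta)\sigma(\zeta,\xi)$ for some $\xi\in\cd$ depending continuously on $\zeta$, and which is uniformly small on $\overline{D_i}$ outside any given neighborhood of $I_i$. This is done by the explicit rational formula \eqref{eq:F} applied to $(\zeta,w)\mapsto r(\zeta)\sigma(\zeta,w)$ (extended by a cutoff to be supported near $I_i$), with the integer $k$ taken large enough. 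Setting $Z_i(z) = h_i(z)\,\nu_i$ defines a flat null holomorphic curve on $\overline{D_i}$, and $X + \sum_i \Re Z_i$ is, locally near $I$, a conformal minimal immersion whose boundary values along each $I_i$ approximate $\varkappa$.

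Second, I would glue this local modification into a globally defined conformal minimal immersion on $M$. Write $\overline M = A\cup B$, where $A\subset M\setminus I$ is a compact domain containing $M\setminus\Omega$ for a suitably thin neighborhood $\Omega$ of $I$, and $B$ is the union of the discs $\overline{D_i}$; the overlap $C=A\cap B$ is a disjoint union of short annular strips, and on $C$ the two candidates $2\di X/\theta$ and $2\di(X + \sum_i\Re Z_i)/\theta$ are uniformly close (this is where large $k$ in \eqref{eq:F} is used). Since $\Agot_*$ is an Oka manifold (a complex homogeneous space; see Sect.\ \ref{ss:Oka}), it admits dominating holomorphic sprays, and the standard method of gluing holomorphic sprays along $C$ (as in the proof of the Oka principle, \cite[Chap.\ 5]{Forstneric2017E}) produces a single holomorphic map $g\colon M\to\Agot_*$ of class $\Ascr^0(M)$ that is uniformly close to $2\di X/\theta$ on $A$ and to $2\di(X+\sum_i\Re Z_i)/\theta$ on $B$.

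Third, I would correct the periods. The map $g\theta$ will not in general have the same periods as $2\di X$, but the discrepancy is small. Since we may arrange (by Theorem \ref{th:desingBRS} or a small perturbation at the start) that $X$ is nonflat, hence $f=2\di X/\theta$ is a nonflat core, Lemma \ref{lem:deformation} supplies a period-dominating spray $\Phi_g(\cdot,t)\colon M\times V\to\Agot_*$ with core $g$, whose defining functions can be chosen supported in $A$ away from a neighborhood of $I$. The implicit function theorem then furnishes a small parameter $t_0\in V$ such that the adjusted map $\tilde g = \Phi_g(\cdot,t_0)$ satisfies
\[
	\Re\oint_{C_j} \tilde g\,\theta = 0, \qquad \Im\oint_{C_j}\tilde g\,\theta = \Flux_X(C_j),
\]
for every loop $C_j$ of a homology basis. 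Integration then defines a conformal minimal immersion $Y(p) = X(p_0) + \int_{p_0}^p\Re(\tilde g\,\theta)$ satisfying $\Flux(Y)=\Flux(X)$ and conditions (i)--(iv). The main obstacle is the triple balancing act of (i) matching the prescribed boundary behaviour along $I$ with controlled error $\epsilon$, (ii) gluing in the nonlinear target $\Agot_*$ rather than in $\C^n$, and (iii) preserving the flux and ensuring exactness of $\Re(\tilde g\,\theta)$; the period-dominating spray of Lemma \ref{lem:deformation} is precisely the tool that makes the three constraints simultaneously satisfiable, while the Oka property of $\Agot_*$ powers the gluing step.
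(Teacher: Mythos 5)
Your geometric observation that $\nu_i = \bu_i - \imath\bv_i$ is a null vector and that the boundary discs in \eqref{eq:varkappa} are the real parts of the flat null discs $r\sigma\nu_i$ is correct, and it is indeed the starting point of the genuine argument. But there is a fatal gap in your first step, and it is precisely the reason this theorem is hard. You assert that $X + \sum_i\Re Z_i$ is, near $I$, a conformal minimal immersion; it is not. Writing $2\di(X+\Re Z_i) = 2\di X + dZ_i$, each summand takes values in the null quadric $\Agot$ (see \eqref{eq:null}), but $\Agot$ is a quadric cone and not a linear subspace: for $a,b\in\Agot$ one has $\sum_j(a_j+b_j)^2 = 2\sum_j a_jb_j$, which vanishes only when $a$ and $b$ are orthogonal for the complex bilinear form. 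For a general $X\in\CMI^1(M,\R^n)$ there is no reason for $\di X\cdot\nu_i$ to vanish, so the perturbed map fails the nullity condition \eqref{eq:sum-zero} and $2\di(X+\sum_i\Re Z_i)/\theta$ does not take values in $\Agot_*$. Consequently the map you wish to glue in your second step is not an $\Agot_*$-valued map of class $\Ascr$, so the Oka gluing and the period-dominating correction in the third step have nothing to act on; the chain breaks at stage one. This obstruction is first order and the perturbation $h_i$ is not small (it must reach the full boundary circle $r(\zeta)\sigma(\zeta,\t)$), so no smallness argument can rescue the additive ansatz.

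This failure of additivity in $\Agot$ is exactly the difficulty the real proof must overcome, and it is why the survey declines to reproduce it. As explained in Sect.\ \ref{ss:RH}, the argument in \cite{AlarconForstneric2015MA} (for $n=3$) passes to the two-sheeted spinor parametrization $\pi\colon\C^2_*\to\Agot^2_*$, lifts $2\di X/\theta$ to a spinor in $\C^2_*$, solves a tailored Riemann--Hilbert problem at the spinor level, and pushes the result back to $\Agot^2_*$; the extension to all $n\ge 3$ and to conformal minimal immersions in \cite{AlarconDrinovecForstnericLopez2015PLMS} reduces the flat boundary discs to a null $3$-plane containing $\nu_i$ and invokes the $n=3$ case. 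Any correct proof must replace your additive perturbation by a construction that respects the quadratic constraint; the remaining two steps of your plan (gluing holomorphic sprays into $\Agot_*$, and period correction via Lemma \ref{lem:deformation}) are sound and do reflect the structure of the actual argument once the local step is repaired.
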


The proof will not be reproduced here due to its complexity and space limitations. 
Note that the boundary discs $\varkappa(\zeta ,\cdotp)$ $(\zeta\in bM)$ lie in affine $2$-planes.
A more precise result is available in dimension $n=3$; see 
\cite[Theorem 3.2]{AlarconDrinovecForstnericLopez2018TAMS}. In that case the map
$\varkappa$ \eqref{eq:varkappa} may be chosen of the form
\begin{equation}\label{eq:varkappa2}
	\varkappa(\zeta,\xi)=X(\zeta) + \alpha\bigl(\zeta,r(\zeta) \,\xi\bigr),
\end{equation}
where $\alpha \colon I \times\overline{\d}\to\r^3$ is a map of class $\Cscr^1$ such that
for every $\zeta\in I$ the map $\cd \ni \xi \mapsto \alpha(\zeta,\xi)\in\r^3$ is a conformal minimal 
immersion with $\alpha(\zeta,0)=0$ and we take $\alpha\bigl(\zeta,r(\zeta) \,\xi\bigr)=0$ for $\zeta\in bM\setminus I$.
The advantage is that the conformal minimal discs $\cd\ni \xi\mapsto \alpha\bigl(\zeta,r(\zeta) \,\xi\bigr)\in\r^3$
are arbitrary and not necessarily flat as before.

The Riemann-Hilbert method has also been adapted by the authors and L\'opez \cite{AlarconForstnericLopez2017CM} 
to holomorphic Legendrian curves in the standard complex contact structure
on Euclidean spaces $\C^{2n+1}$.

%
% Subsection: ON THE CALABI-YAU PROBLEM
%

\subsection{On the Calabi-Yau problem}
\label{ss:CY}

The Calabi-Yau problem for minimal surfaces originated in the following conjecture of Calabi from 1965.

\begin{conjecture}[\text{Calabi \cite[p.\ 170]{Calabi1965Conjecture}}]\label{conj:Calabi}
A complete minimal hypersurface in $\r^n$ for $n\ge 3$ is unbounded. Even more, its projection
to every $(n-2)$-dimensional affine subspace of $\r^n$ is unbounded.
\end{conjecture}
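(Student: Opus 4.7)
This final statement is a conjecture whose extensive discussion in Section \ref{ss:CY} indicates that it is expected to be \emph{false}, at least in the version asserting that boundedness cannot occur. Since the paper is devoted precisely to complex analytic tools that yield such counterexamples, my plan is to sketch a refutation for the case $n=3$ (where the notion of hypersurface coincides with that of a two-dimensional minimal surface and the machinery of the paper applies fully), and then to explain why the same machinery does not settle the case $n\geq 4$.

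For the construction in $\R^3$, I would invoke the Enneper-Weierstrass representation (Theorem \ref{th:EW}) to reduce the problem to producing a holomorphic map $f\colon M\to\Agot_*\subset\C^3$ on a bordered Riemann surface $M$ (for instance $M=\overline{\D}$) such that $\Re(f\theta)$ is exact and the resulting immersion $X(p)=\int_{p_0}^p\Re(f\theta)$ is bounded in $\R^3$ while inducing a complete metric on $\mathring{M}$. First I would exhaust $\mathring{M}$ by an increasing sequence of compact bordered subdomains $M_1\Subset M_2\Subset\cdots$. Second, at each step I would apply the Riemann-Hilbert method for conformal minimal surfaces (Theorem \ref{th:RHCMI}) to attach small boundary modifications along $bM_k$, choosing the boundary discs $\varkappa(\zeta,\cdot)$ to spiral tangentially to a fixed sphere $S_R\subset\R^3$. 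Third, by tuning the spiraling parameters, each surgery should increase the intrinsic boundary distance inside $M_k$ by at least some fixed amount while perturbing $X$ by less than $2^{-k}$ in the $\Cscr^0$ norm on $M_{k-1}$. Fourth, the Runge-Mergelyan approximation theorem (Theorem \ref{th:ALL}) would globalize these modifications as genuine conformal minimal immersions, not merely as generalized ones on admissible sets. The iterated limit $X_\infty\colon\mathring{M}\to\R^3$ would then be bounded (its image lying inside a slightly enlarged ball containing $S_R$) and complete, contradicting the first part of the conjecture.

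For the stronger projection clause in $\R^3$, the plan is analogous, but I would perform the Riemann-Hilbert surgeries only in directions orthogonal to a fixed line $\ell\subset\R^3$, using the component-preserving version (Theorem \ref{th:ALL-2}, in the spirit of the L\'opez-Ros deformation) to keep the $\ell$-coordinate of $X$ from running off to infinity. This should yield a complete minimal surface contained in a slab between two parallel planes transverse to $\ell$, whose projection to $\ell$ is bounded; this would refute the projection assertion as well.

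The genuine obstacle, and the reason this proposal only addresses $n=3$, is that in dimension $n\geq 4$ the conjecture concerns minimal hypersurfaces of dimension $n-1\geq 3$, for which conformal parametrization by a Riemann surface is no longer available. The complex analytic toolbox of this survey, anchored in the identification of two-dimensional conformal minimal surfaces with holomorphic maps to the null quadric $\Agot_*$, does not apply, and a counterexample would presumably require substantially new methods, perhaps a parametric deformation theory for higher-dimensional minimal submanifolds or non-parametric constructions via currents or calibrations. The paper explicitly records that nothing appears to be known in this range, so I would expect this to be the truly hard case.
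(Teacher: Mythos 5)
You have correctly read the statement: it is Calabi's conjecture, not a theorem, and the paper does not attempt to prove it. The surrounding text records that both clauses are false for $n=3$ --- the projection clause was refuted by Jorge and Xavier (1980), the boundedness clause by Nadirashvili (1996) --- and that nothing is known for $n\ge 4$. Your refutation sketch for $n=3$, and your remark that the complex analytic toolbox anchored on two-dimensional conformal parametrizations breaks down in higher ambient dimension, are both in line with the paper's own discussion.

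Two cautions, though. First, your account is anachronistic: the original refutations of Jorge--Xavier and Nadirashvili long predate the Riemann--Hilbert machinery of Sect.~\ref{ss:RH} and were obtained by combining the Jorge--Xavier labyrinth and the L\'opez--Ros deformation with classical Runge approximation. The Riemann--Hilbert route you describe is the paper's own later refinement (Theorem~\ref{th:Jordan}, via Lemmas~\ref{lem:Jordan} and~\ref{lem:Jordan2}), which improves on the originals by controlling the conformal structure and the Jordan boundary behavior. Second, and more substantively, your sketch omits the quantitative mechanism on which everything hinges. Saying that each step increases the intrinsic boundary distance by ``some fixed amount'' while the image ``spirals tangentially to a fixed sphere'' does not by itself explain how the intrinsic gains can sum to infinity without the surface wandering out of the bounded region: near $bM_k$ the surgery displaces the surface by roughly the disc radius $\eta$, and if the $\eta_j$ are summable you lose completeness, while if they are not you seem to lose boundedness. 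The paper's resolution, in Lemma~\ref{lem:Jordan2}, is the orthogonal (Pythagorean) trick: the boundary disc inserted near a point $p_k$ is chosen orthogonal to the error vector $X(p_k)-\Ygot(p_k)$ towards a fixed reference map $\Ygot$, so an intrinsic gain of $\eta$ drives the $\Cscr^0$ distance to $\Ygot$ only from $\delta$ to $\sqrt{\delta^2+\eta^2}$. Taking $\eta_j=1/j$, the series $\sum\eta_j$ diverges (completeness) while $\sum\eta_j^2$ converges (the boundary values stay near $\Ygot(bM)$), and the maximum principle then bounds the entire surface. This decoupling of intrinsic length gain from cumulative extrinsic displacement is the crux, and without stating it explicitly your argument does not close. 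For the projection clause, your idea of holding one coordinate fixed via Theorem~\ref{th:ALL-2} is sound and closely mirrors the original Jorge--Xavier labyrinth-in-a-slab construction.
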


Nothing seems  known about this problem for $n\ge 4$. 
For $n=3$, the latter assertion in Calabi's conjecture was refuted by Jorge and Xavier \cite{JorgeXavier1980AM} in 1980, 
and the former by Nadirashvili \cite{Nadirashvili1996IM} in 1996. In both cases the counterexample is normalized by 
the disc,   and the proof combines the L\'opez-Ros deformation for minimal surfaces \cite{LopezRos1991JDG} 
with an inventive use of the Runge approximation theorem for holomorphic functions. 
In his  {\em 2000 Millennium Lecture} \cite{Yau2000}, 
Yau revisited Calabi's conjectures and proposed several questions concerning the topology, complex structure, 
and asymptotic behavior of complete bounded minimal surfaces in $\r^3$. Ferrer, Mart\'in, and Meeks 
\cite{FerrerMartinMeeks2012AM} proved in 2012 that there is no restriction on their topological type; 
controlling the complex structure is a much more challenging task.
The second part of Conjecture \ref{conj:Calabi} was settled in 2012 by
Alarc{\'o}n, Fern{\'a}ndez, and L{\'o}pez \cite{AlarconFernandezLopez2012CMH} who proved the following result. 
(A special case was obtained beforehand in \cite{AlarconFernandez2011DGA}.)

\begin{theorem}\label{th:AFeL}
Given an open Riemann surface $M$ and a nonconstant  harmonic function $h\colon M\to\R$, 
there is a complete conformal minimal immersion $X\colon M\to\r^3$ whose third coordinate function equals $h$.
In particular, $M$ admits a complete nonflat conformal minimal immersion $M\to\r^3$ with a bounded component
function if and only if there exists a bounded nonconstant harmonic function $M\to\r$.
\end{theorem}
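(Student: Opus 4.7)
The ``in particular'' equivalence follows at once from the first part: given a bounded nonconstant harmonic $h$, the first part produces a complete conformal minimal immersion $X=(X_1,X_2,h)$ whose third coordinate is bounded; conversely, any component of a conformal minimal immersion $M\to\R^3$ is harmonic, and none can be constant if $X$ is nonflat, else $X(M)$ would lie in an affine plane perpendicular to the corresponding axis.

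For the main assertion, the strategy is to follow the blueprint already sketched in the proofs of Theorems~\ref{th:ALL-complete} and \ref{th:Isa}: combine the Enneper-Weierstrass representation, the Runge approximation theorem with fixed coordinates (Theorem~\ref{th:ALL-2}), and a Jorge-Xavier labyrinth, arranging at every stage that the third coordinate remains equal to $h$. First I would exhibit $h$ as the third coordinate of some (a priori non-complete) $X^{0}\in\CMI(M,\R^3)$: on a smoothly bounded simply connected Runge subdomain $U\subset M$, use the Weierstrass formula \eqref{eq:EWR} with $\phi_3=\partial h|_U$ and a nonconstant meromorphic Gauss map $g$ whose poles and zeros are placed at the critical points of $h|_U$ with the multiplicities needed to make $\Phi=(\tfrac12(1/g-g),\tfrac{\imath}{2}(1/g+g),1)\phi_3$ holomorphic and nowhere vanishing; integration (there is no period obstruction since $U$ is simply connected) yields a nonflat $(X^{0}_1,X^{0}_2,h)\in\CMI(U,\R^3)$, and Theorem~\ref{th:ALL-2} applied with the single fixed third component then extends this to $X^{0}\in\CMI(M,\R^3)$.

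Next I would complete $X^{0}$ by an inductive Mergelyan-plus-labyrinth scheme. Exhaust $M$ by compact Runge domains $M_1\Subset M_2\Subset\cdots$ as in \eqref{eq:exhaustion}, set $\wt X_1=X^{0}|_{M_1}$, and construct inductively $\wt X_{i+1}=(\wt X_{i+1,1},\wt X_{i+1,2},h)\in\CMI(M_{i+1},\R^3)$ that is $\Cscr^1$-close to $\wt X_i$ on $M_i$ and whose induced-metric distance between $bM_i$ and $bM_{i+1}$ exceeds $i$. Following the labyrinth construction from the proof of Theorem~\ref{th:ALL-complete}, choose a compact labyrinth $\Upsilon_i\subset\mathring M_{i+1}\setminus M_i$ so that every path in $M_{i+1}\setminus M_i$ that joins the two boundary components and avoids $\Upsilon_i$ satisfies $\int_\gamma |\partial\wt X_{i,1}|>2i$, and then invoke Theorem~\ref{th:ALL-2} (with the single fixed coordinate $X_3=h$) to produce $\wt X_{i+1}$ which is $\Cscr^1$-close to $\wt X_i$ on $M_i$ together with the outer collar carrying $\Upsilon_i$ and moreover satisfies $|\wt X_{i+1,2}(p)-\wt X_{i,2}(q)|>i$ for $p\in\Upsilon_i$, $q\in bM_i$. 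The metric identity \eqref{eq:metric} then forces every divergent path of the limit map $X=\lim_i\wt X_i$ to have infinite length.

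The main obstacle will be the single-component constraint in the labyrinth step: since Theorem~\ref{th:ALL-2} fixes only one coordinate and that coordinate is already $h$, we cannot hold $\wt X_{i,1}$ exactly fixed as is done in the sketch of Theorem~\ref{th:ALL-complete}. The resolution is that Theorem~\ref{th:ALL-2} nevertheless delivers $\wt X_{i+1,1}$ arbitrarily $\Cscr^1$-close to $\wt X_{i,1}$ on $M_i$ together with the outer collar carrying $\Upsilon_i$, so by continuity the estimate $\int_\gamma |\partial\wt X_{i,1}|>2i$ transfers to $\wt X_{i+1,1}$ with a slightly diminished constant, which is all that is required for the length-enlargement. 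The period-matching needed when the topology changes between $M_i$ and $M_{i+1}$ is automatically handled by the period-dominating spray machinery built into Theorem~\ref{th:ALL-2} via Lemma~\ref{lem:deformation} and Remark~\ref{rem:PD2}, so no new ideas are required beyond careful bookkeeping.
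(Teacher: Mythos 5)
Your general plan --- build a Weierstrass representative of $h$ on a simply connected Runge domain using a meromorphic Gauss map $g$ whose zero/pole divisor cancels the zeros of $\di h$, extend to $M$ by Theorem~\ref{th:ALL-2}, then complete via an inductive labyrinth scheme fixing $X_3=h$ --- is the approach the paper has in mind (cf.\ the sketch of Theorem~\ref{th:Isa}), and your initial step correctly fills in what the paper merely asserts. However, the labyrinth step has a genuine gap. You build $\Upsilon_i$ so that paths $\gamma\subset(M_{i+1}\setminus M_i)\setminus\Upsilon_i$ joining $bM_i$ to $bM_{i+1}$ satisfy $\int_\gamma|\di\wt X_{i,1}|>2i$, and then claim that $\Cscr^1$-closeness of $\wt X_{i+1,1}$ to $\wt X_{i,1}$ ``on $M_i$ together with the outer collar carrying $\Upsilon_i$'' transfers this estimate. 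It cannot: those paths $\gamma$ lie exactly in the region where Theorem~\ref{th:ALL-2} gives no control, namely the complement of $\Upsilon_i$ in $M_{i+1}\setminus M_i$; and if you instead demand closeness on all of $M_{i+1}\setminus M_i$, that is incompatible with making $\wt X_{i+1,2}$ large on $\Upsilon_i$, which is the other half of the Jorge--Xavier argument. In the proof of Theorem~\ref{th:ALL-complete} the estimate survives because $X_{i+1;1}=X_{i;1}$ holds \emph{exactly} on all of $M_{i+1}$; here the first component is free, so nothing controls $\int_\gamma|\di\wt X_{i+1,1}|$ along those paths.

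The correct device is to build $\Upsilon_i$ around $\di h=\di\wt X_{i,3}$, the one $1$-form that is preserved exactly at every stage, so that $\int_\gamma|\di h|>2\tau$ for avoiding paths gives a length bound via \eqref{eq:metric} that persists under every subsequent deformation. This exposes the additional difficulty specific to the present theorem that your proposal does not address: the harmonic function $h$ is given and may have critical points in the annuli $M_{i+1}\setminus M_i$, whereas in Theorem~\ref{th:ALL-complete} one extends $X_{i;1}$ so that $\di X_{i;1}$ never vanishes there, and in Theorem~\ref{th:Isa} one is free to choose $X_3$ without critical points. Since the critical set of a nonconstant harmonic function is discrete, each compact annulus contains at most finitely many of its zeros; the Jorge--Xavier construction must therefore be modified, for instance by absorbing small disks about those zeros into $\Upsilon_i$ so that paths avoiding $\Upsilon_i$ remain in the region where $|\di h|$ is bounded below. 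This is the essential new technical content relative to Theorem~\ref{th:ALL-complete}, and it is missing from your argument.
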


In the subsequent paper \cite{AlarconFernandezLopez2013CVPDE} of the same authors this result was extended to 
conformal minimal surfaces in $\R^n$ for $n>3$,  where now $n-2$ of the coordinate functions can be prescribed.

On the other hand, open Riemann surfaces normalizing complete bounded minimal surfaces in $\r^3$ are far from classified. 
By introducing the Riemann-Hilbert method into the picture, the authors with Drinovec Drnov\v sek and L\'opez 
\cite{AlarconDrinovecForstnericLopez2015PLMS} proved the following result to the effect 
that every bordered Riemann surface normalizes a complete bounded minimal surface with Jordan boundary. 

%
%   COMPLETE CMIS WITH JORDAN BOUNDARIES
%
\begin{theorem}\label{th:Jordan}
Let $M$ be a compact bordered Riemann surface. Every conformal minimal immersion $X\colon M\to\r^n$ $(n\ge 3)$ of class 
$\Cscr^1(M)$ may be approximated uniformly on $M$ by continuous maps $\wt X\colon M\to\r^n$ such that 
$\wt X|_{bM}\colon bM\hra\r^n$ is a topological embedding and $\wt X|_{\mathring M}\colon \mathring M\to\r^n$ is a 
complete conformal minimal immersion. If $n\ge 5$  there are embeddings $\wt X\colon M\hra\r^n$ with these properties.
The flux map of $\wt X$ can also be prescribed.
\end{theorem}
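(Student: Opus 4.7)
The plan is to obtain $\wt X$ as a uniform limit of conformal minimal immersions $X_k \in \CMI^1(M,\r^n)$ built by iterated application of the Riemann--Hilbert correspondence (Theorem \ref{th:RHCMI}). First I would fix a base point $p_0 \in \mathring M$, a smooth exhaustion $M_1 \Subset M_2 \Subset \cdots$ of $\mathring M$ by smoothly bounded compact domains with $\bigcup_k M_k = \mathring M$, a summable sequence $\epsilon_k>0$, and set $X_0 := X$. I then produce $X_{k+1}$ inductively so as to enforce: (i) $\|X_{k+1}-X_k\|_{\Cscr^0(M)} < \epsilon_k$; (ii) $\|X_{k+1}-X_k\|_{\Cscr^1(\overline{M_k})} < \epsilon_k$; (iii) $\dist_{X_{k+1}^*(ds^2)}(p_0, bM) > k+1$; and (iv) $\Flux(X_{k+1}) = \Flux(X_0)$. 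Property (i) forces uniform convergence to a continuous $\wt X\colon M\to\r^n$; (ii) forces $\Cscr^1$-convergence on compacta in $\mathring M$, so $\wt X|_{\mathring M}\in \CMI(\mathring M,\r^n)$; (iii) passes to the limit by a standard Fatou-type lower-semicontinuity argument and delivers completeness; and (iv) combined with Theorem \ref{th:ALL} allows prescribing any desired flux by first adjusting $X_0$.

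The heart of the argument is realizing condition (iii) in each inductive step. Let $A_k\subset M$ be a thin annular collar of $bM$ disjoint from $\overline{M_k}$. I would partition $bM$ into a large number $N_k$ of small compact arcs $I_{k,1},\ldots,I_{k,N_k}$ and apply Theorem \ref{th:RHCMI} in succession, once for each arc. For arc $I_{k,j}$ I choose an orthogonal pair $(\bu_{k,j},\bv_{k,j})\in\r^n\times\r^n$ with $|\bu_{k,j}|=|\bv_{k,j}|=\delta_{k,j}$ small, a bump $r_{k,j}\colon bM\to\r_+$ supported near $I_{k,j}$, and a power function $\sigma_{k,j}(\zeta,\xi)=\xi^{m_{k,j}}$ with $m_{k,j}$ large. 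The boundary disc \eqref{eq:varkappa} then perturbs $X_k$ inside the affine $2$-plane through $X_k(\zeta)$ spanned by $\bu_{k,j},\bv_{k,j}$, producing a conformal minimal immersion whose image on a narrow sub-band of $A_k$ abutting $I_{k,j}$ contains a tight $m_{k,j}$-fold spiral in that plane. By choosing $m_{k,j}\delta_{k,j}\to\infty$ while $\delta_{k,j}\to 0$, every path crossing the sub-band picks up intrinsic length $\gtrsim 2\pi m_{k,j}\delta_{k,j}$, whereas the $\Cscr^0$-displacement from $X_k$ is bounded by $\delta_{k,j}$. Composing the $N_k$ successive perturbations and arranging the arcs $I_{k,j}$ as a Jorge--Xavier-type labyrinth inside $A_k$ so that every escape path from $\overline{M_k}$ to $bM$ must cross many spirals, I obtain $X_{k+1}$ satisfying (iii), while clause (iv) of Theorem \ref{th:RHCMI} preserves the flux at every application.

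For the Jordan condition on $bM$ I would exploit the freedom in choosing the directions $(\bu_{k,j},\bv_{k,j})$ to arrange, at each stage, that $X_{k+1}|_{bM}$ is a smooth embedding and, more quantitatively, that distinct points of $bM$ are separated in $\r^n$ by a margin $\eta_k$ satisfying $\eta_k > \sum_{j>k}\epsilon_j$. This is possible because distinct ordered pairs of points on the compact $bM$ form an open condition and a generic perturbation of finitely many boundary arcs avoids finitely many coincidences; the quantitative margin is secured by choosing $\delta_{k,j}$ smaller than a prescribed fraction of the previous separation. In the limit, $\wt X|_{bM}$ is then an injective continuous map from a compact Hausdorff space, hence a topological embedding, with each connected component of $bM$ mapping to a Jordan curve and different components going to pairwise disjoint curves. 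For $n\ge 5$, I would additionally apply Theorem \ref{th:desingBRS} at each stage with an $\epsilon_k/2$-perturbation to make $X_k\colon M\to\r^n$ a conformal minimal embedding; a similar margin argument then shows $\wt X\colon M\to\r^n$ to be injective and continuous on the compact $M$, hence a topological embedding.

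The hard part will be the coordinated quantitative control in the spiralling step: certifying that a single inductive step increases $\dist_{X_{k+1}^*(ds^2)}(p_0,bM)$ by the prescribed amount while simultaneously satisfying the $\Cscr^0$, $\Cscr^1$, flux, and boundary-injectivity constraints. The spirals on different arcs must be nested in a labyrinth tight enough that no divergent path can slip between consecutive spirals with bounded intrinsic length; this is the precise minimal-surface analogue of the spiralling mechanism underlying Nash's $\Cscr^1$ isometric-immersion theorem, and it relies on the combined strength of conclusions (i)--(iv) of Theorem \ref{th:RHCMI}. Once this quantitative step is secured, the remaining convergence, flux, and general-position arguments are routine.
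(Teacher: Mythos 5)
Your high-level outline -- iterate the Riemann--Hilbert method to increase the intrinsic boundary distance while keeping the $\Cscr^0$-error summable and the $\Cscr^1$-error on compacta small, preserve the flux, pass to the limit, and use Theorem~\ref{th:desingBRS} plus a boundary transversality argument for the embedding and Jordan-curve conclusions -- is the same reduction the paper makes (to Lemma~\ref{lem:Jordan}, and from there to Lemma~\ref{lem:Jordan2}). But the mechanism you offer for the crucial step (iii), enlarging $\dist_{X^*(ds^2)}(p_0,bM)$ by a fixed amount at small $\Cscr^0$-cost, has genuine gaps.

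The ``power spiral'' $\sigma(\zeta,\xi)=\xi^{m}$ with $m\ge 2$ is not an admissible datum for Theorem~\ref{th:RHCMI}: the theorem requires $\di\sigma/\di\xi$ to be nowhere vanishing on $I\times\cd$, while $\di(\xi^{m})/\di\xi=m\xi^{m-1}$ vanishes at $\xi=0$. More fundamentally, even setting this aside, the claimed intrinsic-length gain $\gtrsim 2\pi m\delta$ does not follow from conclusions (i)--(iii) of Theorem~\ref{th:RHCMI}. Those conclusions only say that in a narrow collar $\Omega$ near $I$ the new immersion $Y$ stays within $\epsilon$ of the disc $\varkappa(\rho(\zeta),\cd)$, which is a round planar disc of radius at most $r\cdot\delta=O(\delta)$; the map $\xi\mapsto\xi^m$ merely reparametrizes that disc and does not lengthen it. A path crossing the collar therefore picks up Euclidean displacement (hence intrinsic length) of order $\delta$, not $m\delta$, and there is no obvious way to certify anything stronger from the RH estimates alone. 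Thus the very ratio you need between length gain and $\Cscr^0$-displacement is not there.

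What actually makes the iteration close is the Pythagorean structure in Lemma~\ref{lem:Jordan2}, which your proposal omits. One carries along a fixed reference boundary map $\Ygot\colon bM\to\R^n$ with $|X-\Ygot|<\delta$, and chooses the RH boundary discs at each corner $p_k$ in directions $\bu_k,\bv_k$ orthogonal to $X(p_k)-\Ygot(p_k)$. A push-out of size $\eta$ then raises the intrinsic boundary distance by $\eta$ but only raises $|Y-\Ygot|$ from $\delta$ to $\sqrt{\delta^2+\eta^2}$, an increment $O(\eta^2/\delta)$. Iterating with $\eta_j=1/j$ makes $\sum\eta_j=\infty$ (completeness) while $\delta_j=\sqrt{\delta_{j-1}^2+1/j^2}$ converges, so the $\Cscr^0$-drift is bounded. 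Without tying the RH directions to a fixed external reference, the length gain and the $\Cscr^0$-cost scale the same way, and one cannot have both divergent length and convergent displacement.

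Finally, there is a corner problem you don't address: the RH modification on an arc $I_{k,j}$ tapers off near the endpoints of $I_{k,j}$, so a divergent path can escape near an endpoint and pick up almost no length. The paper handles this by a separate first phase within each step of Lemma~\ref{lem:Jordan2}: attach highly oscillating Jordan arcs $\gamma_k$ at the corner points $p_k$, extend to a generalized conformal minimal immersion on the admissible set $M\cup\bigcup\gamma_k$, approximate by Theorem~\ref{th:ALL}, and merge the arcs into the domain via the conformal diffeomorphism of \cite[Theorem~2.3]{ForstnericWold2009JMPA}. Your suggestion to arrange boundary arcs ``as a Jorge--Xavier labyrinth'' has no clear meaning on the one-dimensional circle $bM$ and does not resolve the escape-near-corners issue; the labyrinth trick lives in an interior annulus, not on the boundary circle.
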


Theorem \ref{th:Jordan} follows by an obvious inductive application of the following approximation result 
(see \cite[Lemma 4.1]{AlarconDrinovecForstnericLopez2015PLMS}) together with Theorem \ref{th:desingBRS} 
and a transversality argument which deals with the injectivity on the boundary.

%
%   MAIN LEMMA: 
%
\begin{lemma}\label{lem:Jordan}
In the assumptions of Theorem \ref{th:Jordan}, given $p_0\in \mathring M$ and $\lambda>0$, $X$ may be approximated 
arbitrarily closely in the $\Cscr^0(M)$ topology by a conformal minimal immersion $Y\colon M\to\r^n$ of class $\Cscr^1(M)$ 
such that  $\dist_Y(p_0,bM)>\lambda$.
\end{lemma}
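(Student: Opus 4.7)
The plan is to apply the Riemann-Hilbert boundary value problem for conformal minimal surfaces (Theorem \ref{th:RHCMI}) iteratively, in the spirit of Nash's $\Cscr^1$ isometric immersion technique. Each iteration perturbs the current immersion inside an arbitrarily thin annular collar of $bM$ by small $\Cscr^0$-bumps; yet, by forcing the boundary perturbation to wind through many orthogonal directions, the resulting twisted tubular neighbourhood of $X_j(bM)$ adds a definite amount of extra intrinsic length to every path from $p_0$ to $bM$. After finitely many steps, the accumulated intrinsic gain exceeds $\lambda$ while the cumulative $\Cscr^0$-deviation from $X$ is controlled by a geometric series less than the prescribed $\epsilon>0$.

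For the inductive step set $X_0:=X$. Given $X_j\in\CMI^1(M,\r^n)$ with $\Flux(X_j)=\Flux(X)$, partition $bM$ into many short compact arcs $I_1^j,\ldots,I_{k_j}^j$, none equal to a full connected component of $bM$. For each $i$ select orthogonal unit vectors $\bu_i^j,\bv_i^j\in\r^n$ such that the $2$-planes $\mathrm{span}\{\bu_i^j,\bv_i^j\}$ rotate as $i$ varies across the partition, and such that the family of planes used over successive steps $j$ covers transverse directions in $\r^n$. Pick a continuous function $r_j\colon bM\to\r_+$ supported on $I^j:=\bigcup_i I_i^j$ with $\|r_j\|_\infty<\epsilon/2^{j+1}$, and set $\sigma(\zeta,\xi):=\xi$, which satisfies the hypotheses of Theorem \ref{th:RHCMI}. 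Applying Theorem \ref{th:RHCMI} to $X_j$ with these data yields a new conformal minimal immersion $X_{j+1}\in\CMI^1(M,\r^n)$ together with an arbitrarily thin open neighbourhood $\Omega_j\subset M$ of $I^j$ such that $\|X_{j+1}-X_j\|_{\Cscr^0(M)}<\epsilon/2^{j+1}$, $\Flux(X_{j+1})=\Flux(X_j)$, $X_{j+1}$ is $\Cscr^1$-close to $X_j$ on $M\setminus\Omega_j$, and the boundary value $X_{j+1}(\zeta)$ for $\zeta\in I_i^j$ lies close to $X_j(\zeta)+r_j(\zeta)(\cos\theta\,\bu_i^j+\sin\theta\,\bv_i^j)$ for some $\theta=\theta(\zeta)\in[0,2\pi)$. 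We arrange $\Omega_{j+1}\subset\Omega_j$ with $\Omega_j$ shrinking onto $bM$, so that earlier perturbations are not undone by later ones.

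The main obstacle is the geometric estimate ensuring that each step produces a definite increment, say $\dist_{X_{j+1}}(p_0,bM)\ge\dist_{X_j}(p_0,bM)+1$. Outside $\Omega_j$ the $\Cscr^1$-closeness guarantees that path-lengths are essentially unchanged, so the question reduces to bounding below the intrinsic length any path accrues while crossing the thin collar $\Omega_j$. Within $\Omega_j$ the image of $X_{j+1}$ sits in a twisted tubular neighbourhood of $X_j(bM)$ whose cross-section over the arc $I_i^j$ consists of discs of radius $r_j$ in the $2$-plane $\mathrm{span}\{\bu_i^j,\bv_i^j\}$, and these planes change across consecutive arcs, producing wrinkled boundary behaviour with high tangential oscillation. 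The heart of the argument is the verification that, by choosing the partition fine enough in relation to $r_j$ and the thickness of $\Omega_j$, every smooth path in $M$ from $p_0$ to a point of $bM$ is forced to have $X_{j+1}$-length at least $1$ greater than its $X_j$-length, combining the radial excursion of amplitude $r_j$ with the tangential accumulation arising from the rotating directions. Iterating $N:=\lceil\lambda\rceil+1$ times and setting $Y:=X_N$, the telescoping estimates yield $Y\in\CMI^1(M,\r^n)$ with $\Flux(Y)=\Flux(X)$, $\|Y-X\|_{\Cscr^0(M)}<\epsilon$, and $\dist_Y(p_0,bM)>\lambda$, as required.
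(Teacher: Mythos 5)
Your overall plan — iterate the Riemann–Hilbert deformation of Theorem \ref{th:RHCMI} in thin collars near $bM$, keeping the $\Cscr^0$-perturbation summable while the intrinsic boundary distance diverges — is the right skeleton, but the quantitative heart of your argument does not work, and it is precisely where the paper's proof differs.

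The critical gap is your claim that each Riemann–Hilbert step can produce a \emph{fixed} increment $\dist_{X_{j+1}}(p_0,bM)\ge\dist_{X_j}(p_0,bM)+1$ while the boundary amplitude shrinks geometrically, $\|r_j\|_\infty<\epsilon/2^{j+1}$. This is false. A path from $p_0$ to a boundary point that crosses the thin collar $\Omega_j$ radially picks up $X_{j+1}$-length of order $r_j$ only: the entry point (on the inner boundary of $\Omega_j$, where $X_{j+1}$ is $\Cscr^1$-close to $X_j$) is mapped to a point $\approx X_j(q_1)$, the exit point (on $bM$) is mapped to a point $\approx X_j(q_2)+r_j(\cos\theta\,\bu+\sin\theta\,\bv)$, and the extrinsic displacement — hence the intrinsic length of the crossing — is $\lesssim r_j+\omega_{X_j}(\mathrm{thickness})$. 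The spiralling of $Y(\zeta)$ as $\zeta$ traverses $bM$ makes the \emph{boundary curve} long, but a geodesic need not traverse $bM$; it can cut straight across. Your appeal to "tangential accumulation from rotating directions" is never converted into a lower bound that forces a crossing path to oscillate, and no such bound exists from the RH theorem alone. Consequently $\sum_j(\dist_{X_{j+1}}-\dist_{X_j})\lesssim\sum_j r_j\lesssim\epsilon$, and you cannot reach $\lambda$ for large $\lambda$.

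What actually makes the paper's argument close is the \emph{Pythagorean} accumulation exploited in Lemma \ref{lem:Jordan2}: at each step the push directions $\bu_k,\bv_k$ are chosen \emph{orthogonal to the running boundary deviation vector} $X(p_k)-\Ygot(p_k)$ (here $\Ygot$ is a fixed reference boundary map, in practice $X|_{bM}$). A push of amplitude $\eta_j$ then adds roughly $\eta_j$ to the intrinsic boundary distance, but the new boundary deviation obeys $\delta_{j}\le\sqrt{\delta_{j-1}^2+\eta_j^2}$. Taking $\eta_j\sim c/j$ gives $\sum\eta_j=\infty$ (so the intrinsic distance diverges) while $\sqrt{\delta_0^2+c^2\sum 1/j^2}<\epsilon$ (so the $\Cscr^0$-deviation stays small), and the bound is propagated from $bM$ to $\mathring M$ by the maximum principle for the harmonic difference $X_j-X$. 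Your scheme tracks only magnitudes $\|X_{j+1}-X_j\|_{\Cscr^0(M)}$, not the deviation \emph{vector}, so it is forced into linear accumulation and cannot simultaneously achieve divergent intrinsic distance and bounded $\Cscr^0$-error. In addition, you do not address paths that exit $M$ near the endpoints of your partition arcs, where $\Omega_j$ does not reach $bM$; the paper's Lemma \ref{lem:Jordan2} handles these by a preliminary deformation (the Forstneri\v c–Wold exposing-points technique combined with attached arcs carrying a Jorge–Xavier-type oscillation) before the Riemann–Hilbert step is applied. Both ingredients — the orthogonality trick and the preliminary deformation — are essential and absent from your proof.
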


In turn, Lemma \ref{lem:Jordan} follows from the maximum principle, the divergence of the sequence $d_j=d_{j-1}+\frac1{j}$ 
$(d_0>0)$, the convergence of the sequence $\delta_j=\sqrt{\delta_{j-1}^2+\frac1{j}^2}$ $(\delta_0>0)$, and an inductive 
application of the following result (see \cite[Lemma 4.2]{AlarconDrinovecForstnericLopez2015PLMS}). 
% We refer to \cite[proof of Lemma 4.1]{AlarconDrinovecForstnericLopez2015PLMS} for complete details.

%
%   THE MAIN LEMMA 2
%
\begin{lemma}\label{lem:Jordan2}
In the assumptions of Theorem \ref{th:Jordan}, let $\Ygot\colon bM\to\r^n$ be  a smooth map and choose $\delta>0$ 
such that $|X(p)-\Ygot(p)|<\delta$ for all $p\in bM$. Also let $p_0\in \mathring M$ and choose $d>0$ such that $0<d<\dist_X(p_0,bM)$.
For any $\eta>0$ the map $X$ may be approximated uniformly on compacts in $\mathring M$ by conformal minimal immersions 
$Y\colon M\to\r^n$ of class $\Cscr^1(M)$ satisfying $\dist_Y(p_0,bM)> d+\eta$ and $|Y(p)-\Ygot(p)|<\sqrt{\delta^2+\eta^2}$ for all $p\in bM$.
\end{lemma}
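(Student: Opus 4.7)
My plan is to derive Lemma~\ref{lem:Jordan2} from the Riemann--Hilbert Theorem~\ref{th:RHCMI} by a Pythagorean choice of boundary discs that simultaneously yields the $\Ygot$-approximation bound and the intrinsic-distance increment.

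First I would prepare the data for Theorem~\ref{th:RHCMI}. Since $n\ge 3$, the orthogonal complement of $\Ygot(\zeta)-X(\zeta)$ in $\r^n$ has dimension at least two for every $\zeta\in bM$, so I can choose continuously varying orthonormal vectors $\hat\bu(\zeta),\hat\bv(\zeta)\in\r^n$ perpendicular to $\Ygot(\zeta)-X(\zeta)$. I partition $bM$ into finitely many short closed subarcs $I_1,\ldots,I_k$ lying end to end, on each of which $(\hat\bu,\hat\bv)$ is nearly constant and equal to some $(\hat\bu_i,\hat\bv_i)$, set $\bu_i=\eta\hat\bu_i$ and $\bv_i=\eta\hat\bv_i$, take $r\colon bM\to\r_+$ continuous, equal to $1$ in the relative interior of each $I_i$ and smoothly tapered to $0$ near their endpoints (so that $\mathrm{supp}(r)\subset I=\bigcup_iI_i$), and set $\sigma(\zeta,\xi)=\xi$. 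Applying Theorem~\ref{th:RHCMI} with these data and a tolerance $\epsilon_0>0$ produces a conformal minimal immersion $Y\in\CMI^1(M,\r^n)$ with $\Flux_Y=\Flux_X$, $\epsilon_0$-close to $X$ in $\Cscr^1$ outside an arbitrarily thin neighborhood $\Omega\subset M$ of $I$, and whose boundary values $Y(\zeta)$ lie within $\epsilon_0$ of the circle $\varkappa(\zeta,\mathbb{T})=X(\zeta)+r(\zeta)\{\Re\xi\,\bu_i+\Im\xi\,\bv_i:\xi\in\mathbb{T}\}$ for $\zeta\in I_i$.

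The boundary estimate $|Y-\Ygot|<\sqrt{\delta^2+\eta^2}$ on $bM$ follows by a Pythagorean computation. For $\zeta\in I_i$, there exists $\xi_0\in\mathbb{T}$ such that $Y(\zeta)-X(\zeta)$ is within $\epsilon_0$ of the vector $\eta(\Re\xi_0\hat\bu_i+\Im\xi_0\hat\bv_i)\in\mathrm{span}(\hat\bu_i,\hat\bv_i)$, and this $2$-plane is (approximately) orthogonal to $\Ygot(\zeta)-X(\zeta)$. Expanding,
\[
|Y(\zeta)-\Ygot(\zeta)|^2 \le |X(\zeta)-\Ygot(\zeta)|^2+\eta^2+O(\epsilon_0),
\]
so refining the partition (to make the orthogonality nearly exact) and shrinking $\epsilon_0$ forces the desired strict bound. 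On the residual set $bM\setminus I$ where $r=0$ we have $Y\approx X$, and the original strict inequality $|X-\Ygot|<\delta<\sqrt{\delta^2+\eta^2}$ persists.

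For the distance estimate I pick a smoothly bounded compact subdomain $M'\subset\mathring M$ with $p_0\in\mathring{M'}$ and $\dist_X(p_0,bM')>d$, and arrange $\Omega\cap M'=\varnothing$. The $\Cscr^1$-closeness of $Y$ to $X$ on $M'$ gives $\dist_Y(p_0,bM')>d-O(\epsilon_0)$, and for any path $\gamma$ in $M$ from $p_0$ to $q\in bM$ with first exit time $t_0$ from $M'$, the $Y$-length of $\gamma|_{[t_0,1]}$ is at least the Euclidean chord $|Y(q)-Y(\gamma(t_0))|$. Since $Y(\gamma(t_0))\approx X(\gamma(t_0))$ and $Y(q)\approx X(q)+\eta\xi_0$ with $\xi_0\in\mathrm{span}(\hat\bu_i,\hat\bv_i)$, by refining the partition and choosing $\Omega$ thin enough one ensures that any length-minimizing path crosses $\Omega$ essentially radially, so $|X(\gamma(t_0))-X(q)|=O(\epsilon_0)$ and the chord is at least $\eta-O(\epsilon_0)$. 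Summing the two contributions yields $\dist_Y(p_0,bM)>d+\eta$. The main obstacle is this final step: ruling out detour paths that might exploit the modified geometry of $Y$ within $\Omega$ to reach the boundary cheaply, which is controlled by the $\Cscr^1$-closeness of $Y$ to $X$ off $\Omega$ and the freedom to make both $\Omega$ and the arcs $I_i$ as small as desired.
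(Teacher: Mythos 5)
Your use of Theorem~\ref{th:RHCMI} and the Pythagorean computation for the boundary estimate $|Y-\Ygot|<\sqrt{\delta^2+\eta^2}$ is essentially the right mechanism and matches the paper. The gap is in the intrinsic distance estimate $\dist_Y(p_0,bM)>d+\eta$, and it is not the "detour path" issue you flagged.

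The problem is that the radius function $r$ is tapered to $0$ near the endpoints of each arc $I_i$ (Theorem~\ref{th:RHCMI} requires the $I_i$ to be pairwise disjoint compact arcs, not all of $bM$, so such seams are unavoidable). For a boundary point $q$ near such an endpoint, the Riemann--Hilbert perturbation gives $Y(q)\approx X(q)$ with no $\eta$-shift, so the chord $|Y(q)-Y(\gamma(t_0))|\approx|X(q)-X(\gamma(t_0))|$, and this extrinsic distance can be arbitrarily small (the map $X$ need not expand intrinsic distance into Euclidean distance; take $\gamma(t_0)\in bM'$ close to $q$ with $X(\gamma(t_0))$ close to $X(q)$). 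Refining the partition does not help, since the set where $r$ is small is always nonempty. Moreover, your assertion $|X(\gamma(t_0))-X(q)|=O(\epsilon_0)$ is not justified: $\gamma(t_0)\in bM'$ and $q\in bM$ are separated by the full thickness of the collar $M\setminus M'$, and nothing forces their $X$-images to be close — indeed you should not want them close, since the chord bound in the generic case relies on $X(q)-X(\gamma(t_0))$ being negligible compared to $\eta\xi_0$, not vice versa.

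What is missing is the paper's preliminary deformation $X\Rightarrow X_0$: near finitely many boundary points $p_k$ (the seam points), one attaches short exterior Jordan arcs carrying a generalized conformal minimal immersion that oscillates rapidly in the direction $X(p_k)-\Ygot(p_k)$, approximates it by a genuine conformal minimal immersion on a slightly larger domain (Theorem~\ref{th:ALL}), and then pulls those arcs into the surface by the conformal diffeomorphism of \cite{ForstnericWold2009JMPA}. The oscillations have small extrinsic diameter (so the $\delta$-proximity to $\Ygot$ is preserved), but they are engineered so that any path from $bK$ into a small neighborhood $\overline U_k$ of $p_k$ has projected length $>\eta$ in the direction of $X(p_k)-\Ygot(p_k)$, hence $X_0$-length $>\eta$. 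Only then is the Riemann--Hilbert step applied, with the radius function equal to $\eta$ on arcs $\beta_k\Subset I_k$ and tapering to $0$ inside the $U_k$'s, where the oscillation already supplies the distance gain. Without this first step the estimate $\dist_Y(p_0,bM)>d+\eta$ fails at boundary points near the arc endpoints. A minor additional point: choosing $\hat\bu_i,\hat\bv_i$ orthogonal to $\Ygot-X$ requires $X\ne\Ygot$ on $bM$, which the paper arranges at the outset without loss of generality.
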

\begin{proof}[Sketch of proof of Lemma \ref{lem:Jordan2}]
We assume that $M$ is a smoothly bounded compact domain in an open Riemann surface $\wt M$ and, for simplicity of exposition, 
that $bM$ is connected. Choose a smoothly bounded compact domain $K\subset\mathring M$ which is 
a strong deformation retract of $M$ and such that $\dist_X(p_0,bK)>d$.
We assume without loss of generality that  $X-\Ygot\neq 0$ on $bM$. 
Given  $\epsilon>0$ we look for a conformal minimal immersion 
$Y\colon M\to\r^n$ with $|Y-X|<\epsilon$ on $K$ and satisfying the lemma. 

Fix a number $\epsilon_0>0$ which will be specified later. By continuity of $X$ and $\Ygot$, $bM$ splits into $l\geq 3$ compact arcs 
$\alpha_k$, $k\in\z_l$, lying end to end and such that for all pairs of points $p,q$ in $\alpha_k$ we have
$|\Ygot(p)-\Ygot(q)|<\epsilon_0$, $|X(p)-\Ygot(q)|<\delta$, and $|X(p)-X(q)|<\epsilon_0$. 
Denote by $p_k$ the only point in $\alpha_k\cap\alpha_{k+1}$ 
and by $\pi_k\colon\r^n\to {\rm span}\bigl\{X(p_k)-\Ygot(p_k)\bigl\} \, \subset\r^n$
the orthogonal projection onto the affine real line ${\rm span}\{X(p_k)-\Ygot(p_k)\}$.
The first step consists of perturbing $X$ near the points $\{p_k\colon k\in \z_l\}$ in order to find a
conformal minimal immersion $X_0\colon M\to\r^n$ of class $\Cscr^1(M)$  which is close to $X$ in the $\Cscr^1(K)$ norm 
and such that the distance between $p_0$ and $\{p_k\colon k\in \z_l\}$ in the induced metric $X_0^*(ds^2)$
is large in a suitable way. To be precise, we ask $X_0$ to keep satisfying 
\begin{itemize}
\item[\rm (i)] $|X_0(p)-\Ygot(q)|<\delta$ and $|X_0(p)-X(q)|<\epsilon_0$ for all $\{p,q\}\in \alpha_k$, $k\in \z_l$,
\end{itemize}
and to meet also the following condition:
\begin{itemize}
\item[\rm (ii)] For each $k\in \z_l$ there is a small open neighborhood $U_k$ of $p_k$ in $M$, with $\overline U_k\cap K=\varnothing$, 
enjoying the following condition: if $\gamma\subset M$ is an arc with initial point in $K$ and final point in $\overline U_k$, 
and if $\{J_a\}_{a\in \z_l}$ is a partition of $\gamma$ by Borel measurable subsets, then
$\sum_{a\in \z_l} \length \, \pi_a(X_0(J_a)) >\eta$.
\end{itemize}
To find such $X_0$, we take a family of pairwise disjoint Jordan arcs $\{\gamma_k\subset \wt M\colon k\in \z_l\}$ such that 
each $\gamma_k$ contains $p_k$ as an endpoint, is attached transversely to $M$ at $p_k$ 
and is otherwise disjoint from $M$, and the set $S:=M\cup\bigcup_{k\in \z_l} \gamma_k \subset\wt M$
is admissible (see Definition \ref{def:admissible}). We then extend $X$ to a generalized conformal minimal immersion 
$(X,f\theta)\in\GCMI(S,\r^n)$ so that the following analogues to {\rm (i)} and {\rm (ii)} hold.
\begin{itemize}
\item $|X(x)-\Ygot(q)|<\delta$  and $|X(x)-X(q)|<\epsilon_0$ for all 
$(x,q)\in (\gamma_{k-1}\cup \alpha_k\cup \gamma_k)\times \alpha_k$.

\item If $\{J_a\}_{a\in \z_l}$ is a partition of $\gamma_k$ by Borel measurable subsets, then 
$\sum_{a\in \z_l} \length\, \pi_a(X(J_a)) > 2\eta.$
\end{itemize}
This means that $X$ is chosen on each arc $\gamma_k$ to be highly oscillating in the direction 
of $F(p_a)-\Ygot(p_a)$ for all $a\in\z_l$, but with very small extrinsic diameter. 
Now, apply Theorem \ref{th:ALL} to approximate $(X,f\theta)$ uniformly on $S$ by a 
conformal minimal immersion $X\colon \wt M\to\r^n$ of class $\Cscr^1(M)$; let us keep denoting it by $X$. 
Let $q_k$ denote the endpoint of $\gamma_k$ different from $p_k$.  If the approximation is close enough then 
\cite[Theorem 2.3]{ForstnericWold2009JMPA} provides a smooth diffeomorphism $\phi\colon M\to\phi (M)$ satisfying:
\begin{itemize}
\item $\phi \colon  \mathring M \to  \phi(\mathring M)$ is a biholomorphism,
\item $\phi$ is as close as desired to the identity in the $\Cscr^1$ norm on the complement in $M$ 
of a small neighborhood of $\{p_k\colon k\in\z_l\}$, and 
\item $\phi(p_k) = q_k\in b\,\phi(M)$ and $\phi$ maps a suitably chosen neighborhood of $\{p_k\colon k\in\z_l\}$ in $M$ 
to a small neighborhood of $\bigcup_{k\in\z_l}\gamma_k$ in $\wt M$.
\end{itemize}
Thus, doing things in the right way, when composing $\phi$ with $X$ we are merging the arcs $X(\gamma_k)$ into $X(M)$ 
without modifying $M$ itself. It follows that the $\Cscr^1(M)$ conformal minimal immersion $X_0=X\circ\phi\colon M\to\r^n$ 
satisfies conditions {\rm (i)} and {\rm (ii)}.

We may assume that the sets $\overline U_k$, $k\in \z_l$, are simply connected, smoothly bounded, and pairwise disjoint.
Roughly speaking, $X_0$ meets the requirements in the lemma, except that $\dist_{X_0}(p_0,p)>d+\delta$ only holds 
for the points $p$ in $bM$ which lie in a $\overline U_k$. To conclude the proof we perturb $X_0$ outside $\bigcup_{k\in \z_l} U_k$, 
preserving what has already been achieved. At this point  the Riemann-Hilbert method is invoked.
Fix $\epsilon_1>0$ to be specified later, choose an annular neighborhood $A\subset M\setminus K$ of $bM$ and a smooth 
retraction $\rho\colon A\to bM$. By {\rm (i)} there is a family of pairwise disjoint, smoothly bounded closed discs
$\overline D_k$ in $M\setminus K$, $k\in\z_l$, satisfying $\bigcup_{k\in \z_l} \overline D_k\subset A$, $\overline D_k\cap bM$ is a 
compact connected Jordan arc in $\alpha_k\setminus\{p_{k-1},p_k\}$ with an endpoint  in $U_{k-1}$ and the other endpoint in $U_k$, 
and the following conditions: 
\begin{itemize}
\item[\rm (iii)] $|X_0(p)-\Ygot(q)|<\delta$\ for all $(p,q)\in \overline D_k\times\alpha_k$, and 
\item[\rm (iv)] $\rho(\overline D_k)\subset \alpha_k\setminus\{p_{k-1},p_k\}$ and $|X_0(\rho(x))-X_0(x)|<\epsilon_1$ for all  
$x\in\overline D_k$, $k\in \z_l$.
\end{itemize}
For each $k\in \z_l$ we choose a pair of compact Jordan arcs 
$\beta_k\Subset I_k\Subset \overline D_k\cap\alpha_k$ with an endpoint in $U_{k-1}$ and the other endpoint in 
$U_k$, and a pair of vectors $\bu_k$, $\bv_k\in \r^n$, such that 
$|\bu_k|=1 = |\bv_k|$ and $\bu_k$, $\bv_k$, and $X(p_k)-\Ygot(p_k)$ are pairwise orthogonal.
We then choose a continuous function $\mu\colon bM\to\r_+$  such that
\[
	0\leq\mu\leq\eta,\qquad \text{$\mu=\eta$\ \ on  $\bigcup_{k\in \z_l} \beta_k$,\qquad 
	\text{$\mu=0$\; on $bM\setminus \bigcup_{k\in \z_l} I_k$}}.
\]
Consider the continuous map  $\varkappa\colon bM \times\overline{\d}\to\r^n$ given by 
\[
	\varkappa(x,\xi)=\left\{\begin{array}{ll}
	X_0(x), & x\in bM\setminus \bigcup_{k\in \z_l} I_k; \\
	X_0(x) + \mu(x)(\Re \xi \bu_k + \Im \xi  \bv_k),  & x\in I_k,\; k\in\z_l.
	\end{array}\right.
\]
These are the boundary data of a Riemann-Hilbert problem on $X_0$ for which the boundary disc 
$\varkappa(x,\cdotp)$ at each point $x\in \beta_k$ is a round planar disc of radius $\eta$ that is orthogonal to $X(p_k)-\Ygot(p_k)$. 
Theorem  \ref{th:RHCMI} provides for every $k\in \z_l$ an arbitrarily small open neighborhood $\Omega_k\subset \overline D_k$ of   
$I_k$ in $M$ and a conformal minimal immersion $Y\colon M\to\r^n$ of class $\Cscr^1(M)$ satisfying the following conditions.
\begin{itemize}
\item[\rm (v)] $\dist(Y(x),\varkappa(x,\t))<\epsilon_1$ for all $x\in bM$.
\item[\rm (vi)] $\dist(Y(x),\varkappa(\rho(x),\overline{\d}))<\epsilon_1$ for all 
$x\in \Omega:=\bigcup_{k\in\z_l}\Omega_k$.
\item[\rm (vii)] $Y$ is $\epsilon_1$-close to $X_0$ in the $\Cscr^1$ norm on $M\setminus \Omega$.
\end{itemize}

Using conditions {\rm (i)}--{\rm (vii)} and Pythagoras' theorem, it is not hard to see that $Y$ satisfies the conclusion 
of the lemma provided that $\epsilon_0>0$ and $\epsilon_1>0$ are chosen sufficiently small. 
Very briefly, by {\rm (vi)}, {\rm (iv)}, and the definition of $\varkappa$ we have that $\pi_k\circ Y$ is $2\epsilon_1$-close 
to $\pi_k\circ X_0$ in the $\Cscr^0(\Omega_k)$ topology for all $k\in\z_l$, and so a weaker version of condition {\rm (ii)} is 
preserved by the second deformation procedure. This enables to ensure that $\dist_Y(p_0,p)>d+\eta$ for all 
$p\in bM\cap\bigcup_{k\in\z_l} \overline U_k$. Taking into account {\rm (i)}, {\rm (v)}, and that $\mu=\eta$ on  
$\bigcup_{k\in \z_l} \beta_k$, we infer the same inequality for all points $p\in  bM\setminus \bigcup_{k\in\z_l} \overline U_k$. 
On the other hand, {\rm (i)}, {\rm (v)}, {\rm (vii)}, the facts that  $\mu=\eta$ on  $\bigcup_{k\in \z_l} \beta_k$ and that $\bu_k$, $\bv_k$, 
and $X(p_k)-\Ygot(p_k)$ are pairwise orthogonal, and Pythagoras' theorem guarantee that $|Y(p)-\Ygot(p)|<\sqrt{\delta^2+\eta^2}$ 
for all $p\in bM$.
\end{proof}

Another recent application of the Riemann-Hilbert method is the construction of complete minimal surfaces densely lying 
in arbitrary domains of $\r^n$. The following result is due to Alarc\'on and Castro-Infantes \cite{AlarconCastro-Infantes2018GT}.

\begin{theorem}\label{th:dense}
Let $D\subset\r^n$ $(n\ge 3)$ be an open connected set. Every bordered Riemann surface $M$ admits a complete conformal 
minimal immersion $X\colon M\to \r^n$ such that $X(M)$ is a dense subset of $D$. If $n\ge 5$ then $X$ may be chosen injective.
\end{theorem}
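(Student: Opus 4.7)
The plan is to build $X$ as the limit of a sequence of conformal minimal immersions $X_j\in\CMI^1(M_j,\R^n)$ defined on an exhaustion of $M$, alternating at each inductive step one deformation that drives the image near a new target point of $D$ (for density) and one that stretches the intrinsic metric (for completeness). Fix a base point $p_0\in M$, a countable dense sequence $\{p_j\}_{j\in\N}\subset D$, a summable sequence $\epsilon_j>0$, and an exhaustion $M_0\Subset M_1\Subset\cdots$ of $M$ by smoothly bounded compact bordered Riemann surfaces with $p_0\in M_0$ and $\bigcup_j M_j=M$, each a strong deformation retract of $M$. I will produce $X_j\in\CMI^1(M_j,\R^n)$ satisfying
\begin{enumerate}[\rm (a)]
\item $X_j(M_j)\subset D$;
\item $\|X_j-X_{j-1}\|_{\Cscr^1(M_{j-1})}<\epsilon_j$;
\item $\dist_{X_j}(p_0,bM_j)>j$;
\item $|X_j(q_j)-p_j|<1/j$ for some $q_j\in M_j$;
\item if $n\ge 5$, $X_j$ is an embedding on $M_j$.
\end{enumerate}
Since the $\Cscr^1$ bounds in (b) are summable, the sequence $(X_j)$ converges uniformly on each fixed $M_k$ to a conformal minimal immersion $X\in\CMI(M,\R^n)$; condition (a) survives in the limit (so $X(M)\subset\overline D$), condition (d) yields density in $D$, and condition (c) gives $\dist_X(p_0,bM_j)\to\infty$, so any divergent path in $M$ eventually leaves every $M_j$ and has infinite $X$-length, proving completeness.

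For the density substep, choose a smooth Jordan arc $\gamma_j\subset\mathring M_j\setminus M_{j-1}$ attached transversely to $bM_{j-1}$ so that $S_j=M_{j-1}\cup\gamma_j\subset\mathring M_j$ is admissible in $M$ in the sense of Definition \ref{def:admissible}. Using the openness and connectedness of $D$, select a smooth path $\beta\colon[0,1]\to D$ from $X_{j-1}(\gamma_j\cap bM_{j-1})$ to a point within $1/(2j)$ of $p_j$, lying in a compact subset $K_j\Subset D$ of positive margin $\rho_j$ to $\R^n\setminus D$. Extend $X_{j-1}$ to a generalized conformal minimal immersion $(\widehat X,\widehat f\theta)\in\GCMI(S_j,\R^n)$ whose restriction to $\gamma_j$ parametrizes $\beta$. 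Theorem \ref{th:ALL} then yields $\tilde X_j\in\CMI^1(M_j,\R^n)$ approximating $(\widehat X,\widehat f\theta)$ uniformly on $S_j$, and if the approximation is fine enough the image $\tilde X_j(M_j)$ is trapped in a prescribed tubular neighborhood of $X_{j-1}(M_{j-1})\cup \mathrm{trace}(\beta)$, hence in $K_j\subset D$, while $\|\tilde X_j-X_{j-1}\|_{\Cscr^1(M_{j-1})}<\epsilon_j/3$ and $|\tilde X_j(q_j)-p_j|<1/j$ with $q_j$ the endpoint of $\gamma_j$.

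For the completeness substep I apply Lemma \ref{lem:Jordan} to $\tilde X_j$ on the compact bordered Riemann surface $M_j$, with target intrinsic distance $\lambda=j$ and $\Cscr^0(M_j)$-tolerance small enough that interior holomorphic estimates on the derivative yield a $\Cscr^1(M_{j-1})$-perturbation below $\epsilon_j/3$ and the $\Cscr^0(M_j)$-perturbation stays below $\rho_j/2$. The resulting $X_j'\in\CMI^1(M_j,\R^n)$ satisfies (c) by construction, (a) because the margin $\rho_j/2$ keeps us inside $D$, (d) by the triangle inequality, and (b) by combining the two $\Cscr^1$-estimates. An arbitrarily small final transversality perturbation as in Theorem \ref{th:desingBRS} turns $X_j'$ into an embedding on $M_j$ when $n\ge 5$, producing the desired $X_j$. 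The Jorge--Xavier--Nadirashvili bookkeeping inside Lemma \ref{lem:Jordan} — gain $1/k$ in intrinsic distance at step $k$ of the $\R$-$H$ iteration and pay $1/k^2$ in squared $\Cscr^0$-error — is what makes (c) and (b) compatible.

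The main obstacle is that the two substeps pull in opposite directions: the density excursion toward $p_j$ can carry the image along a long path in $\R^n$ that shaves the margin to $bD$, and the Riemann--Hilbert oscillations added for intrinsic-distance growth may, if not tightly controlled, push the boundary out of $D$. The resolution is the quantitative a priori choice, at the beginning of step $j$, of the compact playground $K_j\Subset D$ with margin $\rho_j$, followed by the ordering Mergelyan-first then Riemann--Hilbert so that each deformation is executed with an explicit small tolerance referring only to data fixed before it. All the analytic ingredients are already available in the survey — Theorem \ref{th:ALL} for the density step, Lemma \ref{lem:Jordan} (itself based on Theorem \ref{th:RHCMI}) for the completeness step, and Theorem \ref{th:desingBRS} for injectivity — so the work is essentially choreographic, and directly generalizes the scheme by which Theorem \ref{th:I-complete} and Theorem \ref{th:I-minimally_convex} are proved in the survey.
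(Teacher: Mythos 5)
The central gap is in the density substep. You claim that Theorem~\ref{th:ALL}, applied to approximate the generalized conformal minimal immersion $(\widehat X,\widehat f\theta)$ uniformly on the admissible set $S_j=M_{j-1}\cup\gamma_j$, produces $\tilde X_j\in\CMI^1(M_j,\R^n)$ whose image $\tilde X_j(M_j)$ lies in a prescribed tubular neighborhood of $X_{j-1}(M_{j-1})\cup\mathrm{trace}(\beta)$, hence in $K_j\subset D$. But the Runge--Mergelyan theorem for conformal minimal immersions gives closeness to the given data \emph{only on the admissible set} $S_j$; it exerts no control whatsoever on the image of $M_j\setminus S_j$. Concretely, the derivative $g=2\di\tilde X_j/\theta$ satisfies the maximum principle (as $|g|^2$ is subharmonic), so its size on $M_j$ is governed by its boundary values on $bM_j$, which are \emph{not} part of $S_j$ and hence not approximated. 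The integral $\tilde X_j(q)=\tilde X_j(p_0)+\int_{p_0}^q\Re(g\theta)$ for $q$ in the annulus $M_j\setminus\mathring M_{j-1}$ can therefore be arbitrarily far from $K_j$, and the induction hypothesis $X_j(M_j)\subset D$ breaks down. This is not a bookkeeping issue that one can fix by choosing smaller tolerances: the Mergelyan step simply does not carry image control, which is precisely why the published proof of this result (Alarc\'on--Castro-Infantes, \emph{Geom.\ Topol.}\ 22 (2018)) does not extend the map across a larger subsurface by Runge approximation. Instead it keeps the \emph{domain} fixed --- one works throughout on the compact bordered surface $\overline M$ whose interior is $M$ --- and both the density excursions and the intrinsic-distance gains are produced by the Riemann--Hilbert method (Theorem~\ref{th:RHCMI} and Lemma~\ref{lem:Jordan}), which by construction displaces the entire immersion by a prescribed small $\Cscr^0$-amount while moving only a boundary arc along a controlled chain of flat discs inside $D$. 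That is how the image is kept inside $D$ at every stage.

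Two smaller problems are worth noting, and both are visible even in your own write-up. First, you fix a summable sequence $(\epsilon_j)$ at the outset, but the data that the later tolerances must beat --- the margin $\rho_j=\dist(X_j(M_j),\R^n\setminus D)$ and the lower bound $\min_{M_j}|dX_j|$ needed so that a small $\Cscr^1$-perturbation does not collapse the intrinsic distance --- are only known \emph{after} step $j$. The $\epsilon_j$ must therefore be chosen inductively, each one after $X_{j-1}$ has been constructed, with $\sum_{k>j}\epsilon_k$ dominated by the step-$j$ margins. Second, you yourself write that condition (a) only survives as $X(M)\subset\overline D$, but the theorem asserts $X(M)$ is a dense \emph{subset} of $D$, i.e.\ $X(M)\subset D$; with the a-priori $\epsilon_j$ the limit could genuinely touch $bD$. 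Both of these are repaired by the adaptive choice of tolerances, but only once the image-control gap in the density substep has itself been closed via the Riemann--Hilbert mechanism.
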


%%%%%%%%%%
%%%%%%%%%%
%%%%%%%%%% Subsection: PROPER CMIS IN DOMAINS IN RN
%%%%%%%%%%
%%%%%%%%%%
%%%%%%%%%%

\subsection{Proper conformal minimal surfaces in minimally  convex domains} % in $\R^n$}
\label{ss:proper}

A major problem in minimal surface theory is to understand
which domains in $\r^n$ admit complete properly immersed minimal surfaces, 
and how the geometry of the domain influences the conformal properties of such surfaces
(see \cite[Section 3]{MeeksPerez2004SDG} for a background  on this topic). 
In dimension $n=3$ this subject is connected with the Calabi-Yau problem. 
In view of Nadirashvili's example \cite{Nadirashvili1996IM}, Yau \cite{Yau2000} asked whether there 
exist complete minimal surfaces properly immersed in the ball of $\r^3$. An affirmative answer for any either convex or  
bounded and smoothly bounded domain in $\r^3$ was given by Mart\'in and Morales in 
\cite{MartinMorales2004TAMS,MartinMorales2005DMJ,MartinMorales2006CMH}. In the opposite direction, 
Mart{\'i}n, Meeks, and Nadirashvili \cite{MartinMeeksNadirashvili2007AJM} gave examples of bounded domains in $\r^3$ 
which do not admit any complete proper minimal surfaces of finite topology.

If $M$ is a bordered Riemann surface for which one is able to construct in a standard 
inductive way a proper conformal minimal immersion  into a given domain $D\subset\r^n$, then one can also 
construct a {\em complete} proper one by the procedure in Lemma \ref{lem:Jordan} which enlarges the intrinsic boundary 
distance within the surface as much as desired by an arbitrarily small displacement of the surface in $D$. 
Hence it suffices to focus on the existence of proper conformal minimal immersions.
Recent examples by Alarc\'on et al.\ \cite{AlarconDrinovecForstnericLopez2018TAMS} 
show that some geometric assumptions on the domain 
are necessary to obtain positive results. Indeed, there is a bounded simply connected domain $D\subset\r^3$ 
carrying no proper conformal minimal disc $\d\to D$ passing through a certain point in $D$, and 
a bounded domain $D\subset\r^3$ admitting no proper minimal surfaces with finite topology and a single end 
(see \cite[Examples 1.13 and 1.14]{AlarconDrinovecForstnericLopez2018TAMS}). 
Much earlier, Dor \cite{Dor1996MZ} found a bounded domain $\Omega\subset\c^m$ for any $m\ge 2$ which 
does not admit any proper holomorphic discs $\d\to\Omega$. 
It remains an open problem whether there is a domain $D\subset \R^n$ for $n>3$ without any proper minimal discs. 

A $\Cscr^2$ function $\rho\colon D\to \r$ on a domain $D \subset\r^n$ is said to be {\em strongly minimal pluri\-subharmonic} 
if $\tr_L \Hess_\rho(x)>0$ for every affine $2$-dimensional linear subspace $L$ and every point $x\in D\cap L$;
equivalently, if $\lambda_1(x) + \lambda_2(x) > 0$ for all $x\in D$ where $\lambda_1(x)\le \lambda_2(x)$ 
are the two smallest eigenvalues of the Hessian $\Hess_\rho(x )$. The domain $D$ is said to be {\em minimally convex} 
if it admits a strongly minimal plurisubharmonic exhaustion function
(see \cite[\textsection 2]{AlarconDrinovecForstnericLopez2018TAMS}). 
The following result was obtained as an application of Theorem \ref{th:RHCMI} (the Riemann-Hilbert method) 
with functions $\varkappa$ of the form \eqref{eq:varkappa2}; see
\cite[Theorem 1.1 and Remark 3.8]{AlarconDrinovecForstnericLopez2018TAMS}.

%
%   MINIMAL SURFACES IN MINIMALLY CONVEX DOMAINS
%
\begin{theorem}\label{th:ADFL}
Let $D\subset\r^n$ for $n\ge 3$ be a minimally convex domain and $M$ be a compact 
bordered Riemann surface. Every conformal minimal immersion $M\to D$ may be approximated uniformly on compacts in 
$\mathring M=M\setminus bM$ by proper (and complete if so desired) conformal minimal immersions 
(embeddings if $n\ge 5$) $\mathring M\to D$.
\end{theorem}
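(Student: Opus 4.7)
I would prove Theorem~\ref{th:ADFL} by constructing $Y$ as the limit on compacts of $\mathring M$ of a sequence of conformal minimal immersions $X_j\in\CMI^1(M,\R^n)$ built inductively so as to push the boundary image $X_j(bM)$ to arbitrarily high levels of an exhaustion function of $D$. Let $\rho\colon D\to \R_+$ be a smooth strongly minimal plurisubharmonic exhaustion function, provided by the definition of minimal convexity, and let $c_1<c_2<\cdots$ be a sequence of regular values of $\rho$ tending to $+\infty$. Fix an exhaustion $K_1\Subset K_2\Subset\cdots$ of $\mathring M$ by smoothly bounded Runge compact sets, a base point $p_0\in\mathring K_1$, and positive numbers $\epsilon_j>0$ with $\sum_j\epsilon_j$ small. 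The inductive conditions on $X_j$ will be
\begin{itemize}
\item[(i)] $X_j(M)\subset D$ and $\rho\circ X_j>c_j$ on $M\setminus K_j$;
\item[(ii)] $\|X_j-X_{j-1}\|_{\Cscr^1(K_{j-1})}<\epsilon_j$ for $j\ge 2$;
\item[(iii)] $\dist_{X_j}(p_0,bM)>j$.
\end{itemize}
Granted these, telescoping (ii) yields $\Cscr^1$-convergence $X_j\to Y$ on every $K_{j_0}$, so $Y\colon\mathring M\to\R^n$ is a well-defined map whose differential is the uniform-on-compacts limit of the $\Agot_*$-valued holomorphic forms $\di X_j$, making $Y$ a conformal minimal immersion by Theorem~\ref{th:equivalent} once $\epsilon_j$ is small enough to preserve non-vanishing. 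Properness of $Y$ follows from (i) and (ii): for any $c>0$ and any compact $L\subset\mathring M$, eventually $L\subset K_j$ and $X_j(\mathring M\setminus L)\subset\{\rho>c_j\}$, and the limit $Y$ inherits a similar bound up to a small loss controlled by $\sum\epsilon_j$. Completeness is (iii). For $n\ge 5$, embeddings are produced by applying Theorem~\ref{th:desingBRS} at each stage to arrange $X_j$ to be an embedding on $K_j$, and by choosing $\epsilon_j$ small enough that injectivity persists in the limit.

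\textbf{Role of minimal convexity.} The essential use of the geometric hypothesis on $D$ is the following subharmonicity identity, valid in any isothermal coordinate $\zeta=u+\imath v$ on $M$ and for any conformal minimal immersion $X\colon M\to D$:
\begin{align*}
\Delta(\rho\circ X) &= \Hess_\rho(X_u,X_u)+\Hess_\rho(X_v,X_v)+\nabla\rho\cdot\Delta X \\
&= |X_u|^2\,\tr_{L}\Hess_\rho \;>\; 0,
\end{align*}
where $L=\span(X_u,X_v)$ is the image tangent plane; here I have used $\Delta X=0$ and the conformal identities $|X_u|=|X_v|$, $X_u\cdot X_v=0$ from Theorem~\ref{th:equivalent}, together with $\tr_L\Hess_\rho\ge \lambda_1+\lambda_2>0$ by minimal convexity (the trace of a symmetric form restricted to any $2$-plane is bounded below by the sum of its two smallest eigenvalues). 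Thus $\rho\circ X$ is strongly subharmonic on $\mathring M$ and satisfies the maximum principle. The same calculation applied to any flat conformal minimal disc $\alpha\colon\overline{\d}\to\R^n$ of the form $\alpha(\xi)=p+r(\Re\xi\,\bu+\Im\xi\,\bv)$ in a $2$-plane $\span(\bu,\bv)$ with positive $\tr\Hess_\rho(p)$ shows that $\rho\circ\alpha$ is strongly subharmonic on $\overline{\d}$; in particular the maximum of $\rho$ on the boundary circle $\alpha(\t)$ strictly exceeds the central value $\rho(\alpha(0))=\rho(p)$. This is the mechanism by which attaching small discs at $bM$ raises $\rho\circ X$ along the boundary.

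\textbf{Inductive step and main obstacles.} The step $X_j\rightsquigarrow X_{j+1}$ is the technical heart and is implemented by finitely many small sub-steps, each an application of the Riemann-Hilbert approximation Theorem~\ref{th:RHCMI} with boundary data \eqref{eq:varkappa}. At each sub-step, I would partition $bM$ into finitely many arcs $I_1,\ldots,I_k$, choose on each $I_i$ a pair of orthogonal unit vectors $\bu_i,\bv_i\in\R^n$ spanning a real $2$-plane $L_i$ on which $\tr_{L_i}\Hess_\rho(X_j(\zeta))>0$ for $\zeta\in I_i$ (such planes exist pointwise by minimal convexity, with uniform choice by compactness of $bM$), and apply Theorem~\ref{th:RHCMI} with small attached flat conformal minimal discs of radii $r(\zeta)>0$ in these planes. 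The resulting $\tilde X\in\CMI^1(M,\R^n)$ is $\Cscr^1$-close to $X_j$ on $K_j$, remains inside $D$ (each disc does, by radius control), and has boundary image trapped near the disc boundary circles, which by the subharmonicity observation lie in higher level sets of $\rho$ than their centers. By a judicious choice of the phase function $\sigma$ in \eqref{eq:varkappa}, one arranges $\tilde X(bM)$ to track the high-$\rho$ portions of these circles, so that iterating the sub-steps pushes $\rho\circ X|_{bM}$ from level $c_j$ past $c_{j+1}$ with arbitrarily small perturbation on $K_j$. Condition (iii) is secured by interleaving with the Jorge-Xavier labyrinth construction from the proof of Lemma~\ref{lem:Jordan}. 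The principal technical obstacle is to arrange the boundary push so that the \emph{minimum} of $\rho$ over the perturbed boundary (not merely its maximum on each small disc) exceeds $c_{j+1}$ uniformly along $bM$, while keeping all intermediate surfaces inside $D$; this requires a careful interplay between the $2$-plane field $(\bu_i,\bv_i)$ and the phase $\sigma$, together with the subharmonic control of $\rho\circ\tilde X$ on the interior from its boundary values. A secondary difficulty, handled by a standard two-scale induction with geometric control on $\epsilon_j$, is to orchestrate the properness push, the completeness enlargement (iii), and (for $n\ge 5$) the general-position perturbation from Theorem~\ref{th:desingBRS} so that none of them undoes the preceding.
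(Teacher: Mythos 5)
Your high-level architecture (inductive exhaustion, $\Cscr^1$-small perturbations on compacts, completeness via Lemma~\ref{lem:Jordan}, general position via Theorem~\ref{th:desingBRS}) is correct, and your computation that $\Delta(\rho\circ X)=|X_u|^2\,\tr_L\Hess_\rho>0$ for conformal minimal $X$ is exactly the right way to see the role of minimal plurisubharmonicity. But the mechanism you propose for the inductive boundary push has a genuine gap that you half-acknowledge without resolving, and which makes the argument fail.

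The problem is that trace positivity of $\Hess_\rho$ on a $2$-plane $L$ only gives \emph{subharmonicity} of $\rho$ along a flat disc in $L$; it does not give a pointwise lower bound. If $\Hess_\rho|_L$ has a negative eigenvalue (which it typically does in dimension $n=3$, where minimal convexity only controls $\lambda_1+\lambda_2$, and similarly for $n\ge 4$ once one restricts to $L\subset\nabla\rho(x)^\perp$ to kill the linear term), then $\rho$ decreases quadratically along one diameter of your attached disc. The Riemann--Hilbert solution $Y$ is only known to lie near the \emph{whole} boundary circle $\varkappa(\zeta,\T)$ (condition~(i) of Theorem~\ref{th:RHCMI}), not near its high-$\rho$ arc, so $\rho\circ Y$ will in general dip below $\rho\circ X$ at many boundary points; no ``judicious choice of phase $\sigma$'' can prevent this, because the winding of $Y$ around the boundary circle is forced by the approximation scheme, not chosen. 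Moreover if you shrink $r(\zeta)$ to keep the discs inside $D$, the unfavorable linear term $\nabla\rho\cdot\alpha$ dominates the quadratic one and the push may go the wrong way entirely. In short, with flat planar discs you cannot establish the key pointwise estimate needed for the induction, namely $\rho\bigl(Y(\zeta)\bigr)\gtrsim \rho\bigl(X(\zeta)\bigr)+\epsilon(\zeta)$ uniformly over $bM$.

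The paper's proof circumvents precisely this by not using flat discs. It observes that $\rho$ is strongly minimally plurisubharmonic if and only if the Levi form of its constant extension to the tube $D\times\imath\R^n$ is positive on null vectors $w\in\Agot_*$, and it then attaches the real parts $\alpha_x^j$ of embedded \emph{null holomorphic} discs $\sigma_x^j$ lying in the quadratic complex hypersurface $\Sigma_x$ defined by the vanishing of the holomorphic linear and holomorphic quadratic terms of the Taylor expansion of $\rho$ at $x$. Restricted to $x+\Sigma_x$ the expansion reduces to $\rho(x+w)=\rho(x)+\Lcal_\rho(x;w)+o(|w|^2)$, and positivity of the Levi form on null vectors then gives $\rho\bigl(x+\alpha_x^j(\xi)\bigr)\ge\rho(x)+c\,|\xi|^2$ for all $\xi\in\cd$ --- a pointwise monotone increase, not merely a mean-value one. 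Feeding these curved conformal minimal discs into the Riemann--Hilbert scheme requires the stronger form \eqref{eq:varkappa2} of the boundary data (arbitrary conformal minimal discs) rather than the flat-disc form \eqref{eq:varkappa} of Theorem~\ref{th:RHCMI} that you invoke. Without this null-disc construction (or some other device that kills the linear and holomorphic quadratic terms of $\rho$ along the discs) the boundary push does not go through, so your proposed proof has a genuine gap at its core step.
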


In particular, every {\em mean-convex} domain in $\r^3$ admits complete proper minimal surfaces normalized by any bordered 
Riemann surface. We refer to \cite{AlarconDrinovecForstnericLopez2015PLMS,AlarconDrinovecForstnericLopez2018TAMS} 
for more precise results including infinite topologies, control of the flux, and continuous extendibility up to the boundary.

Since the complement of an embedded minimal surface in $\R^3$ is a minimally convex domain
\cite[Corollary 1.3]{AlarconDrinovecForstnericLopez2018TAMS}, we have the following corollary
to Theorem \ref{th:ADFL}.

\begin{corollary}
Let $S\subset \R^3$ be a properly embedded minimal surface and let $D$ be a connected component
of $\R^3\setminus S$. Every bordered Riemann surface admits a proper conformal minimal 
immersion into $D$.
\end{corollary}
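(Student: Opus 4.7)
The plan is to reduce the corollary to Theorem \ref{th:ADFL} by verifying that each connected component $D$ of $\R^3\setminus S$ is minimally convex. Once this is established, since any compact bordered Riemann surface $M$ admits at least one conformal minimal immersion into $D$ --- for instance, by taking any conformal minimal immersion $M\to\R^3$, scaling it down by a homothety (which preserves minimality), and translating so that its image lies in a Euclidean ball contained in $D$ --- Theorem \ref{th:ADFL} immediately yields a proper conformal minimal immersion $\mathring M\to D$, which is the stated conclusion.

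To show that $D$ is minimally convex, the boundary inequality is automatic: at every $x\in bD\subset S$ the two principal curvatures $\nu_1(x),\nu_2(x)$ from the inner side satisfy $\nu_1(x)+\nu_2(x)=2H(x)=0$ by minimality of $S$, so the condition $\nu_1+\nu_2\geq 0$ from the introduction is met (with equality). The substantive step is to produce a strongly minimal plurisubharmonic exhaustion function on $D$. The natural candidate, defined on a one-sided tubular neighborhood $U\subset D$ of $S$ where $\delta(x)=\dist(x,S)$ is smooth, is
\[
\rho_0(x) \;=\; -\log\delta(x) + \varepsilon|x|^2.
\]
A direct calculation in a principal frame at the nearest point of $S$ to $x$ shows that $\Hess(-\log\delta)(x)$ has, for small $\delta$, two tangential eigenvalues of the form $\pm\kappa(x)/\delta+O(1)$ summing to $O(1)$ (this is where $\nu_1+\nu_2=0$ enters), together with a normal eigenvalue $1/\delta^2$. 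Adding the strongly convex term $\varepsilon|x|^2$ bumps every eigenvalue by $2\varepsilon$, so the sum of the two smallest eigenvalues of $\Hess(\rho_0)$ is at least $4\varepsilon - o(1) > 0$ throughout a neighborhood of $S$.

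The main obstacle is extending $\rho_0$, which is only defined on the one-sided tubular neighborhood $U$, to a global exhaustion of $D$ while preserving the strict inequality $\lambda_1+\lambda_2>0$. For this one combines $\rho_0$ with a strongly convex function such as $C|x|^2 - A$ via a smoothed maximum on an annular overlap region; the condition $\lambda_1+\lambda_2>0$ on a symmetric matrix defines an open convex cone (because $\lambda_1(\cdot)+\lambda_2(\cdot)$, being the infimum of the linear functional $\tr(P_V \cdot P_V)$ over $2$-dimensional subspaces $V$, is concave in the matrix), so it is preserved under smoothed maxima provided both functions satisfy it with uniform strictness on the overlap, which one arranges by choosing $A,C$ suitably large. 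This construction is precisely \cite[Corollary 1.3]{AlarconDrinovecForstnericLopez2018TAMS} cited in the excerpt, and completes the verification that $D$ is minimally convex, from which the corollary follows by Theorem \ref{th:ADFL}.
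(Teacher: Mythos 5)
Your proof is correct and takes essentially the same route as the paper: the paper deduces the corollary by citing that the complement of a properly embedded minimal surface in $\R^3$ is minimally convex (\cite[Corollary 1.3]{AlarconDrinovecForstnericLopez2018TAMS}) and then invoking Theorem \ref{th:ADFL}, which is exactly your reduction. Your additional sketch of why $-\log\delta + \varepsilon|x|^2$ gives a strongly minimal plurisubharmonic exhaustion (and your remark on producing an initial immersion into $D$) supplies detail the paper delegates to the cited reference, but the logical structure is the same.
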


\begin{proof}[Sketch of proof of Theorem \ref{th:ADFL}]
We discuss the case $n=3$. Let $\rho\colon D\to\R_+$ be a smooth strongly minimally plurisubharmonic
Morse exhaustion function with the (discrete) critical locus $P$. 
We extend $\rho$ to a function on the tube $D\times\imath \R^3\subset\C^3$
which is independent of the imaginary coordinates. A simple calculation shows that $\rho$ is strongly minimally plurisubharmonic
if and only if the Levi form of the extended function at any point of $D\times\imath \R^3$ is positive on every null vector $w\in \Agot_*$ 
(see \cite[Lemma 4.3]{DrinovecForstneric2016TAMS}; such functions are called {\em strongly null plurisubharmonic}).
By using this fact, it is not hard to see (cf.\ \cite[Lemma 3.1]{AlarconDrinovecForstnericLopez2018TAMS})
that  for any compact set $L \subset D\setminus P$ there is a constant $c=c_L>0$ and families of embedded 
null holomorphic discs $\sigma_x^j = \alpha_x^j + \imath  \beta_x^j \colon \cd\to \c^3$  $(x\in L,\ j=1,2)$,
depending locally $\Cscr^1$ smoothly on the point $x\in L$ and satisfying the following conditions:
\begin{itemize}
\item[\rm (a)]   $\sigma_x^j(0)=0$, 
\item[\rm (b)]   $\{x + \alpha_x^j(\xi) : \xi\in \cd\} \subset D$, and 
\item[\rm (c)]   the function $\cd\ni \xi\mapsto \rho\bigl(x+ \alpha_x^j(\xi)\bigr)$ is strongly convex and satisfies
$\rho\bigl(x + \alpha_x^j(\xi)\bigr) \ge \rho(x)+c |\xi|^2$ for $\xi\in \cd$.
\end{itemize}
Indeed, it suffices to choose the null discs $\sigma_x^j$ in the quadratic complex hypersurface
\[
	\Sigma_{x}= \biggl\{w=(w_1,w_2,w_3)\in\C^3 : \sum_{j=1}^3 \frac{\di \rho}{\di x_j}(x)  w_j  
	+   \sum_{j,k=1}^3  \frac{\partial^2 \rho}{\partial z_j \partial\bar z_k}(x) w_j w_k=0 \biggr\}.
\]
The tangent plane $T_0\Sigma_x\subset\C^3$ contains precisely two null directions which leads to two families
of null discs as above. The restriction of $\rho$ to the affine hypersurface $x+\Sigma_{x}$ has 
the Taylor expansion 
\[
	\rho(x +w)=\rho(x)+\Lcal_\rho(x;w)+o(|w|^2), 
\]
where $\Lcal_\rho$ denotes the Levi form of $\rho$. Since the Levi form $\Lcal_\rho(x;w)$ 
is positive on null vectors $w\in \Agot_*$, we get the estimates in condition (c).

Using the conformal minimal discs $\alpha_x^j$  as above and a conformal minimal 
immersion $X\colon M\to D$, we consider the 
Riemann-Hilbert problem in Theorem \ref{th:RHCMI}, but with the function $\varkappa$ 
of the form \eqref{eq:varkappa2}. This shows that for any compact set $L\subset D\setminus P$
there are constants $\epsilon_0>0$ and $C_0>0$ such that the following holds.
Let $M$ be a compact bordered Riemann surface and $X\colon M\to D$ be a conformal minimal immersion 
of class $\Cscr^1(M)$ with $X(bM)\subset L$. 
Given a continuous function $\epsilon \colon bM\to [0,\epsilon_0]$ supported on 
the set $J=\{\zeta\in bM: X(\zeta) \in L\}$, an open set $U\subset M$ containing $\supp(\epsilon)$ in 
its relative interior, and a constant $\delta>0$, there exists a conformal minimal immersion 
$Y\colon M\to D$ satisfying the following conditions.
\begin{itemize}
\item[\rm (i)] $|\rho(Y(\zeta)) - \rho(X(\zeta)) -\epsilon(\zeta)| <\delta$ for every $\zeta\in bM$.          % lifting
\item[\rm (ii)] $\rho(Y(\zeta))\ge \rho(X(\zeta)) -\delta$ for every $\zeta\in M$.                                      % not dropping much
\item[\rm (iii)] $Y$ is $\delta$-close to $X$ in the $\Cscr^1$ norm in $M\setminus U$.      		      % approximation
\item[\rm (iv)] $Y$ is $C_0 \sqrt{\epsilon_0}$-close to $X$ in the $\Cscr^0$ norm in $M$.
\end{itemize} 
The theorem is proved by a inductive application of this procedure, together with a well known method of avoiding critical points of $\rho$.
By using also Lemma  \ref{lem:Jordan} we can obtain complete proper conformal minimal immersions $\mathring M\to D$.
\end{proof}

There is a variety of results and open questions in the literature as to which domains in $\R^3$ may or may not
contain minimal surfaces (possibly with additional properties) 
which are proper in $\R^3$. One of the main examples is the theorem  
of Hoffman and Meeks from 1990 \cite{HoffmanMeeks1990IM} to the effect that a properly immersed minimal surface 
$M\subset \R^3$ is never contained in a half space unless $M$ is a plane. Since minimally convex domains
are not necessarily convex and they may be quite big, it is a natural question whether they may contain
nonflat minimal surfaces which are proper in $\R^3$. Although we do not know a definitive answer to this question, 
we have the following rigidity result \cite[Theorem 1.16]{AlarconDrinovecForstnericLopez2018TAMS}
for properly immersed minimal surfaces of finite total curvature in $\R^3$ lying in minimally convex domains.

%
%   Maximal minimally convex domains
%
\begin{theorem} \label{th:FTC}
Let $S$ be a complete connected properly immersed minimal surface with finite total curvature
in $\r^3$, possibly with (compact) boundary, and let $D\subset\r^3$ be a connected 
minimally convex domain containing $S$. If $S$ is not a plane then $D=\r^3$. If $S$ is a plane
then $D$ is a slab (a domain bounded by two parallel planes), a halfspace, or $\r^3$.
\end{theorem}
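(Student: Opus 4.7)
I would base the argument on the strong subharmonicity of $\rho|_S$, where $\rho \colon D \to \R_+$ is a strongly minimally plurisubharmonic exhaustion of $D$. For any minimal surface $S \subset D$ and any $p \in S$, since $S$ is minimal the intrinsic Laplacian satisfies $\Delta_S \rho(p) = \operatorname{tr}_{T_p S} \operatorname{Hess}_\rho(p)$ (the mean-curvature term drops out). By the Cauchy interlacing inequality, the trace of $\operatorname{Hess}_\rho$ on any $2$-plane is bounded below by the sum $\lambda_1(p) + \lambda_2(p)$ of the two smallest eigenvalues of $\operatorname{Hess}_\rho(p)$, which is $>0$. Hence $\Delta_S \rho > 0$ on $S$. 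In $\R^3$, minimal convexity of $D$ is equivalent to the smooth part of $bD$ having nonnegative mean curvature with respect to the inward normal, which enables maximum-principle comparisons at boundary touching points.

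\textbf{Plane case.} After a rigid motion take $S = \{x_3 = 0\}$. Define
\[
   T_\pm \;=\; \sup\bigl\{ t \geq 0 \colon \{x_3 = \pm s\} \subset D \text{ for all } s \in [0,t] \bigr\} \;\in\; (0, +\infty].
\]
If $T_+ = T_- = \infty$, then $D = \R^3$. Otherwise, say $T_+ < \infty$; by maximality the plane $S_{T_+} := \{x_3 = T_+\}$ lies pointwise in $\overline D$ but not in $D$, so there is a touching point $p \in S_{T_+} \cap bD$. Near $p$ the set $\{0 \le x_3 < T_+\}$ lies in $D$, so locally $bD$ is a graph $x_3 = \phi(x_1,x_2) \geq T_+$ with $\phi(p) = T_+$. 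Mean-convexity forces $\operatorname{tr} \operatorname{Hess} \phi \leq 0$ at the local minimum $p$, while the minimum gives $\operatorname{Hess}\phi(p) \geq 0$; hence $\operatorname{Hess}\phi(p) = 0$. The strong maximum principle applied to the nonnegative function $\phi - T_+$ and the quasilinear mean-curvature operator forces $\phi \equiv T_+$ near $p$, i.e., $bD$ coincides with $S_{T_+}$ in a neighborhood of $p$. Thus $S_{T_+} \cap bD$ is both open and closed in the connected plane $S_{T_+}$, so $S_{T_+} \subset bD$; connectedness of $D$ then yields $D \cap \{x_3 > T_+\} = \emptyset$. Running the symmetric argument for $T_-$ gives $D = \{-T_- < x_3 < T_+\}$, which is a slab, half-space, or $\R^3$ according as both, one, or neither of $T_\pm$ is finite.

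\textbf{Non-planar case.} Assume $S$ is not a plane and, for contradiction, $D \neq \R^3$. By finite total curvature, $S$ is conformally a compact Riemann surface minus finitely many punctures, each corresponding to an end asymptotic either to an affine plane or to a half-catenoid (Osserman--Jorge--Meeks), and $S$ is parabolic with a Gauss map extending meromorphically to the compactification. I would combine three ingredients: (i) the Hoffman--Meeks half-space theorem, which prevents $S$ from lying in any closed half-space and therefore controls the large-scale position of $S$ relative to any tangent plane of $bD$; (ii) parabolicity plus strict subharmonicity of $\rho|_S$, which force $\rho|_S$ to be unbounded above so that $S$ has sequences escaping every compact subset of $D$; and (iii) a catenoid-barrier sliding argument at a smooth point $q \in bD$. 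At $q$, on the outer side of the tangent plane $\Pi_q$ one places a family of small catenoids avoiding $\overline{D}$, and slides or rescales them until they first touch $S$; at the touching point, the mean-curvature comparison between the barrier catenoid (minimal) and $S$ (also minimal), sandwiched by the mean-convex $bD$, forces a local identification by the mean-curvature maximum principle, which is incompatible with the finite-total-curvature asymptotic structure of $S$ at the end being approached. Non-smooth points of $bD$ are handled by first applying the argument to the sublevel sets $\{\rho \leq c\}$ (smoothly bounded and minimally convex for regular $c$) and letting $c \to \infty$.

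\textbf{Main obstacle.} The non-planar case is the hard part. The plane case reduces cleanly to the strong maximum principle for the mean-curvature operator applied to graph functions over the plane. The difficulty in the non-planar case is that a minimally convex $D \neq \R^3$ need not be contained in any half-space (for instance the exterior of a solid catenoid is minimally convex), so the Hoffman--Meeks theorem is not directly applicable; the rigidity must come from combining the asymptotic structure granted by finite total curvature with a localized barrier argument near $bD$, and the matching of the barrier's asymptotic shape to that of the ends of $S$ is the step one must execute with care.
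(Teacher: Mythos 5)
The survey states this theorem without proof; it is quoted directly from \cite[Theorem 1.16]{AlarconDrinovecForstnericLopez2018TAMS}, so there is no in-paper argument to compare yours against. Evaluating your proposal on its own terms: the preliminary observations in your ``Plan'' (strict subharmonicity of $\rho|_S$ on a minimal surface via Cauchy interlacing, and the inward mean-convexity of a smooth $bD$ for $n=3$) are correct and relevant, but both cases of the argument have genuine gaps.

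In the plane case, the existence of a touching point $p\in\{x_3=T_+\}\cap bD$ is not established. Already the assertion $T_\pm>0$ is unjustified: $D$ may pinch toward $S=\{x_3=0\}$ at horizontal infinity, so that no translate $\{x_3=t\}$, $t>0$, lies entirely in $D$, giving $T_+=0$ and no room to start. More fundamentally, $T_+<\infty$ only produces a sequence $q_k\notin D$ with $x_3(q_k)\to T_+^+$, and these points may escape to horizontal infinity, in which case $\{x_3=T_+\}$ can be entirely contained in the open set $D$ and there is simply no touching point. Ruling out exactly this kind of escape is the core difficulty in the Hoffman--Meeks half-space theorem, and its proof is forced to use \emph{compact} catenoidal barriers precisely because noncompact plane barriers leave this loophole open; your argument uses only planes and never addresses it. In addition, a minimally convex domain need not have $\Cscr^2$ (or even $\Cscr^1$) boundary, so the local graph representation of $bD$ near $p$ and the strong maximum principle for the mean-curvature operator require justification. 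Your proposed remedy --- replacing $D$ by the smoothly bounded sublevel sets $\{\rho\le c\}$ --- cannot be invoked here, because the plane $S$ is not contained in any compact sublevel set.

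The non-planar case is an outline of plausible ingredients rather than a proof, and as written the barrier step is not coherent. You place a catenoid on the outer side of $bD$, avoiding $\overline D$, and slide it ``until it first touches $S$''; but $S\subset D$, so the catenoid would first meet $bD$, not $S$. And whichever surface is touched first, the maximum-principle identification only shows that the touched surface agrees locally with a catenoid --- which is not by itself a contradiction, since the catenoid is a complete non-planar minimal surface of finite total curvature, i.e.\ an admissible instance of $S$ that the theorem must \emph{handle}, not one it can rule out a priori. The claimed ``incompatibility with the finite-total-curvature asymptotic structure of $S$'' is asserted but never derived, and ingredients (i)--(iii) are listed but never combined into an argument. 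Your closing ``Main obstacle'' paragraph candidly acknowledges this, so as submitted the non-planar case is not a proof but a declaration of intent; without it, and with the touching-point gap in the plane case, the proposal does not establish the theorem.
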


%%%%%%%%%%
%%%%%%%%%%
%%%%%%%%%%
%%%%%%%%%%   THANKS
%%%%%%%%%%
%%%%%%%%%%

\subsection*{Acknowledgements}
A.\ Alarc\'on is partially supported by the MINECO/FEDER grants no.\ MTM2014-52368-P and MTM2017-89677-P, Spain.
F.\ Forstneri\v c is partially  supported  by the research program P1-0291 and the grant J1-7256 from ARRS, Republic of Slovenia.

We wish to thank Barbara Drinovec Drnov\v sek and Francisco J.\ L\'opez for the 
collaboration on the subject matter of this survey, and Finnur L\'arusson (an Associate Editor) and George
Willis (Editor-in-Chief) for their kind invitation to write this survey for the Journal of the Australian Mathematical Society.

Finally, we sincerely thank an anonymous referee for pointing out several misprints and for 
the remarks which helped us to improve the presentation.

%%%%%%%%%%
%%%%%%%%%%
%%%%%%%%%%
%%%%%%%%%%   THE BIBLIOGRAPHY
%%%%%%%%%%
%%%%%%%%%%

%\renewcommand{\refname}{\sc References}
%{\bibliographystyle{abbrv} \bibliography{bibAF}}

\begin{thebibliography}{100}

\bibitem{Abraham1963TAMS}
R.~Abraham.
\newblock Transversality in manifolds of mappings.
\newblock {\em Bull. Amer. Math. Soc.}, 69:470--474, 1963.

\bibitem{Ahlfors1941ASSF}
L.~V. Ahlfors.
\newblock The theory of meromorphic curves.
\newblock {\em Acta Soc. Sci. Fennicae. Nova Ser. A.}, 3(4):31, 1941.

\bibitem{Ahlfors2006}
L.~V. Ahlfors.
\newblock {\em Lectures on quasiconformal mappings}, volume~38 of {\em
  University Lecture Series}.
\newblock American Mathematical Society, Providence, RI, second edition, 2006.
\newblock With supplemental chapters by C. J. Earle, I. Kra, M. Shishikura and
  J. H. Hubbard.

\bibitem{Alarcon2018}
A.~Alarc{\'o}n.
\newblock {Complete complex hypersurfaces in the ball come in foliations}.
\newblock {\em ArXiv e-prints}, Feb. 2018.
\newblock \url{https://arxiv.org/abs/1802.02004}.

\bibitem{AlarconCastro-Infantes2017}
A.~{Alarc{\'o}n} and I.~{Castro-Infantes}.
\newblock {Interpolation by conformal minimal surfaces and directed holomorphic
  curves}.
\newblock {\em ArXiv e-prints}, Jan. 2017.
\newblock \url{https://arxiv.org/abs/1701.04379}.

\bibitem{AlarconCastro-Infantes2018GT}
A.~Alarc{\'o}n and I.~Castro-Infantes.
\newblock Complete minimal surfaces densely lying in arbitrary domains of
  {$\mathbb {R}^n$}.
\newblock {\em Geom. Topol.}, 22(1):571--590, 2018.

\bibitem{AlarconDrinovecForstnericLopez2015PLMS}
A.~Alarc{\'o}n, B.~Drinovec~Drnov{\v{s}}ek, F.~Forstneri{\v{c}}, and F.~J.
  L{\'o}pez.
\newblock Every bordered {R}iemann surface is a complete conformal minimal
  surface bounded by {J}ordan curves.
\newblock {\em Proc. Lond. Math. Soc. (3)}, 111(4):851--886, 2015.

\bibitem{AlarconDrinovecForstnericLopez2018TAMS}
A.~Alarc{\'o}n, B.~Drinovec~Drnov{\v{s}}ek, F.~Forstneri{\v{c}}, and F.~J.
  L{\'o}pez.
\newblock {Minimal surfaces in minimally convex domains}.
\newblock {\em ArXiv e-prints}, Oct. 2015.
\newblock {\url{https://arxiv.org/abs/1702.08032}}. {\em Trans. Amer. Math.
  Soc.}, {\rm to appear}.

\bibitem{AlarconFernandez2011DGA}
A.~Alarc{\'o}n and I.~Fern{\'a}ndez.
\newblock Complete minimal surfaces in {$\mathbb{R}^3$} with a prescribed
  coordinate function.
\newblock {\em Differential Geom. Appl.}, 29(suppl. 1):S9--S15, 2011.

\bibitem{AlarconFernandezLopez2012CMH}
A.~Alarc{\'o}n, I.~Fern{\'a}ndez, and F.~J. L{\'o}pez.
\newblock Complete minimal surfaces and harmonic functions.
\newblock {\em Comment. Math. Helv.}, 87(4):891--904, 2012.

\bibitem{AlarconFernandezLopez2013CVPDE}
A.~Alarc{\'o}n, I.~Fern{\'a}ndez, and F.~J. L{\'o}pez.
\newblock Harmonic mappings and conformal minimal immersions of {R}iemann
  surfaces into {$\mathbb{R}^{\rm N}$}.
\newblock {\em Calc. Var. Partial Differential Equations}, 47(1-2):227--242,
  2013.

\bibitem{AlarconForstneric2013MA}
A.~Alarc{\'o}n and F.~Forstneri{\v{c}}.
\newblock Every bordered {R}iemann surface is a complete proper curve in a
  ball.
\newblock {\em Math. Ann.}, 357(3):1049--1070, 2013.

\bibitem{AlarconForstneric2017CRELLE}
A.~Alarc{\'o}n and F.~Forstneri{\v{c}}.
\newblock {Every conformal minimal surface in $\mathbb{R}^3$ is isotopic to the
  real part of a holomorphic null curve}.
\newblock {\em ArXiv e-prints}, Aug. 2014.
\newblock \url{https://arxiv.org/abs/1408.5315}. {\em J. reine angew. Math.},
  to appear. \url{http://dx.doi.org/10.1515/crelle-2015-0069}.

\bibitem{AlarconForstneric2014IM}
A.~Alarc{\'o}n and F.~Forstneri{\v{c}}.
\newblock Null curves and directed immersions of open {R}iemann surfaces.
\newblock {\em Invent. Math.}, 196(3):733--771, 2014.

\bibitem{AlarconForstneric2015MA}
A.~Alarc{\'o}n and F.~Forstneri{\v{c}}.
\newblock The {C}alabi-{Y}au problem, null curves, and {B}ryant surfaces.
\newblock {\em Math. Ann.}, 363(3-4):913--951, 2015.

\bibitem{AlarconForstneric2015Abel}
A.~Alarc\'on and F.~Forstneri{\v c}.
\newblock Null holomorphic curves in {$\mathbb C^3$} and applications to the
  conformal {C}alabi-{Y}au problem.
\newblock In {\em Complex geometry and dynamics}, volume~10 of {\em Abel
  Symp.}, pages 101--121. Springer, Cham, 2015.

\bibitem{AlarconForstneric2017IMRN}
A.~Alarc{\'o}n and F.~Forstneri{\v{c}}.
\newblock Darboux charts around holomorphic {L}egendrian curves and
  applications.
\newblock {\em Internat. Math. Res. Not.}, 153(9):1945--1986, 2017.

\bibitem{AlarconForstnericLopez2016MZ}
A.~Alarc{\'o}n, F.~Forstneri{\v{c}}, and F.~J. L{\'o}pez.
\newblock Embedded minimal surfaces in {$\mathbb{R}^n$}.
\newblock {\em Math. Z.}, 283(1-2):1--24, 2016.

\bibitem{AlarconForstnericLopez2017JGEA}
A.~Alarc{\'o}n, F.~Forstneri{\v{c}}, and F.~J. L{\'o}pez.
\newblock Every meromorphic function is the {G}auss map of a conformal minimal
  surface.
\newblock {\em ArXiv e-prints}, Apr. 2016.
\newblock \url{https://arxiv.org/abs/1604.00514}. {\em J. Geom. Anal.}, to
  appear. \url{https://link.springer.com/article/10.1007}.

\bibitem{AlarconForstnericLopezMAMS}
A.~Alarc{\'o}n, F.~Forstneri{\v{c}}, and F.~J. L{\'o}pez.
\newblock New complex analytic methods in the study of non-orientable minimal
  surfaces in $\mathbb{R}^n$.
\newblock {\em ArXiv e-prints}, Mar. 2016.
\newblock \url{https://arxiv.org/abs/1603.01691}. {\em Mem. Amer. Math. Soc.},
  to appear.

\bibitem{AlarconForstnericLopez2017CM}
A.~Alarc{\'o}n, F.~Forstneri{\v c}, and F.~J. L{\'o}pez.
\newblock Holomorphic {L}egendrian curves.
\newblock {\em Compos. Math.}, 153(9):1945--1986, 2017.

\bibitem{AlarconLarusson2017IJM}
A.~Alarc{\'o}n and F.~L{\'a}russon.
\newblock Representing de {R}ham cohomology classes on an open {R}iemann
  surface by holomorphic forms.
\newblock {\em Internat. J. Math.}, 28(9):1740004, 12, 2017.

\bibitem{AlarconLopez2012JDG}
A.~Alarc{\'o}n and F.~J. L{\'o}pez.
\newblock Minimal surfaces in {$\mathbb{R}^3$} properly projecting into
  {$\mathbb{R}^2$}.
\newblock {\em J. Differential Geom.}, 90(3):351--381, 2012.

\bibitem{AlarconLopez2014TAMS}
A.~Alarc{\'o}n and F.~J. L{\'o}pez.
\newblock Properness of associated minimal surfaces.
\newblock {\em Trans. Amer. Math. Soc.}, 366(10):5139--5154, 2014.

\bibitem{AlarconLopez2015GT}
A.~Alarc{\'o}n and F.~J. L{\'o}pez.
\newblock Approximation theory for nonorientable minimal surfaces and
  applications.
\newblock {\em Geom. Topol.}, 19(2):1015--1062, 2015.

\bibitem{AlarconLopez2016JEMS}
A.~Alarc{\'o}n and F.~J. L{\'o}pez.
\newblock Complete bounded embedded complex curves in {$\mathbb {C}^2$}.
\newblock {\em J. Eur. Math. Soc. (JEMS)}, 18(8):1675--1705, 2016.

\bibitem{BarbosaDoCarmo1976AJM}
J.~L. Barbosa and M.~do~Carmo.
\newblock On the size of a stable minimal surface in {$R^{3}$}.
\newblock {\em Amer. J. Math.}, 98(2):515--528, 1976.

\bibitem{BellNarasimhan1990EMS}
S.~R. Bell and R.~Narasimhan.
\newblock Proper holomorphic mappings of complex spaces.
\newblock In {\em Several complex variables, {VI}}, volume~69 of {\em
  Encyclopaedia Math. Sci.}, pages 1--38. Springer, Berlin, 1990.

\bibitem{Bozin1999IMRN}
V.~Bo{\v{z}}in.
\newblock Note on harmonic maps.
\newblock {\em Internat. Math. Res. Notices}, 19:1081--1085, 1999.

\bibitem{ColdingMinicozzi1999CLNM}
T.~H. Colding and W.~P. Minicozzi, II.
\newblock {\em Minimal surfaces}, volume~4 of {\em Courant Lecture Notes in
  Mathematics}.
\newblock New York University, Courant Institute of Mathematical Sciences, New
  York, 1999.

\bibitem{ColdingMinicozzi2008AM}
T.~H. Colding and W.~P. Minicozzi, II.
\newblock The {C}alabi-{Y}au conjectures for embedded surfaces.
\newblock {\em Ann. of Math. (2)}, 167(1):211--243, 2008.

\bibitem{ColdingMinicozzi2011AMS}
T.~H. Colding and W.~P. Minicozzi, II.
\newblock {\em A course in minimal surfaces}, volume 121 of {\em Graduate
  Studies in Mathematics}.
\newblock American Mathematical Society, Providence, RI, 2011.

\bibitem{Collin1997AM}
P.~Collin.
\newblock Topologie et courbure des surfaces minimales proprement plong\'ees de
  {$\bold R^3$}.
\newblock {\em Ann. of Math. (2)}, 145(1):1--31, 1997.

\bibitem{CollinKusnerMeeksRosenberg2004JDG}
P.~Collin, R.~Kusner, W.~H. Meeks, III, and H.~Rosenberg.
\newblock The topology, geometry and conformal structure of properly embedded
  minimal surfaces.
\newblock {\em J. Differential Geom.}, 67(2):377--393, 2004.

\bibitem{DierkesHildebrandtKusterWohlrab1992-I}
U.~Dierkes, S.~Hildebrandt, A.~K\"uster, and O.~Wohlrab.
\newblock {\em Minimal surfaces. {II} Boundary regularity}, volume 296 of {\em
  Grundlehren der Mathematischen Wissenschaften [Fundamental Principles of
  Mathematical Sciences]}.
\newblock Springer-Verlag, Berlin, 1992.

\bibitem{doCarmo1976book}
M.~P. do~Carmo.
\newblock {\em Differential geometry of curves and surfaces}.
\newblock Prentice-Hall, Inc., Englewood Cliffs, N.J., 1976.

\bibitem{Donaldson2011book}
S.~Donaldson.
\newblock {\em Riemann surfaces}, volume~22 of {\em Oxford Graduate Texts in
  Mathematics}.
\newblock Oxford University Press, Oxford, 2011.

\bibitem{Dor1996MZ}
A.~Dor.
\newblock A domain in {${\bf C}^m$} not containing any proper image of the unit
  disc.
\newblock {\em Math. Z.}, 222(4):615--625, 1996.

\bibitem{Douglas1932TAMS}
J.~Douglas.
\newblock One-sided minimal surfaces with a given boundary.
\newblock {\em Trans. Amer. Math. Soc.}, 34(4):731--756, 1932.

\bibitem{DrinovecForstneric2007DMJ}
B.~Drinovec~Drnov{\v{s}}ek and F.~Forstneri{\v{c}}.
\newblock Holomorphic curves in complex spaces.
\newblock {\em Duke Math. J.}, 139(2):203--253, 2007.

\bibitem{DrinovecForstneric2012IUMJ}
B.~Drinovec~Drnov{\v{s}}ek and F.~Forstneri{\v{c}}.
\newblock The {P}oletsky-{R}osay theorem on singular complex spaces.
\newblock {\em Indiana Univ. Math. J.}, 61(4):1407--1423, 2012.

\bibitem{DrinovecForstneric2016TAMS}
B.~Drinovec~Drnov{\v{s}}ek and F.~Forstneri{\v{c}}.
\newblock Minimal hulls of compact sets in {$\mathbb{R}^3$}.
\newblock {\em Trans. Amer. Math. Soc.}, 368(10):7477--7506, 2016.

\bibitem{FarkasKra1992}
H.~M. Farkas and I.~Kra.
\newblock {\em Riemann surfaces}, volume~71 of {\em Graduate Texts in
  Mathematics}.
\newblock Springer-Verlag, New York, second edition, 1992.

\bibitem{FerrerMartinMeeks2012AM}
L.~Ferrer, F.~Mart{\'{\i}}n, and W.~H. Meeks, III.
\newblock Existence of proper minimal surfaces of arbitrary topological type.
\newblock {\em Adv. Math.}, 231(1):378--413, 2012.

\bibitem{Florack1948SMIUM}
H.~Florack.
\newblock Regul\"are und meromorphe {F}unktionen auf nicht geschlossenen
  {R}iemannschen {F}l\"achen.
\newblock {\em Schr. Math. Inst. Univ. M\"unster}, 1948(1):34, 1948.

\bibitem{FornaessForstnericWold2018}
J.~E. {Forn{\ae}ss}, F.~{Forstneri{\v c}}, and E.~F. {Wold}.
\newblock {Holomorphic approximation: the legacy of Weierstrass, Runge,
  Oka-Weil, and Mergelyan}.
\newblock {\em ArXiv e-prints}, Feb. 2018.
\newblock \url{http://arxiv.org/abs/1802.03924}.

\bibitem{Forster1970}
O.~Forster.
\newblock Plongements des vari\'et\'es de {S}tein.
\newblock {\em Comment. Math. Helv.}, 45:170--184, 1970.

\bibitem{Forster1981book}
O.~Forster.
\newblock {\em Lectures on {R}iemann surfaces}, volume~81 of {\em Graduate
  Texts in Mathematics}.
\newblock Springer-Verlag, New York, 1981.
\newblock Translated from the German by Bruce Gilligan.

\bibitem{Forstneric2007AJM}
F.~Forstneri{\v{c}}.
\newblock Manifolds of holomorphic mappings from strongly pseudoconvex domains.
\newblock {\em Asian J. Math.}, 11(1):113--126, 2007.

\bibitem{Forstneric2013KAWA}
F.~Forstneri{\v{c}}.
\newblock Oka manifolds: from {O}ka to {S}tein and back.
\newblock {\em Ann. Fac. Sci. Toulouse Math. (6)}, 22(4):747--809, 2013.
\newblock With an appendix by Finnur L{\'a}russon.

\bibitem{ForstnericJAM}
F.~Forstneri{\v{c}}.
\newblock {Proper holomorphic immersions into Stein manifolds with the density
  property}.
\newblock {\em ArXiv e-prints}, Mar. 2017.
\newblock \url{https://arxiv.org/abs/1703.08594}. {\em J.\ Anal.\ Math.}, to
  appear.

\bibitem{Forstneric2017E}
F.~Forstneri{\v{c}}.
\newblock {\em {Stein manifolds and holomorphic mappings. The homotopy
  principle in complex analysis}}, volume~56 of {\em Ergebnisse der Mathematik
  und ihrer Grenzgebiete. 3. Folge}.
\newblock Berlin: Springer, 2017.

\bibitem{ForstnericGlobevnik2001MRL}
F.~Forstneri{\v{c}} and J.~Globevnik.
\newblock Proper holomorphic discs in {$\mathbb C^2$}.
\newblock {\em Math. Res. Lett.}, 8(3):257--274, 2001.

\bibitem{ForstnericLarusson2011}
F.~Forstneri{\v{c}} and F.~L{\'a}russon.
\newblock Survey of {O}ka theory.
\newblock {\em New York J. Math.}, 17A:11--38, 2011.

\bibitem{ForstnericLarussonCAG}
F.~Forstneri{\v{c}} and F.~L{\'a}russon.
\newblock {The parametric h-principle for minimal surfaces in $\mathbb R^n$ and
  null curves in $\mathbb C^n$}.
\newblock {\em ArXiv e-prints}, Feb. 2016.
\newblock \url{https://arxiv.org/abs/1602.01529}. {\em Comm. Anal. Geom.},
  27(2) (2019), to appear.

\bibitem{ForstnericWold2009JMPA}
F.~Forstneri{\v{c}} and E.~F. Wold.
\newblock Bordered {R}iemann surfaces in {$\mathbb C^2$}.
\newblock {\em J. Math. Pures Appl. (9)}, 91(1):100--114, 2009.

\bibitem{ForstnericWold2013APDE}
F.~Forstneri{\v{c}} and E.~F. Wold.
\newblock Embeddings of infinitely connected planar domains into {${\mathbb
  C}^2$}.
\newblock {\em Anal. PDE}, 6(2):499--514, 2013.

\bibitem{Fujimoto1983JMSJ}
H.~Fujimoto.
\newblock On the {G}auss map of a complete minimal surface in {${\bf R}^{m}$}.
\newblock {\em J. Math. Soc. Japan}, 35(2):279--288, 1983.

\bibitem{Fujimoto1988JMSJ}
H.~Fujimoto.
\newblock On the number of exceptional values of the {G}auss maps of minimal
  surfaces.
\newblock {\em J. Math. Soc. Japan}, 40(2):235--247, 1988.

\bibitem{Fujimoto1990JDG}
H.~Fujimoto.
\newblock Modified defect relations for the {G}auss map of minimal surfaces.
  {II}.
\newblock {\em J. Differential Geom.}, 31(2):365--385, 1990.

\bibitem{Globevnik2015AM}
J.~Globevnik.
\newblock A complete complex hypersurface in the ball of {$\mathbb{C}^N$}.
\newblock {\em Ann. of Math. (2)}, 182(3):1067--1091, 2015.

\bibitem{Grauert1957MA}
H.~Grauert.
\newblock Holomorphe {F}unktionen mit {W}erten in komplexen {L}ieschen
  {G}ruppen.
\newblock {\em Math. Ann.}, 133:450--472, 1957.

\bibitem{Grauert1958MA}
H.~Grauert.
\newblock Analytische {F}aserungen \"uber holomorph-vollst\"andigen {R}\"aumen.
\newblock {\em Math. Ann.}, 135:263--273, 1958.

\bibitem{GrauertRemmert1979}
H.~Grauert and R.~Remmert.
\newblock {\em Theory of {S}tein spaces}, volume 236 of {\em Grundlehren Math.
  Wiss.}
\newblock Springer-Verlag, Berlin-New York, 1979.
\newblock Translated from the German by Alan Huckleberry.

\bibitem{Grigoryan1999BAMS}
A.~Grigor{\cprime}yan.
\newblock Analytic and geometric background of recurrence and non-explosion of
  the {B}rownian motion on {R}iemannian manifolds.
\newblock {\em Bull. Amer. Math. Soc. (N.S.)}, 36(2):135--249, 1999.

\bibitem{Gromov1986}
M.~Gromov.
\newblock {\em Partial differential relations}, volume~9 of {\em Ergebnisse der
  Mathematik und ihrer Grenzgebiete (3) [Results in Mathematics and Related
  Areas (3)]}.
\newblock Springer-Verlag, Berlin, 1986.

\bibitem{Gromov1989JAMS}
M.~Gromov.
\newblock Oka's principle for holomorphic sections of elliptic bundles.
\newblock {\em J. Amer. Math. Soc.}, 2(4):851--897, 1989.

\bibitem{Gromov2017}
M.~Gromov.
\newblock Geometric, algebraic, and analytic descendants of {N}ash isometric
  embedding theorems.
\newblock {\em Bull. Amer. Math. Soc. (N.S.)}, 54(2):173--245, 2017.

\bibitem{Gromov1973IZV}
M.~L. Gromov.
\newblock Convex integration of differential relations. {I}.
\newblock {\em Izv. Akad. Nauk SSSR Ser. Mat.}, 37:329--343, 1973.

\bibitem{GunningRossi2009}
R.~C. Gunning and H.~Rossi.
\newblock {\em Analytic functions of several complex variables}.
\newblock AMS Chelsea Publishing, Providence, RI, 2009.
\newblock Reprint of the 1965 original.

\bibitem{HoffmanMeeks1990IM}
D.~Hoffman and W.~H. Meeks, III.
\newblock The strong halfspace theorem for minimal surfaces.
\newblock {\em Invent. Math.}, 101(2):373--377, 1990.

\bibitem{HoffmanOsserman1980MAMS}
D.~A. Hoffman and R.~Osserman.
\newblock The geometry of the generalized {G}auss map.
\newblock {\em Mem. Amer. Math. Soc.}, 28(236):iii+105, 1980.

\bibitem{Hormander1990book}
L.~H{\"o}rmander.
\newblock {\em An introduction to complex analysis in several variables},
  volume~7 of {\em North-Holland Mathematical Library}.
\newblock North-Holland Publishing Co., Amsterdam, third edition, 1990.

\bibitem{Jones1979PAMS}
P.~W. Jones.
\newblock A complete bounded complex submanifold of {${\bf C}^{3}$}.
\newblock {\em Proc. Amer. Math. Soc.}, 76(2):305--306, 1979.

\bibitem{JorgeXavier1980AM}
L.~P. d.~M. Jorge and F.~Xavier.
\newblock A complete minimal surface in {$\mathbb{R}^{3}$} between two parallel
  planes.
\newblock {\em Ann. of Math. (2)}, 112(1):203--206, 1980.

\bibitem{Calabi1965Conjecture}
S.~Kobayashi and J.~Eells~Jr.
\newblock {\em Proceedings of the {U}nited {S}tates-{J}apan {S}eminar in
  {D}ifferential {G}eometry, {K}yoto, {J}apan, 1965}.
\newblock Nippon Hyoronsha Co., Ltd., Tokyo, 1966.

\bibitem{Lopez2014JGA}
F.~J. L{\'o}pez.
\newblock Exotic minimal surfaces.
\newblock {\em J. Geom. Anal.}, 24(2):988--1006, 2014.

\bibitem{Lopez2014TAMS}
F.~J. L{\'o}pez.
\newblock Uniform approximation by complete minimal surfaces of finite total
  curvature in {$\Bbb{R}^3$}.
\newblock {\em Trans. Amer. Math. Soc.}, 366(12):6201--6227, 2014.

\bibitem{LopezPerez2003IUMJ}
F.~J. L{\'o}pez and J.~P{\'e}rez.
\newblock Parabolicity and {G}auss map of minimal surfaces.
\newblock {\em Indiana Univ. Math. J.}, 52(4):1017--1026, 2003.

\bibitem{LopezRos1991JDG}
F.~J. L{\'o}pez and A.~Ros.
\newblock On embedded complete minimal surfaces of genus zero.
\newblock {\em J. Differential Geom.}, 33(1):293--300, 1991.

\bibitem{MartinMeeksNadirashvili2007AJM}
F.~Mart{\'{\i}}n, W.~H. Meeks, III, and N.~Nadirashvili.
\newblock Bounded domains which are universal for minimal surfaces.
\newblock {\em Amer. J. Math.}, 129(2):455--461, 2007.

\bibitem{MartinMorales2004TAMS}
F.~Mart{\'{\i}}n and S.~Morales.
\newblock On the asymptotic behavior of a complete bounded minimal surface in
  {$\mathbb{R}^3$}.
\newblock {\em Trans. Amer. Math. Soc.}, 356(10):3985--3994, 2004.

\bibitem{MartinMorales2005DMJ}
F.~Mart{\'{\i}}n and S.~Morales.
\newblock Complete proper minimal surfaces in convex bodies of {$\mathbb R^3$}.
\newblock {\em Duke Math. J.}, 128(3):559--593, 2005.

\bibitem{MartinMorales2006CMH}
F.~Mart{\'{\i}}n and S.~Morales.
\newblock Complete proper minimal surfaces in convex bodies of {$\mathbb R^3$}.
  {II}. {T}he behavior of the limit set.
\newblock {\em Comment. Math. Helv.}, 81(3):699--725, 2006.

\bibitem{Meeks1981DMJ}
W.~H. Meeks, III.
\newblock The classification of complete minimal surfaces in {$\mathbb{R}^3$}\
  with total curvature greater than {$-8\pi $}.
\newblock {\em Duke Math. J.}, 48(3):523--535, 1981.

\bibitem{MeeksPerez2004SDG}
W.~H. Meeks, III and J.~P{\'e}rez.
\newblock Conformal properties in classical minimal surface theory.
\newblock In {\em Surveys in differential geometry. {V}ol. {IX}}, Surv. Differ.
  Geom., IX, pages 275--335. Int. Press, Somerville, MA, 2004.

\bibitem{MeeksPerez2012AMS}
W.~H. Meeks, III and J.~P{\'e}rez.
\newblock {\em A survey on classical minimal surface theory}, volume~60 of {\em
  University Lecture Series}.
\newblock American Mathematical Society, Providence, RI, 2012.

\bibitem{MeeksPerezRos-CY}
W.~H. Meeks, III, J.~P{\'e}rez, and A.~Ros.
\newblock The embedded {C}alabi-{Y}au conjectures for finite genus.
\newblock Preprint http://www.ugr.es/local/jperez/papers/papers.htm.

\bibitem{MeeksPerezRos2015AM}
W.~H. Meeks, III, J.~P{\'e}rez, and A.~Ros.
\newblock Properly embedded minimal planar domains.
\newblock {\em Ann. of Math. (2)}, 181(2):473--546, 2015.

\bibitem{MeeksRosenberg2005AM}
W.~H. Meeks, III and H.~Rosenberg.
\newblock The uniqueness of the helicoid.
\newblock {\em Ann. of Math. (2)}, 161(2):727--758, 2005.

\bibitem{Morales2003GAFA}
S.~Morales.
\newblock On the existence of a proper minimal surface in {$\mathbb{R}^3$} with
  a conformal type of disk.
\newblock {\em Geom. Funct. Anal.}, 13(6):1281--1301, 2003.

\bibitem{Nadirashvili1996IM}
N.~Nadirashvili.
\newblock Hadamard's and {C}alabi-{Y}au's conjectures on negatively curved and
  minimal surfaces.
\newblock {\em Invent. Math.}, 126(3):457--465, 1996.

\bibitem{Nash1954}
J.~Nash.
\newblock {$C^1$} isometric imbeddings.
\newblock {\em Ann. of Math. (2)}, 60:383--396, 1954.

\bibitem{Nash1956}
J.~Nash.
\newblock The imbedding problem for {R}iemannian manifolds.
\newblock {\em Ann. of Math. (2)}, 63:20--63, 1956.

\bibitem{Nitsche1989}
J.~C.~C. Nitsche.
\newblock {\em Lectures on minimal surfaces. {V}ol. 1}.
\newblock Cambridge University Press, Cambridge, 1989.
\newblock Introduction, fundamentals, geometry and basic boundary value
  problems, Translated from the German by Jerry M. Feinberg, with a German
  foreword.

\bibitem{Osserman1964AM}
R.~Osserman.
\newblock Global properties of minimal surfaces in {$E^{3}$} and {$E^{n}$}.
\newblock {\em Ann. of Math. (2)}, 80:340--364, 1964.

\bibitem{Osserman1980DG}
R.~Osserman.
\newblock Minimal surfaces, {G}auss maps, total curvature, eigenvalue
  estimates, and stability.
\newblock In {\em The {C}hern {S}ymposium 1979 ({P}roc. {I}nternat. {S}ympos.,
  {B}erkeley, {C}alif., 1979)}, pages 199--227. Springer, New York-Berlin,
  1980.

\bibitem{Osserman1986book}
R.~Osserman.
\newblock {\em A survey of minimal surfaces}.
\newblock Dover Publications Inc., New York, second edition, 1986.

\bibitem{OssermanRu1997JDG}
R.~Osserman and M.~Ru.
\newblock An estimate for the {G}auss curvature of minimal surfaces in {${\bf
  R}^m$} whose {G}auss map omits a set of hyperplanes.
\newblock {\em J. Differential Geom.}, 46(3):578--593, 1997.

\bibitem{Perez2017}
J.~P\'erez.
\newblock A new golden age of minimal surfaces.
\newblock {\em Notices Amer. Math. Soc.}, 64(4):347--358, 2017.

\bibitem{Rado1930AM}
T.~Rad{\'o}.
\newblock On {P}lateau's problem.
\newblock {\em Ann. of Math. (2)}, 31(3):457--469, 1930.

\bibitem{Ros2002DG}
A.~Ros.
\newblock The {G}auss map of minimal surfaces.
\newblock In {\em Differential geometry, {V}alencia, 2001}, pages 235--252.
  World Sci. Publ., River Edge, NJ, 2002.

\bibitem{Ru1991JDG}
M.~Ru.
\newblock On the {G}auss map of minimal surfaces immersed in {${\bf R}^n$}.
\newblock {\em J. Differential Geom.}, 34(2):411--423, 1991.

\bibitem{SchoenYau1997CIP}
R.~Schoen and S.~T. Yau.
\newblock {\em Lectures on harmonic maps}.
\newblock Conference Proceedings and Lecture Notes in Geometry and Topology,
  II. International Press, Cambridge, MA, 1997.

\bibitem{Spring2010}
D.~Spring.
\newblock {\em Convex integration theory}.
\newblock Modern Birkh\"auser Classics. Birkh\"auser/Springer Basel AG, Basel,
  2010.
\newblock Solutions to the $h$-principle in geometry and topology, Reprint of
  the 1998 edition [MR1488424].

\bibitem{Stout1965TAMS}
E.~L. Stout.
\newblock Bounded holomorphic functions on finite {R}iemann surfaces.
\newblock {\em Trans. Amer. Math. Soc.}, 120:255--285, 1965.

\bibitem{Weierstrass1886}
K.~{Weierstrass}.
\newblock {Zur Theorie der eindeutigen analytischen Functionen.}
\newblock {Berl. Abh. 11-60 (1876)}, 1876.

\bibitem{Wirtinger1936MMP}
W.~Wirtinger.
\newblock Eine {D}eterminantenidentit\"at und ihre {A}nwendung auf analytische
  {G}ebilde in euklidischer und {H}ermitescher {M}a\ss bestimmung.
\newblock {\em Monatsh. Math. Phys.}, 44(1):343--365, 1936.

\bibitem{Yang1977}
P.~Yang.
\newblock Curvature of complex submanifolds of {$C^{n}$}.
\newblock In {\em Several complex variables ({P}roc. {S}ympos. {P}ure {M}ath.,
  {V}ol. {XXX}, {P}art 2, {W}illiams {C}oll., {W}illiamstown, {M}ass., 1975)},
  pages 135--137. Amer. Math. Soc., Providence, R.I., 1977.

\bibitem{Yau2000}
S.-T. Yau.
\newblock Review of geometry and analysis.
\newblock In {\em Mathematics: frontiers and perspectives}, pages 353--401.
  Amer. Math. Soc., Providence, RI, 2000.

\end{thebibliography}

\def\cprime{$'$}

%%%%%%%%%%
%%%%%%%%%%
%%%%%%%%%%
%%%%%%%%%%   AFFILIATIONS
%%%%%%%%%%
%%%%%%%%%%

\vspace{3mm}

\noindent Antonio Alarc\'{o}n \\
\noindent Departamento de Geometr\'{\i}a y Topolog\'{\i}a e Instituto de Matem\'aticas (IEMath-GR), Universidad de Granada, Campus de Fuentenueva s/n, E--18071 Granada, Spain. \\
\noindent  e-mail: {\tt alarcon@ugr.es}

\vspace{3mm}

\noindent Franc Forstneri\v c \\
\noindent Faculty of Mathematics and Physics, University of Ljubljana, and Institute
of Mathematics, Physics and Mechanics, Jadranska 19, SI--1000 Ljubljana, Slovenia. \\
\noindent e-mail: {\tt franc.forstneric@fmf.uni-lj.si}

\end{document}